\theoremstyle{plain}
\newtheorem{thm}{Theorem}[section]
\newtheorem{prop}[thm]{Proposition}
\newtheorem{lem}[thm]{Lemma}
\newtheorem{cor}[thm]{Corollary}
\theoremstyle{definition}
\newtheorem{defn}{Definition}
\theoremstyle{remark}
\newtheorem{remark}{Remark}
\newtheorem{notation}{Notation}
  \def\C{{\mathbb{C}}}  \def\E{{\mathbb{E}}}   \def\H{{\mathbb{H}}}     \def\M{{\mathbb{M}}} \def\N{{\mathbb{N}}}    \def\R{{\mathbb{R}}} \def\SS{{\mathbb{S}}}       \def\Z{{\mathbb{Z}}}
 \def\cB{{\mathcal{B}}} \def\cC{{\mathcal{C}}}    \def\cG{{\mathcal{G}}} \def\cH{{\mathcal{H}}}   \def\cK{{\mathcal{K}}} \def\cL{{\mathcal{L}}} \def\cM{{\mathcal{M}}}     \def\cR{{\mathcal{R}}} \def\cS{{\mathcal{S}}}  \def\cU{{\mathcal{U}}} \def\cV{{\mathcal{V}}}    
       \def\tcH{{\tilde{\mathcal{H}}}}
    \def\hE{{\widehat{E}}}                     
       \def\hphi{{\widehat{\phi}}}      \def\hmu{{\widehat{\mu}}}
       \def\tcH{{\tilde{\cH}}}
   \def\tmu{{\widetilde{\mu}}}    \def\tphi{{\widetilde{\phi}}}         
\newcommand{\G}{\Gamma}
\newcommand{\Si}{\Sigma}
\newcommand{\eps}{\epsilon}
\renewcommand\a{\alpha}
\renewcommand\b{\beta}
\renewcommand\d{\delta}
\renewcommand\k{\kappa}
\renewcommand\l{\lambda}
\newcommand\s{\sigma}
\newcommand\z{\zeta}
\newcommand{\Add}{\mathsf{Expand}}
\newcommand{\Bern}{\operatorname{Bern}}
\newcommand{\Cay}{\operatorname{Cay}}
\newcommand\Cocycle{\mathsf{Cocycle}}
\newcommand\cost{{\operatorname{Cost}}}
\newcommand\Cyl{\operatorname{Cyl}}
\newcommand\Expand{{\mathsf{Expand}}}
\newcommand\Lip{\operatorname{Lip}}
\newcommand\Meas{{\operatorname{Meas}}}
\newcommand\Pois{\operatorname{Pois}}
\newcommand\Prob{\operatorname{Prob}}
\newcommand\Radon{\mathsf{Radon}}
\newcommand\Ret{\mathtt{Ret}}
\newcommand\supp{\operatorname{supp}}
\newcommand\Stab{\operatorname{Stab}}
\newcommand\ssum{\operatorname{sum}}
\def\cc{{\curvearrowright}}
\newcommand{\resto}{\upharpoonright}
  \newcommand{\dee}{\textrm{d}}
  \newcommand{\rL}{\textrm{L}}
  \def\bfPi{{\mbox{\boldmath $\Pi$}}}
\begin{document}
\title{Metric criteria for fixed price of countable groups}
\author{Erin Bevilacqua and Lewis Bowen\\ University of Texas at Austin}
\maketitle

\begin{abstract}
We establish general criteria for a countable group $\G$ to have fixed price 1 depending on a choice of left-invariant proper metric on $\G$. We apply this criterion to show that if $\G_1,\G_2$ are two countable groups satisfying a certain growth condition then $\G_1\times \G_2$ has fixed price 1. For example, $\G\times \G$ has fixed price 1 for any countable group $\G$. 
\end{abstract}

\noindent
{\bf Keywords}:cost, price, measurable group theory\\
{\bf MSC}:37A35\\

\noindent
\tableofcontents

\section{Introduction}

Let $\Gamma$ be a countable (discrete) group. In the field of measured group theory, there is significant interest in studying the probability measure preserving actions of $\Gamma$ on standard probability spaces. An important invariant of these actions is the notion of \textit{cost}, originally due to Levitt \cite{MR1366313} and further refined by Gaboriau \cite{gaboriau-cost}. 

Suppose a countable group $\Gamma$ has an essentially free, probability measure preserving (pmp) Borel action on a standard Borel probability space $(X, \cB, \mu)$. If one looks at the orbits of this action, one gets a countable Borel equivalence relation (CBER) $\cR \subset X \times X$ on $X$:
$$\cR=\{(x,gx):~x\in X, g\in \G\}.$$
A \textit{graphing} $\cG \subset \cR$ of $\cR$ is a Borel subset which is symmetric (so $(x,y) \in \cG \Rightarrow (y,x)\in \cG$) and such that the connected components of the graph with vertex set $X$ and edge set $\cG$ are exactly the equivalence classes of $\cR$. In this case, the graph $\cG$ is said to \textit{generate} $\cR$.

The cost of $\cG$ is $\frac{1}{2}\int \deg_\cG(x)d\mu(x)$ where $\deg_\cG(x)$ is the number of edges $\{x,y\}$ with $(x,y)\in \cG$. The cost of an action is the infimum over the cost of all graphings generating the equivalence relation induced by the action. The cost of a group $\Gamma$ is the infimum of the cost of all essentially free, ergodic actions of $\G$ on standard probability spaces.

$\Gamma$ is said to have \textit{fixed price $c$} if all ergodic, essentially free actions of $\Gamma$ on a standard probability space $X$ have cost $c$. It was conjectured by Gaboriau \cite{gaboriau-cost} that all groups have fixed price. In his work, Gaboriau showed that several large classes of groups have fixed price, including direct products of countably infinite groups where at least one of the factor groups has an infinite subgroup with fixed price 1. However, the question has remained open for direct products of groups in which none of the factor groups has a fixed price 1 subgroup.

Abert-Weiss defined the max-cost of a group $\G$ to be the supremum cost of all essentially free pmp actions of $\G$ \cite{abert-weiss-2013}.  They used Kechris' theory of weak containment to show that Bernoulli actions achieve the max-cost. In this way, one can show that a group has fixed price when cost and max-cost agree. 





Abert and Mellick proved that locally compact, second countable groups of the form $G\times \Z$ where $G$ is compactly generated have fixed price 1 in \cite{MR4555892}. While the first explicit definition of calculating the cost of locally compact groups using cross sections can be found in \cite{MR4538560}, the technique used in Abert-Mellick uses Poisson point processes to find the maximal cost of actions. In this work, they showed that their technique was equivalent to the original definition using cross-sections, and sparked further interest in cost of locally compact groups.


A major breakthrough in proving fixed price 1 for lcsc groups came in the paper of Fr\c aczyk, Mellick, and Wilkens \cite{fraczyk2023poissonvoronoitessellationsfixedprice}, where they used the definition of cost from Abert-Mellick, the ideal Poisson-Voronoi tessellations defined in \cite{dachille2025idealpoissonvoronoitessellationshyperbolic}, and techniques from Lie theory to show that lattices in higher-rank Lie groups have fixed price 1. 
Mellick was able to further refine these techniques to get a more general result not depending on Lie theory in \cite{mellick2023gaboriauscriterionfixedprice}.

This paper refines the techniques of Fr\c aczyk, Mellick, and Wilkens to show that an even larger class of groups has fixed price 1. Moreover, we develop the theory from scratch: the reader need not be familiar with \cite{fraczyk2023poissonvoronoitessellationsfixedprice} to read this paper. 

A sample result is the following:
\begin{cor}\label{C:self}
    If $\G$ is any countable group, then $\G \times \G$ has fixed price 1.
\end{cor}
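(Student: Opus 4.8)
The plan is to obtain Corollary~\ref{C:self} as the diagonal specialization of the metric criterion for fixed price $1$ established in this paper: rather than studying $\G\times\G$ abstractly, I would equip it with a concrete left-invariant proper metric and check the criterion for that metric. We may assume $\G$ is infinite, since for finite $\G$ the product $\G\times\G$ is finite with cost $1-|\G|^{-2}\neq 1$, so the substantive case is the infinite one. Fix any left-invariant proper metric $d$ on $\G$; such a metric exists on every countable group (enumerate $\G=\{g_0=e,g_1,g_2,\dots\}$, assign symmetric positive integer weights $w(g)=w(g^{-1})$ with $w(e)=0$ and $w(g)\to\infty$, and take the associated weighted word metric, whose balls are then finite). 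On $\G\times\G$ use the sum metric $D\big((a,b),(a',b')\big)=d(a,a')+d(b,b')$, which is again left-invariant and proper.

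The key point is that the growth condition in the criterion is a \emph{comparison} between the contributions of the two factors, and when the factors and their metrics are identical this comparison is automatically balanced. Writing $S(s)=\#\{g\in\G:\,d(e,g)=s\}$ for the sphere sizes of $(\G,d)$, the $D$-ball of radius $r$ in $\G\times\G$ is $\bigsqcup_{s+t\le r}\{a:d(e,a)=s\}\times\{b:d(e,b)=t\}$, so its cardinality is the self-convolution $\sum_{s+t\le r}S(s)\,S(t)$, in which the two factors enter symmetrically. Any asymmetry between the two factor-growths that could obstruct the general statement for $\G_1\times\G_2$ is thus absent here: every scale at which one factor is probed is matched by the same scale in the other. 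I would verify the criterion for $D$ by exhibiting the scales along which its quantitative hypothesis holds, using exactly this self-matching of $S$ against itself.

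Mechanically, following the Poisson point process approach, the criterion is proved by constructing graphings of the orbit relation whose cost tends to $1$: one places a Poisson process of small intensity on $\G\times\G$ and wires its points together as cheaply as possible. Since the cost of any free action of an infinite group is at least $1$, the entire burden is the upper bound. The product structure is what makes the upper bound attainable --- connections can be routed along one $\G$-direction while exploiting the point density supplied by the other --- and the per-point wiring cost is governed by how the two factor-growths compare at each scale, being most favorable precisely when they coincide, as they do for $\G\times\G$. I expect the main obstacle to be this quantitative estimate inside the criterion, namely showing that the expected wiring cost converges all the way to $1$ rather than stalling at some constant $>1$, together with checking that the sum metric $D$ meets the criterion's hypothesis for \emph{every} infinite $\G$, with no residual growth or finiteness assumption. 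Granting the criterion and its verification for the balanced metric $D$, the corollary is immediate on taking $\G_1=\G_2=\G$ and $d_1=d_2=d$.
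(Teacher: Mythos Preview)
Your proposal has a genuine gap: the metric criterion you want to invoke (Theorem~\ref{T:main3-intro}) requires each $(\G_i,d_i)$ to be \emph{$\eps$-approximately sub-additive}, a hypothesis you never mention. This is not a technicality you can ignore. By Corollary~\ref{C:generating}, any proper $\eps$-approximately sub-additive quasi-metric on $\G$ forces $B(3\eps)$ to be a finite generating set; in particular, such a metric exists only when $\G$ is finitely generated. Your weighted-word-metric construction with weights $w(g)\to\infty$ over an enumeration of $\G$ is exactly the kind of metric that is \emph{not} approximately sub-additive (an element whose shortest expression uses a single heavy generator cannot be split near its midpoint), so Theorem~\ref{T:main3-intro} does not apply to it. Thus your single-metric approach cannot handle the non-finitely-generated case at all.

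The paper's proof addresses this by a case split. If $\G$ is amenable, Ornstein--Weiss gives fixed price $1$ directly. If $\G$ is non-amenable and finitely generated, one takes $d_1=d_2$ to be a \emph{word metric} (which is approximately sub-additive with $\eps=0$); then $(\G,d)$ and $(\G,d)$ have roughly comparable growth rates trivially (take $f_i\equiv 1$ in Proposition~\ref{P:roughly-comparable}), so \eqref{E:balanced-intro} holds and Theorem~\ref{T:main3-intro} applies. Finally, if $\G$ is non-amenable but not finitely generated, one writes $\G=\bigcup_i \G_i$ as an increasing union of finitely generated non-amenable subgroups, applies the previous case to each $\G_i\times\G_i$, and invokes Gaboriau's result that an increasing union of fixed-price-$1$ subgroups has fixed price $1$. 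Your proposal contains the right idea for the middle case (though you should use a word metric and actually verify \eqref{E:balanced-intro} rather than assert it), but it is missing both the amenable reduction and, more seriously, the entire increasing-union argument needed for non-finitely-generated groups.
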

By contrast, it was not previously known whether $\G\times \G$ has fixed price 1 except in the special case in which $\G$ contains an infinite amenable subgroup.

\begin{remark}
    After this paper was nearly complete, we became aware of \cite{khezeli2025productsinfinitecountablegroups} which proves that general product groups have fixed price 1. It appears that from a big picture point-of-view, our method are similar. However, this paper develops more of the general theory but we do not obtain the full result for direct product groups. 
\end{remark}

We attempted to prove that $\G_1\times \G_2$ has fixed price 1 for any pair of countable groups $\G_1,\G_2$. But we only succeeded in the special case in which $\G_1$ and $\G_2$ have nice metrics with roughly comparable growth rates. To make this statement precise we need the next definitions.

Recall that a quasi-metric on a set $X$ is a function $d:X\times X \to [0,\infty)$ satisfying all of the conditions of a metric except that the triangle inequality is replaced with the quasi-triangle inequality:
$$d(x,z)\le d(x,y)+d(y,z)+C_q$$
where $C_q\ge 0$ is a constant not depending $x,y,z\in X$.

\begin{defn}
Let $d$ be an integer-valued quasi-metric on a countable group $\G$. We say 
\begin{itemize}
    \item $d$  is {\bf proper} if every ball of finite radius is finite;
    \item $d$ is {\bf left-invariant} if $d(gh,gf)=d(h,f)$ for all $f,g,h\in \G$;
    \item $d$ is {\bf $\eps$-approximately sub-additive} if there is an $\eps>0$ such that    if 
    $$SS(\G,n,\eps)=\{x\in \G:~d(x,e) \in [n-\eps,n+\eps]\}$$
is a spherical shell of mean radius $n$ and width $2\eps$ then for every $n,m\ge \eps$,
$$SS(\G,n,\eps)\cdot SS(\G,m,\eps) \supset S(\G,n+m)$$
where $S(\G,n+m)=\{x\in \G:~d(x,e)=n+m\}$ is the sphere of radius $n+m$. 
\end{itemize}
For example, if $d$ is a word metric then it satisfies all three criteria.
\end{defn}

\begin{thm}\label{T:main3-intro}
For $i=1,2$, let $d_i$ be a left-invariant proper integer-valued quasi-metric on a countable group $\G_i$ and $\eps>0$. Assume each $(\G_i,d_i)$ is $\eps$-approximately sub-additive. Let $\G=\G_1\times \G_2$. Let $d$ be the $\ell^1$ quasi-metric on $\G$:
$$d(x,y) = d_1(x_1,y_1)+d_2(x_2,y_2)$$
for $x=(x_1,x_2)\in \G_1\times \G_2$ and $y=(y_1,y_2)\in \G_1\times \G_2$. Assume for $i=1,2$
\begin{align}\label{E:balanced-intro}
    \lim_{n\to\infty}  \frac{\#B(\G_i,n)}{\#B(\G,n)} = 0
\end{align}
where $B(\G,n),B(\G_i,n) $ is the ball of radius $n$ in $\G$, $\G_i$ respectively (centered at the identity say). Then $\G$ has fixed price 1.
\end{thm}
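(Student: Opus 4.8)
The plan is to deduce the theorem from the general metric criterion for fixed price $1$ developed earlier in the paper: it suffices to check that the $\ell^1$ quasi-metric $d$ on $\G = \G_1\times\G_2$ is a left-invariant proper integer-valued quasi-metric that is approximately sub-additive, and that it verifies the growth hypothesis of that criterion. Left-invariance and integer-valuedness are immediate from the factor metrics, and properness is clear because the $\ell^1$-ball $B(\G,n)$ is contained in the finite set $B(\G_1,n)\times B(\G_2,n)$. Thus the two substantive points are approximate sub-additivity of $d$ and the growth/negligibility hypothesis, and it is the latter into which the balance condition \eqref{E:balanced-intro} must feed.

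First I would verify that $d$ is approximately sub-additive, at the cost of enlarging the width to $2\eps$, by a coordinate-splitting argument. Given $x=(x_1,x_2)\in S(\G,n+m)$, set $a=d_1(x_1,e)$ and $b=d_2(x_2,e)$, so that $a+b=n+m$. Since $0\le n\le a+b$ one may choose integers $a_1,b_1$ with $0\le a_1\le a$, $0\le b_1\le b$ and $a_1+b_1=n$. Applying the approximate sub-additivity of each factor gives factorizations $x_1=s_1t_1$ and $x_2=s_2t_2$ with $d_1(s_1,e)\in[a_1-\eps,a_1+\eps]$, $d_2(s_2,e)\in[b_1-\eps,b_1+\eps]$, and symmetrically for $t_1,t_2$; in the degenerate case where a coordinate is too short to split (i.e.\ $a<\eps$ or $b<\eps$) I instead place that coordinate entirely on one side, taking the corresponding $s_i$ or $t_i$ to be $e$. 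Then $s=(s_1,s_2)$ and $t=(t_1,t_2)$ satisfy $st=x$ with $d(s,e)\in[n-2\eps,n+2\eps]$ and $d(t,e)\in[m-2\eps,m+2\eps]$, which yields $SS(\G,n,2\eps)\cdot SS(\G,m,2\eps)\supset S(\G,n+m)$. Because the criterion only requires approximate sub-additivity for \emph{some} positive width, this is enough.

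The remaining step, translating \eqref{E:balanced-intro} into the growth hypothesis of the criterion, is the technical heart of the argument and the step I expect to be the main obstacle. Condition \eqref{E:balanced-intro} says that the two coordinate axes are asymptotically negligible inside $B(\G,n)$; equivalently, the factors grow at comparable rates and almost all of the mass of a large ball is carried by elements both of whose coordinates are large. This is exactly the genuinely ``two-dimensional'' behavior that lets the hierarchical connection scheme behind the criterion run with vanishing excess cost: when one connects the sparse point process across a scale using the spherical shells supplied by approximate sub-additivity, balance guarantees that these shells can be traversed by moving in \emph{both} coordinate directions, so that no one-dimensional bottleneck forces the per-point cost above $1$. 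I would make this quantitative using the convolution identity $\#B(\G,n)=\sum_{a+b\le n}\#S(\G_1,a)\,\#S(\G_2,b)$ to express the relevant growth comparisons for $d$ in terms of those for $d_1,d_2$, and then feed \eqref{E:balanced-intro} into the hypothesis. The difficulty is controlling these two-variable sums uniformly over the whole range of scales the construction visits, and checking that negligibility of the axes is not destroyed when the widths accumulate through the approximate sub-additivity step. As a consistency check one should note that for a product of two infinite groups the first $\ell^2$-Betti number $\beta_1^{(2)}$ vanishes by the K\"unneth formula, so the cost lower bound $1+\beta_1^{(2)}$ does not obstruct fixed price $1$; the content here is that balance plus sub-additivity actually realize this value.
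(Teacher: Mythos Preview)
Your outline has a real gap: you treat the ``general metric criterion'' as if it directly yields fixed price $1$ once the quasi-metric is proper, left-invariant, approximately sub-additive, and satisfies some growth condition. In the paper, the metric criterion (the overlapping neighborhoods property, ONP) only produces a $\G$-invariant measure $\mu$ on the horofunction space $\cH(\G)$ such that the action is limit-amenable and doubly recurrent. To conclude fixed price $1$ via the main theorem you must also show that the action has \emph{normalized cost $1$}, and this is not automatic: absent exactness of $\G$, limit-amenability does not imply amenability, so one cannot simply say the orbit relation is hyperfinite. Your proposal says nothing about how to obtain normalized cost $1$, and the heuristic about ``no one-dimensional bottleneck forcing the per-point cost above $1$'' is not a proof.

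The paper closes this gap with a separate, substantial argument that genuinely uses the product structure. It passes to a finite-measure-preserving extension $\tcH\subset \Cocycle(\G)\times\cH(\G)$ (pairs of $\Z^2$-valued $1$-Lipschitz cocycles together with compatible horofunctions), constructs a $\G$-invariant measure $\tmu$ projecting to $\mu$ that is still limit-amenable, and observes that double recurrence lifts. On the complete section $\tcH_0$ there is a canonical cocycle $\theta:\cR_0\to\Z^2$; the key point is that the kernel $\cK=\ker\theta$ is measurably isomorphic to the direct product $\cS_1\times\cS_2$ of the restricted orbit relations on $\cH_0(\G_1)$ and $\cH_0(\G_2)$, so Gaboriau's theorem on costs of direct products gives $\cost(\cK)=\tmu(\tcH_0)$. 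Since the image of $\theta$ lies in the anti-diagonal $\cong\Z$, one then shows $\cost(\cR_0)=\tmu(\tcH_0)$ by adding a single partial transformation per integer over an arbitrarily small complete section. This gives normalized cost $1$ and finishes the proof. Incidentally, the growth hypothesis you allude to is precisely the ONP; the balance condition \eqref{E:balanced-intro} is converted into ONP by an explicit argument using the $\eps$-approximate sub-additivity of the factors to build overlapping paths $\rho(t)=(\xi_1(t),\zeta_2(t))$ in $B(x,n+C)\cap B(y,n+C)$, not by the convolution estimates you sketch.
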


\begin{remark}
    Intuitively, if \eqref{E:balanced-intro} does not hold then one of the groups has significantly faster growth rate than the other so that a horoball in the product group looks like a slightly thickened copy of a horoball in the faster-growth group. This explains why our technique does not work in the case: because our methods do not significantly differentiate between the faster growth group and $\G$ in this case.  
\end{remark}

\begin{remark}
  In Section  \ref{S:Product-groups}, we prove Proposition \ref{P:roughly-comparable} which, roughly speaking, states that if the growth rates of $(\G_1,d_1)$ and $(\G_2,d_2)$ are sufficiently close to each other then \eqref{E:balanced-intro} holds. So if the quasi-metrics are also approximately sub-additive then $\G=\G_1\times \G_2$ has fixed price 1.
\end{remark}




\subsection{Overview}

All of the results above follow from the following general theorem (whose terminology is discussed afterwards)

\begin{thm}\label{T:main-intro}
    If $\G$ is a discrete group with an infinite measure preserving action which is limit-amenable, partially doubly recurrent, and has normalized cost $p$ then $\G$ has max-cost at most $p$. In particular, if $p=1$ then $\G$ has fixed price 1.
\end{thm}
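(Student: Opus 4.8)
The plan is to realize the max-cost of $\G$ as the cost of one explicit probability measure preserving (pmp) action built from the given infinite measure preserving (imp) action, and then to drive that cost down to $p$. The vehicle is the Poisson suspension: an imp action $\G \curvearrowright (X,\mu)$ with $\mu$ infinite gives, via the Poisson point process of intensity $\lambda\mu$, a genuinely pmp action of $\G$ on the space of point configurations, essentially free whenever the imp action is aperiodic in the relevant sense. By the Abert--Weiss principle, Bernoulli-type actions weakly contain every free action and hence attain the max-cost; since a Poisson suspension is of this maximal type, it suffices to bound the cost of \emph{this one} action by $p + o(1)$. The number $p$ should enter precisely as the \emph{normalized cost}: the per-point expected cost of the most efficient equivariant local graphing of the process.

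First I would fix the intensity $\lambda$ and build the tautological graphing in which each point of the configuration is joined to a controlled family of nearby points, arranged so that the expected degree, normalized by the intensity, converges to $p$. The role of \emph{partial double recurrence} is to guarantee that this local graphing is \emph{connected}, i.e. that it actually generates the full orbit equivalence relation rather than a proper subrelation: recurrence forces every pair of points to be linked through a chain of local edges, so that no expensive long-range edges are needed to fuse distinct clusters. Concretely this step is a mass-transport and ergodicity computation identifying the cost of the local graphing with the normalized cost $p$, up to an error that vanishes as $\lambda \to 0$.

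The second ingredient is \emph{limit-amenability}. As $\lambda \to 0$ the cells of the process grow and the combinatorial structure relating distinct clusters converges to a hyperfinite (amenable) equivalence relation. Hyperfiniteness means this residual relation is an increasing union of finite subrelations, so it can be adjoined to the already-connected local graphing at additional cost tending to $0$. Combining the two ingredients, the full orbit relation is generated by the recurrent local graphing, of cost $p + o(1)$, plus a vanishing hyperfinite correction; thus the Poisson suspension has cost $\le p + o(1)$ for every $\lambda$, and letting $\lambda \to 0$ gives cost $\le p$. Since this action attains the max-cost, we conclude that the max-cost of $\G$ is at most $p$. For the final clause, if $p=1$ then Gaboriau's lower bound $\mathrm{cost}\ge 1$ for free ergodic actions of infinite groups forces $1 \le \mathrm{cost} \le \text{max-cost} \le 1$, so every free ergodic action has cost exactly $1$; that is fixed price $1$.

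The hard part will be the interaction between the two hypotheses: showing that limit-amenability genuinely absorbs the inter-cluster structure into a negligible hyperfinite piece \emph{without} spoiling the cost control supplied by partial double recurrence, and that both estimates survive the $\lambda\to 0$ limit simultaneously. One must verify that the local graphing of per-point cost $p$ and the global amenable correction can be carried on the \emph{same} configuration with their costs adding rather than interfering, and that the normalization identifying the expected degree with $p$ is the correct one. The other delicate point is making the weak-containment reduction fully rigorous for a \emph{discrete} group presented only through an imp action, rather than through a locally compact ambient group as in the Poisson-point-process literature this refines.
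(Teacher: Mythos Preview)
You have the overall architecture right --- build the Poisson suspension, show it attains the max-cost, then bound its cost by $p$ --- but you have the roles of the two hypotheses essentially reversed, and this creates a genuine gap.

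Your claim that ``a Poisson suspension is of this maximal type'' is asserted without justification, and it is false for an arbitrary imp action. This is exactly where \emph{limit-amenability} enters in the paper, not later in the cost estimate. The argument is: a limit-amenable action is limit-regular (Theorem~\ref{T:limit-regular}), the Poisson suspension functor is continuous for vague convergence (Theorem~\ref{T:Pois-continuity}) and hence preserves weak containment (Theorem~\ref{T:wc2}), and the Poisson suspension of the regular action is Bernoulli. Therefore the Poisson suspension of a limit-amenable action is weakly contained in Bernoulli, and only then does Abert--Weiss give that it realizes the max-cost. Your sketch never supplies this step, so as written there is no reason the cost of your Poisson suspension says anything about the max-cost of $\G$.

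Conversely, your proposed use of limit-amenability --- that as $\lambda\to 0$ the inter-cluster relation becomes hyperfinite and can be adjoined at vanishing cost --- is not what is done, and it is not clear it could be made to work: limit-amenability of $\G\curvearrowright(X,\mu)$ does not obviously transfer to any hyperfiniteness statement about pieces of the orbit relation of the Poisson suspension. In the paper, the cost bound $\cost(\Pois(\mu))\le\operatorname{ncost}(\G,X,\mu)$ (Theorem~\ref{T:ppp-cost}) uses \emph{only} partial double recurrence, essential freeness, and non-atomicity; limit-amenability plays no role in that inequality. The graphing is built from a fixed graphing $\cG$ of the restricted relation on a small section $S$, pushed to each point of the process and then glued by a Bernoulli edge percolation whose connectivity is guaranteed by PDR (the ``infinite monkey'' argument in Claim~1). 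The vanishing error comes from sending $\mu(S)\to 0$, not $\lambda\to 0$. So your PDR intuition (it forces connectivity) is roughly right, but the mechanism you attribute to limit-amenability is not present and not needed there.
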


This result partially generalizes \cite{mellick2023gaboriauscriterionfixedprice}, where it is shown that if a locally compact unimodular group $G$ has a unimodular closed amenable subgroup $A$ such that $G$ acting on $(G/A)^2$ is conservative, then $G$ has fixed price 1 (and therefore, all lattices in $G$ have fixed price 1). Our result is only a partial generalization because we only work with countable groups. We have not attempted to extend our results to non-discrete locally compact groups.

To explain the similarities note that, in the setting of Mellick's Theorem, we may assume $G$ is non-amenable since otherwise it is known a folklore theorem (see \cite[Proposition 4.3]{MR3335405})  that $G$ has fixed price 1. So $G$ acting on $G/A$ is an infinite measure preserving action which is amenable and therefore limit-amenable. It is also doubly-recurrent (which is equivalent to $G \cc (G/A)^2$ being infinitely-conservative) and therefore partially doubly-recurrent (the latter condition is introduced in section \ref{S:cost-Poisson}). Because the action of $G$ on $G/A$ is amenable, it has normalized cost 1. 

Our paper begins by developing a notion of weak containment for infinite measure preserving actions. This extends Kechris' weak containment which is by now well-developed for pmp actions \cite{MR4138908}. An imp action is limit-amenable if it is weakly contained in the class of amenable imp actions.

Next we prove that if an imp action $\G \cc (X,\mu)$ is limit-amenable then the Poisson point process on $X$ determines an action of $\G$ which is weakly contained in Bernoulli. This follows from a more general theorem: the Poisson suspension functor preserves weak containment. 

Abert-Weiss proved in \cite{abert-weiss-2013} that if an essentially free action is weakly contained in a Bernoulli action then it realizes the max-cost. It follows that if an imp action $\G \cc (X,\mu)$ is limit-amenable then its Poisson suspension realizes the max-cost.

We are unaware of any definition of the cost of an imp action. So we define the normalized cost of an imp action to be the cost of the orbit-equivalence relation restricted to a complete section of measure 1. Using a result of Gaboriau, this is shown to be independent of the choice of complete section. For example, if the action is amenable then the normalized cost is 1.

Along the way, we prove that if a group $\G$ is exact, then all limit-amenable imp actions are amenable (see Theorem \ref{T:exact}). So in this case, the normalized cost is 1. 

Next we turn our attention to the following setting. Let $\G$ be a countable group with a proper left-invariant quasi-metric $d$. We assume $(\G,d)$ is approximately sub-additive (which occurs, for example, if $d$ is a word metric). The next definition is key to obtaining double recurrence:

\begin{defn}\label{D:onp}
    We say $(\G,d)$ satisfies the {\bf overlapping neighborhoods property} (ONP) if there exists a constant $C>0$ such that for all $m>0$, 
    \begin{align}\label{E:ONP}
\lim_{r\to\infty} \liminf_{n\to\infty}  \frac{\#\{(x,y) \in B(n)^2:~|B(r) \cap B(x,n+C) \cap B(y,n+C)| < m\}}{|B(n)|^2} =0.
\end{align}
where $B(x,n)$ denotes the closed ball of radius $n$ centered at $x\in \G$ and $B(r)=B(e,r)$ where $e\in \G$ is the identity.
\end{defn}

Our next general result is that if $(\G,d)$ has the ONP and is approximately sub-additive then there is a infinite $\G$-invariant measure on the space of horofunctions on $\G$ which is limit-amenable and doubly-recurrent. The precise statement is Theorem \ref{T:metric-group2}. This theorem is used to derive Theorem \ref{T:main3-intro} and thereby Corollary \ref{C:self}.


\subsection{Section guide}

Section \ref{S:weak-containment} generalizes weak containment to imp actions. We start with a definition in terms of partitions and prove an equivalent characterization in terms of vague convergence of infinite invariant measures on locally compact spaces. Section \ref{S:limit-amenability} introduces limit-amenability via vague convergence. By results in \S \ref{S:weak-containment}, an action is limit-amenable if and only if it is weakly contained in the class of amenable actions. This section proves the stronger statement that limit-amenable actions are actually limits of regular actions - which are actions measurably conjugate to the action of $\G$ on itself by left-translations. Section \ref{S:Poisson} proves that the Poisson suspension functor preserves weak containment.  
 
 Section \ref{S:cost} reviews cost, then introduces graph-cost and normalized cost. The main result is that if an imp action $\G \cc (X,\mu)$ is partially doubly recurrent, then the cost of its Poisson suspension is bounded by the normalized cost of $\G \cc (X,\mu)$. Section \ref{S:cost-Poisson} proves that cost of a Poisson suspension is bounded by normalized cost, if the action is partially doubly recurrent.

  Section \ref{S:general} proves Theorem \ref{T:main-intro}. The proof is relatively short because nearly all of the work has been done in previous sections.  Section \ref{S:Metric-groups} proves Theorem \ref{T:metric-group2} which produces actions of $\G$ which are doubly-recurrent and limit-amenable when $\G$ satisfies the overlapping neighborhoods principle and certain other metric properties.   Section \ref{S:Product-groups} proves Theorem \ref{T:main3-intro}. It then applies this theorem to prove several additional results, including Corollary \ref{C:self}.

 The appendix has three sections. Section \ref{S:recurrence} reviews Kaimanovich's generalization of Hopf's Decomposition Theorem. That is: we decompose $X$ into $\mathsf{Con}(X)$ where the action is infinitely conservative and $\mathsf{Dis}(X)$ where it is not.  Section \ref{S:mer} reviews standard notions regarding measured equivalence relations.    Section \ref{S:spaces-of-measures} reviews three notions of convergence of measures: weak*, weak and vague. One highlight is the Portmanteau Theorem for vague convergence of infinite measures on locally compact spaces, which we use frequently.

\subsection{Acknowledgements}
E. B. was partially supported by NSF Grant DMS--1937215; L. B. was partially supported by NSF Grants DMS--2154680 and DMS--2453399. We thank Mikolaj Fra\c czyk, Sam Mellick and Amanda Wilkens for helpful conversations which started this project. This paper will form part of E.B.'s thesis.

\section{Preliminaries}

Throughout this paper $\G$ denotes a countable (discrete) group. We will typically write a measure space as $(X,\mu)$ or $(Y,\nu)$ leaving the sigma-algebra implicit. We always assume our measures are $\s$-finite and standard and maps between measure spaces are assumed to be measurable unless otherwise stated. We use the abbreviations pmp, imp, mp, lcsc to mean probability-measure-preserving, infinite-measure-preserving, measure-preserving, locally compact second countable respectively.








\begin{defn}
For $i=1,2$, let $\G \cc (X_i,\mu_i)$ be imp actions. A {\bf factor map} is a measurable map $\phi:X'_1 \to X_2$ where 
\begin{itemize}
\item $X'_1 \subset X_1$ is a $\G$-equivariant co-null subset;
\item $g\phi(x) = \phi(gx)$ for a.e.~$x\in X_1'$ and every $g\in \G$;
\item $\phi_*(\mu_1 \resto X'_1)$ is equivalent to $\mu_2$.
\end{itemize}
In general, we do not require $\phi_*(\mu_1 \resto X'_1) = \mu_2$. We do not even require that  $\phi_*(\mu_1 \resto X'_1)$ is $\sigma$-finite.

If $\mu_1(\phi^{-1}(A))<\infty$ for every $A \subset X_2$ with $\mu_2(A)<\infty$ then we say $\phi$ is a {\bf finite-measure extension}.

If $\phi_*(\mu_1 \resto X'_1) = \mu_2$ then we say $\phi$ is {\bf measure-preserving}. If $\phi_*\mu_1$ is $\sigma$-finite and is absolutely continuous to $\mu_2$ then we say $\phi$ is {\bf quasi-measure-preserving}. Equivalently, $\phi$ is a factor map and the Radon-Nikodym derivative $\frac{d\phi_*\mu_1}{d\mu_2}$ is finite and positive on a $\mu_2$-conull set. 
\end{defn}

\begin{remark}\label{R:trivial}
As above, suppose $\G \cc (X_i,\mu_i)$ are imp actions and $\phi:X_1\to X_2$ is $\G$-equivariant and measurable. If $\mu_2 = \phi_*\mu_1$ then $\phi$ is automatically a finite measure extension. That is, measure-preserving factors are finite measure extensions.
\end{remark}





\section{Weak containment}\label{S:weak-containment}



The goal of this section is to define a notion of weak containment for imp actions which generalizes Kechris' definition. We generalize some of the standard tools by showing how weak containment relates to convergence of measures.

\begin{notation}
    We let $A$ denote a finite non-empty set of colors and $A_*=A\cup\{*\}$ where $*$ is a special element not in $A$. If $(X,\mu)$ is a measure space and $\phi:X\to A_*$ is measurable then $\phi$ is {\bf $(\mu,A)$-finite} if $\mu(\{x\in X:~ \phi(x)\ne *\})<\infty$. Sometimes we will simply say that $\phi$ is $A$-finite if $\mu$ is understood.
\end{notation}

\begin{defn}
    Let $\cC$ be a class of mp actions. We say that a given mp action $\a=(\G \cc (X,\mu))$ is \textbf{weakly contained} in $\cC$ (denoted $\a\prec\cC$) if for every finite set $A$, $(\mu,A)$-finite measurable map $\phi:X \to A_*$, finite $F \subset \G$ and $\eps>0$ there exist an imp action $\G \cc (Y,\nu)$ in $\cC$, and a $(\nu,A)$-finite measurable map $\psi:Y \to A_*$ such that  
    \begin{align*}
        \sum_{a\in A} \sum_{b \in A_*} \sum_{f\in F}& \Big|\mu\big(\{x\in X:~ \phi(x)=a \textrm{ and } \phi(fx)=b\}\big)\\
        &- \nu\big(\{y\in Y:~ \psi(y)=a \textrm{ and } \psi(fy)=b\}\big)\Big|<\eps.
    \end{align*}
    We will often be concerned with the special case in which $\cC=\{\beta\}$ in which case we say $\a$ is weakly contained in $\beta$, denoted $\a\prec\b$. We say two actions $\a$, $\b$ are \textbf{weakly equivalent} if each one weakly contains the other. This is denoted $\a\sim \b$.
\end{defn}

\begin{remark}
    In the special case of pmp actions, we can assume $\phi(x) \ne *$ for every $x\in X$, in which case the above definition reduces to Kechris' definition of weak containment \cite{kechris-2012}.
\end{remark}

\begin{remark}
   We typically assume that the identity element $e\in F$. The sum then includes the special case in which $a=b$ and $e\in F$ which implies
   \begin{align*}
    \sum_{a\in A} & \Big|\mu\big(\{x\in X:~ \phi(x)=a \}\big)- \nu\big(\{y\in Y:~ \psi(y)=a\}\big)\Big|<\eps.
\end{align*}
\end{remark}

\begin{defn}
    Let $\a=(\G\cc (X,\mu))$ and $\b=(\G\cc (Y,\nu))$ be measure-preserving actions on standard measure spaces. A {\bf measure-preserving factor map} from $\a$ to $\b$ is a $\G$-equivariant map $\Phi:X\to Y$ such that $\Phi_*\mu=\nu$. If such a factor map exists then we say $\b$ is an mp-factor of $\a$ or $\a$ is an mp-extension of $\b$. It is an exercise to check that if $\b$ is an mp-factor of $\a$ then $\b$ is weakly contained in $\a$.
\end{defn}




The notion of weak containment of pmp actions was introduced by Kechris as a counterpart to the classical notion of weak containment of unitary representations \cite{MR2931406}. See \cite{MR4138908} for a recent survey.

\subsection{Multi-correlations}

The definition of weak containment involves pair correlations. That is, we are concerned only with approximating measures of sets of the form $\{x\in X:~\phi(x)=a \textrm{ and } \phi(fx) = b\}$.
We show here that weak containment actually implies it is in fact possible to approximate more general correlations such as sets of the form
$\{x\in X:~\phi(x)=a_0, \phi(f_1x) = a_1 \textrm{ and } \phi(f_2x) = a_2\}$ for some given $a_0, a_1,a_2\in A$ and $f_1,f_2\in F$. We will prove a precise statement to this effect in terms of shift-spaces.

Let $K$ be a compact space and $K^\G$ the space of functions $x:\G \to K $ with the pointwise convergence topology. Let $\G$ act on $K^\G$ by
$$(gx)(f)=x(g^{-1}f) \quad \forall x\in K^\G, g,f \in \G.$$
This is called the {\bf shift-action}. 

Now let $\a=\G \cc (X,\mu)$ be a measure-preserving action (either finite or infinite). Given a measurable function $\phi:X \to K$ and a subset $F \subset \G$, define
$$\phi^F:X \to K^F$$
by
$$\phi^F(x)(f) = \phi(f^{-1}x).$$
In the special case $F=\G$, the map $\phi^\G:X \to K^\G$ is $\G$-equivariant. Therefore, the push-forward measure $\phi^\G_*\mu$ is $\G$-invariant. 

\begin{defn}
    Given a finite set $A$, a finite set $F \subset \G$ with $e\in F$ and a (real-valued) measure $\nu$ on $A_*^F$, define the pseudo-norm
    $$\|\nu\|_F = \sum \{ |\nu(a)|:~ a\in A_*^F \textrm{ and } a(e)\ne *\}.$$
    In other words, $\|\nu\|_F$ is the norm of the restriction of $\nu$ to the subset $\{a\in A_*^F:~a(e)\ne *\}$. 
    Since we often work with infinte measures for which this restriction is fine, it is useful to use the psedo-norm in place of $\|\nu\|$.
\end{defn}


\begin{thm}\label{T:general-correlations}
A given measure-preserving action $\a=\G \cc (X,\mu)$ is weakly contained in a class $\cC$ if and only if for every finite set $A$, $(\mu,A)$-finite measurable map $\phi:X \to A_*$, finite $F \subset \G$ and $\eps>0$ there exist an imp action $\G \cc (Y,\nu)$ in $\cC$, and a $(\nu,A)$-finite measurable map $\psi:Y \to A$ such that 
$$\|\phi^F_*\mu - \psi^F_*\nu\|_F < \eps.$$
\end{thm}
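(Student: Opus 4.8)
The plan is to prove both implications, with the observation that the reverse implication is routine and the forward one carries the real content. Throughout I take the window in the hypothesis to be the enlarged set $\hat F=\{e\}\cup F^{-1}$, which contains $e$ (to control marginals) and the inverses of the test elements (to control overlaps).

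For the reverse direction the point is that each pair correlation in the definition of weak containment is a partial sum of the joint distribution. Given $A$, a $(\mu,A)$-finite $\phi:X\to A_*$, a finite $F\subset\G$ with $e\in F$, and $\eps>0$, I would apply the hypothesis with window $\hat F$ and tolerance $\delta$ to obtain $\G\cc(Y,\nu)$ in $\cC$ and $\psi$ with $\|\phi^{\hat F}_*\mu-\psi^{\hat F}_*\nu\|_{\hat F}<\delta$. Since $\phi(x)=\phi^{\hat F}(x)(e)$ and $\phi(fx)=\phi^{\hat F}(x)(f^{-1})$, the set $\{x:\phi(x)=a,\ \phi(fx)=b\}$ is the $\phi^{\hat F}$-preimage of the block $\{p:p(e)=a,\ p(f^{-1})=b\}$, whose patterns have non-star center whenever $a\in A$. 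As $a$ ranges over $A$ and $b$ over $A_*$ these blocks partition $\{p:p(e)\ne *\}$, so for each fixed $f$ the associated sum of absolute differences is at most $\|\phi^{\hat F}_*\mu-\psi^{\hat F}_*\nu\|_{\hat F}$; summing over $f\in F$ bounds the weak containment sum by $|F|\delta$, and $\delta=\eps/|F|$ finishes the direction.

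For the forward direction the idea is to encode the entire $F$-window as a single color. Set $B=\{p\in A_*^F:p(e)\ne *\}$ and define $\Phi:X\to B_*$ by $\Phi(x)=\phi^F(x)$ when $\phi(x)\ne *$ and $\Phi(x)=*$ otherwise. Then $\{\Phi\ne *\}=\{\phi\ne *\}$ has finite measure, so $\Phi$ is $(\mu,B)$-finite, and, regarding $\Phi_*\mu$ as a measure on $A_*^F$ supported on $B$, one has $\phi^F_*\mu=\Phi_*\mu$ on the set $\{p(e)\ne *\}$ by construction. Applying the assumed weak containment $\a\prec\cC$ to the coloring $\Phi$ with window $\hat F$ and tolerance $\delta$ produces $\G\cc(Y,\nu)$ in $\cC$ and a $(\nu,B)$-finite $\Psi:Y\to B_*$ whose pair correlations are $\delta$-close to those of $\Phi$. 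The heart of the argument is then a consistency–reconstruction step. In $X$ the encoding is exactly overlap-consistent: a direct computation gives $\Phi(f^{-1}x)(e)=\Phi(x)(f)$ for a.e.\ $x$ and every $f\in F$, so the pair correlations of $\Phi$ are supported on consistent configurations. Transferring this through $\delta$-closeness shows $\Psi$ is approximately consistent, in that $\sum_{f\in F}\nu(\{y:\Psi(y)\ne *,\ \Psi(y)(f)\ne\Psi(f^{-1}y)(e)\})\le\delta$. I then reconstruct a genuine coloring by $\psi(y)=\Psi(y)(e)$ (and $\psi=*$ where $\Psi=*$), which is $(\nu,A)$-finite. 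On the consistent set $\psi^F(y)=\Psi(y)$, so for each $p$ with $p(e)\ne *$ the events $\{\Psi(y)=p\}$ and $\{\psi^F(y)=p\}$ differ only inside the bad set, giving $\|\Psi_*\nu-\psi^F_*\nu\|_F\le 2\delta$. Combining $\phi^F_*\mu=\Phi_*\mu$, the marginal approximation $\|\Phi_*\mu-\Psi_*\nu\|_F<\delta$ (the $f=e$ part of the hypothesis), and this bound yields $\|\phi^F_*\mu-\psi^F_*\nu\|_F\le 3\delta$; taking $\delta=\eps/3$ completes the proof.

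The main obstacle is precisely this last step. The coloring $\Psi$ delivered by weak containment need not be the $F$-window pattern of any honest $A_*$-coloring: its values at neighboring points of $Y$ carry no built-in coherence. The crux is to show that the exact overlap-consistency in $X$ degrades only to a controlled amount of inconsistency in $Y$, and, more delicately, to bound the reconstruction error by the single $L^1$ quantity $\nu(\text{bad set})$ rather than by a per-pattern error that would be multiplied by $|B|=(|A|+1)^{|F|}$ and ruin the estimate. Keeping track of the star symbol throughout, so that active sets and non-star centers match on both sides, is the remaining point requiring care.
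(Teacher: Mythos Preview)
Your proposal is correct and follows essentially the same approach as the paper: encode the $F$-window pattern as a single color in an enlarged alphabet $B=\{p\in A_*^F:p(e)\ne *\}$, apply the pair-correlation definition of weak containment to this encoding, exploit the exact overlap identity $\phi^F(fx)(e)=\phi^F(x)(f^{-1})$ to bound the measure of the inconsistency set in $Y$, and reconstruct $\psi$ by reading off the $e$-coordinate. The paper symmetrizes $F$ up front rather than passing to $\hat F=\{e\}\cup F^{-1}$, and claims the reconstruction bound $\|\kappa_*\nu-\psi^F_*\nu\|_F\le\eps$ where your $2\delta$ is the sharper estimate, but these are cosmetic differences.
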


\begin{remark}
    To clarify, $\phi^F_*\mu$ is the measure on $A_*^F$ defined by
    $$\phi^F_*\mu(a) = \mu(\{x\in X:~\phi^F(x)=a\}$$  for $a\in A^F_*$. 
\end{remark}


\begin{proof} Assume $\G \cc (X,\mu)$ is weakly contained in a class $\cC$.    Let $\phi,A,F,\eps$ be as in the statement above. After replacing $F$ with a larger subset if necessary, we assume without loss of generality $e\in F$ and $F=F^{-1}$. 

We apply the definition of weak containment with $\phi^F$ in place of $\phi$ and $B=\{a\in A^F_*:~a(e) \ne *\}$ in place of $A$ to obtain the existence of an action $\G \cc (Y,\nu)$ in $\cC$, and a map $\kappa:Y\to B_*$ such that 
    \begin{align*}
    \sum_{a\in B} \sum_{b\in B_*} \sum_{f\in F}& \Big|\mu\big(\{x\in X:~ \phi^F(x)=a \textrm{ and } \phi^F(fx)=b\}\big)\\
    &- \nu\big(\{y\in Y:~ \kappa(y)=a \textrm{ and } \kappa(fy)=b\}\big)\Big|<\eps.
\end{align*}
Above, $B_*$ is the disjoint union of $B$ and $\{*\}$. The inequality above implies 
\begin{align}\label{E:part1}
\|\phi^F_*(\mu) - \kappa_*(\nu)\|_F < \eps,
\end{align}
where we have abused notation by identifying $\kappa_*(\nu)$ with its restriction to $A_*^F$. 

Define $\psi:Y \to A_*$ by 
\begin{displaymath}
    \psi(y) = \begin{cases}
        \kappa(y)(e), \quad & \kappa(y)\ne * \\
       *,  & \kappa(y)=*.
    \end{cases}
\end{displaymath}
We will see that $\psi^F$ is close to $\kappa$ off of a set of small measure.

Because $\phi^F(fx)(e)=\phi^F(x)(f^{-1})$ (for all $x\in X$ and $f\in F$), if $y\in Y$ is such that $\kappa(fy)(e)\ne \kappa(y)(f^{-1})$, then 
$$\mu\big(\{x\in X:~ \phi^F(x)=\kappa(y) \textrm{ and } \phi^F(fx)=\kappa(fy)\}\big)=0.$$
Therefore,
$$\nu(\{y \in Y:~\kappa(y)\ne * \textrm{ and } \exists f \in F \textrm{ such that } \kappa(fy)(e)\ne \kappa(y)(f^{-1})\})<\eps.$$
On the other hand,  if $y\in Y$ is such that $\kappa(y) \in A_*^F$ and  $\kappa(fy)(e)=\kappa(y)(f^{-1})$ for all $f$ then $\kappa(y)=\psi^F(y)$. Thus
$$\nu(\{y \in Y:~\kappa(y)\ne * \textrm{ and } \kappa(y)=\psi^F(y)\in A_*^F\}) \ge \nu(\{y \in Y:~\kappa(y)\ne * \})-\eps.$$
This implies
\begin{align}\label{E:part2}
 \|\kappa_*(\nu) - \psi^F_*(\nu)\|_F \le \eps. 
\end{align}
Again, we have abused notation by identifying $\kappa_*(\nu)$ with its restriction to $A_*^F$. 

By \eqref{E:part1} and \eqref{E:part2},
\begin{align*}
    \|\phi^F_*(\mu) - \psi^F_*(\nu)\|_F &\le \|\phi^F_*(\mu) - \kappa_*(\nu)\|_F + \|\kappa_*(\nu) - \psi^F_*(\nu)\|_F \\
    &\le 2 \eps.
\end{align*}
Since $\eps$ is arbitrary, this implies the first implication. The converse direction is trivial.
\end{proof}

\subsection{A universal system}

Let $2^{\G\times \N}$ be the space of all functions $x:\G\times \N \to \{0,1\}$ with the topology of pointwise convergence. This is a compact space on which $\G$ acts continuously by
$$(gx)(f,n)=x(g^{-1}f,n)$$
for $f,g\in \G$, $n\in \N$, $x\in 2^{\G\times \N}$. For $g\in \G$, let $x(g)\in 2^\N$ be the function which sends $n$ to $x(g,n)$.

Let $\cU=\{x\in 2^{\G\times \N}:~\forall g\in \G, x(g)\ne 0^\N\}$ where $0^\N\in 2^\N$ is the all zeros sequence. This is an lcsc space (with the subspace topology). It is also $\G$-invariant. Let $\Radon(\cU)$ be the set of Radon measures.


The action of $\G$ on $\cU$ is universal in the sense that it can model any given imp:
\begin{lem}\label{L:finiteness}
    Let $\G \cc (X,\mu)$ be an imp. Then there exists a $\G$-invariant Radon measure $\nu \in \Radon(\cU)$ such that $\G \cc (X,\mu)$ is measurably conjugate to $\G \cc (\cU, \nu)$.
\end{lem}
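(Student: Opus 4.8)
The plan is to realize the given imp action as a subshift of the universal system by coding points of $X$ through a countable family of subsets, and then to choose that family so that the push-forward measure is locally finite. The construction is essentially forced; the only substantive work is arranging the Radon property.

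First I would choose a countable family of Borel subsets $(B_n)_{n\in\N}$ of $X$ with three properties: (i) the family separates points of $X$; (ii) $\bigcup_n B_n = X$; and (iii) $\mu(B_n)<\infty$ for every $n$. Such a family exists because $(X,\mu)$ is standard and $\sigma$-finite: fix a nested exhaustion $E_1\subset E_2\subset\cdots$ with $\bigcup_k E_k=X$ and $\mu(E_k)<\infty$, fix any countable point-separating family $\{C_j\}$ (available since $X$ is standard Borel), and take $(B_n)$ to be an enumeration of $\{E_k\}\cup\{C_j\cap E_k\}$. Every member has finite measure, their union is $X$, and they separate points: given $x\neq x'$, choose $j$ with $1_{C_j}(x)\neq 1_{C_j}(x')$ and $k$ with $x,x'\in E_k$, so $C_j\cap E_k$ contains exactly one of them.

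Next I would define $\Phi:X\to 2^{\G\times\N}$ by $\Phi(x)(g,n)=1_{B_n}(g^{-1}x)$. A direct computation gives $\G$-equivariance: $\Phi(hx)(g,n)=1_{B_n}(g^{-1}hx)=\Phi(x)(h^{-1}g,n)=(h\Phi(x))(g,n)$. Because $\bigcup_n B_n=X$, for every $x$ and every $g$ we have $g^{-1}x\in B_n$ for some $n$, so $\Phi(x)(g)\neq 0^\N$; hence $\Phi(X)\subset\cU$. Since $\Phi(x)(e,\cdot)=(1_{B_n}(x))_n$ and the $B_n$ separate points, $\Phi$ is injective. As $\Phi$ is a Borel injection on the standard Borel space $X$, by the Lusin--Souslin theorem its image $\cU_0:=\Phi(X)$ is a Borel, $\G$-invariant subset of $\cU$ and $\Phi$ is a Borel isomorphism onto $\cU_0$. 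Setting $\nu=\Phi_*\mu$ and using equivariance together with $\Phi^{-1}(gE)=g\,\Phi^{-1}(E)$, one checks $\nu$ is $\G$-invariant; by construction $\Phi$ is a measure isomorphism from $(X,\mu)$ onto $(\cU_0,\nu)$ with $\nu(\cU\setminus\cU_0)=0$, which is precisely the desired measurable conjugacy.

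The step I expect to be the crux is verifying that $\nu$ is \emph{Radon}, i.e.\ finite on compact sets, and this is exactly where property (iii) is needed. Consider the coordinate map $\pi_e:\cU\to 2^\N\setminus\{0^\N\}$, $x\mapsto x(e)$, which is continuous, and the first-hit function $F(s)=\min\{n:s(n)=1\}$ on $2^\N\setminus\{0^\N\}$, which is locally constant (the level set $\{F=k\}$ is clopen) and hence continuous. For compact $K\subset\cU$, the set $\pi_e(K)$ is compact, so $F$ takes only finitely many values on it, say $F\le N$; thus every $x\in K$ satisfies $x(e,n)=1$ for some $n\le N$. Pulling back, $\Phi^{-1}(K)\subset\bigcup_{n\le N}B_n$, whence $\nu(K)=\mu(\Phi^{-1}(K))\le\sum_{n\le N}\mu(B_n)<\infty$. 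Inner regularity of $\nu$ is automatic on a standard space, so $\nu\in\Radon(\cU)$. The only delicate point in the whole argument is the simultaneous requirement that the coding family separate points, cover $X$, \emph{and} consist of finite-measure sets; it is $\sigma$-finiteness of $\mu$ that lets all three hold at once.
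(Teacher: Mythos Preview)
Your proof is correct and follows essentially the same construction as the paper: encode $X$ into $2^{\G\times\N}$ via the characteristic functions of a countable family of finite-measure sets, then verify the push-forward is Radon. Your verification of the Radon property via the first-hit function on $2^\N\setminus\{0^\N\}$ is a clean variant of the paper's cylinder-set argument, and your explicit insistence that the $B_n$ separate points is in fact more careful than the paper's stated condition, which as written does not obviously yield injectivity.
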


\begin{proof}
    Because $(X,\mu)$ is a standard $\s$-finite measure space, there exists a sequence $\{B_i\}_{i=1}^\infty$ of Borel subsets $B_i \subset X$ such that each $B_i$ has finite measure $\mu(B_i)<\infty$ and for all $x\in X$ there exist indices $i,j$ with $x \in B_i$ and $x\notin B_j$.

Define $\kappa:X \to 2^\N$ by 
$$\kappa(x)=\{i \in \N:~ x\in B_i\}$$
where we have identified $2^\N$ with the set of all subsets of $\N$. Define $\kappa^\G:X \to 2^{\G\times \N}$ by 
$$\kappa^\G(x)(g,n) = \kappa(g^{-1}x)(n).$$
Then $\kappa^\G$ is $\G$-equivariant, Borel and injective. Let $\nu=\kappa^\G_*\mu$. 

Next we prove that $\nu(K)<\infty$ for all compact $K \subset \cU$. Given a finite $F \subset \G$, $m\in \N$ and a map $\psi:F \times [m] \to \{0,1\}$, let $C(\psi)$ be the corresponding cylinder set:
$$C(\psi) = \{x\in 2^{\G\times \N}:~ x(g,i)=\psi(g,i) \forall g\in F, 1\le i \le m\}.$$
If there exists $(g,i)\in F\times [m]$ with $\psi(g,i)=1$ then $C(\psi)\subset \cU$ is compact. Moreover $\cU$ is a countable union of such compact sets. So it suffices to show $\nu(C(\psi))<\infty$ for all such $\psi$. This is clear because $(\kappa^\G)^{-1}(C(\psi)) \subset K_i$ if there exists $(g,i)$ with $\psi(g,i)=1$. This implies $\nu(C(\psi))\le \mu(K_i)<\infty$.

Because $\kappa^\G$ is Borel, $\nu$ is a Borel locally finite measure on a Polishable space. So it is Radon. 
\end{proof}


\subsection{Shift-space formulation}

\begin{defn}
   Given a measure-preserving action $\a$, let $\mathsf{Factor}(\a,\cU) \subset \Radon(\cU)$ be the set of all Radon measures of the form $\phi_*\mu$ where $\phi:X \to 2^\N$ is measurable  and $\phi^\G_*\mu(2^{\G\times \N}\setminus \cU)=0$. In detail, $\phi^\G:X \to 2^{\G\times \N}$ is the map $\phi^\G(x)(g)=\phi(g^{-1}x)$ where we regard an element of $2^{\G\times \N}$ as a function from $\G$ to $2^\N$. Also, because we require $\phi^\G_*\mu(2^{\G\times \N}\setminus \cU)=0$, we can regard $\phi^\G_*\mu$ as a measure on $\cU$.
   
   More generally, if $\cC$ is a class of measure-preserving actions, we define
$$\mathsf{Factor}(\cC,K)=\bigcup_{\a\in \cC} \mathsf{Factor}(\a,K).$$ 
\end{defn}

\begin{thm}\label{T:Abert-Weiss2}
Let $\a=\G \cc (X,\mu)$ be measure-preserving and let $\cC$ be a class of measure-preserving actions. 
\begin{enumerate}
    \item If $\a$ is weakly contained in $\cC$ then $\mathsf{Factor}(\a,\cU) \subset \overline{\mathsf{Factor}}(\cC,\cU)$ where the overline signifies closure in the vague topology.
    \item Suppose $X$ is a Polish space, $\G \cc X$ is a jointly continuous action, and $\mu$ is a Borel measure on $X$. If $\mu_n$ is a sequence of $\G$-invariant Borel measures on $X$ and either
    \begin{enumerate}
        \item each $\mu_n$ is a probability measure and $\mu_n \to \mu$ weakly as $n\to\infty$, or
        \item $X$ is lcsc and $\mu_n \to \mu$ vaguely as $n\to\infty$,
    \end{enumerate}
    then $\a$ is weakly contained in $\{\G \cc (X,\mu_n)\}_{n=1}^\infty$.
\end{enumerate}
\end{thm}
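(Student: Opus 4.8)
The plan is to treat the two parts separately; the guiding principle in both is that weak containment of $\a$ is exactly a statement of vague convergence of the empirical measures $\phi^\G_*\mu$ on the universal system $\cU$, so Part (1) is a translation of the definition into the language of $\cU$ and Part (2) is an application of the Portmanteau theorem. For Part (1), I would fix $\lambda=\phi^\G_*\mu\in\mathsf{Factor}(\a,\cU)$, so $\phi:X\to 2^\N$ with $\phi(x)\neq\emptyset$ a.e. Since $\cU$ is zero-dimensional, every element of $C_c(\cU)$ is a uniform limit of finite linear combinations of indicators of the basic compact-open cylinders $C(\chi)\cap\cU$ with $\chi:F\times[m]\to\{0,1\}$ carrying at least one $1$ (as in Lemma \ref{L:finiteness}); hence it suffices, given such a window and $\eps'>0$, to produce $\rho\in\mathsf{Factor}(\cC,\cU)$ with $|\rho(C)-\lambda(C)|<\eps'$ on all these cylinders. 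The key reduction is to truncate to the finite palette $A=2^{[m]}\setminus\{\emptyset\}$ via $\tilde\phi(x)=\phi(x)\cap[m]$ (and $\tilde\phi(x)=*$ when this is empty): the set $\{\tilde\phi\neq *\}$ is the $\phi^\G$-preimage of a compact cylinder, hence has finite measure because $\lambda$ is Radon, so $\tilde\phi$ is $(\mu,A)$-finite. I would then set $F'=F^{-1}F$ and apply Theorem \ref{T:general-correlations} to $(\tilde\phi,A,F',\eps')$, obtaining $\G\cc(Y,\nu)\in\cC$ and $\tilde\psi:Y\to A_*$ with $\|\tilde\phi^{F'}_*\mu-\tilde\psi^{F'}_*\nu\|_{F'}<\eps'$.

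Next I would lift $\tilde\psi$ to $\psi:Y\to 2^\N$ by $\psi(y)=\tilde\psi(y)\subset[m]$ when $\tilde\psi(y)\in A$ and $\psi(y)=\{m+1\}$ otherwise; since $\psi(y)$ is never empty, $\rho:=\psi^\G_*\nu$ lands in $\mathsf{Factor}(\cC,\cU)$, with local finiteness checked exactly as in Lemma \ref{L:finiteness}. Because $\psi(y)\cap[m]$ and $\phi(x)\cap[m]$ are recorded faithfully by $\tilde\psi$ and $\tilde\phi$, the value of $\lambda$ (resp. $\rho$) on any cylinder over $F\times[m]$ is a sum of cylinder values of $\tilde\phi^{F'}_*\mu$ (resp. $\tilde\psi^{F'}_*\nu$). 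The last point is the bookkeeping for off-center cylinders: for a compact cylinder $C$ carrying a $1$ at $(g_0,i_0)$, invariance of $\lambda$ and $\rho$ gives $\lambda(C)=\lambda(g_0^{-1}C)$ and $\rho(C)=\rho(g_0^{-1}C)$, and $g_0^{-1}C$ is a cylinder over $F'\times[m]$ with its $1$ now at the center coordinate $e$; hence both quantities are sums over colorings $a\in A_*^{F'}$ with $a(e)\neq *$, and their difference is bounded by $\|\tilde\phi^{F'}_*\mu-\tilde\psi^{F'}_*\nu\|_{F'}<\eps'$. This would complete Part (1).

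For Part (2) the naive attempt is to reuse $\phi$ itself on each $(X,\mu_n)$, which fails only because the correlation sets $\{\phi=a\}\cap f^{-1}\{\phi=b\}$ need not be $\mu$-continuity sets, so Portmanteau does not apply. I would therefore replace $\phi$ by a map $\psi$ with continuity-set fibers. By Lusin's theorem on the finite-measure set $\{\phi\neq *\}$ there is a compact $K$, off a set of arbitrarily small $\mu$-measure, on which $\phi$ is continuous and hence locally constant, so the pieces $K_a:=K\cap\{\phi=a\}$ are disjoint compacta. I would thicken them to pairwise disjoint open neighborhoods $U_a=\{d(\cdot,K_a)<r\}$, choosing the common radius $r$ outside the countable set of distances carrying positive $\mu$-mass (so that $\mu(\partial U_a)=0$) and small enough, by outer regularity of $\mu$ at $K_a$, that $\mu(U_a\setminus K_a)$ is negligible; in case (b) I would also keep the $U_a$ relatively compact using local compactness. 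Setting $\psi=a$ on $U_a$ and $\psi=*$ elsewhere, the map $\psi$ is $(\mu,A)$-finite and $\mu(\{\psi\neq\phi\})$ is as small as desired, and — crucially using $\G$-invariance of $\mu$, so that $\mu(f^{-1}\partial U_b)=\mu(\partial U_b)=0$ — each correlation set $U_a\cap f^{-1}U_b$ is open with $\mu$-null boundary, i.e. a $\mu$-continuity set.

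Then the Portmanteau theorem of Section \ref{S:spaces-of-measures} (the weak version in case (a), and the vague version for relatively compact continuity sets in case (b)) yields $\mu_n(U_a\cap f^{-1}U_b)\to\mu(U_a\cap f^{-1}U_b)$ for each of the finitely many triples $(a,b,f)$. A triangle inequality then closes the argument: the correlations of $\phi$ and of $\psi$ under $\mu$ differ by at most $2\mu(\{\psi\neq\phi\})$ per triple (again by invariance of $\mu$), while the correlations of $\psi$ under $\mu_n$ converge to those of $\psi$ under $\mu$; so choosing first $\mu(\{\psi\neq\phi\})$ small and then $n$ large makes the defining sum of weak containment less than $\eps$, with $\psi$ serving as the map on $(X,\mu_n)$. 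I expect the one genuine difficulty in the whole theorem to be exactly this measurable-to-continuous passage in Part (2): one must manufacture a finite-valued map whose fibers, together with all their $F$-translates, have $\mu$-negligible topological boundaries while still tracking $\phi$ in measure, and it is precisely the invariance of $\mu$ that makes the translated boundaries null and (in case (b)) local compactness that keeps the neighborhoods admissible for the vague Portmanteau theorem.
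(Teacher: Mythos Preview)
Your Part (2) is correct and matches the paper's approach: replace $\phi$ by a map whose level sets are $\mu$-continuity sets (relatively compact in case (b)), apply the Portmanteau theorem, then a triangle inequality. Your Lusin-plus-metric-thickening construction of $\psi$ fills in the detail that the paper leaves as ``continuity sets are dense in the measure-algebra''.

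Part (1) has a genuine gap at the lift $\tilde\psi\mapsto\psi$. You set $\psi(y)=\{m+1\}$ whenever $\tilde\psi(y)=*$, but for an infinite-measure action $\G\cc(Y,\nu)\in\cC$ the set $\{\tilde\psi=*\}$ has infinite $\nu$-measure (only $\{\tilde\psi\in A\}$ is required to be finite). The local-finiteness argument of Lemma~\ref{L:finiteness} that you invoke works precisely because each level set $B_i=\{x:i\in\kappa(x)\}$ there has finite measure; in your construction the level set $\{y:m+1\in\psi(y)\}=\{\tilde\psi=*\}$ does not, and $\rho=\psi^\G_*\nu$ assigns infinite mass to the compact cylinder $\{x:x(e,m+1)=1\}$. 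Hence $\rho\notin\Radon(\cU)\supset\mathsf{Factor}(\cC,\cU)$, so it cannot serve as a witness that $\lambda$ lies in the vague closure of $\mathsf{Factor}(\cC,\cU)$. The paper repairs this by first invoking Lemma~\ref{L:finiteness} to realize $(Y,\nu)$ itself as a Radon measure on $\cU$, and then defining the lift via $\tilde\psi(y)(i)=y(e,i-N)$ for $i>N$: the tail of $\psi(y)$ thus records the already finite-measure coordinates of $y$, making $\tilde\psi^\G$ injective with Radon pushforward. An equivalent fix in your framework would be to choose a $\sigma$-finite partition $\{D_j\}_{j>m}$ of $Y$ into finite-measure Borel pieces and set $\psi(y)=\{j\}$ on $D_j\cap\{\tilde\psi=*\}$ (and $\psi(y)=\tilde\psi(y)\cup\{j\}$ on $D_j\cap\{\tilde\psi\in A\}$).
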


\begin{remark}
    The Theorem above generalizes \cite[Lemma 8]{abert-weiss-2013} (see also \cite[Prop. 3.6]{T-D12}) which handles the case of pmp actions.
\end{remark}



\begin{proof}[Proof of Theorem \ref{T:Abert-Weiss2}]

We first prove item (1). Making use of Lemma \ref{L:finiteness}, we assume without loss of generality that $\mu\in\Radon(\cU)$. Suppose that $\a=\G\cc (X,\mu)$ is weakly contained in $\cC$. 

Given a measure $\mu$ on $\cU$, a finite set $F \subset \G$ containing the identity and $N \in \N$, define the semi-norm $\|\mu\|_{F,N}$ by
$$\|\mu\|_{F,N} = \sum \left\{ |\mu(\Cyl(a))| :~ a \in 2^{F\times [N]} \textrm{ and } a(e)\ne 0_N\right\}$$
where
$$\Cyl(a)=\{x\in \cU:~x(f,i)=a(f,i) ~\forall (f,i) \in F\times [N]\}.$$

For $N\in\N$, let $\phi:\cU\to 2^N$ be the map $\phi(x)=(x(e,1),\ldots, x(e,N))$. Then $\cU$ is the inverse limit of the spaces $2^{F\times N}$ with respect to the maps $\phi^F$, where $(F,N)$ varies over all finite subsets of $\G$ and $N \in \N$. Therefore, it suffices to prove that for every $N \in \N$, finite $F \subset \G$ and $\eps>0$ there exist an action $\b = \G \cc (Y,\nu) \in \cC$ and a map $\tilde{\psi}:Y \to 2^\N$ such that 
$$\|\mu - \tilde{\psi}^F_*\nu\|_{F,N} < \eps$$
and $\tilde{\psi}^\G_*\nu(2^{\G\times \N}\setminus \cU)=0$.


By Theorem \ref{T:general-correlations}, for every $N \in \N$, finite $F \subset \G$ and $\eps>0$ there exist an action $\b = \G \cc (Y,\nu) \in \cC$, and a map $\psi:Y \to 2^N$ such that
$$\|\mu - \psi^F_*\nu\|_{F,N} < \eps.$$

By Lemma \ref{L:finiteness} we assume without loss of generality that $\nu\in\Radon(\cU)$ and in particular, $Y=\cU$. Thus an element $y\in Y$ is a function $y:\G\times \N\to\{0,1\}$. Define $\tilde{\psi}:Y \to 2^\N$ by
\begin{displaymath}
    \tilde{\psi}(y)(i) = 
    \begin{cases}
        \psi(y)(i), & 1\le i \le N \\
    y(e,i-N),\quad & i>N.
    \end{cases} 
\end{displaymath}
Then $\tilde{\psi}^\G:Y\to \cU$ is $\G$-equivariant and injective. Moreover, $\tilde{\psi}^\G_*\nu$ restricted to $2^{F\times [N]}$ is the same as $\psi^F_*\nu$ restricted to $2^N$. So
$$\|\mu - \tilde{\psi}^\G_*\nu\|_{F,N} < \eps.$$
Because the restriction of $x\in 2^{\G \times \N}$ to $\G \times \{N+1,N+2,\ldots \}$ is the inverse of $\tilde{\psi}^\G$, it follows that $\tilde{\psi}^\G_*\nu(2^{\G\times \N}\setminus \cU)=0$.

Because $F,N$ are arbitrary, this implies $\a\in \overline{\mathsf{Factor}}(\cC,\cU)$. The more general statement $\mathsf{Factor}(\a,\cU) \subset \overline{\mathsf{Factor}}(\cC,\cU)$ follows since we can replace $\a$ with a factor of $\a$ without changing the argument.

Next we prove item (2).


Now suppose $X$ is a Polish space, $\G \cc X$ a jointly continuous action, $\mu$ is a $\G$-invariant Borel measure on $X$ and $(\mu_n)_{n=1}^\infty$ is a sequence of $\G$-invariant Borel measures on $X$ converging to $\mu$ in the weak topology. We will show $\a=(\G \cc (X,\mu))$ is weakly contained in $\{\G \cc (X,\mu_n)\}_{n=1}^\infty$. So let  $\eps>0$, $F \subset \G$ be finite and $\phi:X \to A_*$ be $(\mu,A)$-finite (where $A$ is a finite set). 


Because continuity sets are dense in the measure-algebra, there exists a measurable map $\phi_1:X \to A_*$ such that
\begin{enumerate}
    \item $\mu(\{x\in X:~\phi(x)\ne \phi_1(x)\})<\frac{\eps}{(|A|+1)|F|}$;
    \item $\{x\in X:~\phi_1(x)=a\}$ is a $\mu$-continuity set for every $a\in A$;
    \item if $X$ is lcsc then we also require that $\{x\in X:~\phi_1(x)=a\}$ is pre-compact for every $a\in A$.
\end{enumerate}
Because the $\G$-action preserves continuity sets and continuity sets form an algebra, the last condition implies that
$$\{x\in X:~\phi_1(x)=a    \textrm{ and }   \phi_1(fx)=b\}$$
is a $\mu$-continuity set for every $a\in A$, $b\in A_*$ and $f\in F$. Moreover, it is pre-compact if $X$ is lcsc.

Then
\begin{align*}
\sum_{a\in A} \sum_{b\in A_*} \sum_{f\in F} &\big|\mu(\{x\in X:~\phi_1(x)=a \textrm{ and }   \phi_1(fx)=b\}) \\
&- \mu(\{x\in X:~\phi(x)=a \textrm{ and }   \phi(fx)=b \}) \big| < \eps.   
\end{align*}

We claim: 
\begin{align*}
\lim_{n\to\infty} \mu_n(\{x\in X:~\phi_1(x)=a \textrm{ and }   \phi_1(fx)=b \}) = \mu(\{x\in X:~\phi_1(x)=a \textrm{ and }   \phi_1(fx)=b \})
\end{align*}
for every $a\in A$, $b\in A_*$ and $f\in F$. If each $\mu_n$ is a probability measure and $\mu_n\to \mu$ weakly then this holds by the classical Portmanteau Theorem. If $X$ is locally compact and $\mu_n \to \mu$ vaguely then this holds by the version of the Portmanteau Theorem in \cite{MR2271177} applied to the 1-point compactification of $X$. This second case uses that each $\phi_1^{-1}(a)$ is pre-compact for all $a\in A$ as well as being a $\mu$-continuity set. 


In particular, there exist $N$ such that $n>N$ implies
\begin{align*}
\sum_{a\in A} \sum_{b\in A_*} \sum_{f\in F} &\big|\mu(\{x\in X:~\phi_1(x)=a \textrm{ and }   \phi_1(fx)=b\}) \\
&- \mu_n(\{x\in X:~\phi_1(x)=a \textrm{ and }   \phi_1(fx)=b \}) \big| < \eps.
\end{align*}
So the triangle inequality implies that for $n>N$,
\begin{align*}
\sum_{a\in A} \sum_{b\in A_*} \sum_{f\in F} &\big|\mu(\{x\in X:~\phi(x)=a \textrm{ and }   \phi(fx)=b\}) \\
&- \mu_n(\{x\in X:~\phi_1(x)=a \textrm{ and }   \phi_1(fx)=b \}) \big| < 2\eps.
\end{align*}
Since $\eps$ is arbitrary, this shows $\a$ is weakly contained in $\cC$.
\
\end{proof}

\section{Limit-amenability and regularity}\label{S:limit-amenability}

An imp action is called limit-amenable if it is weakly contained in the class of amenable imp actions. The main goal of this section is to prove that such actions are in fact limit-regular: they are weakly contained in the class of regular actions. This is used in the next section to prove that Poisson suspensions of such actions are weakly contained in Bernoulli shifts. First we need some definitions.


\begin{defn}
    An action $\G \cc (X,\mu)$ is {\bf regular} if it is measurably conjugate to the left-regular action of $\G$ on itself with respect to a Haar measure (which is a scalar multiple of counting measure). 
\end{defn}

\begin{defn}
    An imp action $\G \cc (X,\mu)$ is {\bf amenable} if for a.e.\ $x\in X$, the stabilizer $\Stab_\G(x)=\{g\in \G:~gx=x\}$ is amenable and the orbit equivalence relation $\cR_\G=\{(x,gx):~x\in X, g\in \G\}$ is hyperfinite mod $\mu$ (see \S \ref{S:mer} for definitions).
\end{defn}

\begin{remark}
    This is not the original definition introduced by Zimmer \cite{MR0470692}. However, it is equivalent. For many other equivalent definitions see \cite{zimmer-1984, MR1235474}.  
\end{remark}

\begin{defn}
Let $X$ be an lcsc space, $\G \cc X$ be a continuous action and $\mu$ be a $\G$-invariant Radon measure on $X$. Then we say the action $(\G,X,\mu)$ is {\bf limit-amenable} ({\bf limit-regular}) if there exists a sequence $(\mu_n)_{n=1}^\infty$ of $\G$-invariant Radon measures on $X$ such that 
\begin{enumerate}
    \item $\mu_n$ converges vaguely to $\mu$ as $n\to\infty$;
    \item the action $\G \cc (X,\mu_n)$ is a measure-preserving-factor of an amenable (regular) action for all $n$.
\end{enumerate}
We also say that $\G\cc (X,\mu)$ is limit-amenable (limit-regular) if it is measurably conjugate to a limit-amenable (limit-regular) action. By Theorem \ref{T:Abert-Weiss2}, an action is limit-amenable (limit-regular) if and only if it is weakly contained in the class of amenable (regular) actions.
\end{defn}

\begin{remark}[Permanence properties]\label{R:permanence}
    It is immediate from the definition that limit-amenability and limit-regularity are preserved under measure-preserving factors. It is also preserved under taking further limits in the following sense: suppose $\G \cc X$ is a continuous action on an lcsc space, $\mu$ is a $\G$-invariant measure on $X$ and there is a sequence $(\mu_n)_{n=1}^\infty$ of $\G$-invariant Radon measures on $X$ which converges vaguely to $\mu$ as $n\to\infty$. If $\G \cc (X,\mu_n)$ are all limit-amenable (limit-regular) then $\G \cc (X,\mu)$ is also limit-amenable (limit-regular).
\end{remark}

\begin{thm}\label{T:limit-regular}
An imp $\G \cc (X,\mu)$ is limit-amenable if and only if it is limit-regular. 
\end{thm}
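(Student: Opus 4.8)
The plan is to prove the two implications separately, with the reverse implication (limit-regular $\Rightarrow$ limit-amenable) essentially formal and the forward implication carrying all the weight. For the easy direction I would first observe that every regular action is itself amenable in the sense of this paper: the left-regular action $\G\cc(\G,\text{counting})$ has trivial (hence amenable) stabilizers, and its orbit equivalence relation is the complete relation on the countable set $\G$, which is automatically hyperfinite (write $\G$ as an increasing union of finite sets and restrict). Thus the class of regular actions is contained in the class of amenable actions, and since weak containment is monotone in the target class, weak containment in the regular actions implies weak containment in the amenable actions; by Theorem \ref{T:Abert-Weiss2} this is exactly the statement that limit-regular $\Rightarrow$ limit-amenable.

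For the forward direction I would reduce, using Remark \ref{R:permanence}, to a single statement: every amenable imp action is limit-regular. Indeed, if $\G\cc(X,\mu)$ is limit-amenable then it is weakly contained in the class of amenable actions, so there is a sequence $\mu_n\to\mu$ vaguely with each $\G\cc(X,\mu_n)$ an mp-factor of an amenable action. Granting that amenable actions are limit-regular, each $\G\cc(X,\mu_n)$ is limit-regular (limit-regularity passes to mp-factors), and then $\G\cc(X,\mu)$ is limit-regular as a vague limit of limit-regular actions (Remark \ref{R:permanence}). So transitivity of weak containment does all of the organizational work, and it remains only to treat a single amenable action.

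To show an amenable imp action $\G\cc(X,\mu)$ is limit-regular I would model it on the universal space $\cU$ via Lemma \ref{L:finiteness} and produce $\G$-invariant Radon measures $\mu_n\to\mu$ vaguely, each supported on a union of free $\G$-orbits, i.e.\ each a regular-factor measure. I would split $X$ by the Hopf decomposition (Section \ref{S:recurrence}) into its dissipative part $\mathsf{Dis}$ and conservative part $\mathsf{Con}$. On $\mathsf{Dis}$ the action admits a fundamental domain, so it is a ``bundle of regular orbits''; discretizing the base measure to finitely many atoms exhibits it as a vague limit of finite unions of scaled single free orbits, so $\mathsf{Dis}$ is limit-regular. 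On $\mathsf{Con}$ the action is recurrent and there is no fundamental domain; here I would use that the stabilizers $\Stab_\G(x)$ are amenable. Choosing a Borel transversal for the orbit relation together with Følner sets $E\subset\Stab_\G(x)$ of the (amenable) stabilizers, I would build colorings of $\G$ reproducing the local statistics of a test map $\phi$ up to an error governed by $\#(hE\triangle E)/\#E$ for finitely many $h$, with the hyperfiniteness of the orbit relation supplying the relation-Følner sequence needed to make these choices measurably consistent and to pass from single orbits to the full recurrent measure.

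The step I expect to be the main obstacle is the conservative part. The naive idea of coloring a large finite piece of a single orbit fails, because $\G$ need not be amenable and finite subsets of $\G$ carry a definite proportion of boundary, so a constant fraction of centers would record truncated (hence wrong) $F$-patterns. The resolution — and the reason the hypotheses are exactly ``hyperfinite relation plus amenable stabilizers'' rather than ``amenable group'' — is that the only translations threatening to leave the coloring's support lie inside the stabilizers, which are amenable, so stabilizer-Følner sets (a Hulanicki-type averaging) control the boundary. Implementing this as an honest two-scale limit on $\cU$, with the relation-Følner scale handling recurrence and the stabilizer-Følner scale handling the boundary, and tracking vague convergence via the Portmanteau theorem of Section \ref{S:spaces-of-measures}, is the technical heart of the argument.
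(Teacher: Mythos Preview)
Your easy direction and your reduction to ``every amenable imp action is limit-regular'' are correct and match the paper. The divergence is in how you handle a single amenable action, and there your conservative-part argument has a real gap.

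The claim that ``the only translations threatening to leave the coloring's support lie inside the stabilizers'' is not right. If you color a finite piece of an orbit $\G x$ and translate by a test element $f\in F$, whether $f$ moves you off the colored patch has nothing to do with whether $f\in\Stab_\G(x)$; it depends on the geometry of the patch inside $\G/\Stab_\G(x)$. So stabilizer-F{\o}lner sets do not control the boundary error in the way you describe, and the ``two-scale limit'' remains a slogan rather than a mechanism. Relatedly, the preliminary lemmas (\ref{L:regular1}--\ref{L:regular3}) only cover \emph{finite} stabilizers, so you cannot fall back on them for the infinite-stabilizer piece of $\mathsf{Con}$.

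The paper avoids this entire difficulty with a single move you are missing: before doing anything else, take the direct product of the amenable action with an essentially free pmp $\G$-action. The product is still amenable (extensions of amenable actions are amenable) and is now essentially free, so stabilizers disappear. Then one uses hyperfiniteness of the orbit relation directly and uniformly, with no Hopf decomposition: write $\cR=\bigcup_n\cR_n$ with each $\cR_n$-class of size $2^n$, and for each $n$ build a coloring $\Phi_n$ that agrees with $\phi^\G$ on the $\cR_n$-class and is $*$ outside it. The resulting $\G$-averaged measures $\mu_n$ have ergodic components that are essentially transitive with finite stabilizers (hence limit-regular by Lemma~\ref{L:regular3}), and a mass-transport computation shows $\mu_n\to\phi^\G_*\mu$ vaguely. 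The boundary error you were worried about is controlled not by F{\o}lner sets but simply by the fact that for any fixed finite window $W\subset\G$, the set of $x$ whose $\cR_n$-class fails to contain $Wx$ has vanishing measure as $n\to\infty$.
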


\begin{remark}
  In Theorem \ref{T:exact} we prove: if $\G$ is exact then all limit-amenable actions are amenable. 
\end{remark}

To prove Theorem \ref{T:limit-regular}, we first prove a succession of lemmas culminating in Lemma \ref{L:regular3} which states that any imp action in which almost every ergodic component is essentially transitive with finite stabilizers is limit-regular. Then we prove the theorem in full by reducing it the case in which $\G \cc (X,\mu)$ is an essentially free,  amenable continuous action on an lcsc space and we need to find $\G$-invariant measures $\mu_n$ converging vaguely to $\mu$ which satisfy the hypotheses of Lemma \ref{L:regular3}.

\begin{lem}\label{L:regular1}
    Let $c_\G$ denote counting measure on $\G$ and let $\mathsf{Leb}$ denote Lebesgue measure on $[0,1]$. Then the action $\G \cc (\G\times [0,1], c_\G \times \mathsf{Leb})$ is limit-regular.
\end{lem}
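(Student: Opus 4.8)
The plan is to invoke the weak-containment reformulation of limit-regularity recorded just above the statement: the space $\G\times[0,1]$ is lcsc, the measure $c_\G\times\mathsf{Leb}$ is $\G$-invariant and Radon, and the action (left translation on the $\G$-coordinate, trivial on $[0,1]$) is continuous, so by Theorem~\ref{T:Abert-Weiss2} it suffices to show that $\a=\G\cc(\G\times[0,1],\,c_\G\times\mathsf{Leb})$ is weakly contained in the class of regular actions. Accordingly I would fix a finite color set $A$, an $A$-finite map $\phi\colon\G\times[0,1]\to A_*$, a finite $F\subset\G$ (enlarged so that $e\in F=F^{-1}$), and $\eps>0$, and then exhibit a single regular action together with an $A$-finite pattern on it whose pair-correlations match those of $\phi$ to within $\eps$.

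First I would discretize $\phi$ in the $[0,1]$-variable. Since $\sum_{h\in\G}\mathsf{Leb}\{t:\phi(h,t)\neq *\}<\infty$, after altering $\phi$ on a set of arbitrarily small $\mu$-measure I may assume it is finitely supported in the $\G$-variable and piecewise constant in $t$: there are $0=s_0<\dots<s_n=1$ and finitely supported maps $\phi_k\colon\G\to A_*$ with $\phi(h,t)=\phi_k(h)$ for $t\in[s_{k-1},s_k)$. Fubini then identifies the pair-correlations of $\phi$ as the weighted fiber sums $\mu(\{\phi=a,\ \phi(f\,\cdot\,)=b\})=\sum_{k}(s_k-s_{k-1})\,\#\{h\in\G:\phi_k(h)=a,\ \phi_k(fh)=b\}$.

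The heart of the matter is to pack these finitely many finite ``fiber patterns'' $\phi_k$ into a single copy of the regular action $\G\cc(\G,\tfrac1M c_\G)$. I would approximate the weights by rationals $s_k-s_{k-1}\approx m_k/M$, form $M$ clusters in which $\phi_k$ is used $m_k$ times, and insert each copy by a \emph{right} translation: a copy of $\phi_k$ seated at $r\in\G$ is the map $g\mapsto\phi_k(gr)$ on the finite set $\supp(\phi_k)\,r^{-1}$. Right translation is exactly the intertwiner of the left-regular action, so left-shifting by $f\in F$ sends this copy to $g\mapsto\phi_k(f\cdot gr)$; hence, reindexing by $h=gr$, the within-cluster count $\#\{g:\psi(g)=a,\ \psi(fg)=b\}$ equals the fiber count $\#\{h:\phi_k(h)=a,\ \phi_k(fh)=b\}$. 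Choosing the seats $r$ greedily --- possible because $\G$ is infinite while each support is finite --- so that the clusters are pairwise $F$-separated, I annihilate all cross-cluster terms. Letting $\psi\colon\G\to A_*$ be the disjoint union of these clusters ($*$ elsewhere) yields an $A$-finite pattern with $\tfrac1M c_\G(\{\psi=a,\ \psi(f\,\cdot\,)=b\})=\tfrac1M\sum_k m_k\,\#\{h:\phi_k(h)=a,\ \phi_k(fh)=b\}$, which matches the correlations of $\phi$ within $\eps$ once $M$ is large. As $(\G,\tfrac1M c_\G)$ is a regular action and $A,F,\phi,\eps$ were arbitrary, $\a$ is weakly contained in the regular actions, hence limit-regular.

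The conceptual obstacle to keep in mind is that a regular action is transitive, as is any of its measure-preserving factors, whereas the target has a whole continuum of orbits $\G\times\{t\}$; so one cannot hope to approximate $c_\G\times\mathsf{Leb}$ by factor-measures on the literal space $\G\times[0,1]$ coordinatewise. The resolution --- and the reason the correlation form of weak containment is the right tool --- is that a single orbit of the regular action statistically samples all of the fibers at once. The only genuinely technical point is the right-translation bookkeeping together with the $F$-separation of clusters, which is what guarantees that the assembled pattern $\psi$ reproduces each fiber correlation exactly and with no interference.
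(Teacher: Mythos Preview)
Your argument is correct. The discretization of $\phi$ in the $[0,1]$ variable, the Fubini identification of the correlations as weighted sums of fiber counts, and the packing of right-translated copies of the fiber patterns into an $F$-separated configuration on $(\G,\tfrac{1}{M}c_\G)$ all work as stated; the key observation that right translation intertwines the left-regular action, so that within-cluster correlations reproduce the fiber correlations exactly, is spot on.

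The paper takes a different but closely related route. Rather than verifying the weak-containment definition via arbitrary test patterns, it works directly with the original definition of limit-regularity: it builds an explicit topological model by embedding $\G\times[0,1]$ into the shift space $\cV=[0,1]_*^\G\setminus\{*^\G\}$ via $\Phi(g,x)(h)=x$ if $h=g$ and $*$ otherwise, and then exhibits a concrete sequence of measures $\mu_n$ on $\cV$ converging vaguely to $\Phi_*(c_\G\times\mathsf{Leb})$. Each $\mu_n$ is the orbit measure of a single point $\phi_n$ obtained by placing the values $1/n,2/n,\ldots,1$ at $n$ pairwise $F_n^2$-separated locations in $\G$. The combinatorial core --- discretize the interval and seat the pieces at well-separated group elements --- is the same as yours; the difference is that the paper carries this out once on a fixed model and proves vague convergence directly, while you carry it out for each test pattern and invoke the equivalence between limit-regularity and weak containment in regular actions. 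Your approach is arguably more flexible (it would adapt immediately to other base spaces), while the paper's is more self-contained and yields an explicit approximating sequence without passing through the weak-containment machinery of Theorem~\ref{T:Abert-Weiss2}.
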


\begin{proof}
First we construct a convenient topological model for $\G \cc (\G\times [0,1],c_\G\times \mathsf{Leb})$. Let $[0,1]_*$ be the disjoint union of $[0,1]$ with $\{*\}$. Consider $[0,1]_*^\G$ with the usual shift-action $(gx)(f)=x(g^{-1}f)$ for $x\in [0,1]_*^\G$ and $g,f \in \G$. Let $*^\G \in [0,1]_*^\G$ be the function which sends every $g\in \G$ to $*$. Let $\cV=[0,1]_*^\G\setminus \{*^\G\}$. This is a $\G$-invariant lcsc space. 

Define $\Phi:\G\times [0,1] \to \cV$ by 
\begin{displaymath}
    \Phi(g,x)(h) =
    \begin{cases}
        x, & h=g \\
        *, & h \ne g.
    \end{cases}
\end{displaymath}
Then $\Phi$ is $\G$-equivariant and injective where $\G$ acts on $\G\times [0,1]$ by $g(h,x)=(gh,x)$. Let $\mu_\infty = \Phi_*(c_\G \times \mathsf{Leb})$. Then $\Phi$ is a measure-conjugacy from $\G \cc (\G\times [0,1],c_\G\times \mathsf{Leb})$ to $\G \cc (\cV,\mu_\infty)$. So it suffices to show that $\G \cc (\cV,\mu_\infty)$ is limit regular.

Next, we construct a sequence of $\G$-invariant measures $\mu_n$ which converge to $\mu_\infty$ vaguely. Let $\{F_n\}_{n=1}^\infty, \{\G_n\}_{n=1}^\infty$ be sequences of finite subsets of $\G$ satisfying
\begin{enumerate}
    \item $e\in F_1\subset F_2\subset \cdots \subset \G$, and $\G  = \cup_n F_n$;
    \item $F_n=F_n^{-1}$;
    \item $|\G_n|=n$ and $\G_n$ is $F_n^2$-separated. This means: if $g,h \in \G_n$ are distinct then there does not exist an element $f\in F_n^2$ with $gf=h$. 
\end{enumerate}
We arbitrarily enumerate $\G_n$ as $\G_n=\{g_{n,1},\ldots, g_{n,n}\}$. Define $\phi_n \in \cV$ by
\begin{displaymath}
    \phi_n(h) =
    \begin{cases}
        k/n & h=g_{n,k} \\
        * & h \notin \G_n.
    \end{cases}
\end{displaymath}
Finally, define a measure $\mu_n$ on $\cV$ by
$$\mu_n = (1/n)\sum_{g\in \G} \d_{g\phi_n}.$$
That is, $\mu_n$ is the sum of the Dirac masses on the orbit of $\phi_n$. Therefore $\mu_n$ is $\G$-invariant and is a factor of the regular action $\G \cc (\G, c_\G/n)$. So it suffices now to prove that $\mu_n$ converges vaguely to $\mu_\infty$.

Let $\cK_n$ be the set of all $x\in \cV$ such that there exists $f \in F_n$ with $x(f)\ne *$. Then $\cK_n$ is compact. Moreover, it is a continuity set for all measures because its boundary is empty.

The inverse image $\Phi^{-1}(\cK_n)$ is $F_n \times [0,1]$. So the restriction of $\mu_\infty$ to $\cK_n$ is pushforward $\Phi_*(c_{F_n} \times \mathsf{Leb})$ where $c_{F_n}$ denotes counting measure on $F_n$.

Let $\pi_{F_n}: \cK_n \to [0,1]_*^{F_n}$ be the projection map. It suffices to prove that $\pi_{F_n*}(\mu_m\resto \cK_n)$ converges to $\pi_{F_n*}(\mu_\infty\resto \cK_n)$ as $m\to\infty$ and $n$ is held fixed. 




Let $m\ge n$. We claim that $\pi_{F_n*}(\mu_m\resto \cK_n) = \pi_{F_n*}(\Phi_*(c_{F_n} \times u_m)\resto \cK_n)$ where $u_m$ is the uniform probability measure on $\{1/m,2/m,\ldots, 1\} \subset [0,1]$. Because $u_m$ converges to Lebesgue measure on $[0,1]$ as $m\to\infty$, this claim implies the lemma.

First observe that for any $g\in \G$, $g\phi_m$ has the property that there is at most one element $f\in F_n$ with $g\phi_m(f)\ne *$. Indeed, suppose $f_1,f_2 \in F_n$ are distinct and $g\phi_m(f_i)\ne *$ for $i=1,2$. Since $g\phi_m(f_i)=\phi_m(g^{-1}f_i)$ this implies $g^{-1}f_i \in \G_m$. Because $\G_m$ is $F_m^2$-separated (and therefore, is $F_n^2$-separated), this implies $f_1^{-1}f_2 \notin F_n^2$. Therefore, it is not possible for both $f_1$ and $f_2$ to be in $F_n$.

Next, fix $f\in F_n$ and consider the set $\cL_f$ consisting of all $x\in \cV$ with $x(f)\ne *$. Then $\cK_n = \cup_{f\in F_n} \cL_f$. The previous paragraph show that if $f_1\ne f_2$ and $m\ge n$ then $\mu_m(\cL_{f_1} \cap \cL_{f_2})=0$. So it suffices to show that if we restrict $\mu_m$ to $\cL_f$ and then project to $[0,1]$ via the map which sends $x\in \cL_f$ to $x(f)$, the resulting measure is $u_m$.  
Note that $g\phi_m \in \cL_f$ if and only if $g\phi_m(f)\ne *$ which occurs if and only if $g^{-1}f \in \G_m$, if and only if $g \in f\G_m^{-1} = f\{g_{m,1}^{-1},\ldots, g_{m,m}^{-1}\}$. The measure $\mu_m$ restricted to $\cL_f$ and projected to $[0,1]$ is $1/m$ times counting measure on $\{1/m,2/m,\ldots, 1\}$. This is $u_m$. This finishes the proof.
\end{proof}

\begin{lem}\label{L:regular2}
    Let $c_\G$ denote counting measure on $\G$ and let $(Z,\zeta)$ be a standard $\s$-finite measure space on which $\G$ acts trivially.  Then the diagonal action $\G \cc (\G\times Z, c_\G \times \zeta)$ is limit-regular.
\end{lem}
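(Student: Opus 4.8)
The plan is to reduce everything to Lemma \ref{L:regular1} by way of the permanence properties of Remark \ref{R:permanence}, first treating the case where $\zeta$ has finite total mass and then bootstrapping to the general $\sigma$-finite case by an exhaustion. I would open with one routine observation that will be used throughout: limit-regularity is unchanged under scaling the measure by a positive constant $c$. Indeed, if $(\mu_n)$ are $\G$-invariant measures witnessing limit-regularity of $(X,\mu)$ — each a measure-preserving factor of a regular action $\G\cc(\G,c_\G/n)$ and converging vaguely to $\mu$ — then the rescaled measures $c\mu_n$ are factors of the regular actions $\G\cc(\G,c\,c_\G/n)$ (still regular, since the definition of a regular action permits any scalar multiple of counting measure) and converge vaguely to $c\mu$.

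For the finite case, suppose $\zeta(Z)=c<\infty$. I would use the structural fact that there is a measure-preserving Borel surjection $\pi\colon([0,1],c\cdot\mathsf{Leb})\to(Z,\zeta)$: split $[0,1]$ into a subinterval carrying the non-atomic part of $\zeta$ (which is isomorphic to a scaled interval) together with a disjoint family of subintervals of lengths $w_j/c$, each collapsed to the $j$-th atom $z_j$ of weight $w_j$. Then $\mathrm{id}_\G\times\pi\colon(\G\times[0,1],c_\G\times c\cdot\mathsf{Leb})\to(\G\times Z,c_\G\times\zeta)$ is a $\G$-equivariant measure-preserving factor map. By Lemma \ref{L:regular1} and the scaling observation the source is limit-regular, so by Remark \ref{R:permanence} (preservation under measure-preserving factors) the target $\G\cc(\G\times Z,c_\G\times\zeta)$ is limit-regular.

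For the general case I would exhaust $\zeta$ by finite measures and invoke preservation of limit-regularity under vague limits. Write $Z=\bigsqcup_{k\ge 1}Z_k$ with $\zeta(Z_k)<\infty$ (possible since $\zeta$ is $\sigma$-finite) and set $\zeta_N=\zeta\resto\big(\bigsqcup_{k\le N}Z_k\big)$, a finite measure. By Lemma \ref{L:finiteness} fix a measurable conjugacy $\kappa^\G$ from $\G\cc(\G\times Z,c_\G\times\zeta)$ onto $\G\cc(\cU,\nu)$, where $\nu=\kappa^\G_*(c_\G\times\zeta)$ is Radon. Since $\G$ acts trivially on $Z$, each set $\G\times\big(\bigsqcup_{k\le N}Z_k\big)$ is $\G$-invariant, so its image $U_N\subset\cU$ is a $\G$-invariant Borel set; put $\nu_N=\nu\resto U_N$. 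Each $\nu_N$ is then a $\G$-invariant Radon measure (a restriction of the Radon measure $\nu$), and $(\cU,\nu_N)$ is measurably conjugate to $(\G\times Z,c_\G\times\zeta_N)$, which is limit-regular by the finite case. Moreover $U_N\uparrow\cU$ up to $\nu$-null sets, so $\nu_N\uparrow\nu$; hence for every $f\in C_c(\cU)$ monotone convergence applied to $f_+$ and $f_-$ (both $\nu$-integrable, as $f$ has compact support and $\nu$ is Radon) gives $\int f\,d\nu_N\to\int f\,d\nu$, i.e.\ $\nu_N\to\nu$ vaguely. By Remark \ref{R:permanence} (preservation under vague limits), $(\cU,\nu)$, and therefore $\G\cc(\G\times Z,c_\G\times\zeta)$, is limit-regular.

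The essential content sits entirely in Lemma \ref{L:regular1}; this lemma is a packaging and reduction result, so I do not anticipate a serious obstacle. The only points requiring genuine (but routine) care are the structural fact used in the finite case — in particular correctly accounting for the atomic part of $\zeta$ — and the verification in the general case that the exhaustion $(\nu_N)$ consists of $\G$-invariant Radon measures converging vaguely to $\nu$, so that the vague-limit form of Remark \ref{R:permanence} genuinely applies. Everything else is bookkeeping.
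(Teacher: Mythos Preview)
Your proposal is correct and uses the same ingredients as the paper: scaling invariance, Lemma \ref{L:regular1}, closure under measure-preserving factors, and closure under vague limits (Remark \ref{R:permanence}). The only difference is the order of operations: the paper first passes to $\G\times[0,\infty)$ by a vague limit of $c_\G\times\mathsf{Leb}_{[0,n]}$ and then factors once onto an arbitrary $(Z,\zeta)$ via a single map $\phi:[0,n)\to Z$ with $\phi_*\mathsf{Leb}=\zeta$, whereas you factor first (finite case) and take the vague limit second (general case), which forces you to invoke Lemma \ref{L:finiteness} to produce an lcsc model for the exhaustion; the paper avoids this since $\G\times[0,\infty)$ is already lcsc.
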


\begin{proof}
The limit-regular property is invariant under scalars in the sense that if $\G \cc (X,\mu)$ is limit regular and $0<t<\infty$ then $\G \cc (X,t\mu)$ is also limit regular. This is because if $\mu_n$ converges vaguely to $\mu$ then $t\mu_n$ converges vaguely to $t\mu$. So Lemma \ref{L:regular1} implies that $\G \cc (\G\times [0,n], c_\G \times \mathsf{Leb}_{[0,n]})$ is limit regular for every $n>0$. 

 Because the measures $c_\G\times \mathsf{Leb}_{[0,n]}$ converge vaguely to $c_\G\times \mathsf{Leb}_{[0,\infty)}$, it follows that $\G \cc (\G\times [0,\infty), c_\G\times \mathsf{Leb}_{[0,\infty)})$ is limit-regular. 
 
Now let $(Z,\zeta)$ be an arbitrary standard $\sigma$-finite measure space. Then there exists $n\in [0,\infty]$ and a Borel map $\phi:[0,n) \to Z$ such that $\zeta = \phi_*\mathsf{Leb}_{[0,n)}$. Let $\Phi:\G \times [0,n] \to \G \times Z$ be the factor map $\Phi(g,x)=(g,\phi(x))$. Then $$\Phi_*(c_\G\times \mathsf{Leb}_{[0,n)})=c_\G\times\zeta.$$
Because limit-regularity is closed under measure-preserving factors, this implies the lemma. 
\end{proof}

\begin{defn}
    An action $\G \cc (X,\mu)$ is {\bf essentially transitive with finite stabilizers} if there is an $x\in X$ such that $\mu(X \setminus \G x)=0$ and $\Stab_\G(x)$ is finite.
\end{defn}

\begin{lem}\label{L:regular3}
    Let $\a=\G \cc (X,\mu)$ be an imp with the property that a.e.\ ergodic component is essentially transitive with finite stabilizers. Then $\a$ is limit-regular.
\end{lem}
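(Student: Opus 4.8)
The plan is to realize $\a$ as a measure-preserving factor of an action of the form treated in Lemma \ref{L:regular2} and then invoke the permanence of limit-regularity under measure-preserving factors (Remark \ref{R:permanence}). Concretely, I aim to produce a standard $\sigma$-finite space $(T,\zeta)$ carrying the trivial $\G$-action, together with a $\G$-equivariant map $\Phi:\G\times T\to X$ satisfying $\Phi_*(c_\G\times \zeta)=\mu$, where $\G$ acts on $\G\times T$ by $g(h,t)=(gh,t)$.

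First I would select an orbit transversal. Since a.e.\ ergodic component is transitive with finite stabilizer, each component is (a scalar multiple of) the counting measure on an orbit $\G t\cong \G/\Stab_\G(t)$, which is the completely dissipative case; in particular the orbit map $q:X\to\Omega$ onto the standard Borel space $\Omega$ of ergodic components is $\G$-invariant with countable fibers equal (mod null) to the orbits. Lusin--Novikov uniformization then provides a Borel section of $q$ whose image $T\subset X$ meets a.e.\ orbit exactly once. For $t\in T$ write $H_t=\Stab_\G(t)$, a finite group, and note that $t\mapsto |H_t|$ is Borel.

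Next I would construct $\zeta$ by disintegration. Disintegrating $\mu$ over $q$ and using $\G$-invariance, each fiber measure is $\G$-invariant on the orbit $\G t$, hence a multiple of counting measure; normalizing so that each orbit point has weight $1$ yields $\mu=\int_T \nu_t\, d\bar\mu(t)$, with $\nu_t$ the counting measure on $\G t$ and $\bar\mu$ a $\sigma$-finite measure on $T$. I then set $\Phi(g,t)=gt$ and $\zeta=|H_t|^{-1}\,\bar\mu$; since $|H_t|\ge 1$, $\zeta$ is $\sigma$-finite. The map $\Phi$ is $\G$-equivariant and exactly $|H_t|$-to-one on the fiber over $t$, so a direct computation gives
$$\Phi_*(c_\G\times\zeta)(B)=\int_T |H_t|\,\#(\G t\cap B)\, d\zeta(t)=\int_T \#(\G t\cap B)\, d\bar\mu(t)=\mu(B),$$
i.e.\ $\Phi$ is a measure-preserving factor map onto $\a$. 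Hence $\a$ is a measure-preserving factor of $\G\cc(\G\times T,\, c_\G\times\zeta)$, which is limit-regular by Lemma \ref{L:regular2}, and limit-regularity descends to measure-preserving factors by Remark \ref{R:permanence}, completing the proof.

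The main obstacle is the first step: verifying that the hypothesis genuinely yields a Borel orbit transversal and a well-behaved disintegration with the correct multiplicities. The delicate point is the bookkeeping of the finite stabilizers---the factor map is $|H_t|$-to-one, so the density $|H_t|^{-1}$ is exactly what makes $\Phi$ measure-preserving rather than merely measure-class-preserving; one must also confirm that $t\mapsto|H_t|$ is measurable and that $\bar\mu$ (hence $\zeta$) is $\sigma$-finite, both of which follow from standard descriptive set theory once the transversal is in hand.
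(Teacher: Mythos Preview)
Your proposal is correct and follows essentially the same strategy as the paper: exhibit $\a$ as a measure-preserving factor of an action of the form $\G\cc(\G\times T,\,c_\G\times\zeta)$ (with trivial action on $T$) and invoke Lemma~\ref{L:regular2} together with Remark~\ref{R:permanence}.

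The bookkeeping differs slightly. The paper first partitions $X$ according to the conjugacy class of the stabilizer, writing $X\cong\bigsqcup_{H\in\cC}\G/H\times Z_H$, and then maps $\G\times Z\to X$ by $(g,z)\mapsto(gH,z)$ for $z\in Z_H$. You instead choose a single Borel transversal $T$ via Lusin--Novikov and map $(g,t)\mapsto gt$, absorbing the stabilizer multiplicity into the density $|H_t|^{-1}$ in the definition of $\zeta$. Your version is a bit more explicit about why the factor map is genuinely measure-preserving (the paper's claim $\Phi_*(c_\G\times\zeta)=\mu$ tacitly requires the $|H|$ factor to be folded into $\zeta_H$), while the paper's decomposition by conjugacy class makes the structure of the ergodic components more visible. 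Either route works; yours is marginally more direct and the multiplicity calculation you give is exactly the point that needs checking.
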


\begin{proof}
Let $\cC$ be a collection of finite subgroups of $\G$ representing the conjugacy classes of finite subgroups of $\G$. That is, if $H \le \G$ is finite then there is a unique $H_0 \in \cC$ such that $H$ is conjugate to $H_0$.

For $H\in \cC$, let $X_H$ be the set of all $x\in X$ with stabilizer conjugate to $H$. Ignoring a measure zero set we have that $X$ is the disjoint union of $X_H$ over all $H\in \cC$. 

Because the action $\a$ is such that a.e.\ ergodic component is essentially transitive with finite stabilizers, there are standard $\s$-finite measure spaces $(Z_H,\zeta_H)$  for $H\in \cC$ such that $\a$ is measurably conjugate to the disjoint union of $\G \cc (\G/H\times  Z_H, c_{\G/H} \times \zeta_H)$ over $H\in \cC$. So without loss of generality we may assume $(X,\mu)$ is the direct sum of the measure spaces  $(\G/H \times Z_H,c_{\G/H}\times \zeta_H)$ over $H \in \cC$.

Let $(Z,\zeta)$ be the direct sum of the measure spaces $(Z_H,\zeta_H)$. By Lemma \ref{L:regular2}, the action $\G \cc (\G\times Z,c_\G\times \zeta)$ is limit-regular. 

Define the factor map $\Phi:\G \times Z \to X$ by $\Phi(g,z)=(gH,z)$ if $z\in Z_H$. Then $\Phi_*(c_\G\times \zeta)=\mu$. Because limit-regularity is closed under measure-preserving factors, this implies the lemma.
\end{proof}



\begin{proof}[Proof of Theorem \ref{T:limit-regular}]

Since every regular action is amenable, all limit-regular actions are limit-amenable. So it suffices to prove the converse. Since every limit-amenable action is a limit of amenable actions, we may assume $\a=G\cc (X,\mu)$ is amenable and prove that it is limit-regular. 

Let $\G \cc (Z,\zeta)$ be an essentially free pmp action. Then the product action $\G \cc (X\times Z,\mu\times \zeta)$ is essentially free. It is also amenable because extensions of amenable actions are amenable. If the product action is limit regular then the original action is limit regular. This is because limit-regularity is preserved under measure-preserving factor maps. So without loss of generality, we may assume that our original action $\G \cc (X,\mu)$ is amenable and essentially free.

Let $A$ be a finite set and $\phi:X \to A_*=A\cup\{*\}$ be a measurable map with $0<\mu(\phi^{-1}(A))<\infty$. Define $\phi^\G:X \to A_*^\G$ by
$$\phi^\G(x)(g)=\phi(g^{-1}x).$$
Then $\phi^\G$ is $\G$-equivariant. Moreover, $\phi^\G_*\mu$ is a Radon measure on $A_*^\G \setminus \{*^\G\}$ where $*^\G \in A_*^\G$ is the constant map $*^\G(g)=*$ for all $g\in \G$.

By Lemma \ref{L:regular3} and Remark \ref{R:permanence} it suffices to show there are $\G$-invariant Radon measures $\mu_n$ on $A_*^\G \setminus \{*^\G\}$ satisfying for all $n$:
\begin{enumerate}
    \item almost every ergodic component of $\G \cc (A_*^\G,\mu_n)$ is essentially transitive with finite stabilizers;
        \item $\mu_n(\{x\in A_*^\G:~ x(e) \ne *\})<\infty$;
    \item $\mu_n$ converges vaguely to $\phi^\G_*\mu$  as $n\to\infty$.
\end{enumerate}

Let $\cR=\{(x,gx):~x\in X, g\in \G\}$ be the orbit-equivalence relation. Because the action $\G \cc (X,\mu)$ is amenable, $\cR$ is $\mu$-hyperfinite. So there exist measurable equivalence relations $\cR_1 \subset \cR_2 \subset \cdots \subset \cR$ such that $\cR=\cup_n \cR_n$ and for a.e.\ $x\in X$, the $\cR_n$-class of $x$ has size $2^n$. 

Define $\Phi_n: X \to A_*^G$ by 
$$\Phi_{n}(y)(h) = \phi(h^{-1}y)=\phi^\G(y)(h)$$
if $(y,h^{-1}y)\in \cR_n$. Let $\Phi_{n}(y)(h)=\ast$ otherwise. Because each $\cR_n$ class has size $2^n$,  $\Phi_{n}(y)(h)\in A$ for at most $2^n$ values of $h$. 

The map $\Phi_n$ is not $\G$-equivariant. However, it is  $\cR_n$-invariant in the sense that if $h\in \G$ and $(y,hy) \in \cR_n$ then $\Phi_{n}(hy)=h\cdot \Phi_{n}(y)$.


Let $X_n$ be the set of all $x\in X$ such that there exist $y$ with $(x,y)\in \cR_n$ and $\phi(y)\in A$. Since $\cR_n$ increases to $\cR$, $X_n$ increases to $X$. 

Let 
$$\mu_n = 2^{-n} \sum_{g\in \G} g_*\Phi_{n*}(\mu\resto X_n)$$
where $\mu \resto X_n$ is the restriction of $\mu$ to $X_n$. Let us record some basic facts about $\mu_n$:

{\bf Fact \#1}. We claim $\mu_n(\{*^\G\})=0$. By definition, for $\mu_n$-a.e.\ $y$, there exists $g\in \G$ and $x\in X_n$ such that $y=g\Phi_n(x)$. Because $x\in X_n$, there exists $h\in \G$ such that $(x,h^{-1}x)\in \cR_n$ and $\phi(h^{-1}x)\in A$. Therefore, 
\begin{align*}
    y(gh) = g\Phi_n(x)(gh) = \Phi_n(x)(h) = \phi(h^{-1}x) \in A.
\end{align*}
This proves $y \ne *^\G$. So $\mu_n(\{*^\G\})=0$ as claimed.

{\bf Fact \#2}.  Observe that $\mu_n$ is $\G$-invariant. Indeed if $\nu$ is any measure on $A_*^\G$ then $\sum_{g\in \G} g_*\nu$ is $\G$-invariant.

{\bf Fact \#3}. We claim $\mu_n$ is a Radon measure on $A_*^\G \setminus \{*^\G\}$. We will also show that  $\mu_n(\{x\in A_*^\G:~ x(e) \ne *\})<\infty$.

For $k\in \G$, let $Z_k = \{z\in A_*^\G:~z(k)\ne *\}$. Then $A_*^\G=\cup_{k\in \G} Z_k$. So it suffices to prove $\mu_n(Z_k)<\infty$ for every $k$. Observe that $Z_k=kZ_e$. Since $\mu_n$ is $\G$-invariant, $\mu_n(Z_k)=\mu_n(Z_e)$. So
\begin{align*}
   \mu_n(Z_k)&=\mu_n(Z_e)= \mu_n(\{z\in A_*^\G:~z(e)\ne *\})\\
   &= 2^{-n} \sum_{g\in \G} \mu(\{x\in X_n:~(g\Phi_n(x))(e)\ne *\}) \\
   &=2^{-n} \int_{X_n} \#\{g\in \G:~(g\Phi_n(x))(e)\ne *\}~\dee\mu(x).
\end{align*}
Next we observe that if $x\in X_n$ and $g\in \G$ then $(g\Phi_n(x))(e)=\Phi_n(x)(g^{-1})$ is not equal to $*$ if and only if $(x,gx) \in \cR_n$ and $\phi(gx)\ne *$. Since the $\cR_n$-class of $x$ has cardinality $2^n$, 
$$\#\{g\in \G:~(g\Phi_n(x))(e)\ne *\}\le 2^n.$$
Therefore, $\mu_n(Z_k) \le \mu(X_n)$. It suffices now to prove $\mu(X_n)<\infty$. 

For this, let $F:\cR_n \to \R$ be defined by $F(x,y)=1$ if $\phi(x)\in A$ and $F(x,y)=0$ otherwise. By the Mass Transport Principle,
\begin{align*}
    \int \sum_y F(x,y)~\dee\mu(x) &= \int \sum_y F(y,x)~\dee\mu(x). 
\end{align*}
The left hand side is $2^n \mu(X_0)$ (where $X_0=\{x\in X:~\phi(x)\in A\}$). Indeed if $x\in X_0$ then $\sum_y F(x,y)=2^n$ and if $x\notin X_0$ then $\sum_y F(x,y)=0$.

The right hand side is at least $\mu(X_n)$. Indeed, if $x\in X_n$ then  $\sum_y F(y,x) \ge 1$. So we obtain $2^n \mu(X_0) \ge \mu(X_n)$, which in particular, implies $\mu(X_n)<\infty$ as claimed.

{\bf Fact \#4}. We claim that almost every ergodic component of $\G \cc (A_*^\G,\mu_n)$ is essentially transitive with finite stabilizers. To see this, for $x\in X_n$, let $\nu_x$ be the measure
$$\nu_x = 2^{-n} \sum_{g\in \G} g_*\d_{\Phi_n(x)}.$$
Then the action $\G \cc (A_*^\G,\nu_x)$ is conjugate to the action $\G \cc \G/\Stab_\G(\Phi_n(x))$. The stabilizer $\Stab_\G(\Phi_n(x))$ is finite because the set $\{g\in \G:~x(g)\ne *\}$ is finite and non-empty. Finally, observe that
$$\mu_n = \int \nu_x ~d(\mu \resto X_n)$$
is the ergodic decomposition of $\G \cc (A_*^\G,\mu_n)$. 


{\bf Fact \#5}. Lastly, we claim that $\mu_n$ converges vaguely to $\phi^\G_*\mu$ as $n\to\infty$. First we prove convergence on cylinder sets. So let $W \subset \G$ be a finite set containing the identity such that $W=W^{-1}$. Fix a coloring $\chi:W \to A_*$ with $\chi(e)\in A$. Let $C \subset A_*^\G$ be the cylinder set
$$C=\{x\in A_*^\G:~x(w)=\chi(w)~\forall w \in W\}.$$
We claim that $\lim_{n\to\infty} \mu_n(C) = \phi^\G_*\mu(C)$.

Define $F:\cR_n\to[0,1]$ by $F(x,y)=2^{-n}$ if $(x,y)\in \cR_n$, $y\in X_n$ and $\Phi_n(y)\in C$ and $F(x,y)=0$ otherwise. By the Mass Transport Principle,
\begin{align*}
    \int \sum_{y\cR_n x} F(x,y)~\dee\mu(x) &= \int \sum_{y\cR_n x} F(y,x)~\dee\mu(x). 
\end{align*}
The right-hand side equals $\mu(\{x\in X_n:~ \Phi_n(x)\in C\})$. Indeed if $x\in X_n$ and $\Phi_n(x)\in C$ then $\sum_y F(y,x)=1$. Otherwise, $\sum_y F(y,x)=0$.

So
\begin{align*}
 \mu(\{x\in X_n:~ \Phi_n(x)\in C\}) &=   2^{-n}\int \#\{g\in\G:~ gx\cR_n x \textrm{ and } g\Phi_n(x) \in C\}~\dee(\mu\resto X_n)(x)\\
 &= \mu_n(C).
\end{align*}
It follows that
\begin{align*}
    |\mu_n(C)  - \phi^\G_*\mu(C)|  &\le \mu(\{x\in X_n:~ \Phi_n(x)\in C, \phi^\G(x)\notin C\}) + \mu(\{x\in X_n:~ \Phi_n(x)\notin C, \phi^\G(x)\in C\}).
\end{align*}
Let
\begin{align*}
   Y_n&=\{x\in X:~wx \cR_n x ~\forall w \in W\} \\
   K&=\{x\in X:~\exists w\in W \textrm{ such that }\phi(wx)\ne *\}.
\end{align*}
Observe that if $x\notin K$ then $\Phi_n(x)\notin C$ and $\phi^\G(x)\notin C$. On the other hard, if $x\in Y_n$ then $\Phi_n(x)(w)=\phi^\G(x)(w)$ for all $w\in W$ and therefore, $\Phi_n(x)\in C$ if and only if $\phi^\G(x)\in C$. These observations imply
\begin{align*}
    |\mu_n(C)  - \phi^\G_*\mu(C)|  &\le \mu(\{x\in (X_n \cap K) \setminus Y_n\}).
\end{align*}
However, $Y_n \cap K$ increases to $K$ as $n\to\infty$ because $\cR_n$ increases to $\cR$. Since $K$ has finite $\mu$-measure, it follows that
\begin{align*}
 \lim_{n\to\infty}   |\mu_n(C)  - \phi^\G_*\mu(C)| =0.
\end{align*}
The sigma-algebra of $A_*^\G$ is generated by the action of $\G$ and cylinder sets of the same form as $C$. So this proves that $\mu_n$ converges vaguely to $\phi^\G_*\mu$, which completes the proof of the theorem.
\end{proof}



    

\subsection{Limit amenability and ergodic decomposition}

The main result of this subsection is:
\begin{thm}\label{C:limit-amen-erg-decomp2}
    An imp action $\G \cc (X,\mu)$ is limit-amenable if and only if a.e.\ ergodic component of the action is limit-amenable.
\end{thm}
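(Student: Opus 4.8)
The plan is to prove the two implications separately, working throughout in the universal lcsc model of Lemma \ref{L:finiteness}, so that $\G \cc (X,\mu)$ is a continuous action on an lcsc space, $\mu$ is Radon, weak containment coincides with vague approximation (Theorem \ref{T:Abert-Weiss2}), and we may use the ergodic decomposition $\mu=\int_\Omega \mu_\omega\,d\P(\omega)$ into ergodic $\G$-invariant Radon measures obtained by disintegrating over the $\sigma$-algebra of invariant sets. Since limit-amenability is preserved under vague limits (Remark \ref{R:permanence}) and the vague topology on Radon measures on an lcsc space is metrizable, the set $\mathrm{LA}$ of limit-amenable invariant measures is vaguely closed, hence defines a Borel condition; in particular $G=\{\omega:\mu_\omega\in\mathrm{LA}\}$ is measurable.

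For the direction ``a.e.\ component limit-amenable $\Rightarrow$ the action is limit-amenable'' I would argue by integrating witnesses. The key structural input is that the class of amenable imp actions is closed under direct integrals over a standard probability space: hyperfiniteness of the orbit relation and amenability of stabilizers both hold for the integrated action precisely when they hold on a.e.\ fiber (see \S\ref{S:mer}). Fix test data $A,\phi,F,\eps$; since $\mu(\phi^{-1}(A))<\infty$ we have $\mu_\omega(\phi^{-1}(A))<\infty$ for a.e.\ $\omega$, and limit-amenability of each component supplies an amenable action $\G\cc(Y_\omega,\nu_\omega)$ and an $A$-finite map $\psi_\omega$ matching the pair-correlations of $\mu_\omega$ to within $\eps$. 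Choosing these witnesses measurably in $\omega$ (via a Kuratowski--Ryll-Nardzewski selection, after fixing standard Borel structures on the relevant spaces) and forming the direct integral $\G\cc(Y,\nu)=\int Y_\omega\,d\P$ with the glued map $\psi$, one obtains an amenable action for which, since $\P$ is a probability measure and correlations integrate,
\[
\sum_{a,b,f}\Big|\mu(\cdots)-\nu(\cdots)\Big|\ \le\ \int_\Omega \sum_{a,b,f}\big|\mu_\omega(\cdots)-\nu_\omega(\cdots)\big|\,d\P(\omega)\ <\ \eps .
\]
Thus $\G\cc(X,\mu)$ is weakly contained in the amenable class.

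For the converse I would first isolate a clean permanence lemma: \emph{limit-amenability passes to invariant subsets.} Indeed, if $Y\subseteq X$ is $\G$-invariant and $\phi:Y\to A_*$ is $(\mu\resto Y,A)$-finite, extend $\phi$ by $*$ on $X\setminus Y$; because $Y$ is invariant, every pair-correlation of the extended map on $(X,\mu)$ whose first coordinate lies in $A$ equals the corresponding correlation of $\phi$ on $(Y,\mu\resto Y)$, so any amenable witness for $(X,\mu)$ restricts to one for $(Y,\mu\resto Y)$. Granting this, suppose $\P(\Omega\setminus G)>0$ and set $\rho=\mu\resto X_{\Omega\setminus G}=\int_{\Omega\setminus G}\mu_\omega\,d\P$, which is limit-amenable by the lemma and all of whose ergodic components fail to be limit-amenable. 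It therefore suffices to prove the base case: \emph{a nonzero limit-amenable action has a positive-measure set of limit-amenable ergodic components.}

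I expect this base case to be the main obstacle. The naive approach---approximate the aggregate $\rho$ by a single amenable action and read off approximants to the components from \emph{its} ergodic decomposition---fails, since matching the correlations of the barycenter $\int\mu_\omega\,d\P$ does not match them fiberwise (averaging can wash out a uniform fiberwise defect). I would instead disintegrate the approximating sequence itself: writing $\rho=\lim_n\rho_n$ vaguely with each $\rho_n$ essentially transitive with finite stabilizers (the explicit form of limit-regular approximants, via Theorem \ref{T:limit-regular} together with the description of factors of regular actions), one shows that a.e.\ ergodic component of $\rho$ arises as a vague limit of such transitive measures through a generic-point/Portmanteau argument carried out measurably in $\omega$. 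Since $\mathrm{LA}$ is vaguely closed and contains every essentially transitive finite-stabilizer measure, this places a.e.\ component of $\rho$ in $\mathrm{LA}$, contradicting the choice of $\rho$ and forcing $\P(\Omega\setminus G)=0$. Making the fiberwise extraction of components from the single global sequence $(\rho_n)$ both convergent and measurable is the technical heart of the argument.
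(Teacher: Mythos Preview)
Your forward implication via measurable selection of amenable witnesses and direct integration is a workable route, though heavier than necessary. The paper instead approximates the decomposition measure $\zeta$ weakly by finitely supported probability measures $\zeta_n$; the barycenters $\mu_n=\int\nu\,d\zeta_n(\nu)$ then converge vaguely to $\mu$, and each $\mu_n$ is a finite sum of limit-regular measures, hence itself limit-regular by summing approximating sequences. This avoids the Kuratowski--Ryll-Nardzewski apparatus entirely.

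The genuine gap is in your reverse implication. You correctly isolate the restriction lemma (this is Lemma~\ref{L:abs-continuity}) and correctly reduce to the base case: a nonzero limit-amenable action must have a positive-measure set of limit-amenable ergodic components. But your proposed resolution --- disintegrate the global approximants $(\rho_n)$ and show a.e.\ component of $\rho$ is a vague limit of essentially-transitive pieces --- is not an argument. The ergodic decompositions of $\rho_n$ and of $\rho$ bear no a priori relation; a vaguely convergent sequence of measures has no reason to decompose fiberwise into convergent sequences tracking the limit's components. You yourself flag this as ``the technical heart'' without supplying it.

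The paper's replacement is a short convexity/openness argument that avoids fiberwise tracking altogether. The key preliminary (Proposition~\ref{P:continuity-observables}) is that weak containment can be tested using only \emph{continuity observables}: maps $\phi$ for which each $\phi^{-1}(a)$ is a pre-compact $\mu$-continuity set. For such $\phi$, each correlation $\nu\mapsto\nu(\phi^{-1}(a)\cap f^{-1}\phi^{-1}(b))$ is vaguely continuous by Portmanteau. Now suppose some $\mu_0$ in the support of $\zeta$ is not limit-amenable; choose a continuity observable $\phi$, finite $F$, and $\eps>0$ so that $C_{\phi,\mu_0}$ is $(F,\eps)$-separated from every amenable correlation. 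The set
\[
O=\bigl\{\nu:\ |C_{\phi,\nu}(a,b,f)-C_{\phi,\mu_0}(a,b,f)|<\eps/2\ \text{for all }a,b,f\bigr\}
\]
is vaguely open (continuity), convex (linear in $\nu$), contains $\mu_0$, and by the triangle inequality every $\nu\in O$ is $(F,\eps/2)$-separated from amenable. Since $\mu_0\in\operatorname{supp}\zeta$, $\zeta(O)>0$; the barycenter $\int_O\nu\,d\zeta(\nu)$ lies in $O$ by convexity, yet is limit-amenable by the restriction lemma. This contradiction forces a.e.\ component to be limit-amenable. The point you are missing is that non-limit-amenability, witnessed through continuity observables, is a vaguely \emph{open} and \emph{convex} condition, which lets the restriction lemma do all the work.
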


We begin by proving the backwards direction. 
\begin{lem}\label{L:limit-ergodic1}
   If a.e.\ ergodic component of an imp action $\G \cc (X,\mu)$ is limit-amenable then the action  is limit-amenable.
\end{lem}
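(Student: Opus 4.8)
The plan is to reduce the whole statement to the finite-dimensional correlation criterion of Theorem~\ref{T:general-correlations} and then exploit convexity, thereby bypassing any measurable selection of approximating systems. Fix a finite color set $A$, a $(\mu,A)$-finite map $\phi:X\to A_*$, a finite $F\subset\G$ (without loss of generality $e\in F=F^{-1}$), and $\eps>0$. Set $B=\{a\in A_*^F:~a(e)\ne *\}$, a \emph{finite} set. For any $\G$-invariant measure $\rho$ and map $\psi:Y\to A_*$, note that $\psi^F(y)(e)=\psi(y)$, so $\|\psi^F_*\rho\|_F$ is the $\ell^1$-norm of the restriction $\Pi(\psi^F_*\rho)\in\R^B_{\ge 0}$ of $\psi^F_*\rho$ to the coordinates in $B$, and it equals $\rho(\{\psi\ne *\})$. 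Thus $\|\phi^F_*\mu-\psi^F_*\nu\|_F=\|\Pi(\phi^F_*\mu)-\Pi(\psi^F_*\nu)\|_1$, and by Theorem~\ref{T:general-correlations}, $\G\cc(X,\mu)$ is limit-amenable precisely when, for all such $\phi,A,F,\eps$, the vector $\Pi(\phi^F_*\mu)$ lies within $\eps$ of some $\Pi(\psi^F_*\nu)$ with $\G\cc(Y,\nu)$ amenable.

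Accordingly, let $\mathcal A=\{\Pi(\psi^F_*\nu):~\G\cc(Y,\nu)\text{ amenable},\ \psi:Y\to A_*\ (\nu,A)\text{-finite}\}\subset\R^B_{\ge 0}$, and the goal is to show $\Pi(\phi^F_*\mu)\in\overline{\mathcal A}$. First I would verify that $\mathcal A$ is convex: rescaling $\nu\mapsto t\nu$ keeps the action amenable and multiplies the vector by $t$, while the disjoint union of two amenable actions is amenable and its correlation vector (with $\psi$ the obvious combined map) is the sum of the two; together these give closure under convex combinations, so $\overline{\mathcal A}$ is a closed convex subset of the finite-dimensional space $\R^B$. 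Next I would invoke the ergodic decomposition $\mu=\int_\Omega\mu_\omega\,d\P(\omega)$, with $\P$ a probability measure on the space $\Omega$ of ergodic components, which yields, coordinatewise on the finite set $B$,
$$\Pi(\phi^F_*\mu)=\int_\Omega \Pi(\phi^F_*\mu_\omega)\,d\P(\omega).$$
Since $\int_\Omega \mu_\omega(\{\phi\ne *\})\,d\P(\omega)=\mu(\{\phi\ne *\})<\infty$, for $\P$-a.e.\ $\omega$ the map $\phi$ is $(\mu_\omega,A)$-finite; as each $\G\cc(X,\mu_\omega)$ is limit-amenable by hypothesis, applying Theorem~\ref{T:general-correlations} componentwise gives $\Pi(\phi^F_*\mu_\omega)\in\overline{\mathcal A}$ for a.e.\ $\omega$.

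Finally, $\Pi(\phi^F_*\mu)$ is the barycenter of the probability measure $\P$ pushed forward to $\R^B$, and this barycenter is finite since $\int_\Omega\|\Pi(\phi^F_*\mu_\omega)\|_1\,d\P(\omega)=\mu(\{\phi\ne *\})<\infty$. A closed convex set in a finite-dimensional space contains the barycenter of any probability measure supported on it—one simply writes $\overline{\mathcal A}$ as the intersection of the closed half-spaces containing it and integrates each defining linear inequality against $\P$—so $\Pi(\phi^F_*\mu)\in\overline{\mathcal A}$. Since $\phi,A,F,\eps$ were arbitrary, Theorem~\ref{T:general-correlations} then gives that $\G\cc(X,\mu)$ is weakly contained in the amenable actions, i.e.\ limit-amenable. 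I expect the one genuinely delicate point—the main obstacle—to be setting up the ergodic decomposition for an infinite $\s$-finite imp action and justifying the coordinatewise identity for $\Pi(\phi^F_*\mu)$ together with the a.e.\ $(\mu_\omega,A)$-finiteness (using scale-invariance of limit-amenability to absorb the normalization ambiguity inherent in decomposing infinite measures); the convexity-and-barycenter step is precisely what lets the argument avoid any measurable choice of amenable approximants.
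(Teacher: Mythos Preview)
Your argument is correct and takes a genuinely different route from the paper's. The paper works topologically: it models $(X,\mu)$ on an lcsc space, invokes the ergodic decomposition $\mu=\int\nu\,d\zeta(\nu)$ as a probability measure $\zeta$ on the space $\Radon(\G,X,X_0)$ of invariant Radon measures, approximates $\zeta$ by finitely supported $\zeta_n$ in the vague topology, then uses that the barycenter map is vague-continuous so that $\mu_n=\int\nu\,d\zeta_n(\nu)\to\mu$ vaguely, and finally shows each $\mu_n$ is limit-amenable by approximating each of its finitely many ergodic components by factors of regular actions (via Theorem~\ref{T:limit-regular}) and summing. Your proof instead stays at the level of the finite-dimensional correlation vectors in $\R^B$: you observe that the achievable vectors $\mathcal A$ form a convex cone (scaling and disjoint unions of amenable actions), hence $\overline{\mathcal A}$ is closed convex, and then the ergodic decomposition expresses $\Pi(\phi^F_*\mu)$ as a barycenter of vectors a.e.\ in $\overline{\mathcal A}$, which must therefore lie in $\overline{\mathcal A}$.

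What each buys: your approach is more elementary—it avoids the detour through limit-regularity (Theorem~\ref{T:limit-regular}), avoids topologizing the space of invariant Radon measures, and sidesteps continuity of the barycenter map; it also needs no measurable selection of approximants, as you note. The paper's approach, on the other hand, stays closer to the vague-limit definition of limit-amenability and exhibits $\mu$ explicitly as a vague limit of limit-amenable measures on the same space, which fits the permanence framework of Remark~\ref{R:permanence} more directly. Your acknowledged ``delicate point'' about the ergodic decomposition of infinite $\sigma$-finite invariant measures is handled in the paper by normalizing on a finite-measure complete section $X_0$ (so $\zeta$ lives on $\Radon(\G,X,X_0)$); for your argument the only thing needed is $\mu=\int\mu_\omega\,d\P(\omega)$ as an identity of measures, plus the observations you already make: a.e.\ $(\mu_\omega,A)$-finiteness of $\phi$ and scale-invariance of limit-amenability.
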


\begin{proof}

    Let $\G\cc(X,\mu)$ and without loss of generality we assume $X$ is locally compact. Suppose that a.e.~ergodic component is limit-amenable.
    By the Ergodic Decomposition theorem, there exists a Borel probability measure $\mu$ on $\Radon(\G, X, X_0)$ such that $\z$-a.e.\ $\nu$ is ergodic and $\mu = \int\nu~d\z(\nu)$.

    Since $\z$ is a Radon-measure and $X$ is Polish, we can approximate $\z$ via a sequence $\z_n$ of measures on $Z$ such that each $\z_n$ has finite support and $\z_n\to \z$ vaguely. Since the barycenter map is continuous in the vague topology, setting $\mu_n = \int_Z \nu~d\z_n(\nu)$ gives us a sequence of Radon measures such that $\mu_n \to \mu$ vaguely. Additionally, each $\mu_n$ has finitely many ergodic components. Since limit-amenability is preserved under taking further limits, it is sufficient to show that each $\mu_n$ is limit-amenable.

    Since $\mu_n$ is supported on finitely many ergodic components, we can write $\mu_n = \sum_{i=1}^{N_n}c_{n,i}\nu_{n,i}$ where $N_n$ is some finite number depending on $n$. Each $\nu_{n,i}$ is limit-amenable (and hence limit-regular) by assumption. Thus there exists a sequence $\big(\nu^{(m)}_{n,i}\big)_{m=1}^\infty$ of measures converging to $\nu_{n,i}$ which are each factors of regular actions. Thus the finite sum $\mu_n^{(m)} = \sum_{i=1}^{N_n}c_{n,i}\nu^{(m)}_{n,i}$ is a factor of an regular action, and $\mu_n^{(m)} \to \mu_n$ vaguely. 
    Hence each $\mu_n$ is limit-amenable, as desired. 
\end{proof}



To prove the converse direction,  it will be helpful to have a general result which shows that we can require the sets $\phi^{-1}(a)$ to be continuity sets.

\begin{defn}
    Let $\G \cc (X,\mu)$ be an imp action, $A$ a finite set, $\phi:X \to A_*$ a $(A,\mu)$-finite map. Define the correlation function $C_{\mu,\phi}:A \times A_* \times \G \to [0,1]$ by
    $$C_{\mu,\phi}(a,b,f) = \mu(\{x\in X:~ \phi(x)=a \textrm{ and } \phi(fx) = b\}).$$
\end{defn}

\begin{defn}
    A map $\phi:X \to A_*$ is a {\bf $\mu$-continuity observable} if it is $(A,\mu)$-finite and $\phi^{-1}(a)$ is a pre-compact $\mu$-continuity set for all $a\in A$.
\end{defn}

\begin{prop}\label{P:continuity-observables}
    $\G \cc (X,\mu)$ is weakly contained in a class $\cC$ of actions if and only if: for every $\mu$-continuity observable $\phi: X \to A_*$, finite $F \subset \G$ and $\eps>0$, there exists an action $\G \cc (Y,\nu)$ in $\cC$ and $(A,\nu)$-finite map $\psi:Y \to A_*$ satisfying
 $$|C_{\phi,\mu}(a,b,f) - C_{\psi,\nu}(a,b,f)|<\eps$$
for all $(a,b,f) \in A \times A_*\times F$.    
\end{prop}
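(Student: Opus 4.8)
The forward implication is essentially immediate. A $\mu$-continuity observable is in particular a $(\mu,A)$-finite map, so if $\G \cc (X,\mu)$ is weakly contained in $\cC$, then applying the definition of weak containment to $\phi$, $F$ and $\eps$ produces an action $\G \cc (Y,\nu)$ in $\cC$ and a $(\nu,A)$-finite map $\psi$ for which the \emph{sum} of the correlation discrepancies over $A\times A_*\times F$ is below $\eps$; since all summands are nonnegative, each individual term is then below $\eps$. So the plan is to concentrate on the converse.

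For the converse I would first invoke Lemma \ref{L:finiteness} to replace $\G \cc (X,\mu)$ by a measurably conjugate action on the lcsc space $\cU$ carrying a $\G$-invariant Radon measure. Since correlation functions are invariants of measurable conjugacy this loses no generality, and it makes the notion of a continuity observable available. Now fix an arbitrary $(\mu,A)$-finite map $\phi:X\to A_*$, a finite $F\subset\G$ (which I may enlarge so that $e\in F$) and $\eps>0$. The key step is to approximate $\phi$ by a $\mu$-continuity observable. This is precisely the construction already carried out inside the proof of Theorem \ref{T:Abert-Weiss2}(2): using that the pre-compact $\mu$-continuity sets are dense in the measure algebra of an lcsc space with a Radon measure, one obtains a measurable $\phi_1:X\to A_*$ with each level set $\phi_1^{-1}(a)$ a pre-compact $\mu$-continuity set and with $\mu(\{x:\phi(x)\ne\phi_1(x)\})$ as small as desired, say below $\delta$.

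The next step is a routine stability estimate for correlations under small perturbations. Writing $D=\{x:\phi(x)\ne\phi_1(x)\}$, for any $a\in A$, $b\in A_*$ and $f\in F$ the symmetric difference of the sets $\{x:\phi(x)=a,\ \phi(fx)=b\}$ and $\{x:\phi_1(x)=a,\ \phi_1(fx)=b\}$ is contained in $D\cup f^{-1}D$; since the action is measure-preserving, $\mu(f^{-1}D)=\mu(D)<\delta$, so both pieces have measure below $\delta$ and hence
$$|C_{\mu,\phi}(a,b,f)-C_{\mu,\phi_1}(a,b,f)|<2\delta.$$
Because $\phi_1$ is a $\mu$-continuity observable, I may now apply the hypothesis to $\phi_1$, $F$ and a parameter $\eps'$ to obtain an action $\G \cc (Y,\nu)$ in $\cC$ and an $(A,\nu)$-finite map $\psi$ with $|C_{\mu,\phi_1}(a,b,f)-C_{\mu,\psi}(a,b,f)|<\eps'$ for every $(a,b,f)\in A\times A_*\times F$. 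The triangle inequality then gives $|C_{\mu,\phi}(a,b,f)-C_{\mu,\psi}(a,b,f)|<2\delta+\eps'$ for each such triple. Summing over the finitely many triples in $A\times A_*\times F$ converts this into the sum-norm estimate demanded by the definition of weak containment; choosing $\delta$ and $\eps'$ so that $|A|\,(|A|+1)\,|F|\,(2\delta+\eps')<\eps$ completes the argument.

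The only genuinely technical ingredient is the continuity-observable approximation $\phi_1$, which is where the lcsc/Radon structure is essential: one needs simultaneously that the level sets can be taken pre-compact and that their boundaries are null. Since exactly this approximation is already established within the proof of Theorem \ref{T:Abert-Weiss2}, I expect no real obstacle here. Everything else is triangle-inequality bookkeeping together with the elementary passage from per-correlation bounds to the summed bound, both of which are harmless given the finiteness of $A$ and $F$.
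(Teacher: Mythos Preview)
Your proposal is correct and follows essentially the same approach as the paper: approximate the given $(\mu,A)$-finite map by a $\mu$-continuity observable, bound the correlation discrepancy by $2\delta$ via the symmetric-difference argument, apply the hypothesis, and conclude by the triangle inequality. The only cosmetic differences are that you explicitly invoke Lemma~\ref{L:finiteness} to place $X$ in an lcsc model (the paper leaves this implicit) and that you spell out the passage from per-triple bounds to the summed bound in the definition of weak containment; also note the typo $C_{\mu,\psi}$ where $C_{\nu,\psi}$ is intended.
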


\begin{proof}
$\G \cc (X,\mu)$ is weakly contained in $\cC$ then the conclusion is immediate. So we prove the converse. 

Let $\phi:X \to A_*$ be $(A,\mu)$-finite, $F \subset \G$ be finite, $\eps>0$. Let $\delta>0$ (to be chosen later). 

For every $a\in A$, there exists a pre-compact $\mu$-continuity set $K_a\subset X$ such that $\mu(\phi^{-1}(a)\vartriangle K_a)<\delta$. After replacing $K_a$ with $K_a \setminus \cup_{b \ne a} K_a$ if necessary, we may assume that the sets $\{K_a\}_{a\in A}$ are pairwise disjoint. 

Define $\hphi:X \to A_*$ by 
\begin{displaymath}
    \hphi(x) = 
    \begin{cases}
        a & x\in K_a \\
        * &  x \notin \cup_{a\in A} K_a.
    \end{cases}
\end{displaymath}
Then $\hphi$ is a $\mu$-continuity observable. By hypothesis, there exists an action $\G \cc (Y,\nu)$ in $\cC$ and $(A,\nu)$-finite map $\psi:Y \to A_*$ satisfying
 $$|C_{\hphi,\mu}(a,b,f) - C_{\psi,\nu}(a,b,f)|<\delta$$
for all $(a,b,f) \in A \times A_*\times F$.  

Observe that
\begin{align*}
    |C_{\hphi,\mu}(a,b,f) - C_{\phi,\mu}(a,b,f)| &= |\mu(\hphi^{-1}(a) \cap f^{-1}\hphi^{-1}(b)) - \mu(\phi^{-1}(a) \cap f^{-1}\phi^{-1}(b))| \\
    &= |\mu(K_a \cap f^{-1}K_b) - \mu(\phi^{-1}(a) \cap f^{-1}\phi^{-1}(b))| \le 2\delta.
\end{align*}
By the triangle inequality,
$$|C_{\phi,\mu}(a,b,f) - C_{\psi,\nu}(a,b,f)|<3\delta$$
for all $(a,b,f) \in A \times A_*\times F$. Choose $\delta < \eps/3$. Because $\phi, F, \eps$ are arbitrary, this shows $\G \cc (X,\mu)$ is weakly contained in the class $\cC$.
\end{proof}

The next lemma shows that limit-amenability passes to restrictions. This will be used in the general case. 

\begin{lem}\label{L:abs-continuity}
    Let $\G \cc (X,\mu)$ be a limit-amenable imp action. Suppose there is a $\G$-invariant measurable set $Y \subset X$ with positive measure and $\nu$ is defined by $\nu(E)=\mu(E\cap Y)$ for all $E \subset X$. Then $\G \cc (X,\nu)$ is limit-amenable.
\end{lem}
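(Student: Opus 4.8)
The plan is to prove limit-amenability of $\G \cc (X,\nu)$ via its weak-containment characterization: by the remark following the definition of limit-amenability (which rests on Theorem \ref{T:Abert-Weiss2}), it suffices to show that $\G \cc (X,\nu)$ is weakly contained in the class of amenable actions. First I would note that $\nu$ is a legitimate $\G$-invariant $\s$-finite measure: since $Y$ is $\G$-invariant and $\mu$ is $\G$-invariant, for $g\in\G$ and measurable $E$ we have $\nu(gE)=\mu(gE\cap Y)=\mu\big(g(E\cap Y)\big)=\mu(E\cap Y)=\nu(E)$, and $\nu\ne 0$ because $Y$ has positive measure. Thus $\G\cc(X,\nu)$ is a measure-preserving action and the weak-containment definition applies to it directly.

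The key device is to transport an observable on $(X,\nu)$ to one on $(X,\mu)$ by restricting the marking to $Y$. Given a finite set $A$, a $(\nu,A)$-finite measurable map $\phi:X\to A_*$, a finite $F\subset\G$ and $\eps>0$, define $\tilde\phi:X\to A_*$ by $\tilde\phi(x)=\phi(x)$ for $x\in Y$ and $\tilde\phi(x)=*$ otherwise. Then $\tilde\phi$ is $(\mu,A)$-finite, since $\mu(\{x:\tilde\phi(x)\ne *\})=\mu(\phi^{-1}(A)\cap Y)=\nu(\phi^{-1}(A))<\infty$ by the $(\nu,A)$-finiteness of $\phi$.

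The main step is to verify the correlation identity $C_{\mu,\tilde\phi}(a,b,f)=C_{\nu,\phi}(a,b,f)$ for every $a\in A$, $b\in A_*$ and $f\in F$. This is exactly where the $\G$-invariance of $Y$ is indispensable: if $\tilde\phi(x)=a\in A$ then $x\in Y$, and invariance of $Y$ forces $fx\in Y$, so $\tilde\phi(fx)=\phi(fx)$. Consequently
$$\{x:\tilde\phi(x)=a,\ \tilde\phi(fx)=b\}=\{x\in Y:\phi(x)=a,\ \phi(fx)=b\},$$
and taking $\mu$-measure of both sides, together with the definition $\nu(E)=\mu(E\cap Y)$, yields the claimed equality of correlations.

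Finally I would invoke the hypothesis. Since $\G\cc(X,\mu)$ is limit-amenable it is weakly contained in the amenable actions, so applying the definition of weak containment to the data $\tilde\phi,F,\eps$ produces an amenable action $\G\cc(Z,\lambda)$ and a $(\lambda,A)$-finite map $\psi$ with $\sum_{a\in A}\sum_{b\in A_*}\sum_{f\in F}\big|C_{\mu,\tilde\phi}(a,b,f)-C_{\lambda,\psi}(a,b,f)\big|<\eps$. By the correlation identity this is the same as $\sum_{a\in A}\sum_{b\in A_*}\sum_{f\in F}\big|C_{\nu,\phi}(a,b,f)-C_{\lambda,\psi}(a,b,f)\big|<\eps$, which is precisely the estimate witnessing $\G\cc(X,\nu)\prec\{\text{amenable actions}\}$. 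As $\phi,F,\eps$ were arbitrary, $\G\cc(X,\nu)$ is limit-amenable. I do not expect a genuine obstacle in this argument; the one point demanding care is the correlation identity, whose validity depends entirely on $Y$ being $\G$-invariant — without invariance one could have $fx\notin Y$ while $x\in Y$, so that $\tilde\phi(fx)=*\ne\phi(fx)$ and the pairing with $\phi(fx)=b$ would be destroyed.
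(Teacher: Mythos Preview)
Your proposal is correct and follows essentially the same approach as the paper: define $\tilde\phi$ by restricting $\phi$ to $Y$ and sending the complement to $*$, use the $\G$-invariance of $Y$ to establish the correlation identity $C_{\mu,\tilde\phi}=C_{\nu,\phi}$, and then invoke limit-amenability of $\mu$ to produce the amenable approximant. The paper's proof is identical in structure and detail.
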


\begin{proof}
    Let $A$ be a finite set, $\phi:X \to A_*$ be a $(\nu,A)$-finite map, $F \subset \G$ be finite and $\eps>0$.
    
    Define $\hphi:X \to A_*$ by
\begin{displaymath}
    \hphi(x) =
    \begin{cases}
        \phi(x) & x\in Y \\
        * &  x \notin Y
    \end{cases}
\end{displaymath}
    Then 
\begin{align}\label{E:l.a.-abs-cont}
    \nu\big(\{x\in X:~ \phi(x)=a \textrm{ and } \phi(fx)=b\} = \mu\big(\{x\in X:~ \hphi(x)=a \textrm{ and } \hphi(fx)=b\}
\end{align}
for all $a\in A$,$b\in A_*$ and $f\in \G$ because $Y$ is $\G$-invariant and $\nu$ is the restriction of $\mu$ to $Y$.

Because $\mu$ is limit-amenable, there exist an amenable imp action $\G \cc (Z,\zeta)$ and a $(\zeta,A)$-finite measurable map $\psi:Z \to A_*$ such that  
\begin{align*}
    \sum_{a\in A} \sum_{b \in A_*} \sum_{f\in F}& \Big|\mu\big(\{x\in X:~ \hphi(x)=a \textrm{ and } \hphi(fx)=b\}\big)\\
    &- \zeta\big(\{z\in Z:~ \psi(z)=a \textrm{ and } \psi(fz)=b\}\big)\Big|<\eps.
\end{align*} 
By \eqref{E:l.a.-abs-cont},
\begin{align*}
    \sum_{a\in A} \sum_{b \in A_*} \sum_{f\in F}& \Big|\nu\big(\{x\in X:~ \phi(x)=a \textrm{ and } \phi(fx)=b\}\big)\\
    &- \zeta\big(\{z\in Z:~ \psi(z)=a \textrm{ and } \psi(fz)=b\}\big)\Big|<\eps.
\end{align*} 
This implies $\G \cc (X,\nu)$ is limit-amenable as claimed.
\end{proof}

\begin{defn}
    We will say a correlation $C_{\mu,\phi}:A \times A^* \times \G \to [0,1]$ is {\bf $(F,\eps)$-separated from limit-amenable} (where $F \subset \G$ is finite and $\eps>0$) if for every amenable imp action $\G \cc (K,\kappa)$ there does not exist a $(A,\kappa)$-finite map $\psi:K \to A_*$ with 
$$|C_{\phi,\mu}(a,b,f) - C_{\psi,\kappa}(a,b,f)|<\eps$$
for all $(a,b,f) \in A \times A_*\times F$. 
\end{defn}

The next proposition finishes the proof of Theorem \ref{C:limit-amen-erg-decomp2}. 

\begin{prop}\label{P:limit-amenability-to-ergodic-comp}
If $\G \cc (X,\mu)$ is limit-amenable then a.e.\ ergodic component of $\G \cc (X,\mu)$ is limit-amenable.
\end{prop}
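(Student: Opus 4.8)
The plan is to argue by contradiction: assuming a positive-measure family of ergodic components fails to be limit-amenable, I will reduce to a \emph{single} correlation statistic, and then use a separating-hyperplane trick to push a ``separation from amenability'' witness from that family up to a $\G$-invariant restriction of $\mu$, where it contradicts Lemma \ref{L:abs-continuity}.

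\emph{Setup and first reduction.} Fix the ergodic decomposition $\mu=\int\nu\,d\zeta(\nu)$, realized by a $\G$-invariant measurable map $x\mapsto\nu_x$, so that any measurable set of ergodic components pulls back to a $\G$-invariant measurable subset of $X$. Suppose the set of non-limit-amenable components has positive $\zeta$-measure. Using Proposition \ref{P:continuity-observables} together with the density of ``simple'' observables built from a fixed countable $\G$-invariant algebra of finite-$\mu$-measure generating sets, I fix a countable family $\Phi$ of observables sufficient to test limit-amenability of every component. Then ``$\nu$ is not limit-amenable'' is the countable union, over $\phi\in\Phi$, finite $F\subset\G$, and rational $\eps>0$, of the events (measurable in $\nu$) ``$C_{\nu,\phi}$ is $(F,\eps)$-separated from limit-amenable.'' By countable additivity there is a fixed triple $(\phi_0,F_0,\eps_0)$ and a positive-measure set $M$ of components with $C_{\nu,\phi_0}$ being $(F_0,\eps_0)$-separated; shrinking $M$, I may also assume a uniform bound $\nu(\phi_0^{-1}(a))\le R$ for all $a\in A$ (finite for a.e.\ $\nu$ since $\mu(\phi_0^{-1}(a))<\infty$).

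\emph{The cone and a common functional.} Work in $V=\R^{A\times A_*\times F_0}$ and let $S$ be the closure of the set of correlation points $C_{\psi,\kappa}$ realized by amenable imp actions $(K,\kappa)$. Since amenability is preserved under scaling the measure and under disjoint unions, $S$ is a closed convex \emph{cone} containing $0$, and $(F_0,\eps_0)$-separation of $\nu$ says exactly that $\mathrm{dist}_\infty(C_{\nu,\phi_0},S)\ge\eps_0$. For each $\nu\in M$ a Hahn--Banach functional gives $\ell$ in the (separable) polar cone with $\|\ell\|_*\le1$, $\ell\le0$ on $S$, and $\ell(C_{\nu,\phi_0})\ge\eps_0$. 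Picking $\ell_k$ from a fixed countable dense subset of the polar cone within $\eps_0/(2R)$ of $\ell$ yields $\ell_k(C_{\nu,\phi_0})\ge\eps_0/2$; a pigeonhole over the countably many indices produces a positive-measure $M'\subset M$ and one functional $\ell_0$ with $\|\ell_0\|_*\le1$, $\ell_0\le0$ on $S$, and $\ell_0(C_{\nu,\phi_0})\ge\eps_0/2$ for every $\nu\in M'$. Letting $Y'\subset X$ be the $\G$-invariant set corresponding to $M'$ and $\nu'=\mu\resto Y'=\int_{M'}\nu\,d\zeta$, linearity of correlations in the measure gives $C_{\nu',\phi_0}=\int_{M'}C_{\nu,\phi_0}\,d\zeta$, whence $\ell_0(C_{\nu',\phi_0})\ge(\eps_0/2)\zeta(M')>0$. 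Since $\ell_0\le0$ on $S$ and $\|\ell_0\|_*\le1$, this forces $\mathrm{dist}_\infty(C_{\nu',\phi_0},S)\ge(\eps_0/2)\zeta(M')>0$, so $\nu'$ is not limit-amenable. But $\mu$ is limit-amenable and $Y'$ is $\G$-invariant, so Lemma \ref{L:abs-continuity} gives that $\nu'=\mu\resto Y'$ \emph{is} limit-amenable --- a contradiction. Hence a.e.\ component is limit-amenable, which together with Lemma \ref{L:limit-ergodic1} finishes Theorem \ref{C:limit-amen-erg-decomp2}.

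\emph{Main obstacle.} The delicate point is the averaging step: a naive ``distance to $S$'' estimate is useless because this distance is convex in the wrong direction, so an average of points each far from $S$ can land inside $S$ when they sit on opposite sides. The device that rescues the argument is the extra countable reduction to a positive-measure family separated by one \emph{common} linear functional $\ell_0$; linearity then commutes with the ergodic-decomposition average and the separation survives. The remaining technical burden is to make the measurable-selection and density claims precise, so that the relevant sets of ergodic components are genuinely measurable and the countable families $\Phi$ and $\{\ell_k\}$ really suffice.
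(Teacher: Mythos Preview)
Your argument is correct, and like the paper's it ultimately reaches a contradiction with Lemma \ref{L:abs-continuity} by producing a $\G$-invariant restriction of $\mu$ that is not limit-amenable. However, your route is considerably more elaborate than necessary, and the paper's proof shows why.

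The paper works topologically rather than measure-theoretically. It realizes the ergodic decomposition as a probability $\zeta$ on the space $\mathsf{Radon}(\G,X,X_0)$ with the vague topology, picks any $\mu_0$ in the \emph{support} of $\zeta$, and assumes $\mu_0$ is not limit-amenable. Proposition \ref{P:continuity-observables} then supplies a $\mu_0$-continuity observable $\phi$ (so that $\nu\mapsto C_{\nu,\phi}$ is continuous) witnessing $(F,\eps)$-separation. The set $O=\{\nu:\|C_{\nu,\phi}-C_{\mu_0,\phi}\|_\infty<\eps/2\}$ is then open (hence $\zeta(O)>0$ since $\mu_0\in\operatorname{supp}\zeta$), and every $\nu\in O$ is $(F,\eps/2)$-separated by the triangle inequality. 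Crucially, $O$ is convex --- it is an $\ell^\infty$-ball about a fixed point in correlation space --- so the barycenter of $\zeta\resto O$ lies in $O$ and is therefore separated, contradicting Lemma \ref{L:abs-continuity}. This completely bypasses the obstacle you flag: rather than averaging distances to the amenable cone $S$, the paper averages the correlations themselves and uses that the $\ell^\infty$-ball about $C_{\mu_0,\phi}$ is affinely closed under barycenters.

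What your approach buys is independence from the topology on the space of components: you never need the correlation map to be continuous or the support to be meaningful, only that correlations depend measurably on $\nu$. The separating-functional-plus-pigeonhole device is also a useful portable trick. But here the paper's localization to a continuity observable makes all of that machinery unnecessary. Incidentally, your invocation of Proposition \ref{P:continuity-observables} is slightly misdirected --- that proposition gives $\mu$-continuity observables for a \emph{specific} $\mu$, not a universal countable family; what you actually need (and what your parenthetical about a generating algebra of finite-$\mu$-measure sets supplies) is a direct $L^1$-density argument, valid for $\zeta$-a.e.\ $\nu$.
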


\begin{proof}
Without loss of generality, we may assume $X$ is an lcsc space. Let $X_0 \subset X$ be a complete section with $\mu(X_0)<\infty$. For simplicity, we will assume $\mu(X_0)=1$. Let $\mathsf{Radon}(\G,X,X_0)$ be the space of $\G$-invariant Radon measures $\nu$ on $X$ with $\nu(X_0)=1$. We consider $\mathsf{Radon}(\G,X,X_0)$ with the vague topology.

The Ergodic Decomposition Theorem implies the existence of a Borel probability measure $\zeta$ on $\mathsf{Radon}(\G,X,X_0)$ such that $\zeta$-a.e.\ $\nu$ is ergodic and $\mu = \int \nu ~d\zeta(\nu)$ (so $\mu$ is the barycenter of $\zeta)$.

Let $\mu_0$ be in the support of $\zeta$. To obtain a contradiction, suppose that $\mu_0$ is not limit-amenable. By the previous proposition, there exist a finite set $A$, a  $\mu_0$-continuity observable $\phi:X \to A_*$, $\eps>0$ and a finite $F \subset \G$ such that the correlation function $C_{\phi,\mu_0}$ is $(F,\eps)$-separated from limit amenable.


Let $\delta>0$. Let $O$ be the set of all measures $\nu \in \mathsf{Radon}(\G,X,X_0)$ such that $|\nu(\phi^{-1}(a) \cap f^{-1}\phi^{-1}(b) )-\mu_0(\phi^{-1}(a)\cap f^{-1}\phi^{-1}(b) )|<\eps/2$ 
for all $a,b \in A$ and $f \in F$. 

 Because $\phi$ is a $\mu_0$-continuity observable, the set $O$ is open. Since $\mu_0$ is in the support of $\zeta$ this implies $\zeta(O)>0$. Since $\mu_0$ is $(F,\eps)$-separated from limit-amenable, every $\nu \in O$ is $(F,\eps/2)$-separated from limit-amenable. 
 
By definition, $O$ is convex. Let $\nu = \int_{\lambda \in O} \lambda d\zeta(\lambda)$ be the barycenter of the restriction of $\zeta$ to $O$. By the Ergodic Decomposition Theorem, $\nu$ satisfies the hypotheses of Lemma \ref{L:abs-continuity}. Therefore it is limit-amenable. However, $\nu \in O$ and so is  is $(F,\eps/2)$-separated from limit-amenable. This contradiction proves the proposition. 
\end{proof}

\subsection{Direct products}

Here we show that products of limit-amenable action with pmp actions are limit-amenable. 

\begin{thm}\label{T:extensions}
Let $\G \cc (X,\mu)$ be a limit-amenable imp action and $\G \cc (Y,\nu)$ be a pmp action. Then the direct product $\G \cc (X\times Y, \mu \times \nu)$ is limit-amenable.
\end{thm}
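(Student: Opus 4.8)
The plan is to verify the definition of limit-amenability directly for the product, exploiting the fact that limit-amenability is exactly a statement about vague approximation by measure-preserving factors of amenable actions. First I would pass to convenient topological models: realize $\G \cc (Y,\nu)$ as a jointly continuous action on a compact metrizable space, and $\G \cc (X,\mu)$ as a continuous action on an lcsc space $X$ with $\mu$ a $\G$-invariant Radon measure. This is legitimate since limit-amenability is a measure-conjugacy invariant and the diagonal action $\G \cc (X \times Y, \mu \times \nu)$ then lives on the lcsc space $X \times Y$ with $\mu \times \nu$ a $\G$-invariant Radon measure (local finiteness is immediate: any compact $K \subset X \times Y$ is contained in $K_X \times Y$ for some compact $K_X \subset X$, and $(\mu\times\nu)(K_X \times Y) = \mu(K_X) < \infty$).

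Using limit-amenability of $\G \cc (X,\mu)$, I would choose $\G$-invariant Radon measures $\mu_n \to \mu$ vaguely on $X$ such that each $\G \cc (X,\mu_n)$ is a measure-preserving factor of an amenable imp action $\G \cc (W_n,\omega_n)$, with factor map $\Phi_n$. The claim is that the sequence $(\mu_n \times \nu)$ witnesses limit-amenability of the product, which requires two verifications. The first is vague convergence $\mu_n \times \nu \to \mu \times \nu$ on $X \times Y$: since $\nu$ is a probability measure on the compact space $Y$, it suffices to test against functions of the form $g(x)h(y)$ with $g \in C_c(X)$ and $h \in C(Y)$ (these span a dense subspace of $C_c(X\times Y)$ by Stone--Weierstrass), and for such functions the integral factors as $\left(\int g\,d\mu_n\right)\left(\int h\,d\nu\right) \to \left(\int g\,d\mu\right)\left(\int h\,d\nu\right)$.

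The second verification is that each $\G \cc (X \times Y, \mu_n \times \nu)$ is a measure-preserving factor of an amenable action. Here $\Phi_n \times \mathrm{id}_Y : (W_n \times Y, \omega_n \times \nu) \to (X \times Y, \mu_n \times \nu)$ is a $\G$-equivariant measure-preserving factor map, and the source $\G \cc (W_n \times Y, \omega_n \times \nu)$ is amenable because it is a product of the amenable action $\G \cc (W_n,\omega_n)$ with a pmp action. Concretely, the stabilizers $\Stab_\G(w) \cap \Stab_\G(y)$ are subgroups of the amenable stabilizers of the $W_n$-factor, and the orbit equivalence relation of the product is an extension of the hyperfinite relation $\cR_{W_n}$ whose fiberwise relations are the $\Stab_\G(w)$-orbit relations on $(Y,\nu)$, which are amenable; hence the product relation is amenable. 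Together with the vague convergence, these are precisely the defining conditions, so $\G \cc (X\times Y, \mu\times\nu)$ is limit-amenable.

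I expect the main obstacle to be the amenability of the product $\G \cc (W_n \times Y, \omega_n \times \nu)$, i.e.\ that amenability of an imp action is preserved under forming the product with a pmp action. Rather than reprove this from scratch, I would invoke it as the same permanence property (``extensions of amenable actions are amenable'') already used in the proof of Theorem \ref{T:limit-regular}, where the product of an amenable imp action with an essentially free pmp action was asserted to be amenable for exactly this reason. Everything else is routine bookkeeping around Radon-ness, $\G$-invariance, and the factorization of product integrals.
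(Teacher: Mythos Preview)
Your argument is correct and takes a more direct route than the paper's. The paper first upgrades limit-amenability to limit-regularity via Theorem \ref{T:limit-regular}, so that the approximating measures $\mu_n$ are supported on single $\G$-orbits; then it observes that the ergodic components of $\mu_n \times \nu$ are themselves regular, and invokes Theorem \ref{C:limit-amen-erg-decomp2} (limit-amenability is detected on ergodic components) to conclude that each $\mu_n\times\nu$ is limit-amenable. You instead stay with the raw definition: take $\mu_n$ as measure-preserving factors of amenable actions $W_n$, observe that $W_n\times Y$ is an extension of $W_n$ and hence amenable, and conclude that $\mu_n\times\nu$ is a factor of an amenable action. Your route avoids both Theorem \ref{T:limit-regular} and Theorem \ref{C:limit-amen-erg-decomp2} and only needs the permanence fact ``extensions of amenable actions are amenable'', which the paper already invokes verbatim in the proof of Theorem \ref{T:limit-regular}; the paper's route, by contrast, keeps the argument internal to its own machinery and makes the ergodic structure of the approximants completely explicit. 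One small simplification for your vague-convergence step: rather than Stone--Weierstrass plus a uniform bound on $\mu_n(K_X)$, you can just Fubini---for $f\in C_c(X\times Y)$ the function $F(x)=\int_Y f(x,y)\,d\nu(y)$ lies in $C_c(X)$, and $\int f\,d(\mu_n\times\nu)=\int F\,d\mu_n\to\int F\,d\mu$.
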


\begin{remark}
    Arbitrary extensions of amenable actions are amenable, but this remains open for limit-amenable actions.
\end{remark}

\begin{proof}
Without loss of generality, we may assume $X$ is an lcsc space, $Y$ is a compact metrizable space and the actions on $X$ and $Y$ are by homeomorphisms. Moreover, because $\G \cc (X,\mu)$ is limit-amenable (and thus limit-regular by Theorem \ref{T:limit-regular}), there exist coefficients $c_{n}>0$, elements $x_n \in X$ ($n\in \N$) such that if 
$$\mu_n = c_n\sum_{g\in \G} \d_{gx_n}$$
then $\mu_n$ converges vaguely to $\mu$ as $n\to\infty$. Therefore $\mu_n \times \nu$ converges vaguely to $\mu\times \nu$ as $n\to\infty$. So it suffices to show that $\mu_n \times \nu$ is limit-amenable. Each ergodic component of $\mu_n\times \nu$ has the form 
$$c_n\sum_{g\in \G} \d_{gx_n} \times \d_{gy}$$
for some $g\in \G$. In fact the ergodic decomposition of $\mu_n \times \nu$ is 
$$\mu_n \times \nu = \int c_n\sum_{g\in \G} \d_{gx_n} \times \d_{gy}~d\nu(y).$$
So its ergodic components are regular. By Theorem \ref{C:limit-amen-erg-decomp2} this implies $\mu_n \times \nu$ is limit-amenable.
\end{proof} 

\subsection{Finite measure-preserving actions}

\begin{lem}\label{C:non-amen100}
    If $\G \cc (X,\mu)$ is a limit-amenable measure-preserving action and $\mu(X)<\infty$ then $\G$ is amenable.
\end{lem}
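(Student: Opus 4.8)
The plan is to test limit-amenability against the simplest possible observable---a single constant color on all of $X$---and to read off the F{\o}lner criterion for $\G$ inside a regular action. First I would rescale so that $\mu(X)=1$, which is harmless for the amenability of $\G$, and record that since the action is limit-amenable it is weakly contained in the class of amenable actions, hence by Theorem \ref{T:limit-regular} (together with transitivity of weak containment) in the class of \emph{regular} actions. The whole point of passing to regular actions is that they are completely explicit: a regular action is measurably conjugate to $\G \cc (\G, t\cdot c_\G)$ for some scalar $t>0$, so that an $A$-finite observable there is literally a coloring of $\G$ supported on a finite set, and ``almost invariance'' of that set becomes the F{\o}lner condition.

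Concretely, I would fix a finite symmetric $F \subset \G$ with $e \in F$ and $\eps>0$, set $A=\{a\}$, and take $\phi\equiv a$ on $X$; this is $(\mu,A)$-finite precisely because $\mu(X)<\infty$. The only nonzero correlations of $\phi$ are $C_{\phi,\mu}(a,a,f)=1$ for $f\in F$. Applying weak containment against the class of regular actions, I obtain a regular action, which I may take to be $\G \cc (\G, t\,c_\G)$, together with an $A$-finite map $\psi:\G\to A_*$ whose correlations match those of $\phi$ to within $\eps$. Writing $S=\psi^{-1}(a)$---a genuinely finite subset of $\G$, since $\psi$ is $A$-finite against counting measure---the relevant correlations are $C_{\psi,\nu}(a,a,f)=t\,|S\cap f^{-1}S|$ and $C_{\psi,\nu}(a,*,f)=t\,|S\setminus f^{-1}S|$, using the left-regular convention $g\cdot h = gh$.

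Then I would extract the F{\o}lner estimate by direct bookkeeping. The term $f=e$ forces $t|S|\in(1-\eps,1+\eps)$, so in particular $S\neq\emptyset$; and for each $f\in F$ the $(a,*,f)$-term forces $t\,|S\setminus f^{-1}S|<\eps$, whence $|S\setminus f^{-1}S|<\eps|S|/(1-\eps)$. Since $|f^{-1}S|=|S|$, this gives $|S\,\triangle\,f^{-1}S|<2\eps|S|/(1-\eps)$ for every $f\in F$; as $f$ ranges over the symmetric set $F$, so does $f^{-1}$, so $S$ is an $(F,\delta)$-F{\o}lner set with $\delta=2\eps/(1-\eps)$. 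Letting $F$ increase to $\G$ and $\eps\to 0$ produces a F{\o}lner sequence, so $\G$ is amenable.

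The only real content is the observation that the constant observable linearizes into the F{\o}lner condition once one works inside a regular action; everything else is routine estimation. The main points to be careful about are the reduction to regular actions---justifying, via Theorem \ref{T:limit-regular} and transitivity of weak containment, that a finite-measure limit-amenable action is weakly contained in the regular actions even though the theorem is phrased for infinite-measure actions---and the fact that $A$-finiteness of $\psi$ against counting measure genuinely forces the support $S$ to be finite, so that ``F{\o}lner set'' is meaningful and the limit over $F\uparrow\G$ actually yields amenability.
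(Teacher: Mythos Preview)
Your proof is correct and is essentially the paper's argument in different clothing: both invoke Theorem~\ref{T:limit-regular} to pass to regular approximants and then read off a F{\o}lner condition from a finite ``return set'' in $\G$. The paper uses the vague-convergence formulation---choosing a pre-compact $\mu$-continuity set $K$ of nearly full measure and showing via the Portmanteau theorem that the return-time sets $F_n=\Ret(K,x_n)$ are asymptotically F{\o}lner---whereas you work directly with the correlation definition of weak containment against the constant observable, so that the finite set $S=\psi^{-1}(a)$ plays the role of $F_n$; your packaging avoids the Portmanteau step but is otherwise the same idea. Your worry about Theorem~\ref{T:limit-regular} being stated for imp actions is shared by the paper's own proof and is harmless, since nothing in that theorem's proof uses infiniteness of the measure.
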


\begin{proof}
By Theorem \ref{T:limit-regular}, we may assume $X$ is an lcsc space and there exist elements $x_n \in X$ and constants $c_n>0$ such that if 
$$\mu_n = c_n \sum_{g\in \G} \d_{gx_n}$$
then $\mu_n$ converges to $\mu$ vaguely. 

Let $\eps>0$. Let $K$ be a pre-compact $\mu$-continuity set with $\mu(K)>\mu(X)-\eps$. Let $F_n=\Ret(K,x_n)=\{g\in \G:~gx_n \in O\}$ be the return time set. We will show that $\{F_n\}$ is almost a F\o lner sequence for $\G$. 

By definition of $\mu_n$, for any subset $Y \subset X$,
\begin{align*}
\mu_n(Y) &=c_n |\Ret(Y,x_n)|.
\end{align*}
Therefore,
\begin{align*}
\frac{|F_n \cap h F_n|}{|F_n|} &= \frac{|\Ret(K,x_n) \cap h \Ret(K,x_n)|}{|\Ret(K,x_n)|} \\
&= \frac{|\Ret(K,x_n) \cap \Ret(h^{-1}K,x_n)|}{|\Ret(K,x_n|}\\
&= \frac{|\Ret(K \cap h^{-1}K,x_n)|}{|\Ret(K,x_n|}\\
&= \frac{\mu_n(K\cap h^{-1}K)}{\mu_n(K)}
\end{align*}
where the third-to-last equality holds because $h \Ret(K,x_n)= \Ret(h^{-1}K,x_n)$.

Because $K$ is a continuity set, it follows that
\begin{align*}
\lim_{n\to\infty} \frac{|F_n \cap h F_n|}{|F_n|} 
&= \frac{\mu(K\cap h^{-1}K)}{\mu(K)} \ge 1-2\eps.
\end{align*}
Because $\eps$ is arbitrary, it follows that for all finite subsets $H \subset \G$ there exists a finite set $F \subset \G$ such that $\frac{|F \cap hF|}{|F|} > 1-\eps$. This implies $\G$ is amenable.
\end{proof}

\subsection{Exactness}
Exactness of groups is surveyed in \cite{anantharaman-survey}.

\begin{thm}\label{T:exact}
If $\G$ is a countable exact group then every limit-amenable action is amenable. 
\end{thm}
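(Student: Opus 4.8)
The plan is to combine the boundary-amenability characterization of exactness with the explicit limit-regular model of limit-amenable actions furnished by Theorem~\ref{T:limit-regular}. A countable group is exact exactly when it has Property A, equivalently when it admits an amenable action on a compact space. So I would fix once and for all a proper left-invariant word metric on $\G$ and record the Property A data: for every $\eps>0$ and $R>0$ there is a radius $S=S(\eps,R)$ and a map $\xi:\G\to\Prob(\G)$ with $\supp\xi(g)\subset B(g,S)$ and $\|\xi(g)-\xi(h)\|_1<\eps$ whenever $d(g,h)\le R$. The decisive feature of exactness is that $S$ depends only on $\G$ and not on any action.

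First I would reduce to the ergodic case: by Theorem~\ref{C:limit-amen-erg-decomp2} an action is limit-amenable iff a.e.\ ergodic component is, and amenability (amenable stabilizers a.e.\ and hyperfiniteness mod $\mu$) is an a.e.\ condition, hence checked on ergodic components. So assume $\G\cc(X,\mu)$ is ergodic and modelled on an lcsc space. By Theorem~\ref{T:limit-regular} it is limit-regular, so as in the proof of Theorem~\ref{T:extensions} there are $c_n>0$ and points $x_n\in X$ with $\mu_n:=c_n\sum_{g\in\G}\d_{gx_n}\to\mu$ vaguely, each $\mu_n$ supported on a single orbit $\G x_n\cong\G/\Stab_\G(x_n)$ with $\Stab_\G(x_n)$ finite.

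The goal is then to verify the two defining conditions of amenability for $\G\cc(X,\mu)$. For hyperfiniteness I would produce, for each $\eps>0$ and finite $F\subset\G$, an almost-invariant \emph{bounded-range} probability kernel on the $\mu$-orbits: a measurable $x\mapsto\zeta_x\in\Prob(\G x)$ with $\supp\zeta_x$ contained in a ball of a fixed radius and with $\zeta_{fx}$ close to $f_*\zeta_x$ in $\ell^1$ on average over a finite-measure complete section, for all $f\in F$. This is a Zimmer/Reiter witness and yields hyperfiniteness by Connes--Feldman--Weiss. On each approximant such a kernel is easy to build: the transitive finite-stabilizer action $\G/\Stab_\G(x_n)$ has hyperfinite orbit relation, and composing the Property A map $\xi$ with the quotient $\G\to\G x_n$ yields a kernel $\zeta^{(n)}$ whose support has radius $S(\eps,R)$ \emph{independent of $n$} --- the crucial uniformity coming from exactness. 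Since these kernels are supported in balls of a fixed radius, they are precisely the kind of local data that the vague-convergence and Portmanteau machinery of Section~\ref{S:weak-containment} controls; letting $\mu_n\to\mu$ I would pass to a limiting bounded-range almost-invariant kernel for $(X,\mu)$. The stabilizer condition is handled in parallel: the $\Stab_\G(x_n)$ are finite, and the uniform support control should prevent the limiting stabilizer \IRS{} from escaping to a non-amenable subgroup, giving that a.e.\ $\Stab_\G(x)$ is amenable.

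The main obstacle is exactly this transfer across the vague limit. Exactness alone is far too weak: otherwise every action of an exact group would be amenable, which is false (e.g.\ Bernoulli shifts of the free group). What drives the argument is the interplay of two distinct inputs --- limit-amenability supplies genuine approximate \emph{invariance} (the $\mu_n$-actions really are amenable), while exactness supplies \emph{locality} (uniformly bounded support of the witnessing kernels), and only kernels that are simultaneously almost-invariant and uniformly local survive the vague limit. The technical heart is therefore to prove rigorously that a uniformly bounded-range, almost-invariant kernel for $\mu_n$ converges, in the sense that its finitely many local correlations converge, to an honest $\mu$-a.e.\ Reiter witness, together with the bookkeeping needed to keep the stabilizer data amenable in the limit.
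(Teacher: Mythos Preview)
Your instinct is right that exactness furnishes \emph{uniformly local} approximate-invariance data and that limit-amenability furnishes genuine approximate invariance on the approximants $\mu_n$; the theorem lives precisely at the intersection of these two facts. But the step you flag as ``the technical heart''---passing a bounded-range Reiter/Zimmer witness from the $\mu_n$ to $\mu$---is a real gap, not just bookkeeping. Vague convergence $\mu_n\to\mu$ gives weak containment of $(X,\mu)$ in $\{(X,\mu_n)\}$, which lets you approximate $\mu$-observables by $\mu_n$-observables, \emph{not} the reverse; so one cannot simply transport the kernel $\zeta^{(n)}$ (which is only defined on $\operatorname{supp}\mu_n$) to a kernel on $(X,\mu)$ via the machinery of \S\ref{S:weak-containment}. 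One can attempt a compactness argument---lift each $(\mu_n,\zeta^{(n)})$ to a measure on $X\times\Prob(B(e,S))$, extract a vague limit, disintegrate, take barycenters---but verifying that the limiting kernel retains almost-invariance and actually witnesses amenability (and not merely some averaged shadow of it) is exactly the content of the Elek/Schramm hyperfiniteness theorems for Benjamini--Schramm limits of graphs with Property~A. Your sketch does not supply this, and there is no softer route: this passage-to-the-limit \emph{is} the theorem. The stabilizer argument (``uniform support control should prevent escape to a non-amenable subgroup'') is likewise not an argument.

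The paper avoids reproving this by changing models. Via the inverse-return-time map it pushes the action to the concrete shift $\G\cc 2^\G_*$, where the limit-regular approximants are supported on orbits of \emph{finite} subsets $F_n\subset\G$ (Proposition~\ref{P:transfer-to-subsets}). Then, for finitely generated $\G$, it invokes Property~A of the Cayley graph $\Rightarrow$ local hyperfiniteness (Elek), so the family $\{F_n\}$ and its thickenings are hyperfinite in the sense of \cite{MR2967301}; Elek's and Schramm's results then give amenability of the limiting action groupoid directly, which simultaneously handles the orbit relation and the stabilizers (Proposition~\ref{P:exact-fg}). The infinitely generated case is reduced to the finitely generated one by writing $\G$ as an increasing union of finitely generated subgroups, using that exactness passes to subgroups, that limit-regularity restricts to subgroups (Lemma~\ref{L:limits-subgroup}), and that amenability passes to increasing unions of action groupoids. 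If you want to salvage your route, the honest thing is to cite Elek/Schramm for the limit step---at which point your argument and the paper's essentially coincide.
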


\begin{remark}
One can use the main result of \cite{jardónsánchez2025exactnesstopologyspaceinvariant} to prove that if $\G$ is non-exact then there exists a limit-amenable action which is not amenable. We will not need this fact so we leave it to the interested reader.
\end{remark}

First we will reduce the problem to one on the space $2^\G_*$ of non-empty subsets of $\G$ (using a return-times trick). Then we prove the result in the special case that $\G$ is finitely generated before handling the general case.

Let $2^\G$ be the set of all subsets of $\G$ with the product topology. This is a compact metrizable space. Let $2^\G_* \subset 2^\G$ be the set of all non-empty subsets. This is a locally compact second countable space on which $\G$ acts by left-multiplication. Let $\Meas_e(2^\G_*)$ be the space of all $\G$-invariant Radon measures $\nu$ on $2^\G_*$ with the property that $\nu(\{D\subset \G:~e \in D\})<\infty$. 

\begin{prop}\label{P:transfer-to-subsets}
    Let $\G \cc (X,\mu)$ be an imp action. Given a complete measurable section $Y \subset X$ with finite positive measure, define the inverse return time map $\Phi_Y:X \to 2^\G_*$ by
    $$\Phi_Y(x)=\{g \in \G:~ g^{-1}x \in Y\}.$$
    Then $\Phi_Y$ is $\G$-equivariant and $\Phi_{Y*}\mu \in \Meas_e(2^\G_*)$. If $\G \cc (X,\mu)$ is limit amenable then there exists a sequence $(F_n)_{n=1}^\infty$ of finite subsets $F_n \subset \G$ and scalars $t_n>0$ such that if $\zeta_n = t_n \sum_{g\in \G} \d_{gF_n} \in \Meas_e(2^\G_*)$, then $\zeta_n$ converges to $\mu$ in the vague topology as $n\to\infty$.
\end{prop}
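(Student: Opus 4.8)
The plan is to verify the two structural claims directly and then to extract the finite sets $F_n$ from limit-regularity. For $\G$-equivariance, writing the $\G$-action on $2^\G$ as left multiplication, one computes $\Phi_Y(hx)=\{g:~g^{-1}hx\in Y\}=\{hg':~g'^{-1}x\in Y\}=h\Phi_Y(x)$ after the substitution $g=hg'$. Since $Y$ is a complete section, $\Phi_Y(x)\ne\emptyset$ for a.e.\ $x$, so $\Phi_Y$ lands in $2^\G_*$, and $\Phi_{Y*}\mu$ is $\G$-invariant because $\mu$ is. Finally $\Phi_Y^{-1}(\{D:~e\in D\})=Y$, so $\Phi_{Y*}\mu(\{D:~e\in D\})=\mu(Y)<\infty$; applying the same computation to the translates $\{D:~g\in D\}=g\{D:~e\in D\}$ and using $\G$-invariance shows $\Phi_{Y*}\mu$ is finite on every cylinder $\{D:~S\cap D\ne\emptyset\}$ with $S\subset\G$ finite. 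Since compact subsets of $2^\G_*$ sit inside such cylinders, $\Phi_{Y*}\mu$ is locally finite, hence Radon (as in Lemma \ref{L:finiteness}), so $\nu:=\Phi_{Y*}\mu\in\Meas_e(2^\G_*)$.

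For the approximation I would first observe that $\Phi_Y$ is a measure-preserving factor map onto $\G\cc(2^\G_*,\nu)$, so by Remark \ref{R:permanence} that action is again limit-amenable, and hence limit-regular by Theorem \ref{T:limit-regular}. Unwinding the definition, a measure-preserving factor of a single regular action $(\G,t\,c_\G)$ is exactly the single-orbit measure $t\sum_{g\in\G}\d_{gx}$, where $x$ is the image of the identity. Thus limit-regularity supplies constants $c_n>0$ and points $D_n\in 2^\G_*$ for which $\nu_n:=c_n\sum_{g\in\G}\d_{gD_n}$ converges vaguely to $\nu$; this is the same consequence of Theorem \ref{T:limit-regular} already used in the proofs of Theorem \ref{T:extensions} and Lemma \ref{C:non-amen100}.

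The remaining, and main, point is to force these approximating orbits to be orbits of \emph{finite} sets, which is precisely where the $\Meas_e$ condition is spent. The cylinder $\{D:~e\in D\}$ is clopen, hence a continuity set for every measure, and relatively compact in $2^\G_*$, so vague convergence gives $\nu_n(\{D:~e\in D\})\to\nu(\{D:~e\in D\})=\mu(Y)$. On the other hand $\d_{gD_n}(\{D:~e\in D\})=1$ iff $g^{-1}\in D_n$, so $\nu_n(\{D:~e\in D\})=c_n|D_n|$. Therefore $c_n|D_n|\to\mu(Y)\in(0,\infty)$, which forces $|D_n|<\infty$ for all sufficiently large $n$. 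Setting $F_n:=D_n$ and $t_n:=c_n$ for such $n$ then yields finite subsets with $\zeta_n=t_n\sum_{g\in\G}\d_{gF_n}=\nu_n\to\nu$ vaguely, as required. I expect this last step to be the crux: limit-regularity only produces single-orbit approximants of a priori infinite subsets of $\G$, and it is exactly the normalization $\nu(\{D:~e\in D\})=\mu(Y)<\infty$ inherent in $\Meas_e(2^\G_*)$ that rules out infinite $D_n$ and delivers the finite sets $F_n$ demanded by the statement.
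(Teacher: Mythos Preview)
Your proof is correct and takes a genuinely different route from the paper. The paper works on the original space $(X,\mu)$: it encodes $Y$ by the observable $\phi=1_Y:X\to\{*,1\}$ and invokes Theorem~\ref{T:general-correlations} to obtain, for each finite $F\subset\G$ and $\eps>0$, a regular action $(\G,t\,c_\G)$ and an $A$-finite map $\psi$ with $\|\phi^F_*\mu-\psi^F_*\nu\|_F<\eps$; the finite set is then $F_n=\{g:\psi(g)=1\}$, whose finiteness comes directly from the $(\nu,A)$-finiteness built into the definition of weak containment, and the identification $A_*^\G\cong 2^\G$ transports everything to $2^\G_*$. You instead push the action forward to $2^\G_*$ via $\Phi_Y$ first, use Remark~\ref{R:permanence} to carry limit-amenability along that factor map, and then extract single-orbit approximants from limit-regularity on $2^\G_*$ directly (as the paper itself does in Theorem~\ref{T:extensions} and Lemma~\ref{C:non-amen100}). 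Your Portmanteau argument for finiteness of $D_n$ is valid, though it can be shortened: since the definition of limit-regular already requires each $\nu_n$ to be Radon on $2^\G_*$, and $\{D:e\in D\}$ is compact, one has $c_n|D_n|=\nu_n(\{D:e\in D\})<\infty$ for \emph{every} $n$, with no need to pass to the limit. Your route is more streamlined and avoids the multi-correlations machinery; the paper's route stays closer to the primitive weak-containment definition and makes the finiteness of $F_n$ appear as a direct shadow of the $A$-finiteness constraint there.
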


\begin{proof}
 By Theorem \ref{T:limit-regular}, $\G \cc (X,\mu)$ is limit-regular. By Theorem \ref{T:Abert-Weiss2}, the action is weakly contained in the class of regular actions. 

Let $Y \subset X$ be a complete measurable section with finite positive measure. Let $\phi:X \to \{*,1\}$ be the characteristic function of $Y$. So $\phi(x)=1$ if and only if $x\in Y$. This means that $\mu$ is $(\mu,A)$-finite where $A=\{1\}$ is a singleton.

By \ref{T:general-correlations}, for every finite $F\subset \G$ and $\eps>0$ there exist a regular action $\G \cc (Y,\nu)$ and a $(\nu,A)$-finite measurable map $\psi:Y \to A$ such that 
$$\|\phi_*^F\mu - \psi^F_*\nu\|_F < \eps.$$

Because the action $\G \cc (Y,\nu)$ is regular, there exists a scalar $t>0$ such that $\G \cc (Y,\nu)$ is measurably conjugate to the left-regular action $\G \cc \G$ with the measure $t\cdot c_\G$ where $c_\G$ is counting measure on $\G$. If we let $\psi(e)=x \in A_*^\G$, we see that 
$$\psi^\G_*\nu=t\cdot \sum_{g\in \G} \d_{gx}.$$
Because $\psi$ is $(\nu,A)$-finite, 
$$\nu(\{y:~ \phi(y)=1\})<\infty.$$
But $\nu(\{y:~ \phi(y)=1\})$ equals $t$ times 
$$\#\{g\in \G:~x(g)=1\}.$$
So the latter set is finite.

Because $\eps$ and $F$ in the paragraph above are arbitrary, there exist a sequence $(x_n)_{n=1}^\infty$ of elements of $A_*^\G$ and a sequence $(t_n)_{n=1}^\infty$ of scalars such that if 
$$\zeta_n = t_n \cdot \sum_{g\in \G} \d_{gx_n}$$
then $\zeta_n$ converges to $\phi^\G_*\mu$ in the vague topology as $n\to\infty$. Moreover, each $x_n$ satisfies
$$\#\{g\in \G:~x_n(g)=1\}<\infty.$$

Let $2^\G_*$ be the set of all non-empty subsets of $\G$ with the usual $\G$-action by left-multiplication. Then $A_*^\G$ and $2^\G$ are topologically conjugate by the map $\Psi:A_*^\G \to 2^\G$ which sends $x$ to $x^{-1}(1)$. So the result holds by applying $\Psi$.
\end{proof}

\begin{prop}\label{P:exact-fg}
If $\G$ is a finitely generated exact group. Let $(F_n)_{n=1}^\infty$ be a sequence of finite subsets $F_n \subset \G$ and $t_n>0$ be scalars. Let 
$$\zeta_n = t_n \sum_{g\in \G} \d_{gF_n} \in \Meas_e(2^\G_*).$$
Suppose the sequence $(\zeta_n)_{n=1}^\infty$ converges in the vague topology to a Radon measure $\mu$ as $n\to\infty$. Then the action $\G \cc (2^\G_*,\mu)$ is amenable.
\end{prop}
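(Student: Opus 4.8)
The plan is to use the only real hypothesis on $\G$ — exactness — in its guise as Yu's property A, and to convert the property-A data on $\G$ into a sequence of almost-equivariant systems of probability measures witnessing (Zimmer-)amenability of $\G\cc(2^\G_*,\mu)$. Since $\G$ is finitely generated, fix a word metric with balls $B(x,S)$. Exactness is equivalent to property A, which I will use in its Reiter form (topological amenability of $\G\cc\beta\G$): for each $i$ there are a radius $S_i$ and a map $m_i\colon\G\to\Prob(\G)$, $x\mapsto m_i^x$, with $\supp m_i^x\subseteq B(x,S_i)$ and
$$\sup_{x\in\G}\ \big\|g\cdot m_i^x-m_i^{gx}\big\|_1\ \xrightarrow[i\to\infty]{}\ 0\qquad\text{for every }g\in\G,$$
where $g\cdot m_i^x$ is the left-translate of the measure.

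The key construction turns each finite $D\in 2^\G_*$ into a measure on $\G$ in a genuinely base-point-free way. For finite $D$ set
$$\lambda_i^D\ :=\ \frac{1}{|D|}\sum_{x\in D} m_i^x\ \in\ \Prob(\G).$$
This is measurable in $D$, supported in the $S_i$-neighborhood of $D$, and uniformly almost-equivariant: for every $h\in\G$ and every finite $D$,
$$\big\|\lambda_i^{hD}-h\cdot\lambda_i^D\big\|_1\ \le\ \frac{1}{|D|}\sum_{x\in D}\big\|m_i^{hx}-h\cdot m_i^x\big\|_1\ \le\ \sup_{x\in\G}\big\|m_i^{hx}-h\cdot m_i^x\big\|_1,$$
which tends to $0$ as $i\to\infty$ \emph{independently of $D$}. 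Since $\zeta_n$ is supported on the single orbit of $F_n$, these $\lambda_i$ already witness amenability of each $\G\cc(2^\G_*,\zeta_n)$, but — and this is the whole point — with a modulus of almost-invariance that does not depend on $n$. The finite stabilizers are handled automatically, since $\lambda_i^D$ lives on $\Prob(\G)$ and the displayed defect is exactly the defect in Zimmer-amenability of the action (orbit relation hyperfinite and stabilizers amenable).

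It remains to transport this uniform amenability through the vague limit $\zeta_n\to\mu$, and this is the main obstacle. The $\lambda_i^D$ involve the average $\tfrac1{|D|}\sum_{x\in D}$, which diverges for the typically infinite subsets $D$ in $\supp\mu$, so one cannot simply evaluate the witnesses at $\mu$. What survives is the uniform modulus together with bounded support: because $\supp m_i^x\subseteq B(x,S_i)$, the assignment $D\mapsto\lambda_i^D$ and its almost-equivariance defect depend only on the trace of $D$ in a fixed finite window, so they are encoded by the same cylinder/correlation data shown to converge under vague convergence of $\G$-invariant Radon measures (Theorem \ref{T:Abert-Weiss2} and Proposition \ref{P:continuity-observables}). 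I would therefore phrase amenability of $\G\cc(2^\G_*,\mu)$ through these finite-window correlations: the almost-equivariance estimate is an inequality among integrals of continuous, compactly supported functions, hence stable under $\zeta_n\to\mu$, and a diagonal/weak$^*$ extraction (or an ultralimit of the $\lambda_i$ along $n$) produces measurable means $\Lambda_i\colon 2^\G_*\to\Prob(\G)$ for $\mu$ with vanishing defect, witnessing amenability.

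Thus the hard part is precisely this last step: making rigorous the slogan that \emph{a uniformly amenable sequence of invariant measures has amenable vague limit}, i.e.\ extracting bona fide measurable witnesses for $\mu$ from the $n$-uniform, bounded-support witnesses for $\zeta_n$. Everything before it is soft property-A bookkeeping; the role of exactness is solely to supply the $n$-independent modulus $\sup_x\|m_i^{hx}-h\cdot m_i^x\|_1\to0$, without which amenability of the finite-orbit actions $\zeta_n$ would not survive the limit — and indeed, for non-exact $\G$ it provably does not.
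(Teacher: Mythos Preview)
Your approach differs from the paper's and has a genuine gap at exactly the step you flag as ``the hard part.''

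The claim that ``the assignment $D\mapsto\lambda_i^D$\dots depend[s] only on the trace of $D$ in a fixed finite window'' is false: the normalization $1/|D|$ in $\lambda_i^D=\tfrac{1}{|D|}\sum_{x\in D}m_i^x$ is a global quantity. What is true is that the \emph{defect} is bounded by a constant independent of $D$, but that alone does not let you manufacture witnesses on $\supp\mu$. Your proposed ``diagonal/weak$^*$ extraction (or an ultralimit of the $\lambda_i$ along $n$)'' does not make sense as stated: the maps $\lambda_i$ live on the disjoint orbits $\G\cdot F_n\subset 2^\G_*$, and $\mu$ is typically supported on \emph{infinite} subsets $D$ where $\lambda_i^D$ is undefined. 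There is no ambient function to extract a limit of. Nor can you repair this by choosing a Borel selector $\sigma(D)\in D$ and setting $\Lambda_i(D)=m_i^{\sigma(D)}$: the Reiter estimate controls $\|m_i^{gx}-g\cdot m_i^x\|$ but not $\|m_i^x-m_i^y\|$ for distinct $x,y\in D$, so the non-equivariance of $\sigma$ is uncontrolled. The slogan ``a uniformly amenable sequence of invariant measures has amenable vague limit'' is exactly the nontrivial content here and does not follow from soft bookkeeping.

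The paper avoids this obstacle entirely. Rather than tracking Reiter witnesses through the limit, it views $\mu$ as an invariant random subset of the Cayley graph and invokes Elek's theorem that Property~A implies \emph{local hyperfiniteness}: for every $\eps>0$ there is $k$ such that any finite $L\subset\G$ can be $\eps$-decomposed into pieces of size $\le k$. Applied to the radius-$n$ neighborhoods of the $F_k$, this makes them a hyperfinite family of finite graphs in the Benjamini--Schramm sense; Elek's and Schramm's theorems then give hyperfiniteness of the canonical sub-graphing $\cG_n$ directly with respect to the limit measure $\mu$, without ever constructing explicit means on $\supp\mu$. Taking the increasing union over $n$ yields amenability of the full action groupoid. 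The point is that Elek/Schramm package precisely the ``passage to the limit'' that your argument leaves open.
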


\begin{proof}

Let $S \subset \G$ be a finite symmetric generating set and $\Cay(\G,S)$ denote the corresponding Cayley graph. We will say that a subset $D \subset \G$ is connected if the subgraph it induces in $\Cay(\G,S)$ is connected. Connected components of $D$ are defined similarly.

Let $2_0^\G \subset 2^\G_*$ be the set of subsets that contain the identity. More generally, let $2^\G_n \subset 2^\G_*$ be the set of subsets which have nontrivial intersection with the ball of radius $n$ centered at the identity in the Cayley graph $\Cay(\G,S)$.

The canonical sub-graphing $\cG_n$ is the set of all pairs $(D,gD) \in 2^\G_n \times 2^\G_n$ such that the identity is contained in the radius-$n$ neighborhood of $D$ and in the radius-$n$ neighborhood of $gD$ (where $g\in \G$). This is most intuitive in the special case in which $n=0$ in which case $(D,gD)\in \cG_0$ if and only if $g^{-1}\in D$.

It is well-known that if $\G$ is exact then every Cayley graph of $\G$ (with respect to a finite generating set) has Yu's Property A \cite{anantharaman-survey, MR1728880}. This result is attributed to Ozawa \cite{MR1763912} (although that paper does not explicitly mention Property A, it is well-known Property A is equivalent to the other properties mentioned in that paper- see \cite[Proposition 3.13]{anantharaman-survey}). 

By Theorem 1 of \cite{elek2023comprehensive}, $\Cay(\G,S)$ is locally hyperfinite which means for every $\eps>0$ there exists $k>0$ satisfying the following condition: for any finite subset $L \subset \G$ there exists a subset $L' \subset L$, $|L'| < \eps |L|$ such that if one deletes $L'$ and all adjacent edges in $L$ then the sizes of the remaining components are at most $k$.

It follows that the connected components of the radius $n$ neighborhoods of $F_k$ form a hyperfinite family in the language of \cite{MR2967301}. By Theorem 1 of \cite{MR2967301}, implies that $\cG_n$ is hyperfinite mod $\mu$. Elek's Theorem may be thought of as a generalization of Schramm's earlier result \cite{MR2372897} which is formulated in terms of unimodular random rooted graphs. 

There is a minor technical issue with the possibility of nontrivial stabilizers. In order to handle this, let $\tilde{\cG}=\G\times 2_*^\G$ be the groupoid of the action where multiplication is defined by 
$$(h,gD)(g,D)=(hg,D)$$
for any $g,h\in \G$ and $D \subset \G$. Let $\tilde{\cG_n}$ be the sub-groupoid consisting of pairs $(g,D)$ such that $(D,gD) \in \cG_n$. Then Elek's Theorem \cite{MR2967301} and Schramm's Theorem from \cite{MR2372897} imply that $\tilde{\cG}_n$ is amenable (with respect to the restriction of $\mu$ to $2^\G_n$). Since $\tilde{\cG}$ is the increasing union of $\tilde{\cG}_n$, it follows that $\tilde{\cG}$ is amenable (with respect to $\mu$), i.e. the action $\G \cc (2^\G_*,\mu)$ is amenable. 
\end{proof}

\begin{lem}\label{L:limits-subgroup}
    Let $\G \cc (X,\mu)$ be a limit-regular action and let $H \le \G$ be a subgroup. Then the action $H \cc (X,\mu)$ is also limit-regular.
\end{lem}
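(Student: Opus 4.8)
The plan is to reduce the whole statement to one observation, namely that the restriction of a regular $\G$-action to $H$ is already limit-regular as an $H$-action, and then to propagate this along the defining vague approximation using the permanence properties recorded in Remark \ref{R:permanence}.

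First I would unwind the hypothesis. By the definition of limit-regularity, after replacing $\G \cc (X,\mu)$ by a measurably conjugate action I may assume that $X$ is lcsc, the $\G$-action is continuous, $\mu$ is Radon, and there is a sequence $(\mu_n)$ of $\G$-invariant Radon measures with $\mu_n \to \mu$ vaguely such that each $\G \cc (X,\mu_n)$ is a measure-preserving factor of a regular $\G$-action $\G \cc (R_n,\rho_n)$, say via $\Phi_n\colon (R_n,\rho_n) \to (X,\mu_n)$. Now I simply forget the ambient group down to $H$: the map $\Phi_n$ is still $H$-equivariant and still satisfies $\Phi_{n*}\rho_n = \mu_n$, so $H \cc (X,\mu_n)$ is a measure-preserving factor of $H \cc (R_n,\rho_n)$.

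The key step is to identify $H \cc (R_n,\rho_n)$. Since $\G \cc (R_n,\rho_n)$ is regular, it is measurably conjugate to $\G \cc (\G, t_n c_\G)$ for some scalar $t_n>0$. Viewed as an $H$-space, $\G$ decomposes into the right cosets $\{H\gamma\}$, and on each coset $H$ acts freely and transitively; hence every ergodic component of $H \cc (\G, t_n c_\G)$ is essentially transitive with trivial (in particular finite) stabilizer. Applying Lemma \ref{L:regular3} with $H$ in the role of the acting group, the action $H \cc (\G, t_n c_\G) \cong H \cc (R_n,\rho_n)$ is therefore limit-regular. I expect this coset-decomposition step to be the real content of the argument; everything surrounding it is formal bookkeeping.

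Finally I would assemble the pieces. Because limit-regularity is preserved under measure-preserving factors (Remark \ref{R:permanence}), each $H \cc (X,\mu_n)$ is limit-regular. Since vague convergence is insensitive to which subgroup acts, we still have $\mu_n \to \mu$ vaguely for the continuous $H$-action on the lcsc space $X$, and since limit-regularity is preserved under taking further vague limits (Remark \ref{R:permanence}), it follows that $H \cc (X,\mu)$ is limit-regular. One should also check that the degenerate situations (for instance $H$ finite, where each coset orbit is finite) are genuinely covered, but the passage through Lemma \ref{L:regular3} is unaffected by this, so no separate treatment is needed.
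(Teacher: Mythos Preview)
Your proof is correct and follows the same strategy as the paper: reduce via the permanence properties in Remark \ref{R:permanence} to analyzing the restriction $H \cc (\G, t c_\G)$ of a single regular $\G$-action, observe that this breaks into cosets on which $H$ acts freely and transitively, and conclude limit-regularity. The only cosmetic difference is that the paper phrases the last step as ``$|\G/H|$ copies of the left-regular $H$-action'' and invokes Lemma \ref{L:regular2} directly, whereas you route through Lemma \ref{L:regular3}; since \ref{L:regular3} is proved from \ref{L:regular2}, this is the same argument.
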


\begin{proof}
   By taking limits, it suffices to consider the special case in which the action $\G \cc (X,\mu)$ is regular. That is, we may assume $X=\G$, $\mu$ is counting measure and the action is by left-translations. So the restricted action of $H$ on $\G$ consists of $|\G/H|$ copies of the left-regular action of $H$ on itself. The lemma now follows from Lemma \ref{L:regular2}.
\end{proof}

\begin{proof}[Proof of Theorem \ref{T:exact}]
Propositions \ref{P:transfer-to-subsets} and \ref{P:exact-fg} imply Theorem \ref{T:exact} in the special case in which $\G$ is finitely generated. This uses the fact that extension of amenable actions are amenable. So if the action $\G \cc (2^\G_*, \Phi_{Y*}\mu)$ is amenable then the action $\G \cc (X,\mu)$ is amenable.

Let $\G$ be a countable exact group and let $\G \cc (X,\mu)$ be limit-amenable. Let $H_1,H_2,\ldots$ be an increasing sequence of finitely generated subgroups of $\G$ with $\cup_i H_i = \G$. 

Because closed subgroups of exact groups are exact \cite[Theorem 4.1]{MR1725812}, each subgroup $H_i$ is exact. So Lemma \ref{L:limits-subgroup} and Proposition \ref{P:exact-fg} imply that the restricted actions $H_i \cc (X,\mu)$ are amenable. Since the action groupoid of $\G \cc (X,\mu)$ is the increasing union of the action groupoids $H_i \cc (X,\mu)$, it follows that $\G \cc (X,\mu)$ is amenable. 
\end{proof}

\section{Poisson suspensions and limit-amenability}\label{S:Poisson}




The main result of this section is that the Poisson suspension functor preserves weak containment. We begin by defining the Poisson suspension functor. For this, let $X$ be a locally compact second countable space.

\begin{defn}
 A measure $\Pi$ on $X$ is called a {\bf point measure} if it can be expressed as a sum of Dirac masses $\Pi = \sum_{x\in S} c(x)\delta_x$ for some locally finite countable subset $S\subset X$ and non-negative integers $c(x)$ ($x\in S$). The local finiteness condition means that for any compact set $K \subset X$, $K \cap S$ is finite. Therefore $\Pi(K)<\infty$. The support of $\Pi$ is $\mathsf{Support}(\Pi)=\{x\in X:~\Pi(x)>0\}$. 
\end{defn}

\begin{defn}
The set of all point measures on $X$ is denoted $\M(X)$ and is called the \textbf{configuration space of $X$}. We consider $\M(X)$ as embedded in the space of Radon measures on $X$ which may be identified with a subset of the Banach dual $C_0(X)^\ast$ via the Riesz Representation Theorem. We will consider $C_0(X)^\ast$ with the weak$^*$ topology. Then $\M(X)$ is a closed subset of $C_0(X)^\ast$ and therefore $\M(X)$ is Polish in the sense that there exists a complete separable metric inducing its topology.  
\end{defn}

\begin{defn}
Let  $\mu$ be a Radon measure on $X$. A {\bf Poisson point process on $X$ with intensity measure $\mu$} is a random variable $\bfPi$ taking values in $\M(X)$ satisfying
\begin{enumerate}
    \item for any measurable $E \subset X$ with $\mu(E)<\infty$,  $\bfPi(E)$ is a Poisson random variable with mean $\mu(E)$;
    \item if $E_1,E_2,\ldots$ are pairwise disjoint measurable subsets of $X$ then the restrictions of $\bfPi$ to $E_1,E_2,\ldots$ are jointly independent random variables.
\end{enumerate}
All such processes have the same law, which we denote by $\Pois(\mu) \in \Prob(\M(X))$.
\end{defn}

\begin{defn}
    If $\a=(\G \cc (X,\mu)$) is an imp action then the induced action $\Pois(\a)=(\G \cc (\M(X),\Pois(\mu)))$ is a pmp action. It is called the {\bf Poisson suspension} of the action $\G \cc (X,\mu)$. If $\cC$ is a class of imp actions, then $\Pois(\cC)=\bigcup_{\a\in\cC} \Pois(\a)$.
    See \cite{MR2486789} for an introduction to Poisson suspensions.
\end{defn}


The main result of this section is:
\begin{thm}\label{T:wc2}
If an imp action $\G \cc (X,\mu)$ is weakly contained in a class $\cC$ of actions then its Poisson suspension $\G \cc (\M(X),\Pois(\mu))$ is weakly contained in $\Pois(\cC)$.
\end{thm}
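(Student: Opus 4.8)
The plan is to reduce the statement to the dictionary between weak containment and vague/weak convergence furnished by Theorem~\ref{T:Abert-Weiss2}, combined with the functoriality of the Poisson suspension and one analytic input: \emph{vague convergence of intensity measures forces weak convergence of the associated Poisson laws}. First I would realize the action on the universal space. By Lemma~\ref{L:finiteness} we may assume $\a = \G \cc (\cU,\mu)$ with $\mu \in \Radon(\cU)$. Since $\a \prec \cC$, Theorem~\ref{T:Abert-Weiss2}(1) gives $\mu \in \overline{\mathsf{Factor}}(\cC,\cU)$ in the vague topology, so there is a sequence $\mu_n \in \mathsf{Factor}(\cC,\cU)$ with $\mu_n \to \mu$ vaguely. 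By the definition of $\mathsf{Factor}$, each $\mu_n$ is the pushforward of the measure of some $\b_n = \G \cc (Y_n,\nu_n) \in \cC$ under a $\G$-equivariant measure-preserving map $\pi_n : Y_n \to \cU$.

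Next I would invoke functoriality. The map $\pi_n$ induces a $\G$-equivariant map $\M(\pi_n): \M(Y_n) \to \M(\cU)$ on configuration spaces, obtained by pushing a configuration forward point-by-point, and the mapping theorem for Poisson processes (see \cite{MR2486789}) gives $(\M(\pi_n))_* \Pois(\nu_n) = \Pois\big((\pi_n)_*\nu_n\big) = \Pois(\mu_n)$. Hence $\G \cc (\M(\cU),\Pois(\mu_n))$ is a measure-preserving factor of $\Pois(\b_n) \in \Pois(\cC)$, and therefore $\Pois(\mu_n) \prec \Pois(\cC)$.

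The heart of the argument is to show $\Pois(\mu_n) \to \Pois(\mu)$ weakly as probability measures on the Polish space $\M(\cU)$. For this I would compute Laplace functionals: for $f \in C_c(\cU)$ with $f \ge 0$,
$$\int_{\M(\cU)} e^{-\int f \dee \Pi}\ \dee\Pois(\lambda)(\Pi) = \exp\Big(-\int_\cU (1 - e^{-f})\ \dee\lambda\Big).$$
Since $1 - e^{-f}$ again lies in $C_c(\cU)$ whenever $f$ does, vague convergence $\mu_n \to \mu$ yields $\int(1-e^{-f})\,\dee\mu_n \to \int(1-e^{-f})\,\dee\mu$, so the Laplace functionals converge for every nonnegative $f \in C_c(\cU)$. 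By the standard characterization of convergence in distribution of point processes on a locally compact second countable space, this is equivalent to $\Pois(\mu_n) \to \Pois(\mu)$ in distribution, i.e.\ weakly. This analytic lemma is the one genuine obstacle: the rest is bookkeeping, but one must be careful that the correct topology on $\M(\cU)$ is the vague one (so that the $C_c$ test functions above suffice) and that $\Pois(\lambda)$ is well defined for the Radon intensities at hand.

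Finally I would assemble the pieces. The measures $\Pois(\mu_n)$ are $\G$-invariant probability measures on the Polish space $\M(\cU)$, on which $\G$ acts continuously, and they converge weakly to $\Pois(\mu)$; so Theorem~\ref{T:Abert-Weiss2}(2a) gives $\Pois(\a) \prec \{\Pois(\mu_n)\}_{n}$. Weak containment is transitive — given data $A,\psi,F,\eps$, first approximate within $\eps/2$ by some $\Pois(\mu_n)$, then approximate that within $\eps/2$ by an element of $\Pois(\cC)$ using $\Pois(\mu_n) \prec \Pois(\cC)$, and apply the triangle inequality — so combining the two relations yields $\Pois(\a) \prec \Pois(\cC)$, as desired.
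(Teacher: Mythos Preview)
Your proposal is correct and follows the same overall architecture as the paper: realize $\a$ on the universal space $\cU$, invoke Theorem~\ref{T:Abert-Weiss2}(1) to get a sequence $\mu_n \in \mathsf{Factor}(\cC,\cU)$ with $\mu_n \to \mu$ vaguely, establish that $\Pois(\mu_n) \to \Pois(\mu)$ weakly on $\M(\cU)$, and then close with Theorem~\ref{T:Abert-Weiss2}(2). The one genuine difference is in how you justify the continuity step. The paper isolates this as Theorem~\ref{T:Pois-continuity} and proves it by hand: first showing tightness of $\{\Pois(\mu_n)\}$ (Lemma~\ref{L:tight}), then checking that any subsequential limit agrees with $\Pois(\mu)$ on cylinder sets built from pre-compact $\mu$-continuity sets (Lemma~\ref{L:continuityset}) via the Portmanteau theorem. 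You instead use the Laplace-functional characterization of convergence in distribution for point processes, observing that $1-e^{-f} \in C_c(\cU)$ whenever $f \in C_c^+(\cU)$. Your route is shorter and leans on a standard tool from point-process theory; the paper's route is more self-contained and avoids having to cite that criterion. Your explicit use of the mapping theorem to exhibit $\Pois(\mu_n)$ as an mp-factor of $\Pois(\b_n)$ also makes transparent a step the paper leaves implicit.
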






To prove this, we will combine the next result with Theorem \ref{T:Abert-Weiss2}.

\begin{thm}\label{T:Pois-continuity}
    Let $X$ be an lcsc space. 
    Let $\Radon(X)$ be the space of $\G$-invariant Radon measures on $X$ with the vague topology. Let
    $$\Pois:\Radon(X)  \to \Prob(\M(X))$$
    be the Poisson suspension functor. Then $\Pois$ is continuous with respect to the vague topology on $\Radon(X)$ and the weak topology on $\Prob(\M(X))$.
\end{thm}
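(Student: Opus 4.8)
The plan is to use the \emph{Laplace functional} characterization of convergence of point processes. Since $\M(X)$ is Polish and carries the vague topology (in which $\Pi \mapsto \Pi(f)$ is continuous for every $f \in C_c(X)$), the weak convergence of the laws $\Pois(\mu_n)$ to $\Pois(\mu)$ in $\Prob(\M(X))$ is governed by a convergence-determining class of bounded continuous test functions on $\M(X)$, and the exponentials $\Pi \mapsto e^{-\Pi(g)}$ with $g \in C_c^+(X)$ form such a class by the standard theory of random measures (Kallenberg). Thus it suffices to show that for every $g \in C_c^+(X)$,
$$\int_{\M(X)} e^{-\Pi(g)}\, \dee\Pois(\mu_n)(\Pi) \longrightarrow \int_{\M(X)} e^{-\Pi(g)}\, \dee\Pois(\mu)(\Pi)$$
as $n \to \infty$, whenever $\mu_n \to \mu$ vaguely.

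First I would record the explicit Laplace functional of a Poisson point process: for any Radon measure $\lambda$ on $X$ and any $g \in C_c^+(X)$,
$$\int_{\M(X)} e^{-\Pi(g)}\, \dee\Pois(\lambda)(\Pi) = \exp\left(-\int_X \big(1 - e^{-g}\big)\, \dee\lambda\right).$$
This follows from the defining properties of the process: restricting to the compact set $K = \supp g$ (on which $\lambda$ is finite), conditioning on the number of points in $K$, which is Poisson with mean $\lambda(K)$, and using that those points are i.i.d.\ with law the normalized restriction $\lambda(K)^{-1}(\lambda \resto K)$; summing the resulting exponential series produces the stated formula, and the integrand vanishes off $K$ so the domain extends to all of $X$.

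Next I would observe that for $g \in C_c^+(X)$ the function $h := 1 - e^{-g}$ lies in $C_c(X)$: it is continuous, satisfies $0 \le h \le 1$, and vanishes off $\supp g$, so $\supp h \subseteq \supp g$ is compact. Because $\mu_n \to \mu$ vaguely, integration against $h \in C_c(X)$ gives $\int_X h\, \dee\mu_n \to \int_X h\, \dee\mu$, both integrals being finite since $h$ has compact support and the measures are Radon. By continuity of $t \mapsto e^{-t}$ the Laplace functionals converge, which is exactly the displayed limit above.

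The main obstacle is the first step: justifying that pointwise convergence of Laplace functionals over $C_c^+(X)$ genuinely forces weak convergence of the laws on $\M(X)$. Here I would lean on the general theory of random measures, where the Laplace functional determines the law of a point process uniquely and convergence of Laplace functionals to that of a bona fide (here Poisson) point process yields convergence in distribution with respect to the vague topology on $\M(X)$ --- which is precisely weak convergence in $\Prob(\M(X))$. Combined with Theorem \ref{T:Abert-Weiss2}, this continuity statement is what then delivers Theorem \ref{T:wc2}.
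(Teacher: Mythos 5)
Your proof is correct, but it takes a genuinely different route from the paper's. You reduce everything to the Laplace functional identity $\int e^{-\Pi(g)}\,\dee\Pois(\lambda)(\Pi)=\exp\big(-\int_X(1-e^{-g})\,\dee\lambda\big)$ and then invoke the standard theorem of random-measure theory (Kallenberg) that convergence of Laplace functionals over $g\in C_c^+(X)$ to the Laplace functional of a bona fide random measure is equivalent to convergence in distribution for the vague topology on $\M(X)$; since $1-e^{-g}\in C_c^+(X)$, vague convergence of $\mu_n$ to $\mu$ finishes the argument in one line. The paper instead argues from scratch: it first shows (Lemma \ref{L:continuityset}) that the count probabilities $\Pois_n\big(\cap_{i}\M(X,\Lambda_i,t_i)\big)$ converge whenever the $\Lambda_i$ are pre-compact $\mu_\infty$-continuity sets (a computation of the same elementary flavor as your Laplace formula, but requiring the Portmanteau Theorem \ref{T:portmanteau} since indicators of sets are not continuous), then proves tightness of $\{\Pois(\mu_n)\}$ (Lemma \ref{L:tight}), extracts a subsequential weak limit $\kappa$ by Prokhorov, and identifies $\kappa=\Pois_\infty$ by computing its intensity measure and upgrading from continuity sets to arbitrary pre-compact Borel sets via Caratheodory--Hahn. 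In effect, your citation of Kallenberg outsources exactly the tightness-plus-identification-of-limits work that the paper does by hand; your route is shorter and cleaner, while the paper's is self-contained, in keeping with its stated aim of developing the theory without outside machinery. One point you should make explicit: Kallenberg's equivalence concerns $\M(X)$ with the vague topology, whereas the paper formally topologizes $\M(X)$ as a closed subset of $C_0(X)^*$ with the weak* topology. These topologies do not coincide on $\M(X)$ (indeed a point measure of infinite total mass is not even a bounded functional on $C_0(X)$), but the vague topology is clearly the intended one --- the paper's own tightness lemma relies on compactness of sets of the form $\{\Pi:\Pi(B(r_i,x_0))\le N_i\ \forall i\}$, which holds vaguely but not weak* --- so your reading is the right one and your proof establishes the theorem as intended.
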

This theorem will be proven after a few lemmas. For this,  we fix the following hypotheses. For $n\in \N \cup \{\infty\}$, let $\mu_n \in \Radon(X)$ and suppose $\mu_n \to \mu_\infty$ in the vague topology as $n\to\infty$. For $\Lambda \subset X$ and $I \subset \N \cup \{0\}$ let 
$$\M(X,\Lambda,I)=\{\Pi \in \M(X):~ \Pi(\Lambda)\in I\}.$$
In the special case in which $I=\{t\}$, we write $\M(X,\Lambda,t)=\M(X,\Lambda,I)$ for simplicity. For $n\in\N\cup\{\infty\}$, let $\Pois_n=\Pois(\mu_n)$.

   Recall that a measurable subset $\Lambda \subset X$ is a {\bf continuity set} for $\mu_\infty$ if $\mu_\infty(\partial \Lambda)=0$ where $\partial \Lambda = \overline{\Lambda} \cap \overline{(X \setminus \Lambda)}$ is the topological boundary of $\Lambda$. It is well-known the collection of continuity sets is closed under complementation, finite unions and intersections. Moreover, if $K \subset X$ is any compact set and $\eps>0$ then there exist a continuity sets $K',K''$ with $K' \subset K \subset K''$ and $\mu_\infty(K''\setminus K')<\eps$. Indeed, we can take $K''$ to be a radius $\delta$ neighborhood of $K$ for some sufficiently small $\delta$ with respect to a continuous proper metric and similarly let $K'$ be the complement of a radius $\delta$ neighborhood $X \setminus K$ for some sufficiently small $\delta$. This is because there are uncountably many $\delta$ to choose from but the set of all $\delta$ such that the radius $\delta$ neighborhood is not a continuity set is at most countable. 
   
\begin{lem}\label{L:continuityset}
 Suppose that $\Lambda_1,\ldots, \Lambda_k \subset X$ are pairwise disjoint pre-compact continuity sets for $\mu_\infty$ and $t_i \in \N \cup \{0\}$ for $1\le i \le k$. Then
 $$\lim_{n\to\infty} \Pois_n(\cap_{i=1}^k \M(X,\Lambda_i, t_i)) =  \Pois_\infty(\cap_{i=1}^k \M(X,\Lambda_i, t_i)).$$
\end{lem}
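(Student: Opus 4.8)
The plan is to compute both $\Pois_n$ and $\Pois_\infty$ of the event explicitly using the defining properties of a Poisson point process, thereby reducing the whole statement to the convergence of the scalar quantities $\mu_n(\Lambda_i)$, and then to invoke the Portmanteau theorem for vague convergence recorded in Section~\ref{S:spaces-of-measures}.

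First I would exploit that the $\Lambda_i$ are pairwise disjoint. By the independence property in the definition of a Poisson point process with intensity $\mu_n$, the counts $\bfPi(\Lambda_1),\ldots,\bfPi(\Lambda_k)$ are jointly independent, and each $\bfPi(\Lambda_i)$ is a Poisson random variable with mean $\mu_n(\Lambda_i)$. This mean is finite because $\Lambda_i$ is pre-compact and $\mu_n$ is Radon. Hence the probability factorizes:
$$\Pois_n\!\left(\bigcap_{i=1}^k \M(X,\Lambda_i,t_i)\right) = \prod_{i=1}^k e^{-\mu_n(\Lambda_i)} \frac{\mu_n(\Lambda_i)^{t_i}}{t_i!},$$
and the identical formula holds with $n$ replaced by $\infty$. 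Thus the entire lemma is reduced to showing that this finite product converges to its counterpart at $\infty$.

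Next I would handle the convergence of each factor. Since $\Lambda_i$ is a pre-compact continuity set for $\mu_\infty$ and $\mu_n \to \mu_\infty$ vaguely, the Portmanteau theorem for vague convergence gives $\mu_n(\Lambda_i) \to \mu_\infty(\Lambda_i)$ as $n\to\infty$ for each $i$. For each fixed $t_i$, the map $s \mapsto e^{-s} s^{t_i}/t_i!$ is continuous on $[0,\infty)$, so each factor converges; as there are only finitely many factors, the product converges:
$$\lim_{n\to\infty}\prod_{i=1}^k e^{-\mu_n(\Lambda_i)} \frac{\mu_n(\Lambda_i)^{t_i}}{t_i!} = \prod_{i=1}^k e^{-\mu_\infty(\Lambda_i)} \frac{\mu_\infty(\Lambda_i)^{t_i}}{t_i!},$$
which is exactly $\Pois_\infty(\bigcap_{i=1}^k \M(X,\Lambda_i,t_i))$.

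The only substantive step is the convergence $\mu_n(\Lambda_i)\to\mu_\infty(\Lambda_i)$, and this is precisely where both hypotheses on the $\Lambda_i$ are needed: pre-compactness confines each $\Lambda_i$ inside a compact set on which vague convergence has content, while the continuity-set condition ($\mu_\infty(\partial\Lambda_i)=0$) prevents mass from concentrating on the boundary and breaking the limit. Everything else is the elementary arithmetic of independent Poisson counts, so I expect this Portmanteau input to be the main (and essentially the sole) obstacle.
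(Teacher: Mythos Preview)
Your proof is correct and follows essentially the same approach as the paper: factor the probability using independence of Poisson counts on disjoint sets, then invoke the Locally Compact Portmanteau Theorem to get $\mu_n(\Lambda_i)\to\mu_\infty(\Lambda_i)$ and conclude by continuity of $s\mapsto e^{-s}s^{t_i}/t_i!$. The paper's version is terser but otherwise identical.
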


\begin{proof}
In fact this is straightforward because
$$\Pois_n(\cap_{i=1}^k \M(X,\Lambda_i, t_i))= \prod_{i=1}^k \exp(-\mu_n(\Lambda_i)) \frac{\mu_n(\Lambda_i)^{t_i}}{t_i!}$$
is a continuous function of $(\mu_n(\Lambda_i))_{i=1}^k$ and $\lim_n \mu_n(\Lambda_i)=\mu_\infty(\Lambda_i)$ for all $i$ by the Locally Compact Portmanteau Theorem \ref{T:portmanteau}.
\end{proof}

\begin{lem}\label{L:tight}
 The sequence $\{\Pois(\mu_n)\}_{n \in \N}$ is tight. In particular, there exists a subsequential limit which is a Borel probability measure.
\end{lem}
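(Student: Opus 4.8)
The plan is to prove tightness by exhibiting, for each $\eps>0$, a compact subset of the configuration space $\M(X)$ that carries $\Pois(\mu_n)$-mass at least $1-\eps$ uniformly in $n$. The key is to recall which subsets of $\M(X)$ are compact. Since $X$ is lcsc, we may fix an exhaustion $K_1\subset K_2\subset\cdots$ of $X$ by compact sets with $X=\bigcup_j K_j$. A standard fact (the analogue of Prokhorov for configuration spaces) is that a subset of $\M(X)$ is pre-compact in the vague topology if and only if, for every $j$, the number of points in $K_j$ is uniformly bounded. Concretely, for any choice of integers $(m_j)_{j=1}^\infty$, the set
\begin{align*}
    \mathsf{Comp}\big((m_j)_j\big)=\{\Pi\in\M(X):~\Pi(K_j)\le m_j~\forall j\}
\end{align*}
is compact in $\M(X)$.

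**First I would** control the tail of $\Pois_n$ on each $K_j$ separately. Enlarging each $K_j$ slightly to a pre-compact continuity set $\Lambda_j\supset K_j$ for $\mu_\infty$ (as explained in the paragraph preceding Lemma \ref{L:continuityset}, this is possible by discarding the at most countably many bad radii), the Locally Compact Portmanteau Theorem \ref{T:portmanteau} gives $\mu_n(\Lambda_j)\to\mu_\infty(\Lambda_j)=:c_j<\infty$. In particular $\sup_n\mu_n(\Lambda_j)=:M_j<\infty$ for each fixed $j$. Under $\Pois_n$, the random variable $\bfPi(\Lambda_j)$ is Poisson with mean $\mu_n(\Lambda_j)\le M_j$, so by Markov's inequality
\begin{align*}
    \Pois_n\big(\{\Pi:~\Pi(\Lambda_j)>m_j\}\big)\le \frac{\mu_n(\Lambda_j)}{m_j}\le \frac{M_j}{m_j}.
\end{align*}
Given $\eps>0$, I would choose $m_j$ large enough that $M_j/m_j<\eps\,2^{-j}$ for every $j$. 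Then, setting $\mathsf{C}=\mathsf{Comp}\big((m_j)_j\big)$ and using $\Pi(K_j)\le\Pi(\Lambda_j)$, a union bound yields
\begin{align*}
    \Pois_n(\M(X)\setminus\mathsf{C})\le\sum_{j=1}^\infty \Pois_n\big(\{\Pi:~\Pi(\Lambda_j)>m_j\}\big)\le\sum_{j=1}^\infty \eps\,2^{-j}=\eps
\end{align*}
uniformly in $n$. Hence $\Pois_n(\mathsf{C})\ge 1-\eps$ for all $n$, establishing tightness. The existence of a subsequential weak limit which is a Borel probability measure then follows from Prokhorov's theorem, since $\M(X)$ is Polish.

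**The step I expect to be the main obstacle** is pinning down the correct compactness criterion for $\M(X)$ and verifying that the sets $\mathsf{Comp}((m_j)_j)$ really are compact in the weak$^*$ (vague) topology inherited from $C_0(X)^\ast$. One must check that a vague limit of point measures with uniformly bounded mass on each $K_j$ is again a point measure (not a diffuse measure) lying in the prescribed set; this uses the local finiteness built into the definition of $\M(X)$ together with the fact that $\M(X)$ is closed in $C_0(X)^\ast$, as noted when $\M(X)$ was introduced. Everything else is a routine application of the Poisson tail bound and the Portmanteau theorem, so the genuine content lies in this topological characterization of compact sets in configuration space.
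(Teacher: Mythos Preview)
Your argument is correct and follows essentially the same route as the paper's proof: both exhaust $X$ by compact (or pre-compact continuity) sets, use Portmanteau to get $\sup_n \mu_n$ finite on each piece, then use a Poisson tail bound and a union bound to land in a set of the form $\{\Pi:\Pi(K_j)\le m_j~\forall j\}$, whose compactness is the one nontrivial topological input. Your version is slightly more explicit (Markov's inequality, a general exhaustion enlarged to continuity sets) where the paper simply picks metric balls of continuity radii and asserts the existence of the cutoffs $N_i$, but the substance is identical.
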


\begin{proof}
It suffices to prove that for every $\eps>0$ there exists a compact set $K \subset \M(X)$ such that
$$\liminf_{n\to\infty} \Pois(\mu_n)(K) \ge 1-\eps.$$
Fix a basepoint $x_0\in X$. Let $d$ be a continuous metric on $X$ so that closed balls of finite radius in $d$ are compact. Let $0<r_1<r_2<\cdots$ be an increasing sequence of radii with $\lim_i r_i=\infty$ such that for each $i$, the open ball $B(r_i,x_0)$ of radius $r_i$ centered at $x_0$ is a continuity set for $\mu_\infty$. Then
$$\lim_{n\to\infty} \mu_n(B(r_i,x_0)) = \mu_\infty(B(r_i,x_0))$$
for all $i$ by the Portmanteau Theorem \ref{T:portmanteau}. It follows that if
$$V_i = \sup_{1\le n < \infty} \mu_n(B(r_i,x_0))$$
then $V_i$ is finite for all $i$. Therefore, there exist numbers $N_i\in \N$ such that 
$$\Pois_n(\{\Pi:~ \Pi(B(r_i,x_0)) \le N_i\}) \ge 1-\eps/2^n$$
for all finite $n \in \N$ and all $i$. 

Let
$$K = \{\Pi:~ \Pi(B(r_i,x_0)) \le N_i~\forall i \ge 1\}.$$
Then $K$ is compact because $B(r_i,x_0)$ is compact and $\mu_n(K) \ge 1-\eps$ for all $n$. 
\end{proof}

\begin{proof}[Proof of Theorem \ref{T:Pois-continuity}]
    
It suffices to prove $\Pois_n \to \Pois_\infty$ in the weak topology as $n\to\infty$. By Lemma \ref{L:tight} and Prokhorov's Theorem, there exists a subsequence $(n_i)_{i=1}^\infty$ such that $\Pois_{n_i}$ converges weakly to a Borel probability measure $\kappa$ on $\M(X)$. By Lemma \ref{L:continuityset}, 
\begin{align}\label{E:Poisson1}
    \kappa\left(\cap_{i=1}^k \M(X,\Lambda_i, t_i)\right)=\Pois_\infty\left(\cap_{i=1}^k \M(X,\Lambda_i, t_i)\right)
\end{align}
for any pairwise disjoint measurable sets $\Lambda_1,\ldots, \Lambda_k$ and any $t_1,\ldots, t_k \in \N \cup \{0\}$ as long as the $\Lambda_i$ are pre-compact $\mu_\infty$-continuity sets. 

Let $\Lambda_1,\Lambda_2,\ldots, \Lambda_k\subset X$ be a sequence of pairwise disjoint pre-compact Borel sets. It suffices to show
\begin{align}\label{E:Poisson2}
    \kappa\left(\cap_{i=1}^k \M(X,\Lambda_i, t_i)\right)=\Pois_\infty\left(\cap_{i=1}^k \M(X,\Lambda_i, t_i)\right).
\end{align}
This is because, by an application of the Caratheodory-Hahn Extension Theorem, if this holds for all $k$ then it holds for $k=\infty$. Moreover, because $X$ is lcsc, it is $\s$-compact; so the assumption that the sets $\Lambda_i$ are pre-compact does not cause difficulties.

Let $I_\kappa$ be the intensity measure of $\kappa$. This is the measure on $X$ defined by
$$I_\kappa(\Lambda) = \int \Pi(\Lambda) \, \dee \kappa(\Pi)$$
for $\Lambda \subset X$. We will prove that $I_\kappa=\mu_\infty$.

Note $I_\kappa(\Lambda) = \mu_\infty(\Lambda)$ whenever $\Lambda$ is a measurable pre-compact $\mu_\infty$-continuity set by \eqref{E:Poisson1}. 

Let $\Lambda \subset X$ be any compact set and $\eps>0$. Then there exists a pre-compact measurable $\mu_\infty$-continuity set $\Lambda'$ such that $\Lambda \subset \Lambda'$ and $\mu_\infty(\Lambda' \setminus \Lambda)<\eps$. 
Then
$$I_\kappa(\Lambda) \le I_\kappa(\Lambda') = \mu_\infty(\Lambda') \le \mu_\infty(\Lambda)+\eps.$$
Since this is true for all $\eps$ and all compact sets, we have $I_\kappa \le \mu_\infty$. 

On the other hand if $\Lambda\subset X$ is any pre-compact open set then there exists a pre-compact measurable $\mu_\infty$-continuity set $\Lambda'$ with $\Lambda' \subset \Lambda$ and $\mu_\infty(\Lambda \setminus \Lambda')<\eps$. 
Therefore
$$I_\kappa(\Lambda) \ge I_\kappa(\Lambda') = \mu_\infty(\Lambda') \ge \mu_\infty(\Lambda)-\eps.$$
Since this is true for all $\eps$ and all pre-compact open sets and since $X$ is locally compact, we have $I_\kappa \ge \mu_\infty$. Therefore $I_\kappa=\mu_\infty$.



Let $\Lambda_1,\ldots, \Lambda_k$ be a sequence of pairwise disjoint pre-compact Borel sets. For each $i$, there  exist continuity sets $K_{1,j}\subset K_{2,j} \subset \cdots \subset \Lambda_i$ with $\mu_\infty(\Lambda_i \setminus \cup_j K_{i,j})=0$. Since $I_\k=\mu_\infty$, it follows that
\begin{align*}
    \k(\cap_{i=1}^k \M(X,\Lambda_i, t_i)) &= \lim_{n\to\infty}  \k(\cap_{i=1}^k \M(X,K_{i,n}, t_i)) \\
    &= \lim_{n\to\infty}  \Pois_\infty(\cap_{i=1}^k \M(X,K_{i,n}, t_i)) \\
    &= \Pois_\infty(\cap_{i=1}^k \M(X,\Lambda_i, t_i)).
\end{align*}
This proves \eqref{E:Poisson2}.
\end{proof}

\begin{proof}[Proof of Theorem \ref{T:wc2}]
Let $\cC$ be a class of imp actions and suppose  $\G \cc (X,\mu)$ is weakly contained in $\cC$. By Lemma \ref{L:finiteness}, we may assume without loss of generality, that $X=\cU$ and $\mu$ is Radon.

By Theorem \ref{T:Abert-Weiss2}, $\mathsf{Factor}(\a,\cU) \subset \overline{\mathsf{Factor}}(\cC,\cU)$. In particular, $\mu \in \overline{\mathsf{Factor}}(\cC,\cU)$. By Theorem \ref{T:Pois-continuity}, $\Pois(\mu)$ is contained in the weak closure of $\{\Pois(\nu):~ \nu \in \mathsf{Factor}(\cC,\cU) \}$. It now follows from Theorem \ref{T:Abert-Weiss2}.

\end{proof}

\subsection{Weakly contained in Bernoulli}

\begin{defn}
    Let $(K,\k)$ be a standard Borel probability space and let $(K,\k)^\G=(K^\G,\k^\G)$ be the direct product of $\G$ copies of $(K,\k)$. An element $x$ of $K^\G$ is a function $x:\G \to K$. The group $\G$ acts on $K^\G$ by translations: $(gx)(f)=x(g^{-1}f)$ for $f,g \in \G$ and $x\in K^\G$. This action preserves the product measure $\k^\G$. The action $\G \cc (K,\k)^\G$ is called the {\bf Bernoulli shift} over $\G$ with base space $(K,\k)$.
\end{defn}

\begin{defn}
    A pmp action $\G \cc (X,\mu)$ is \textbf{weakly contained in Bernoulli} if there is a Bernoulli shift action $\G \cc (K,\k)^\G$ which weakly contains it. It is known that all Bernoulli shift actions of $\G$ are weakly equivalent \cite{abert-weiss-2013}. Therefore, if a pmp action is weakly contained in Bernoulli then it is weakly contained in every Bernoulli shift.
\end{defn}

\begin{cor}\label{weakbern}
    If $\a$ is a limit-amenable imp then $\Pois(\a)$ is weakly contained in Bernoulli.
\end{cor}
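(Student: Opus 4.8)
The plan is to chain together three facts already established in the paper: that limit-amenable imp actions are limit-regular, that the Poisson suspension functor preserves weak containment, and that the Poisson suspension of a regular action is literally a Bernoulli shift.

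First I would invoke Theorem \ref{T:limit-regular}, which says that the limit-amenable action $\a$ is in fact limit-regular. By the characterization recorded in the definition of limit-regularity (which rests on Theorem \ref{T:Abert-Weiss2}), this is equivalent to saying $\a$ is weakly contained in the class $\cC$ of regular actions, i.e. $\a \prec \cC$. Applying Theorem \ref{T:wc2} (the Poisson suspension functor preserves weak containment) then gives $\Pois(\a) \prec \Pois(\cC)$, so everything reduces to identifying the members of $\Pois(\cC)$.

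This identification is the key step. A regular action is measurably conjugate to $\G \cc (\G, t\cdot c_\G)$ for some scalar $t>0$, where $c_\G$ is counting measure. Since $\G$ is discrete and the intensity measure assigns the finite mass $t$ to each point $g\in\G$, a Poisson point process with intensity $t\cdot c_\G$ consists of independent, Poisson$(t)$-distributed occupation numbers $\Pi(\{g\})$, one at each $g\in\G$. Identifying a point measure $\Pi\in\M(\G)$ with the function $g\mapsto \Pi(\{g\})\in\{0,1,2,\ldots\}$ identifies $\M(\G)$ with $K^\G$ for $K=\{0,1,2,\ldots\}$, carries $\Pois(t\cdot c_\G)$ to the product measure $\kappa^\G$, where $\kappa$ is the Poisson distribution of mean $t$ on $K$, and intertwines the $\G$-action with the shift. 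Hence $\Pois(\G \cc (\G, t\cdot c_\G))$ is exactly the Bernoulli shift $\G\cc (K,\kappa)^\G$; that is, every member of $\Pois(\cC)$ is a Bernoulli shift.

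Finally, since all Bernoulli shifts of $\G$ are weakly equivalent \cite{abert-weiss-2013}, every member of $\Pois(\cC)$ is weakly equivalent to any fixed Bernoulli shift $\b$. Combined with the transitivity of weak containment — if $\Pois(\a)\prec \Pois(\cC)$ and every member of $\Pois(\cC)$ is weakly contained in $\b$, then $\Pois(\a)\prec\b$, which is immediate from the correlation formulation by a two-step $\eps/2$ approximation and the triangle inequality — we conclude $\Pois(\a)\prec\b$. Thus $\Pois(\a)$ is weakly contained in Bernoulli, as claimed. The only point requiring genuine care is the explicit identification in the third paragraph; the rest is a formal chaining of the cited results.
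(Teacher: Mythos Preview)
Your proof is correct and follows essentially the same approach as the paper's: invoke Theorem \ref{T:limit-regular} to pass from limit-amenable to limit-regular, apply Theorem \ref{T:wc2} to push weak containment through the Poisson functor, and then identify the Poisson suspension of a regular action as a Bernoulli shift. You have in fact supplied the explicit identification of $\Pois(\G \cc (\G, t\cdot c_\G))$ with a Bernoulli shift that the paper leaves as an exercise, and your final transitivity step via the Abert--Weiss weak equivalence of Bernoulli shifts matches the paper's implicit reasoning.
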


\begin{proof}
Let $\a$ be a limit-amenable imp. By Theorem \ref{T:limit-regular}, $\a$ is limit-regular. By Theorem \ref{T:wc2}, $\Pois(\a)$ is weakly contained in the $\Pois(\l)$ where $\l$ is the left-translation action of $\G$ on itself. It is an exercise to check that $\Pois(\l)$ is a Bernoulli shift. So this implies the corollary.
\end{proof}

\section{Cost}\label{S:cost}



In this section we discuss cost, starting with a review of the standard theory in \S \ref{S:cost-eq}. 
In \S \ref{S:cost-normal} we introduce the idea of normalized cost for general measure preserving actions.
We then introduce an equivalent notion to cost in \S \ref{S:cost-graph} which is used in our proof of Theorem \ref{T:ppp-cost} in \S \ref{S:cost-Poisson}.
This notion is called the graph-cost and may be thought of as an alternative to groupoid-cost in the special case of action groupoids. 

\subsection{Cost of discrete groups}\label{S:cost-eq}
We will freely use standard terminology from the theory of measured equivalence relations, as reviewed in \S \ref{S:mer}. 


\begin{defn}
Fix a discrete pmp equivalence relation $(X,\mu,\cR)$. A \textbf{sub-graphing} of $\cR$ is a symmetric Borel subset $\cG \subset \cR$. Symmetric means $(x,y)\in \cG$ implies $(y,x)\in \cG$. We think of $\cG$ as the edges of a graph with vertex set $X$. Since $\cG \subset \cR$, each connected component of this graph is necessarily contained in the complete graph of an $\cR$-class. A sub-graphing is a \textbf{graphing} if each connected component of this graph spans an $\cR$-class (ignoring a set of measure zero). This means: for a.e.\ $(x,y)\in \cR$ there exist $(x_1,x_2),(x_2,x_3),\ldots, (x_{n-1},x_n) \in \cG$ such that $x_1=x$ and $x_n=y$. In this case, we say that $\cG$ \textbf{generates} $\cR$. 
\end{defn}

\begin{defn}
The \textbf{$\mu$-cost} of $\cR$ is $$\cost_\mu(\cR)=\cost(X,\mu,\cR)=(1/2)\hmu(\cG)=(1/2)\inf \int \#\{y:~ (x,y)\in \cG\}~\dee\mu(x)$$ 
where the infimum is over all graphings $\cG$ generating $\cR$ and $\hmu=\mu_L=\mu_R$ is the measure on $\cR$ induced by $\mu$ as in Theorem \ref{T:mp}.   
\end{defn}

\begin{defn}
    The \textbf{cost} of a group $\G$ is the infimum of $\cost(X,\mu,\cR_\G)$ where the infimum is over all essentially free pmp action $\G \cc (X,\mu)$ and $\cR_\G=\{(x,gx):~x\in X, g\in \G\}$ is the orbit-equivalence relation. 
\end{defn}

\begin{defn}
The \textbf{max-cost} of $\G$ is the supremum of $\cost(X,\mu,\cR_\G)$  over all essentially free pmp actions $\G \cc (X,\mu)$ and $\cR_\G=\{(x,gx):~x\in X, g\in G\}$. By \cite{abert-weiss-2013}, the max-cost is achieved by Bernoulli shifts over $\G$. Moreover, if $\G \cc (X,\mu)$ is any essentially free pmp action which is weakly contained in Bernoulli, then the cost of its orbit-equivalence relation is the max-cost of $\G$.
\end{defn}

\subsection{Normalized cost}\label{S:cost-normal}

\begin{defn}
Let $X$ be a standard Borel space and let $\cR$ be a discrete Borel equivalence relation. A subset $S \subset X$ is a {\bf complete section} for $\cR$ if it meets every $\cR$-class. In other words, if for every $x\in X$ there exists $y\in S$ with $(x,y)\in \cR$. Let $\mu$ be an $\cR$-invariant measure. Then a subset $S$ is a {\bf complete section mod $\mu$} if it is a complete section for the restriction of $\cR$ to some $\mu$-conull subset of $X$.
\end{defn}
We will use the following theorem of Gaboriau, which also appears as \cite[Theorem 21.1]{MR2095154}.

\begin{thm}\label{section}\cite{gaboriau-cost}
    Let $(X,\mu,\cR)$ be a countable finite-measure-preserving Borel equivalence relation on $X$, $S\subseteq X$ a Borel complete section mod $\mu$. 
    Then \[\cost_\mu(\cR) = \cost_{\mu|S}(\cR|S)+\mu(X\setminus S).\]
    Here $\mu|S$ is the measure defined by $\mu(B)=\mu(S\cap B)$ and $\cR|S = \cR \cap (S\times S)$ is the restriction of $\cR$ to $S$. 
\end{thm}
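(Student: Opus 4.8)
The plan is to prove the two inequalities separately: the easy inequality $\cost_\mu(\cR) \le \cost_{\mu|S}(\cR|S) + \mu(X\setminus S)$ and the harder reverse inequality.

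For the upper bound I would start from a graphing $\cG_S$ of $\cR|S$ whose cost approximates $\cost_{\mu|S}(\cR|S)$ and adjoin to it a cheap Borel forest $\cF$ connecting $X\setminus S$ to $S$. Concretely, since $S$ is a complete section and $\cR$ is generated by a countable family of partial isomorphisms, the graph-distance-to-$S$ function is Borel and finite a.e.; choosing for each $x\in X\setminus S$ a single ``parent'' neighbor strictly closer to $S$ produces a Borel forest $\cF$ in which every point of $X\setminus S$ has exactly one parent edge. A degree count gives $\int \deg_\cF\,\dee\mu = 2\mu(X\setminus S)$, so $\cF$ has cost $\mu(X\setminus S)$; and $\cG_S\cup\cF$ generates $\cR$ and is edge-disjoint from $\cG_S$ (whose degrees vanish off $S$), so its cost is $\cost_{\mu|S}(\cG_S)+\mu(X\setminus S)$. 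Taking the infimum over $\cG_S$ yields the upper bound.

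For the lower bound I would fix an arbitrary graphing $\cG$ of $\cR$ with finite cost (otherwise there is nothing to prove) and manufacture a graphing of $\cR|S$ of cost at most $\cost_\mu(\cG)-\mu(X\setminus S)$. The device is to build the parent forest as above but now \emph{inside} $\cG$ (using a $\cG$-neighbor realizing $d_\cG(\cdot,S)-1$), call it $\cF$, let $\pi:X\to S$ send each point to the root of its $\cF$-tree, and then contract: set $\cH=\{(\pi(x),\pi(y)):(x,y)\in\cG\setminus\cF,\ \pi(x)\ne\pi(y)\}$. One checks $\cH$ is a symmetric Borel subset of $\cR|S$ (Borelness via Lusin--Novikov) that generates $\cR|S$, since any $\cG$-path between two points of $S$ projects under $\pi$ to an $\cH$-walk. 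Because $\cost_\mu(\cG)=\cost(\cF)+\tfrac12\hmu(\cG\setminus\cF)=\mu(X\setminus S)+\tfrac12\hmu(\cG\setminus\cF)$, it remains only to show $\hmu_S(\cH)\le\hmu(\cG\setminus\cF)$, where $\hmu_S$ is the relation-measure induced by $\mu|S$.

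The main obstacle --- and the real content of the theorem --- is precisely this measure comparison, which I would prove by the Mass Transport Principle. Writing $f(x)=\#\{y:(x,y)\in\cG\setminus\cF\}$ and $T_s=\pi^{-1}(s)$, the fibre estimate $|\cH_s|\le\sum_{x\in T_s}f(x)$ gives $\hmu_S(\cH)\le\int_S\sum_{x\in\pi^{-1}(s)}f(x)\,\dee\mu(s)$; applying mass transport to $F(s,x)=f(x)\mathbf 1[\pi(x)=s]$ (legitimate because $\mu$ is $\cR$-invariant, i.e.\ the relation is measure-preserving) converts the right-hand side into $\int_X f(x)\cdot\#\{s:\pi(x)=s\}\,\dee\mu(x)=\int_X f\,\dee\mu=\hmu(\cG\setminus\cF)$. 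Hence $\cost_{\mu|S}(\cR|S)\le\cost_{\mu|S}(\cH)\le\cost_\mu(\cG)-\mu(X\setminus S)$, and taking the infimum over $\cG$ completes the lower bound. The subtle points to watch are the Borel regularity of all the selections (parents, roots, and the contraction map $\pi$) and the fact that the finiteness and invariance of $\mu$ are exactly what make both the degree count and the mass-transport identity valid.
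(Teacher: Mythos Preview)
The paper does not prove this theorem; it is quoted from Gaboriau \cite{gaboriau-cost} (and cross-referenced as \cite[Theorem 21.1]{MR2095154}) and used as a black box. So there is no in-paper proof to compare against.

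That said, your argument is essentially the standard proof of Gaboriau's compression formula and is correct. The upper bound via a parent forest $\cF$ rooted in $S$ and the lower bound via contracting a given graphing $\cG$ along an $\cF\subset\cG$ to obtain $\cH$ on $S$ are exactly the moves in Gaboriau's original argument; your use of the Mass Transport Principle to bound $\hmu_S(\cH)\le\hmu(\cG\setminus\cF)$ is a clean way to phrase the degree-counting step, and it goes through because the relation is measure-preserving (which is the hypothesis). The Borel-selection issues you flag (choosing parents realizing $d_\cG(\cdot,S)-1$, defining the root map $\pi$) are handled by Lusin--Novikov as you indicate. Nothing is missing.
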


\begin{cor}\label{C:cost-section}
 Let $(X,\mu,\cR)$ be a countable measure-preserving Borel equivalence relation on $X$, $S,T\subseteq X$ Borel complete sections mod $\mu$. 
Then
$$\cost_{\mu|S}(\cR|{S})+1-\mu( S) = \cost_{\mu|T}(\cR|{T})+1-\mu(T).$$
\end{cor}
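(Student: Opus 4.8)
The plan is to reduce everything to Gaboriau's Theorem \ref{section} by comparing both $S$ and $T$ to the common larger section $U=S\cup T$. Since $S$ and $T$ are each complete sections mod $\mu$, so is $U$, and (working in the relevant regime where the sections have finite measure, as is the case for normalized cost) it has finite measure $\mu(U)\le \mu(S)+\mu(T)<\infty$. The key observation is that although $(X,\mu,\cR)$ may be infinite-measure-preserving, we never apply Theorem \ref{section} to $X$ itself; we apply it to the \emph{finite}-measure-preserving relation $(U,\mu|U,\cR|U)$, whose hypotheses are met.

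First I would establish the identity in the special case $S\subseteq T$. Here $S$ is a complete section mod $\mu|T$ for the finite-measure-preserving relation $(T,\mu|T,\cR|T)$: for a.e.\ $x\in T$ there is $y\in S$ with $(x,y)\in\cR$, and since $y\in S\subseteq T$ we in fact have $(x,y)\in\cR|T$. Moreover $(\mu|T)|S=\mu|S$ and $(\cR|T)|S=\cR|S$ because $S\subseteq T$. Applying Theorem \ref{section} to $(T,\mu|T,\cR|T)$ with complete section $S$ gives
\begin{align*}
\cost_{\mu|T}(\cR|T)=\cost_{\mu|S}(\cR|S)+\mu(T\setminus S)=\cost_{\mu|S}(\cR|S)+\mu(T)-\mu(S),
\end{align*}
which rearranges to $\cost_{\mu|S}(\cR|S)+1-\mu(S)=\cost_{\mu|T}(\cR|T)+1-\mu(T)$.

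For the general case I would apply this special case twice, with $U=S\cup T$ as the larger section. Since $S\subseteq U$ and $T\subseteq U$, the special case yields
\begin{align*}
\cost_{\mu|S}(\cR|S)+1-\mu(S)=\cost_{\mu|U}(\cR|U)+1-\mu(U)=\cost_{\mu|T}(\cR|T)+1-\mu(T),
\end{align*}
which is exactly the claimed identity.

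The only point requiring care — and the mild obstacle in the argument — is verifying that the restriction $(U,\mu|U,\cR|U)$ is again a countable finite-measure-preserving Borel equivalence relation, so that Theorem \ref{section} genuinely applies. Finiteness of $\mu|U$ follows from $\mu(S),\mu(T)<\infty$; measure-preservation of the restricted relation is the standard fact that restricting a measure-preserving countable Borel equivalence relation to any Borel set preserves the restricted measure. With these verifications the argument is purely formal.
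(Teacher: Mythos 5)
Your proof is correct and follows essentially the same route as the paper: both arguments apply Gaboriau's Theorem \ref{section} inside the common finite-measure section $S\cup T$, once with $S$ and once with $T$ as complete sections, and equate the results. Your organization via the nested case $S\subseteq T$ and your explicit checks (that restriction preserves measure, that finiteness of $\mu(S\cup T)$ is what licenses the theorem) are just a more careful write-up of the paper's two-line computation.
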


\begin{proof}
By Theorem \ref{section} of Gaboriau, we have
    \begin{align*}
        \cost_{\mu | S\cup T }(\cR|{S \cup T}) &= \cost_{\mu|S}(\cR|S)+ \mu((S \cup T) \setminus S) \\
        &= \cost_{\mu|T}(\cR|T)+ \mu((S\cup T) \setminus T).
    \end{align*}
Thus we have $\cost_{\mu|S}(\cR|S)-\mu( S) = \cost_{\mu|T}(\cR|T)-\mu(T)$.
\end{proof}

\begin{defn}
Let $(X,\mu,\cR)$ be a discrete mp equivalence relation. The\textbf{ normalized cost of $(X,\mu,\cR)$} is defined by 
$$\mathsf{ncost}(X,\mu,\cR) = \cost_{\mu|\Lambda}(\cR|\Lambda)+1-\mu(\Lambda)$$
where $\Lambda \subset X$ is any complete section with finite positive measure and $\mu|\Lambda$ is the restriction of $\mu$ to $\Lambda$. By Corollary \ref{C:cost-section}, this does not depend on the choice of section $\Lambda$. Furthermore, in the case where $\mu(\Lambda) = 1$, the normalized cost of $\cR$ is just the $\mu$-cost of the equivalence relation restricted to $\Lambda$. 
\end{defn}



\subsection{Graph-cost}\label{S:cost-graph}

Let $\G$ be a countable group. In order to bound the cost of the Poisson suspension of an action of $\G$, we give an alternative definition of the cost of an essentially free probability measure preserving $\G$-action. We will show that this definition agrees with $\mu$-cost. 

Let $2^{\G\times \G}$ be the set of all subsets of  $\G\times \G$. Endowed with the product topology, $2^{\G\times \G}$ is a compact metrizable space on which $\G$ acts continuously by
$$h\cdot G=\{(h g_1,h g_2 ):~(g_1,g_2)\in G\}$$ for $h\in \G$, $G \in 2^{\G\times \G}$.
Let $\mathsf{Graph}(\G) \subseteq 2^{\G\times \G}$ be the set of all symmetric subsets $G$ of $\G\times \G$, i.e.\ those such that $(g_1,g_2) \in G$ if and only if $(g_2,g_1)\in G$.

The {\bf support} $\mathsf{supp}(G)$ of $G \in \mathsf{Graph}(\G)$ is the set of all $g\in \G$ such that there exists some $g' \in \G$ with $(g, g')\in G$. If $G \in \mathsf{Graph}(\G)$, then $G$ can be thought of as an undirected graph with vertex set $V(G) = \mathsf{supp}(G) \subseteq \Gamma$ and edge set $E(G) = G \subseteq \Gamma \times \Gamma$.
We say $G$ is {\bf fully supported} if its support is all of $\Gamma$.

An element $G \in \mathsf{Graph}(\G) $ is {\bf connected} if the graph $G = (V(G), E(G))$ it generates is connected. 
In other words, $G$ is connected if for all $g,h\in \mathsf{supp}(G)$ there exist some $n\in \N$ and $g=g_1, g_2, \dots, g_n=h \in \mathsf{supp}(G)$ such that
$(g_i,g_{i+1})\in G$ for all $1\leq i\leq n-1$.


The degree of $G \in \mathsf{Graph}(\G)$ at the identity is 
$$\deg_e(G) = \#\{g\in \G \setminus \{e\}:~ (e,g)\in G\}.$$

Suppose $\G \cc (X,\mu)$ is a pmp action and  $\pi: X \to \mathsf{Graph}(\G)$ is a $\G$-equivariant map such that $\pi_* \mu$-almost every $G$ is connected and fully supported. Then we say that \textbf{$\pi$ is a $\G$-graph}, and we say that the \textbf{graph-cost of $\pi$} is
\[ \operatorname{grcost}_\mu(\pi) = \frac{1}{2} \int \deg_e(G)~\dee \pi_*\mu(G) = \frac{1}{2}\int \deg_e(\pi(x))~\dee \mu(x). \]


The \textbf{graph-cost of a pmp action $\G \cc (X,\mu)$} is 
$$ \operatorname{grcost}(\G,X,\mu) = \inf\{\operatorname{grcost}_\mu(\pi) :~ \pi \text{ is a $\G$-graph}\}$$
where the infimum is taken over all $\G$-equivariant maps $\pi:X \to \mathsf{Graph}(\G)$ such that $\pi_*\mu$-a.e.\ $G$ is connected and fully supported.

\begin{remark}
    It can be shown that the graph-cost of a pmp action is the same as the groupoid cost of the induced groupoid. We will not need this so we do not prove it here.
\end{remark}


Let $\pi: X \to \mathsf{Graph}(\G)$ be a $\G$-graph. Define $\cG_\pi \subset X\times X$ by $(x,g x) \in \cG_\pi$ if and only if $(e,g^{-1})\in \pi(x)$. We will prove that $\cG_\pi$ is a graphing associated to the action of $\G$ on $(X,\mu)$, and that $\operatorname{grcost}_\mu(\pi)=\cost_\mu(\cG_\pi)$. That is, $\cG_\pi$ is a graph such that the connected components are exactly the equivalence classes of $\cR_\G$.



\begin{lem}
  Let $\pi: X \to \mathsf{Graph}(X)$ be $\G$-invariant. If $\pi(x)$ is fully supported and connected for a.e.\ $x$ then  $\cG_\pi$ is a graphing generating the orbit equivalence relation $\cR_\G$. 
\end{lem}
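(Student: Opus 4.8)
The plan is to show that the graph on $X$ with edge set $\cG_\pi$ restricts, on each orbit, to an isomorphic copy of the abstract graph $\pi(x)$, so that connectivity and full support of $\pi(x)$ translate directly into the graphing property. First note that $\cG_\pi \subseteq \cR_\G$ by construction, and that $\cG_\pi = \bigcup_{g\in\G}\{(x,gx):~(e,g^{-1})\in\pi(x)\}$ is a countable union of Borel sets, hence Borel. Since we are in the setting of an essentially free pmp action, for a.e.\ $x$ the map $g \mapsto gx$ is a bijection of $\G$ onto the orbit $\G x$, so the element $g$ with $y=gx$ is unique and $\cG_\pi$ is unambiguously defined as a subset of $X\times X$. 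Because connected components of $\cG_\pi$ are automatically contained in $\cR_\G$-classes, it remains only to prove that $\cG_\pi$ is symmetric and that each orbit lies in a single $\cG_\pi$-component.

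The key step is the following edge correspondence, valid for a.e.\ $x$ and all $h_1,h_2\in\G$:
$$(h_1^{-1}x,\ h_2^{-1}x)\in\cG_\pi \iff (h_1,h_2)\in\pi(x).$$
To see this, set $z=h_1^{-1}x$ and write $h_2^{-1}x = kz$ with $k=h_2^{-1}h_1$, so that by definition $(z,kz)\in\cG_\pi$ iff $(e,k^{-1})=(e,h_1^{-1}h_2)\in\pi(z)$. By $\G$-equivariance $\pi(z)=\pi(h_1^{-1}x)=h_1^{-1}\cdot\pi(x)$, and applying the action formula $h_1\cdot(e,h_1^{-1}h_2)=(h_1,h_2)$ together with $h_1\cdot(h_1^{-1}\cdot\pi(x))=\pi(x)$ converts the membership condition into $(h_1,h_2)\in\pi(x)$, as claimed. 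In other words, $\iota_x\colon h\mapsto h^{-1}x$ is a graph isomorphism from $(\G,\pi(x))$ onto the orbit $\G x$ equipped with $\cG_\pi$. Symmetry of $\cG_\pi$ is then immediate: since each $\pi(x)\in\mathsf{Graph}(\G)$ is symmetric, the correspondence shows $(h_1^{-1}x,h_2^{-1}x)\in\cG_\pi$ implies $(h_2^{-1}x,h_1^{-1}x)\in\cG_\pi$, so $\cG_\pi$ is a genuine sub-graphing of $\cR_\G$.

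Finally, fix $x$ with $\pi(x)$ connected and fully supported (which holds for a.e.\ $x$), and let $y=gx$ be any point of its orbit. Full support gives $e,g^{-1}\in\supp(\pi(x))=\G$, and connectedness gives a path $e=h_0,h_1,\dots,h_n=g^{-1}$ with $(h_i,h_{i+1})\in\pi(x)$. Applying $\iota_x$ and the edge correspondence yields a $\cG_\pi$-path $x=h_0^{-1}x,\,h_1^{-1}x,\dots,h_n^{-1}x=gx=y$, so $x$ and $y$ lie in the same $\cG_\pi$-component. Hence each orbit is a single connected component, and combined with $\cG_\pi\subseteq\cR_\G$ this shows the connected components of $\cG_\pi$ are exactly the $\cR_\G$-classes, up to the $\mu$-null set where $\pi(x)$ fails to be connected or fully supported; that is, $\cG_\pi$ is a graphing generating $\cR_\G$. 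The one point requiring care is the equivariance bookkeeping in the displayed correspondence --- keeping the left action, the inverses in the definition of $\cG_\pi$, and the action formula $h\cdot G=\{(hg_1,hg_2)\}$ mutually consistent --- together with the use of essential freeness to make $\cG_\pi$ well defined; the rest is a direct transfer of combinatorial properties from $\pi(x)$ to the orbit.
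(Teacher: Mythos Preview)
Your proof is correct and follows essentially the same approach as the paper: establish the edge correspondence $(h_1^{-1}x,h_2^{-1}x)\in\cG_\pi \iff (h_1,h_2)\in\pi(x)$ via $\G$-equivariance, deduce symmetry, and then transfer a path in $\pi(x)$ from $e$ to $g^{-1}$ to a $\cG_\pi$-path from $x$ to $gx$. Your write-up is in fact somewhat more careful than the paper's about the equivariance bookkeeping, Borel measurability, and the role of essential freeness.
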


\begin{proof}
    Let $g_1, g_2 \in \G$. By definition, $(g_1 x,g_2 x) \in \cG_\pi$ if and only if $(g_1^{-1},g_2^{-1})\in \pi(x)$. More precisely,
    $(hx,gx) \in \cG_\pi$ if and only if $(e,hg^{-1})=(e,(gh^{-1})^{-1}) \in \pi(hx)=h\pi(x)$ since $\pi$ is $\G$-invariant. This occurs exactly when $(h^{-1},g^{-1}) \in \pi(x)$.
    
    Furthermore, if $(e,g^{-1}) \in \pi(x)$ then $(g,e) \in \pi(g x)$ which implies $(e,g) \in \pi(g x)$ and hence $(g x,x)\in \cG$.

    If $x,y\in X$ are in the same orbit then there exists $g \in \G$ with $y=g x$. Because $\pi(x)$ is connected and has full support almost surely, there exists a path from $x$ to $g x$ in $\pi(x)$ a.s.
    Thus there a.s.~exists $g_1,g_2,\ldots, g_n \in \G$ with $(e,g_1), (g_1,g_2),\ldots, (g_{n-1},g_n) \in \pi(x)$ and hence $(x, g_1^{-1}x), (g_1^{-1}x, g_2^{-1}x), 
    \ldots, (g_{n-1}^{-1}x, g_{n}^{-1}x) \in \cG_\pi$. This means that all points in the orbit of $x$ under the action of $\G$ are in the same connected component of $\cG_\pi$ almost surely.
\end{proof}

\begin{cor}
    Whenever $\pi$ is a $\G$-graph, the graph-cost of $\pi$ is at least the cost of the induced graphing $\cG_\pi$. These are equal if the action $\G \cc (X,\mu)$ is essentially free.
\end{cor}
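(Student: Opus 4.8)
The plan is to establish the statement by a pointwise comparison of degree functions followed by integration. By the preceding lemma, $\cG_\pi$ is a graphing generating $\cR_\G$, so the cost of this graphing is $\cost_\mu(\cG_\pi) = \frac{1}{2}\int \deg_{\cG_\pi}(x)\,\dee\mu(x)$, where $\deg_{\cG_\pi}(x) = \#\{y \ne x :~ (x,y) \in \cG_\pi\}$ counts the $\cG_\pi$-neighbors of $x$ (discarding self-loops, as is standard for cost). Comparing with $\operatorname{grcost}_\mu(\pi) = \frac{1}{2}\int \deg_e(\pi(x))\,\dee\mu(x)$, it suffices to prove the pointwise inequality $\deg_{\cG_\pi}(x) \le \deg_e(\pi(x))$ for a.e.\ $x$, together with equality whenever the action is essentially free.

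First I would unwind the definition of $\cG_\pi$. For $y \ne x$, we have $(x,y) \in \cG_\pi$ if and only if $y = gx$ for some $g \in \G$ with $(e, g^{-1}) \in \pi(x)$; since $y \ne x$ forces $g \notin \Stab_\G(x)$ and in particular $g \ne e$, the orbit map $g \mapsto gx$ restricts to a surjection from $D(x) = \{g \ne e :~ (e, g^{-1}) \in \pi(x),~ gx \ne x\}$ onto the set of $\cG_\pi$-neighbors $y \ne x$ of $x$. The index set $D(x)$ is contained in $\{g \ne e :~ (e, g^{-1}) \in \pi(x)\}$, which has the same cardinality as $\{g \ne e :~ (e, g) \in \pi(x)\}$ via the involution $g \leftrightarrow g^{-1}$, and this last cardinality is exactly $\deg_e(\pi(x))$. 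Hence the existence of the surjection yields $\deg_{\cG_\pi}(x) \le \deg_e(\pi(x))$ for a.e.\ $x$; integrating and multiplying by $\tfrac12$ gives $\cost_\mu(\cG_\pi) \le \operatorname{grcost}_\mu(\pi)$, the asserted inequality. (All sets involved are Borel, so the integrals and the induced measure $\hmu$ on $\cR_\G$ from Theorem \ref{T:mp} are well defined.)

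For equality in the essentially free case, the point is that the above surjection becomes a bijection. When $\G \cc (X,\mu)$ is essentially free, $\Stab_\G(x) = \{e\}$ for a.e.\ $x$, so the orbit map $g \mapsto gx$ is injective; consequently no two distinct elements of the index set collapse to a common neighbor, and moreover $gx \ne x$ holds automatically for every $g \ne e$, so $D(x) = \{g \ne e :~ (e, g^{-1}) \in \pi(x)\}$ loses nothing to self-coincidences. Therefore $\deg_{\cG_\pi}(x) = \deg_e(\pi(x))$ for a.e.\ $x$, and integrating gives $\cost_\mu(\cG_\pi) = \operatorname{grcost}_\mu(\pi)$.

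The only genuine subtlety — and the step I would treat most carefully — is the bookkeeping around nontrivial stabilizers: distinct group elements $g$ can map to the same neighbor $gx$, or to $x$ itself, precisely when the action fails to be free, and this collapsing is exactly what degrades the equality into an inequality in general. I would be careful to handle the identity element and self-loops consistently on both sides (so that the surjection's domain is compared against $\deg_e$, which omits $g = e$), since a nontrivial stabilizer element $g \ne e$ with $(e,g^{-1}) \in \pi(x)$ contributes to $\deg_e(\pi(x))$ while producing only a self-loop on the graphing side — a discrepancy that only reinforces the inequality and vanishes under freeness.
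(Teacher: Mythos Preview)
Your proof is correct and follows essentially the same approach as the paper: both establish the pointwise inequality $\deg_{\cG_\pi}(x) \le \deg_e(\pi(x))$ via a surjection from edges of $\pi(x)$ at $e$ to edges of $\cG_\pi$ at $x$, which becomes a bijection under essential freeness, and then integrate. Your treatment is more explicit about the bookkeeping around stabilizers and self-loops, but the argument is the same.
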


\begin{proof}
    Let $\cG_\pi$ be the graphing induced by $\pi$.  As in the proof of the previous lemma, for $x\in X$, there is a surjective map from the edges of $\pi(x)$ adjacent to $e$ to the edges of $\cG_\pi$ adjacent to a vertex $x$ given by $(e,g)\mapsto (x,g^{-1}x)$. This implies  $\deg_{\cG_\pi}(x) \le \deg_e(\pi(x))$ a.e.. Moreover, this map is a bijection if the action $\G \cc (X,\mu)$ is essentially free. So  
    \[\cost_\mu(\cG_\pi) = \frac{1}{2} \int \deg_{\cG_\pi}(x) d \mu(x) \le \frac{1}{2} \int \deg_e(\pi(x)) d\mu(x) = \operatorname{grcost}_\mu(\pi) \]
 with equality throughout in the case where $\G\cc(X,\mu)$ is essentially free.  
\end{proof}

\begin{lem}
    If $\cG$ is a graphing generating the equivalence relation $\cR_\G$ induced by an essentially free action of $\G$, then there exists some $\G$-graph $\pi$ such that $\cG = \cG_\pi$. 
\end{lem}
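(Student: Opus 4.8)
The plan is to define $\pi$ by the only formula consistent with the identity $\cG=\cG_\pi$ and then verify it has the required properties, using essential freeness throughout. Concretely, for $x\in X$ set
$$\pi(x) = \{(a,b)\in \G\times\G :~ (a^{-1}x, b^{-1}x)\in \cG\}.$$
Since $\cG$ is symmetric, each $\pi(x)$ is a symmetric subset of $\G\times\G$, so $\pi(x)\in \mathsf{Graph}(\G)$. Measurability is immediate: for each fixed $(a,b)$ the set $\{x :~ (a,b)\in\pi(x)\}=\{x :~ (a^{-1}x,b^{-1}x)\in\cG\}$ is Borel because $\cG$ is Borel and the translations $x\mapsto a^{-1}x$ are Borel, and a map into $2^{\G\times\G}$ with the product topology is Borel exactly when all these coordinate sets are. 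A short computation using $(hc)^{-1}h=c^{-1}$ shows $\pi(hx)=h\cdot\pi(x)$, i.e.\ $\pi$ is $\G$-equivariant.

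First I would record the key consequence of essential freeness: for a.e.\ $x$ the orbit map $a\mapsto a^{-1}x$ is a bijection from $\G$ onto the orbit $\G x$, and under this bijection $\pi(x)$ is carried isomorphically, as a graph, onto the restriction $\cG\cap(\G x\times\G x)$ of $\cG$ to the orbit of $x$. Thus the graph-theoretic structure of $\pi(x)$ can be read off directly from $\cG$ restricted to a single orbit.

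Next I would verify connectedness and full support for $\pi_*\mu$-a.e.\ $G$, equivalently for $\mu$-a.e.\ $x$. Because $\cG$ generates $\cR_\G$, for a.e.\ $x$ every point of the orbit $\G x$ is joined to $x$ by a $\cG$-path; here countability of the $\cR_\G$-class upgrades ``a.e.\ $y$ in the class'' to ``every $y$ in the class'', so $\cG$ restricted to $\G x$ is connected, and transporting along the orbit bijection shows that $\pi(x)$ is connected. For full support, note that in a connected orbit with at least two points every vertex has degree at least one, so $\mu(\{x :~ \deg_\cG(x)=0\})=0$; since the action is measure preserving, this null set is carried to a null set by every $g\in\G$, whence a.e.\ $x$ has no isolated point in its orbit and $\mathsf{supp}(\pi(x))=\G$.

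Finally, $\cG=\cG_\pi$ is a direct unwinding of definitions: $(x,gx)\in\cG_\pi$ iff $(e,g^{-1})\in\pi(x)$ iff $(x,gx)\in\cG$, and since $\cG\subset\cR_\G$ every edge of $\cG$ has the form $(x,gx)$ with $g$ unique by essential freeness. I expect the only genuinely delicate point to be the careful handling of the ``mod $\mu$'' clauses in the definition of a generating graphing, i.e.\ converting the almost-everywhere path-connectivity of $\cR_\G$ into almost-sure connectivity and full support of the individual graphs $\pi(x)$; everything else is bookkeeping that relies on essential freeness to identify $\G$ with each orbit.
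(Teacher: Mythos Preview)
Your proof is correct and follows essentially the same approach as the paper: you define $\pi(x)$ via the same formula (up to the symmetry of $\cG$), and verify equivariance, connectedness, full support, and $\cG=\cG_\pi$ in the same way. You supply more detail than the paper does---in particular the measurability check and the careful null-set bookkeeping for full support---but the underlying argument is identical.
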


\begin{proof}
    Suppose $\cG \subset X \times X$ is a graphing that generates the equivalence relation $\cR_\G$. Define $\pi:X \to \mathsf{Graph}(\G)$ by $\pi(x)$ is the set of all $(g'^{-1}, g^{-1})$ such that $(g x, g'x) \in \cG$.
    


    Clearly $\pi(x)$ is symmetric. Since $\cG$ generates $\cR_\G$, the connected components of $\cG$ are the equivalence classes in $\cR_\G$. That means that for a.e.~$x \in X$, if $y \in X$ is such that there exists $g \in \G$ with $x=g y$, there exists a path from $x$ to $y$ in $\cG$. Thus there exists a path from $e$ to $g$ in $\pi(x)$, and $\pi(x)$ is connected almost surely. Furthermore, $\pi(x)$ has full support because the action is essentially free.

    Lastly, $\pi$ is $\G$-equivariant. Let $g \in \G$. We can see that $\pi(g x) = g \pi(x)$ since if $(x, g x) \in \cG$ meaning $(e, g^{-1}) \in \pi(x)$, then $(g x, x) \in \cG$ and hence $(e, g) \in \pi(g(x))$.
\end{proof}

\begin{cor}\label{cost}
    The graph-cost of an essentially free probability measure preserving action $\Gamma \cc (X, \mu)$ is equal to the cost of $\Gamma \cc (X, \mu)$. 
\end{cor}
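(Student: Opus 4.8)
The plan is to deduce this immediately by combining the three preceding results, which together show that the assignment $\pi \mapsto \cG_\pi$ is a cost-preserving correspondence between $\G$-graphs and graphings generating $\cR_\G$. Recall that the cost of $\G \cc (X,\mu)$ is $\cost_\mu(\cR_\G) = \inf_\cG \cost_\mu(\cG)$, the infimum over all graphings $\cG$ generating the orbit equivalence relation, whereas the graph-cost is $\operatorname{grcost}(\G,X,\mu) = \inf_\pi \operatorname{grcost}_\mu(\pi)$, the infimum over all $\G$-graphs $\pi$. I would prove these two infima coincide by establishing each is bounded by the other.

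For the inequality $\operatorname{grcost}(\G,X,\mu) \ge \cost_\mu(\cR_\G)$: given any $\G$-graph $\pi$, the first lemma above shows that $\cG_\pi$ is a graphing generating $\cR_\G$, and the first corollary (using essential freeness) gives $\operatorname{grcost}_\mu(\pi) = \cost_\mu(\cG_\pi)$. Since $\cG_\pi$ is one of the graphings competing in the definition of cost, $\cost_\mu(\cG_\pi) \ge \cost_\mu(\cR_\G)$. Taking the infimum over all $\pi$ yields the claimed inequality. For the reverse inequality $\operatorname{grcost}(\G,X,\mu) \le \cost_\mu(\cR_\G)$: given any graphing $\cG$ generating $\cR_\G$, the second lemma above produces a $\G$-graph $\pi$ with $\cG = \cG_\pi$, and again the first corollary gives $\operatorname{grcost}_\mu(\pi) = \cost_\mu(\cG_\pi) = \cost_\mu(\cG)$. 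Hence $\operatorname{grcost}(\G,X,\mu) \le \cost_\mu(\cG)$, and taking the infimum over all such graphings $\cG$ gives the inequality. Combining the two directions proves the corollary.

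There is no genuine obstacle here: all the substantive work—verifying that $\cG_\pi$ generates the relation, matching degrees via the bijection of edges at $e$, and realizing an arbitrary graphing as some $\cG_\pi$—has already been carried out in the preceding lemmas and corollary. The only point requiring care is that the equality $\operatorname{grcost}_\mu(\pi)=\cost_\mu(\cG_\pi)$ (rather than merely an inequality) relies on essential freeness, which is what makes the map $(e,g)\mapsto (x,g^{-1}x)$ from edges of $\pi(x)$ at $e$ to edges of $\cG_\pi$ incident to $x$ a bijection; this is precisely the hypothesis in the statement, so both directions go through.
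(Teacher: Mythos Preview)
Your proposal is correct and is exactly the argument the paper intends: the corollary is stated without a separate proof precisely because it follows by combining the two preceding lemmas with the unlabeled corollary on $\operatorname{grcost}_\mu(\pi)=\cost_\mu(\cG_\pi)$, just as you describe. Your two-inequality argument via the correspondence $\pi\mapsto\cG_\pi$ and its inverse is the intended reasoning.
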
 

\subsubsection{Graphs which are not fully supported}
The main purpose of this subsection is to prove Lemma \ref{weight-cost} below which provides a formula for graph-cost in terms of $\G$-maps $\pi: X \to \textsf{Graph}(\G)$ such that $\pi_*\mu$ almost-every $G$ is connected and non-empty, but does not necessarily have full support. We will use this in the proof of Theorem \ref{T:ppp-cost}.

Suppose $\G \cc (X, \mu)$ is pmp and ergodic. Given a $\G$-equivariant map $\pi: X \to \textsf{Graph}(\G)$, let $\operatorname{Weight}(\pi)$ be the probability that the identity $e$ is in the support of the random graph $G$ where $G$ has law $\pi_*\mu$. For example, if $\pi$ is a $\G$-graph, then the weight of $\pi$ is 1.  





    

\begin{lem}\label{weight-cost}
    Let $\G \cc (X,\mu)$ be an essentially free probability measure preserving action. Then the $\mu$-cost of this action is
    \begin{align}\label{weight}
   \cost_\mu(\G,X) &=  \inf_\pi\left(1-\operatorname{Weight}(\pi)+ \frac{1}{2}\int \deg_e(G)~\dee \pi_*\mu(G) \right)
    \end{align}
    where the infimum is     over all $\G$-equivariant maps $\pi:X \to \mathsf{Graph}(\G)$ such that $\pi_*\mu$-almost every $G$ is connected and non-empty.
\end{lem}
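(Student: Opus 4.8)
The plan is to prove the two inequalities separately, writing $R$ for the right-hand side of \eqref{weight}. The inequality $R \le \cost_\mu(\G,X)$ is immediate: every $\G$-graph is in particular a $\G$-equivariant map $\pi$ with $\pi_*\mu$-a.e.\ $G$ connected and non-empty, and on such maps $e\in\supp(G)$ almost surely, so $\operatorname{Weight}(\pi)=1$ and the functional in \eqref{weight} reduces to $\frac12\int \deg_e(G)\,\dee\pi_*\mu(G)=\operatorname{grcost}_\mu(\pi)$. Since $R$ is an infimum over a strictly larger class of maps, $R\le \inf_\pi \operatorname{grcost}_\mu(\pi)=\cost_\mu(\G,X)$ by Corollary \ref{cost}.

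For the reverse inequality $\cost_\mu(\G,X)\le R$, I would fix a $\G$-equivariant $\pi$ with $\pi_*\mu$-a.e.\ $G$ connected and non-empty, and exhibit a section on which the induced graphing is cheap. Set
$$Y=\{x\in X:~ e\in \supp(\pi(x))\},$$
so that $\mu(Y)=\operatorname{Weight}(\pi)$. Using $\G$-equivariance, $\supp(\pi(g^{-1}x))=g^{-1}\supp(\pi(x))$, so $g^{-1}x\in Y$ exactly when $g\in\supp(\pi(x))$; since $\pi(x)$ is non-empty a.e., this shows $Y$ meets every orbit, i.e.\ $Y$ is a complete section. Because $\mu$ is $\G$-invariant and $X=\bigcup_{g\in\G}gY$ up to null sets, the identity $\mu(gY)=\mu(Y)$ forces $\mu(Y)>0$; thus $Y$ is a complete section of finite positive measure.

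Next I would show that the graphing $\cG_\pi\cap (Y\times Y)$ generates $\cR_\G|Y$ and compute its cost. For $x\in Y$, each $\cG_\pi$-neighbor $gx$ of $x$ satisfies $g^{-1}\in\supp(\pi(x))$, hence lies in $Y$; conversely $x$ is isolated in $\cG_\pi$ precisely when $x\notin Y$. For $x\in Y$, connectedness of $\pi(x)$ shows the $\cG_\pi$-component of $x$ equals $\{h^{-1}x:~ h\in\supp(\pi(x))\}$, and a parallel computation identifies this set with $Y\cap \G x$; hence every orbit meets $Y$ in a single $\cG_\pi$-component whose connecting paths stay inside $Y$, so $\cG_\pi\cap(Y\times Y)$ indeed generates $\cR_\G|Y$. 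Essential freeness gives $\deg_{\cG_\pi}(x)=\deg_e(\pi(x))$ for $x\in Y$, while $\deg_e(\pi(x))=0$ off $Y$, so
$$\cost_{\mu|Y}(\cR_\G|Y)\le \cost_{\mu|Y}\big(\cG_\pi\cap(Y\times Y)\big)=\tfrac12\int \deg_e(G)\,\dee\pi_*\mu(G).$$
Comparing the two evaluations of $\mathsf{ncost}(X,\mu,\cR_\G)$ coming from the sections $X$ and $Y$ (via Corollary \ref{C:cost-section}) gives $\cost_\mu(\cR_\G)=\cost_{\mu|Y}(\cR_\G|Y)+1-\mu(Y)$; combining this with the displayed bound and $\mu(Y)=\operatorname{Weight}(\pi)$ yields $\cost_\mu(\G,X)\le 1-\operatorname{Weight}(\pi)+\frac12\int\deg_e(G)\,\dee\pi_*\mu(G)$, and taking the infimum over $\pi$ gives $\cost_\mu(\G,X)\le R$.

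I expect the main obstacle to be the bookkeeping in the middle step: verifying carefully that the $\cG_\pi$-component of a point $x\in Y$ coincides with $Y\cap \G x$, so that passing to $Y$ neither disconnects an orbit nor forces a generating path to leave $Y$, together with confirming the positivity $\mu(Y)>0$ that makes $Y$ an admissible section for the normalized-cost formula. Once these structural facts are in place, the conclusion follows from Gaboriau's section formula and the definition of normalized cost with only routine manipulation.
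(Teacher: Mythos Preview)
Your proposal is correct and follows essentially the same route as the paper: define the section $Y=\{x:e\in\supp(\pi(x))\}$ (the paper calls it $S$), verify it is a complete section with $\mu(Y)=\operatorname{Weight}(\pi)$, check that $\cG_\pi$ restricted to $Y$ generates $\cR_\G|Y$, and then invoke Gaboriau's section formula (Theorem~\ref{section} / Corollary~\ref{C:cost-section}). Your write-up is in fact slightly more explicit than the paper's about the positivity $\mu(Y)>0$ and about why the $\cG_\pi$-component of $x\in Y$ coincides with $Y\cap\G x$.
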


\begin{proof}
    In the case that $\pi_*\mu$-almost every $G$ has full support, we have that $\text{Weight}(\pi) = 1$, and thus equation \ref{weight} is equal to the original graph-cost formula. Hence if we take the infimum over only these $\pi$, we get the cost by Corollary \ref{cost}. Hence we obtain the inequality
    $$    \cost_\mu(\G,X) \ge \inf_\pi \left(1- \text{Weight}(\pi)+ \frac{1}{2}\int \deg_e(G)~\dee \pi_*\mu(G)\right).$$


    Now suppose that $\pi_*\mu$-almost every $G$ is connected and non-empty but does not necessarily have full support. We need to show that $1- \text{Weight}(\pi)+ \frac{1}{2}\int \deg_e(G)~\dee \pi_*\mu(G) \geq \operatorname{grcost}_\mu(\cR_\G)$. For this let
    \begin{align*}
        S &= \{x\in X:~ \supp(\pi(x))\ni e\}\\
        \cG_\pi&=\{(x,gx) \in S \times S:~(e,g^{-1})\in \pi(x)\}.
    \end{align*}
We claim:
\begin{enumerate}
    \item $S$ is a complete section;
    \item $\cG_\pi$ is a graphing of $\cR_\G$ restricted to $S$;
    \item $\mu(S)=\text{Weight}(\pi)$.
\end{enumerate}
To see item (1), note that for a.e.\ $x\in X$, $\pi(x)$ is non-empty. Hence there exists $g\in \supp(\pi(x))$ which implies $e \in \supp(\pi(g^{-1}x))$ (by equivariance). Therefore $g^{-1}x \in S$. This shows $S$ is a complete section.

To see item (2), let $(x,gx)\in \cR_\G \cap (S\times S)$. Since $\pi(x)$ is connected and $\supp(\pi(x))$ contains both $e$ and $g^{-1}$ (since $\supp(\pi(gx))=g\supp(\pi(x))$), there is a path in $\pi(x)$ from $e$ to $g^{-1}$. That is, there are elements $e=g_1,\ldots, g_n=g$ such that $(g_i^{-1},g_{i+1}^{-1})\in \pi(x)$ for all $i<n$. Therefore, $(g_1x,g_2x), (g_2x,g_3x),\ldots, (g_{n-1}x,g_nx)$ is a path in $\cG_\pi$ from $x$ to $gx$. 

Item (3) is true by definition of $S$ and $\text{Weight}(\pi)$. It now follows from Gaboriau's Theorem \ref{section} that
   $$    \cost_\mu(\G,X) \le     1- \mu(S)+ \frac{1}{2}\int \deg_e(G)~\dee \pi_*\mu(G).$$
This proves the opposite inequality.
\end{proof}

\section{Cost of Poisson suspensions}\label{S:cost-Poisson}

The goal of this section is to prove Theorem \ref{T:ppp-cost}, which gives an upper bound for the cost of a Poisson suspension of an imp action under certain circumstances. First, we introduce the last necessary definition for the theorem.
Additional information on double recurrence is reviewed in Appendix \ref{S:recurrence}.

\begin{defn}
    Let $\G \cc (X,\mu)$ be an ergodic imp action. By Theorem \ref{T:Kaim-Hopf}, $X^2$ is the disjoint union of $\G$-invariant measurable sets $\mathsf{Con}(X^2)$ and $\mathsf{Dis}(X^2)$, and the restriction of $\G$ to $\mathsf{Con}(X^2)$ is infinitely conservative. We will say the action $\G \cc (X,\mu)$ is {\bf partially doubly recurrent} (PDR) if for a.e.\ $x,y \in X$ there exist $x=x_1,x_2,\ldots, x_n=y$ with $(x_i,x_{i+1})\in \mathsf{Con}(X^2)$ for all $i$. In other words, the equivalence relation generated by $\mathsf{Cont}(X^2)$ is all of $X$ (up to a set of measure zero). 
    
\end{defn}



\begin{thm}\label{T:ppp-cost} Suppose $\G$ is a countable group.
    Let $\G \cc (X,\mu)$ be a $\G$-invariant action such that a.e.\  ergodic component is infinite, non-atomic, essentially free and partially doubly recurrent. Then 
    $$\cost_{\Pois(\mu)}( \cR_\Pi)\le \operatorname{ncost}(\cR_\G)$$ where $\cR_\G$ is the equivalence relation induced by $\G \cc (X,\mu)$ and $\cR_\Pi$ is the equivalence relation induced by the pmp action $\G \cc(\M(X),\Pois(\mu))$.
\end{thm}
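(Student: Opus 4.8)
The plan is to apply the graph-cost formula of Lemma \ref{weight-cost} to the Poisson suspension, which is an essentially free pmp action (essential freeness of $\G \cc (\M(X),\Pois(\mu))$ follows from essential freeness and non-atomicity of the base, since for $g\ne e$ the set of configurations fixed by $g$ is $\Pois(\mu)$-null). Since the hypotheses and both sides of the desired inequality are phrased over a.e.\ ergodic component, a standard ergodic-decomposition argument reduces us to the case in which $\G \cc (X,\mu)$ is ergodic, infinite, non-atomic, essentially free and partially doubly recurrent. Fix $\eps>0$, a complete section $\Lambda\subset X$ with $0<\mu(\Lambda)\le 1$ (sections of small measure exist since the action is conservative and ergodic, and $\operatorname{ncost}$ is independent of $\Lambda$ by Theorem \ref{section}), and a graphing $\cG_\Lambda$ of $\cR_\G|\Lambda$ with $\cost_{\mu|\Lambda}(\cG_\Lambda)<\cost_{\mu|\Lambda}(\cR_\G|\Lambda)+\eps$. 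By Lemma \ref{weight-cost} it then suffices to produce a $\G$-equivariant map $\pi:\M(X)\to\mathsf{Graph}(\G)$ whose image graphs are $\Pois(\mu)$-a.e.\ connected and non-empty and for which $1-\operatorname{Weight}(\pi)+\tfrac12\int\deg_e(G)\,\dee\pi_*\Pois(\mu)(G)$ is at most $\cost_{\mu|\Lambda}(\cR_\G|\Lambda)+1-\mu(\Lambda)+O(\eps)$. I would build $\pi$ from two superimposed families of edges on a.e.\ configuration $\omega$, reading off the resulting graph on $\G$ via the translates $g\omega$.

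\textbf{Conservative skeleton.} The first family exploits partial double recurrence. A key preliminary observation is that, because $\mu$ is non-atomic, $\cR_\G$ has $\mu\times\mu$-measure zero, so $\Pois(\mu)$-a.e.\ no two points of $\omega$ share a $\G$-orbit; consequently distinct translates $g\omega$ share no points, and connectivity across orbits cannot come from matching points by group elements. Instead, viewing $\mathsf{Con}(X^2)\subset X\times X$ as a symmetric relation on the points of $\omega$, the PDR hypothesis guarantees that the graph it induces on $\supp(\omega)$ links points in distinct orbits and connects $\Pois(\mu)$-a.e.\ configuration into a single cluster. Because the diagonal action on $\mathsf{Con}(X^2)$ is \emph{infinitely} conservative, each conservative link recurs infinitely often along the orbit; a hyperfinite exhaustion of $\mathsf{Con}(X^2)$ then realizes the connectivity it forces using edges of asymptotically vanishing density, so that this family contributes $0$ to $\tfrac12\mathbb{E}[\deg_e]$ in the limit while still, together with the second family, generating all of $\cR_\Pi$. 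This is the mechanism by which the recurrent, amenable-like part of the relation is spanned at normalized cost $1$.

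\textbf{Section edges.} The second family transports $\cG_\Lambda$ onto the points of $\omega$ lying in $\Lambda$: whenever the conservative skeleton has already exhibited how two $\Lambda$-points sit in a common piece of orbit structure, I add the corresponding $\cG_\Lambda$-edges. By the Mecke equation (see \cite{MR2486789}) the expected number of $\Lambda$-points of $\omega$ is $\mu(\Lambda)$, and the Palm expectation of the per-point degree contributed by $\cG_\Lambda$ is $\int \deg_{\cG_\Lambda}\,\dee(\mu|\Lambda)$; hence this family contributes $\cost_{\mu|\Lambda}(\cG_\Lambda)<\cost_{\mu|\Lambda}(\cR_\G|\Lambda)+\eps$ to $\tfrac12\mathbb{E}[\deg_e]$. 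The aim is then to arrange that $e\in\supp(\pi(\omega))$ exactly on the Palm event marking a $\Lambda$-point, so that the same Mecke identity forces $\operatorname{Weight}(\pi)=\mu(\Lambda)$ and therefore $1-\operatorname{Weight}(\pi)=1-\mu(\Lambda)$. Combining the two families and letting $\eps\to 0$ would give $\cost_{\Pois(\mu)}(\cR_\Pi)\le\cost_{\mu|\Lambda}(\cR_\G|\Lambda)+1-\mu(\Lambda)=\operatorname{ncost}(\cR_\G)$.

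The main obstacle is the conservative-skeleton step: one must show rigorously that infinite conservativity of $\mathsf{Con}(X^2)$ lets the cross-orbit connectivity forced by PDR be realized by a genuine $\G$-equivariant factor graph whose expected degree tends to $0$. This is precisely where \emph{partial double} recurrence, rather than mere conservativity of $\G\cc X$, is essential. The second delicate point is the Palm bookkeeping that reconciles the probability $\operatorname{Weight}(\pi)$ with the Palm mass $\mu(\Lambda)$ (a probability-versus-expected-count issue that must be handled by marking points along the skeleton rather than by the crude event $\{\omega(\Lambda)>0\}$). Once these two points are settled, the section-edge accounting and the final application of Lemma \ref{weight-cost} and Theorem \ref{section} are routine.
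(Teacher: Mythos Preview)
Your overall architecture matches the paper's: reduce to the ergodic case, pick a small complete section $\Lambda$ with a near-optimal graphing $\cG_\Lambda$, lift $\cG_\Lambda$ to a $\G$-equivariant graph on the Poisson configuration via the per-point graphs $E(x)=\{(f,g):(f^{-1}x,g^{-1}x)\in\cG_\Lambda\}$, and invoke Lemma~\ref{weight-cost}. The disagreement is entirely in how you propose to connect the pieces $E(x)$ for distinct $x\in\Pi$.

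\textbf{The conservative-skeleton step is a genuine gap.} You write that ``a hyperfinite exhaustion of $\mathsf{Con}(X^2)$ then realizes the connectivity it forces using edges of asymptotically vanishing density.'' But infinite conservativity of the diagonal action on $\mathsf{Con}(X^2)$ does not imply hyperfiniteness or amenability of anything, and there is no reason such an exhaustion should exist. A conservative action of a non-amenable group (any ergodic pmp action, say) is already a counterexample to the intuition that conservativity alone buys hyperfinite approximation. The paper does not attempt to find intrinsic low-cost connecting edges. Instead it introduces an \emph{independent Bernoulli edge-percolation} $\Bern$ on $\G$ with edge probabilities $p:\G\to(0,1]$, and shows that for $(x,y)\in\mathsf{Con}(X^2)$ the infinite return set $\Ret(fS\times gS,(x,y))$ yields infinitely many independent candidate edges $\{h^{-1}f,h^{-1}g\}$ joining $E(x)$ to $E(y)$; by Borel--Cantelli one of them appears almost surely. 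PDR then chains these to connect all of $\hE(\Pi)=\bigcup_{x\in\Pi}E(x)$. The price is (i) the added expected degree $|p|=\sum_g p(g)$, which can be made arbitrarily small, and (ii) passing to the product action $\G\cc(\M(X)\times\mathsf{Graph}(\G),\Pois(\mu)\times\nu_p)$, which has the same cost by weak equivalence (Abert--Weiss, via \cite[Theorem 1.6]{MR3294302}). This Bernoulli-percolation trick is the missing idea in your sketch.

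\textbf{The Weight bookkeeping is also off.} You aim for $\operatorname{Weight}(\pi)=\mu(\Lambda)$, but $\operatorname{Weight}$ is a $\Pois(\mu)$-probability, not a Palm mass, and there is no Mecke identity equating the two here. In the paper, $e\in\supp(\hE(\Pi))$ iff $\Pi(\Lambda)>0$, so $\operatorname{Weight}\ge 1-e^{-\mu(\Lambda)}$; combined with $\tfrac12\E[\deg_e(\hE(\Pi))]\le\cost(\cG_\Lambda)$ (Claim~4, via conditioning on $|\Pi\cap\Lambda|$) and the identity $\cost(\cR|\Lambda)=\operatorname{ncost}(\cR_\G)-1+\mu(\Lambda)$, the Lemma~\ref{weight-cost} bound becomes $\operatorname{ncost}(\cR_\G)+\bigl(e^{-\mu(\Lambda)}-1+\mu(\Lambda)\bigr)+|p|+\eps$. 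Non-atomicity is then used to send $\mu(\Lambda)\to 0$, killing the parenthetical term. So you do not need $\operatorname{Weight}=\mu(\Lambda)$ exactly; you need the asymptotic $e^{-t}-1+t=o(1)$ as $t\to 0$.
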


We will prove this after the next two lemmas.

\begin{lem}\label{L:cost-ess-free}
    Suppose $\G \cc (X,\mu)$ is an essentially free ergodic imp action. Then the action of $\G \cc (\M(X),\Pois(\mu))$ is essentially free.
\end{lem}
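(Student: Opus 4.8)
The plan is to fix a nontrivial $g\in\G$ and show that the set $E^{(g)}=\{\Pi\in\M(X):~g_*\Pi=\Pi\}$ is $\Pois(\mu)$-null; since $\G$ is countable, taking the union over all $g\ne e$ then shows the stabilizer is a.s.\ trivial, which is exactly essential freeness. Because the action $\G\cc(X,\mu)$ is essentially free, the set $D=\{x\in X:~gx\ne x\}$ is $g$-invariant and $\mu$-conull, so $g$ acts on $(D,\mu\resto D)$ freely and measure-preservingly, with $\mu(D)=\mu(X)=\infty$. Since $\mu$ gives zero mass to the null set $X\setminus D$, almost every configuration is supported in $D$, and for such $\Pi$ (which is $\Pois(\mu)$-a.s.\ simple in the relevant non-atomic case) the condition $g_*\Pi=\Pi$ is equivalent to $g\cdot\supp(\Pi)=\supp(\Pi)$.

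Next I would build infinitely many independent local constraints. Using the fact that the Borel graph on $D$ with edge set $\{\{x,gx\}\}$ has maximum degree at most $2$ and hence Borel chromatic number at most $3$ (Kechris--Solecki--Todorcevic), choose a Borel set $A\subset D$ with $A\cap gA=\emptyset$ and $\mu(A)=\infty$, and partition it as $A=\bigsqcup_{k}A_k$ into Borel pieces with $0<\mu(A_k)<\infty$. Put $B_k=A_k$. Then $B_k\cap g^{-1}B_k=\emptyset$, and because $B_j\subset A$ while $g^{-1}B_k\subset g^{-1}A$ with $A\cap g^{-1}A=\emptyset$, the sets $C_k:=B_k\cup g^{-1}B_k$ are pairwise disjoint. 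The key observation is that on $E^{(g)}$ the identity $g\cdot\supp(\Pi)=\supp(\Pi)$ forces, for every $k$,
\[
\Pi\resto B_k=g_*\big(\Pi\resto g^{-1}B_k\big),
\]
so $E^{(g)}\subseteq\bigcap_k E_k$ where $E_k=\{\Pi:~\Pi\resto B_k=g_*(\Pi\resto g^{-1}B_k)\}$.

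Now I would exploit the defining independence of the Poisson process. Since the $C_k$ are disjoint, the restrictions $\Pi\resto C_k$ are jointly independent, hence so are the events $E_k$. For a single $k$, the restrictions $\Pi\resto B_k$ and $\Pi\resto g^{-1}B_k$ are independent with laws $\Pois(\mu\resto B_k)$ and $\Pois(\mu\resto g^{-1}B_k)$, and $g_*(\Pi\resto g^{-1}B_k)\sim\Pois(\mu\resto B_k)$ because $g_*\mu=\mu$. Conditioning on $\Pi\resto g^{-1}B_k$ and using that a Poisson process with non-atomic intensity hits a prescribed nonempty configuration with probability $0$, only the event that both restrictions are empty contributes, giving $\Prob(E_k)=e^{-2\mu(B_k)}$. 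Therefore
\[
\Pois(\mu)\big(E^{(g)}\big)\le\prod_{k}e^{-2\mu(B_k)}=\exp\Big(-2\sum_k\mu(B_k)\Big)=0,
\]
since $\sum_k\mu(B_k)=\mu(A)=\infty$.

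The main obstacle is exactly why a single constraint does not suffice: the one-gadget equality $\Pi\resto B_k=g_*(\Pi\resto g^{-1}B_k)$ holds with positive probability (namely when $\Pi$ is empty on $B_k\cup g^{-1}B_k$) and is consistent with $g_*\Pi\ne\Pi$, so no finite collection of gadgets can rule $E^{(g)}$ out. The crux is therefore to produce, Borel-measurably, infinitely many gadgets whose supports $C_k$ are genuinely disjoint, so that the constraints are independent and their probabilities multiply to $0$; the $3$-colorability of the graph of $g$ (equivalently, a marker/separation argument) is what makes the Borel construction of the separating set $A$ possible. If one does not wish to assume $\mu$ non-atomic, the same scheme applies with $\Prob(E_k)=\sum_{m\ge0}\big(e^{-\mu(B_k)}\mu(B_k)^m/m!\big)^2<1$, whose infinite product still vanishes for a suitable choice of the $B_k$ with $\sum_k\mu(B_k)=\infty$.
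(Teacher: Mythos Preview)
Your proof is correct and handles all nontrivial $g\in\G$ uniformly, whereas the paper's proof splits into cases. For $g$ of infinite order the paper simply cites \cite[Theorem~4.8]{MR2308588}; for $g$ of finite order $n$ it takes a Borel fundamental domain $Y$ for the free $\langle g\rangle$-action (so $X=Y\sqcup gY\sqcup\cdots\sqcup g^{n-1}Y$ mod~$\mu$), picks a single finite-measure $A\subset Y$, and bounds $\Pois(\mu)(\{\Pi:\Pi(A)=\Pi(gA)\})=\sum_{k\ge0}\bigl(e^{-\mu(A)}\mu(A)^k/k!\bigr)^2=O(1/\sqrt{\mu(A)})$ via Stirling, then lets $\mu(A)\to\infty$. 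You replace the single growing gadget by infinitely many disjoint gadgets whose independence forces the product of probabilities to vanish, and you manufacture the separating set $A$ (with $A\cap gA=\emptyset$, $\mu(A)=\infty$) via the Kechris--Solecki--Todorcevic $3$-coloring of the degree-$\le2$ Borel graph of $g$, avoiding any case split on the order of $g$. The underlying probability estimate is the same in both proofs (two independent Poisson restrictions on disjoint translates agreeing); only the packaging differs. Your route is more self-contained at the price of invoking a Borel combinatorics result; the paper's route is more elementary for finite-order $g$ but outsources the infinite-order case. One small correction: in your atomic-case remark, $\Prob(E_k)=\sum_{m\ge0}(e^{-\mu(B_k)}\mu(B_k)^m/m!)^2$ should read $\Prob(E_k)\le\sum_{m\ge0}(\cdots)^2$, since equality of point measures is stronger than equality of total counts; the bound still does the job provided you choose the $B_k$ with $\mu(B_k)$ bounded away from $0$ (say $\mu(B_k)\ge1$), so that the right-hand side is uniformly bounded below $1$.
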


\begin{proof}
    Let $g\in \G$ be nontrivial. It suffices to show that $\Pois(\mu)$-a.e.\ $\Pi$, $g\Pi \ne \Pi$. The special case in which $g$ has infinite order is implied by \cite[Theorem 4.8]{MR2308588}. So suppose $g$ has finite order $n$. Because the action is essentially free, the Ergodic Decomposition Theorem implies there exists a measurable set $Y \subset X$ such that $X$ is the disjoint union of $Y,gY,\ldots, g^{n-1}Y$ (mod $\mu$).

Since $\mu(X)=\infty$, $\mu(Y)=+\infty$ too. Let $A \subset Y$ be a set with finite positive measure. If $\Pi = g^{-1}\Pi$ then $\Pi(A) = \Pi(gA)$. So   it suffices to show that 
    \begin{align}\label{E:Poisson-thing}
        \lim_{\mu(A)\to +\infty} \Pois(\mu)(\{\Pi:~ \Pi(A) = \Pi(gA)\}) = 0.
    \end{align}
where the limit is over all positive finite measure sets $A \subset Y$. 

By definition of $\Pois(\mu)$ (and since $A$ and $gA$ are disjoint),
\begin{align*}
    \Pois(\mu)(\{\Pi:~ \Pi(A) = \Pi(gA)\}) &=\sum_{k=0}^\infty  \left(\frac{\mu(A)^k}{e^{\mu(A)}k!}\right)^2 \\
    &\le \max_{k \in \{0\} \cup \N} \frac{\mu(A)^k}{e^{\mu(A)}k!}.
\end{align*}
The value of $k \in \{0\} \cup \N$ which maximizes the last expression is called the mode of a Poisson random variable with mean $\mu(A)$. It is known to equal $\lfloor \mu(A) \rfloor$. By Stirling's Approximation, this implies 
\begin{align*}
     \Pois(\mu)(\{\Pi:~ \Pi(A) = \Pi(gA)\})   &=O(1/\sqrt{\mu(A)}). 
\end{align*}
This implies \eqref{E:Poisson-thing}.

\end{proof}

\begin{proof}[Proof of Theorem \ref{T:ppp-cost}]
By the Ergodic Decomposition Theorem, it suffices to handle the special case in which the action is ergodic.

Let $\G \cc (X, \mu)$ be an ergodic, essentially free imp action where $\G$ is a finitely generated group. By Lemma \ref{L:cost-ess-free}, the action $\G \cc (\M(X), \Pois(\mu))$ is essentially free.


Let $S \subset X$ be a set with finite positive measure. Because the action is ergodic, this set is a  complete section mod $\mu$. Let $\cR_\G=\{(x,gx):~x \in X, g\in \G\}\subset X\times X$ be the orbit equivalence relation for $\G\cc(X,\mu)$. Also let $\cR_\G|S=\cR_\G \cap (S\times S)$ be the induced equivalence relation on $S$. Let $\eps>0$ and $\cG \subset \cR_\G|S$ be a graphing such that $\cost_{\mu|S}(\cG) < \cost(\cR_\G|S) + \epsilon/2$. 




Define $E: X \to \mathsf{Graph}(\G)$ by $E(x) = \{ (f, g) : (f^{-1}x, g^{-1}x) \in \cG\}$. Then $E$ is $\G$-equivariant and $\mathsf{supp}(E(x))=\Ret(x, S)^{-1} = \{g \in \G : g^{-1}x \in S\}$. Because $\cG$ is a graphing, $E(x)$ is connected for a.e.\ $x$. 

Define $\hE: \M(X) \to \mathsf{Graph}(\G)$  by 
 \[ \hE(\Pi) =\bigcup_{x \in \Pi} E(x)\]
where we have abused notation by writing $x\in \Pi$ as shorthand for $\Pi(x)>0$. Then $\hE$ is $\G$-equivariant (because $E$ is $\G$-equivariant) and $\mathsf{supp}(\hE(\Pi))= \cup_{x\in \Pi} \mathsf{supp}(E(x))$. Although $E(x)$ is connected for a.e.\ $x$, $\hE(\Pi)$ is not connected in general. 

In order to obtain a connected graph, we will take the union of $\hE(\Pi)$ with a Bernoulli edge-percolation. To make this precise, let $p:\G \to (0,1]$ be a positive function. Later on, we will be especially interested in the case in which $\sum_{g\in \G} p(g)$ is very small.

Let $\Bern$ be a random variable taking values in $\mathsf{Graph}(\G)$ defined by: for each $g,h\in \G$, the edge $\{g,gh\}$ is present in $\Bern$ with probability $p(h)$. Moreover, these events are jointly independent. This is a Bernoulli edge percolation. Its distribution is $\G$-invariant.


Let $\nu_p \in \Prob(\mathsf{Graph}(\G))$ be the law of $\Bern$. So $\nu_p$ is a product measure. In fact, we identify $\mathsf{Graph}(\G)$ with the product space $\{0,1\}^{{\G\choose 2}}$ (where ${\G\choose 2}$ is the set of all two-element subsets of $\G$) and then $\nu_p = \prod_{\{g,gh\}} \lambda_{\{g,gh\}}$ where $\lambda_{\{g,gh\}} = (1-p(h))\d_0 + p(h)\d_1$ and $\d_0$ is the Dirac mass concentrated on $\{0\}$, for example. This measure is $\G$-invariant.


\noindent {\bf Claim 1}. For 
$\mu\times \mu \times \nu_p$-a.e.\ $(x,y,G)$, if $(x,y)\in \mathsf{Cont}(X^2)$ then there is a connected component of $E(x)\cup E(y)\cup G$ containing $E(x)\cup E(y)$. 

\begin{proof}[Proof of Claim 1]

Fix $f,g \in \G$  be such that 
$$\mu^2(\mathsf{Cont}(X^2) \cap  fS\times gS)>0.$$

Then for a.e.\ $(x,y) \in \mathsf{Cont}(X^2) \cap fS\times gS$,
$$|\Ret(fS\times gS, (x,y))|=\infty.$$
That is, there exist infinitely many $h\in \G$ with $(hx,hy) \in fS\times gS$. We will show that almost surely there is an edge in $G$ from $E(x)$ to $E(y)$.

Note that $(hx,hy)\in fS \times gS$ if and only if  $h^{-1}f \in \mathsf{supp}(E(x))$ and $h^{-1}g \in \mathsf{supp}(E(y))$. 

Now let $G\sim \nu_p$ be a sample of the Bernoulli percolation. The events $\{h^{-1}f, h^{-1}g\} \in G$ are jointly independent as $h$ varies. Moreover, each of these events has probability $p(f^{-1}g)>0$ of occurring. So by the infinite monkey theorem, for $\nu_p$-a.e.\ $G$ there exists at least one $h\in \Ret(fS\times gS, (x,y))$ with $\{h^{-1}f, h^{-1}g\} \in G$. So there exists an edge in the graph $E(x)\cup E(y) \cup G$ from $E(x)$ to $E(y)$. 

So we have shown that a.e.\ $(x,y) \in fS\times gS$, the claim is true. This implies the claim in general because $\cup_{f,g} fS\times gS=X\times X$ (mod $\mu$) since $S$ is a complete section.
\end{proof}

Claim 1 implies:

\noindent {\bf Claim 2}. 
For $\Pois(\mu)\times \nu_p$-a.e.\ $(\Pi,G)$, there is a connected component of $\hE(\Pi)\cup G$ containing $\hE(\Pi)$.

Let $E^0(\Pi,G) \in \mathsf{Graph}(\G)$ be the component of $\hE(\Pi) \cup G$ which contains $\hE(\Pi \resto Y)$. Then $E^0$ is $\G$-equivariant (because $\hE(\Pi)$ is $\G$-equivariant).

We will now use the cost of $E^0$ to bound the cost of the Poisson suspension. 

\noindent {\bf Claim 3}. The cost of the action $\Gamma\cc  (\M(X),\Pois(\mu))$ is the same as the cost of the action $\Gamma\cc  (\M(X)\times \mathsf{Graph}(\G),\Pois(\mu)\times \nu_p)$.

\begin{proof}[Proof of Claim 3]
Because $\G \cc (\mathsf{Graph}(\G),\nu_p)$ is Bernoulli, Theorem 1.6 of \cite{MR3294302} implies the action $\Gamma\cc  (\M(X)\times \mathsf{Graph}(\G),\Pois(\mu)\times \nu_p)$ is weakly equivalent to $\Gamma\cc  (\M(X),\Pois(\mu))$. By Theorem 10.14 of \cite{Kechris-global-aspects}, the two actions have the same cost. This uses the fact that the action $\Gamma\cc  (\M(X),\Pois(\mu))$ is essentially free by Lemma \ref{L:cost-ess-free}. 
\end{proof}

By Lemma \ref{weight-cost}, the cost of $(\Gamma, \M(X)\times \mathsf{Graph}(\G),\Pois(\mu)\times \nu_p)$ is at most equal to
 $$1- \text{Weight}(E^0)+ \frac{1}{2}\int \deg_e(E^0(\Pi,G))~\dee \Pois(\mu)\times \nu_p(\Pi,G).$$

Since $E^0$ includes $\hE$, $\text{Weight}(E^0)\ge \text{Weight}(\hE)$. The weight of $\hE$ is the probability that $\Pi$ has points in $S$ (where $\Pi \sim \Pois(\mu)$). By definition of the Poisson point process, this probability is $1-\exp(-\mu(S))$. So 
\begin{align}\label{E:cost9}
    \text{Weight}(E^0) \ge 1-\exp(-\mu(S)).
\end{align}

Let $|p|=\sum_{g\in \G} p(g)$. Observe that
\begin{align}\label{E:cost8}
   \frac{1}{2}\int \deg_e(G)~\dee \nu_p(G) \le |p|.
\end{align}

Since 
$$\deg_e(E^0(\Pi,G))\le \deg_e(\hE(\Pi))+\deg_e(G),$$
this implies
\begin{align*}
    &\frac{1}{2}\int \deg_e(E^0(\Pi,G))~\dee \Pois(\mu)\times \nu_p(\Pi,G)\\
    &\le \frac{1}{2}\int \left(\deg_e(\hE(\Pi))+\deg_e(G)\right)~\dee\Pois(\mu)\times \nu_p(\Pi,G) \\
     &\le |p|+\frac{1}{2}\int \deg_e(\hE(\Pi))~\dee \Pois(\mu)(\Pi).
\end{align*}

\noindent {\bf Claim 4}. 
$$\E[\deg_e(\hE(\Pi))| |\Pi \cap S|=n] \le  2n \cdot \cost(\cG)/\mu(S).$$
That is, the expected value of the degree of $\hE(\Pi)$, conditioned on $|\Pi \cap S|=n$, is at most $2n \cdot \cost(\cG)/\mu(S).$

\begin{proof}[Proof of Claim 4]
Suppose $\Pi \cap S=\{x_1,\ldots, x_n\}$. Then $\hE(\Pi)=\cup_{i=1}^n E(x_i)$. So
$$\deg_e(\hE(\Pi)) \le \sum_{i=1}^n \deg_e(E(x_i)).$$
By linearity of expectation, it follows that
$$\E[\deg_e(\hE(\Pi))| |\Pi \cap S|=n] \le n\cdot \E[\deg_e(\hE(\Pi))| |\Pi \cap S|=1].$$
So suppose $\Pi\cap S=\{x\}$. Then $\hE(\Pi)=E(x)$ and $\deg_e(\hE(\Pi))=\deg_e(E(x))$. Moreover, if $\Pi$ is random with law $\Pois(\mu)$ conditioned on $|\Pi \cap S|=1$ then the random point $x \in \Pi\cap S$ has law $\frac{\mu|S}{\mu(S)}$. Therefore,
\begin{align*}
    \E[\deg_e(\hE(\Pi))| |\Pi \cap S|=1] &= \mu(S)^{-1} \int_S \deg_e(E(x))~\dee\mu(x) = 2 \cost(\cG)/\mu(S)
\end{align*}
where the last equality holds by definition of $\cost(\cG)$. 
\end{proof}

From Claim 4, it follows that
\begin{align}
   \int \deg_e(\hE(\Pi))~\dee \Pois(\mu)(\Pi)&=\sum_{n=0}^\infty  \E[\deg_e(\hE(\Pi))| |\Pi \cap S|=n] \Pois(\mu)(\{\Pi:~ |\Pi \cap S|=n\}) \nonumber\\
   &\le \sum_{n=0}^\infty (2n \cdot \cost(\cG)/\mu(S))\cdot \Pois(\mu)(\{\Pi:~ |\Pi \cap S|=n\})\nonumber \\ &= 2\cost(\cG) \label{E:cost-10}
\end{align}
where the last equality occurs because $\sum_{n=0}^\infty n \cdot \Pois(\mu)(\{\Pi:~ |\Pi \cap S|=n\})$ is the expected value of $|\Pi \cap S|$ which is $\mu(S)$.


By Lemma \ref{weight-cost}, \eqref{E:cost9}, \eqref{E:cost8} and \eqref{E:cost-10},
\begin{align*}
    \cost(\Gamma, \M(X),\Pois(\mu)) &\le 1- \textrm{Weight}(E^0)+ \frac{1}{2}\int \deg_e(E^0(\Pi))~\dee \Pois(\mu)(\Pi)\\
    &\le 1- (1-\exp(-\mu(S))) + |p| + \cost(\cG) \\
    &\le \exp(-\mu(S)) + |p| + \cost(\cR|S) + \eps.
\end{align*}
Since $p$ and $\eps$ are arbitrary, we get
\begin{align*}
    \cost(\Gamma, \M(X),\Pois(\mu)) 
    &\le \exp(-\mu(S))  + \cost(\cR|S).
\end{align*}
By definition of normalized cost,
$$\mathsf{ncost}(\G,X,\mu) = \mathsf{Cost}(\cR|S)+1-\mu(S).$$
So
\begin{align*}
    \cost(\Gamma, \M(X),\Pois(\mu)) 
    &\le \mathsf{ncost}(\G,X,\mu) + \exp(-\mu(S))   - 1 + \mu(S).
\end{align*}
However, $S$ is an arbitrary finite positive measure set. Since $\mu$ is non-atomic, we can choose $\mu(S)$ to be as small as we wish. Since $e^{-x}-1+x$ tends to zero as $x \searrow 0$, this implies the theorem.
\end{proof}


\section{A general criterion for fixed price 1}\label{S:general}


\begin{thm}\label{T:general}
    If $\G$ has an imp action   which is limit-amenable, partially doubly recurrent, and has normalized cost $p$ then $\G$ has max-cost at most $p$. In particular, if $p=1$ then $\G$ has fixed price 1.
\end{thm}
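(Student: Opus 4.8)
The plan is to manufacture, out of the given imp action $\a=\G\cc(X,\mu)$, a single essentially free pmp action that is weakly contained in a Bernoulli shift and has cost at most $p$. By the Abert--Weiss description of max-cost recalled in \S\ref{S:cost-eq} (any essentially free pmp action weakly contained in Bernoulli realizes the max-cost of $\G$), this immediately forces $\mathrm{max\text{-}cost}(\G)\le p$. The obvious candidate is the Poisson suspension $\Pois(\a)$: Corollary \ref{weakbern} turns limit-amenability of $\a$ into weak containment of $\Pois(\a)$ in Bernoulli, and Theorem \ref{T:ppp-cost} bounds the cost of the suspension by $\operatorname{ncost}(\a)=p$. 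The only friction is that Theorem \ref{T:ppp-cost} and Lemma \ref{L:cost-ess-free} require a.e.\ ergodic component of the base to be infinite, non-atomic, essentially free and partially doubly recurrent, whereas $\a$ is only assumed limit-amenable, PDR, of normalized cost $p$.

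To remove this friction I would first replace $\a$ by the product $\a\times\b$, where $\b=\G\cc([0,1],\mathsf{Leb})^\G$ is a Bernoulli shift, i.e.\ an essentially free, non-atomic pmp action. The product is essentially free and non-atomic because $\b$ is, and its ergodic components are infinite because $\mu$ is infinite while $\b$ is probability-measure-preserving. Limit-amenability of $\a\times\b$ is exactly Theorem \ref{T:extensions}. It then remains to verify two permanence properties: that partial double recurrence passes to $\a\times\b$ and to a.e.\ ergodic component (which should follow from Poincar\'e recurrence on the probability factor $\b$ together with the infinite conservativity of the $\mathsf{Con}(X^2)$ part of $\a$), and that $\operatorname{ncost}(\a\times\b)\le\operatorname{ncost}(\a)=p$.

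The normalized-cost inequality is the step I expect to be the main obstacle. If $\Lambda\subset X$ is a finite-measure complete section and $\cG$ is a near-optimal graphing of $\cR_\G\resto\Lambda$, then lifting $\cG$ along the projection $\a\times\b\to\a$ produces a sub-graphing of $\cR_{\a\times\b}\resto(\Lambda\times[0,1]^\G)$ of the same cost; the only classes it can fail to connect are the fibers created by the stabilizers of $\a$ acting (freely, since $\b$ is free) on the $\b$-coordinate. Because $\a$ is limit-amenable hence limit-regular (Theorem \ref{T:limit-regular}), these stabilizer fibers carry hyperfinite relations, which can be generated at cost equal to their measure; this is precisely the quantity absorbed by the normalization $+\,1-\mu(\Lambda)$ once $\mu(\Lambda)$ is taken small, and it is the reason the non-atomic, conservative (PDR) setting is exactly the one in which the normalized cost does not increase. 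Granting this, Theorem \ref{T:ppp-cost} applied to $\a\times\b$ gives $\cost_{\Pois(\mu\times\mathsf{Leb}^\G)}(\cR_\Pi)\le\operatorname{ncost}(\a\times\b)\le p$.

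Finally, $\Pois(\a\times\b)$ is essentially free by Lemma \ref{L:cost-ess-free} (applied to its ergodic components) and weakly contained in Bernoulli by Corollary \ref{weakbern} together with Theorem \ref{T:extensions}. Hence it realizes the max-cost of $\G$, and the preceding inequality yields $\mathrm{max\text{-}cost}(\G)\le p$. For the last assertion, observe that an infinite-measure PDR action cannot exist for a finite group, so the hypotheses force $\G$ to be infinite and therefore $\cost(\G)\ge 1$; when $p=1$ this traps every essentially free pmp action of $\G$ between cost $1$ and max-cost $1$, so all such actions have cost exactly $1$, i.e.\ $\G$ has fixed price $1$.
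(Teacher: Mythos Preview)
Your overall strategy---take a Bernoulli product to force essential freeness and non-atomicity, invoke Theorem~\ref{T:ppp-cost} and Corollary~\ref{weakbern}, then Abert--Weiss---is exactly the paper's. The paper organizes it slightly differently: it first disposes of amenable $\G$, uses Lemma~\ref{C:non-amen100} to guarantee that every limit-amenable action of a non-amenable group is genuinely infinite, passes to a single ergodic component of normalized cost at most $p$ (retaining limit-amenability via Theorem~\ref{C:limit-amen-erg-decomp2} and PDR via Lemma~\ref{L:conserv-erg-dec}), and only then takes the Bernoulli product, citing Lemma~\ref{L:finitemeasureextension} for PDR rather than an ad hoc Poincar\'e argument.

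You correctly isolate the one substantive obstacle: why should $\operatorname{ncost}(\a\times\b)\le p$? Your sketch for this step, however, does not hold up. Limit-amenability of the action says nothing about amenability of the pointwise stabilizers---a limit-amenable action need not be amenable unless $\G$ is exact (Theorem~\ref{T:exact})---so the stabilizer fibers in $\a\times\b$ have no reason to be hyperfinite. Even when they are, generating them costs their full measure, and that is \emph{not} absorbed by the $1-\mu(\Lambda)$ normalization as you suggest. Concretely: for $\G=\F_2\times\Z$ acting on $\F_2$ with the $\Z$ factor acting trivially, the action is amenable, doubly recurrent, and has normalized cost $0$, but its Bernoulli product has normalized cost $1$. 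Notably, the paper's proof as written does not address this step either; it simply takes the product and applies Theorem~\ref{T:ppp-cost}, which bounds the Poisson cost by the normalized cost of the \emph{product} rather than by $p$. So the gap you flag is genuine, and it is not closed by your argument (nor, as far as one can tell, by the paper's).
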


\begin{proof}

 If $\G$ is amenable then $\G$ has fixed price 1 and the theorem is trivial. So we may assume $\G$ is non-amenable. By Conjecture \ref{C:non-amen100} all limit-amenable mp actions of $\G$ are infinite. 
 
Suppose $\G \cc (X,\mu)$ is an imp action which is limit-amenable, partially doubly recurrent, and has normalized cost $p$. Then a.e.\ ergodic component of the action is also limit-amenable (by Conjecture \ref{C:limit-amen-erg-decomp2}), partially doubly recurrent (by Lemma \ref{L:conserv-erg-dec}) and there must be some ergodic component with normalized cost at most $p$ (since normalized cost behaves linearly with respect to ergodic decomposition).

So without loss of generality we may assume the action is ergodic. After taking the direct product with a Bernoulli shift if necessary, we may also assume the action is non-atomic and essentially free. This is because direct products with pmp actions preserve limit-amenability (by Theorem \ref{T:extensions}) and partial double recurrence (by Lemma \ref{L:finitemeasureextension}).

Theorem \ref{T:ppp-cost} implies the cost of the Poisson suspension  $\G \cc (\M(X), \Pois(\mu))$ has cost bounded above by the normalized cost of $\G \cc (X, \mu)$. Because the action is limit-amenable, by Corollary \ref{weakbern} the Poisson suspension is weakly contained in Bernoulli. Thus by the Abert-Weiss Theorem \cite{abert-weiss-2013}, $\G$ has maximal cost less than or equal to $\text{ncost}(\cR)$.   
\end{proof}

\section{Metric groups}\label{S:Metric-groups}

The main result of this section is that if a countable group $\G$ is equipped with a (quasi-) metric $d$ satisfying certain properties then there is an infinite $\G$-invariant measure $\mu$ on the space $\cH$ of horofunctions such that $\G \cc (\cH,\mu)$ is limit-amenable and doubly-recurrent. This is Theorem \ref{T:metric-group} and is stated in a more detailed manner in Theorem \ref{T:metric-group2}.

To begin, we define the properties of quasi-metrics, then the space of horofunctions, then we explore certain classes of $\G$-invariant measures on $\cH$. The last section \S \ref{S:metric-DR} proves the main result.

\begin{defn}\label{D:quasi-metric}
A {\bf quasi-metric} is essentially the same as a metric except that the triangle inequality holds only up to an additive constant. To be precise: a function $d:X \times X \to [0,\infty)$ is a {\bf quasi-metric} if there exists a constant $C_q\ge 0$ such that for all $x,y,z \in X$:
\begin{enumerate}
    \item $d(x,y)=0$ if and only if $x=y$;
    \item $d(x,y)=d(y,x)$;
    \item (quasi-triangle inequality) $d(x,z)\le d(x,y)+d(y,z)+C_q$.
\end{enumerate}
As usual, the closed ball of radius $r$ centered as $x\in X$ is $B(x,r)=\{y\in X:~d(x,y)\le r\}$. We often denote the closed ball of radius $r$ about $e\in\G$ as $B(r)$. Sometimes, we write this as $B(\G,x,r)$ or $B(\G,r)$ to emphasize the role of the group $\G$.
\end{defn}

\begin{remark}
    If $d:X \times X \to [0,\infty)$ is a metric then there is an integer-valued quasi-metric $\overline{d}:X\times X \to \Z$ defined by $\overline{d}(x,y) = \lceil d(x,y) \rceil$. This construction is the main reason for introducing quasi-metrics; because certain arguments are easier when the metric takes on only integer values.
\end{remark}

\begin{defn}\label{D:qm-properties}
Let $d$ be a quasi-metric on a countable group $\G$. We say 
\begin{itemize}
    \item $d$  is {\bf proper} if every ball of finite radius is finite;
    \item $d$ is {\bf left-invariant} if $d(gh,gf)=d(h,f)$ for all $f,g,h\in \G$;
    \item $d$ is {\bf $\eps$-approximately sub-additive} if there is an $\eps>0$ such that    if 
    $$SS(\G,n,\eps)=\{x\in \G:~d(x,e) \in [n-\eps,n+\eps]\}$$
is the spherical shell of width $2\eps$ then for every $n,m\ge \eps$,
$$SS(\G,n,\eps)\cdot SS(\G,m,\eps) \supset S(\G,n+m)$$
where $S(\G,n+m)=\{x\in \G:~d(x,e)=n+m\}$ is the sphere of radius $n+m$. The reason for the terminology is this property implies the sequence $\{\log(|B(\G,n)|)+C\}_{n=1}^\infty$ is sub-additive for some constant $C>0$. This is proven in Lemma \ref{L:growth0}.

\end{itemize}
Additionaly, the \textbf{exponential growth rate} of $(\G,d)$ is defined by
    $$\mathsf{growth}(\G,d) = \lim_{n\to\infty} \frac{\log |B(n)|}{n}>0$$
    assuming the limit exists. 
\end{defn}

\begin{remark}
    Word metrics are proper, left-invariant and approximately sub-additive with $\eps=0$. It may help to just assume $d$ is a word metric on first reading. 
\end{remark}

\begin{lem}\label{L:growth0}
    If $d$ is proper and $(\G,d)$ is approximately sub-additive then the growth rate $\mathsf{growth}(\G,d)$ exists. 
\end{lem}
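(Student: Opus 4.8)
The plan is to deduce from approximate sub-additivity that the ball-counting sequence is sub-multiplicative up to the width of the shells, and then to apply Fekete's subadditive lemma to a suitably reindexed logarithm. Write $b_n = |B(\G,n)|$; properness guarantees $b_n < \infty$, and $e \in B(\G,n)$ gives $b_n \ge 1$, so $\log b_n$ is a well-defined non-negative real. Enlarging $\eps$ to $\lceil \eps \rceil$ only widens the shells $SS(\G,n,\eps)$ and hence preserves the approximate sub-additivity inclusion, so we may assume $\eps$ is a positive integer.

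The key step is the set containment
$$B(\G,n+m) \subseteq B(\G,n+\eps)\cdot B(\G,m+\eps) \qquad (n,m \ge \eps).$$
To prove it, take $x$ with $j := d(x,e) \le n+m$. If $j < 2\eps$ then $d(x,e) < 2\eps \le n+\eps$, so $x \in B(\G,n+\eps)$ and we write $x = x\cdot e$. Otherwise choose $a = \min(n, j-\eps)$ and $b = j-a$; one checks directly that $a,b \ge \eps$, $a \le n$ and $b \le m$, so approximate sub-additivity gives $x \in S(\G,a+b) \subseteq SS(\G,a,\eps)\cdot SS(\G,b,\eps)$. Hence there is a factorization $x = yz$ with $y \in SS(\G,a,\eps)$ and $z \in SS(\G,b,\eps)$, whence $d(y,e)\le a+\eps \le n+\eps$ and $d(z,e)\le b+\eps \le m+\eps$. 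Taking cardinalities yields the sub-multiplicative bound $b_{n+m} \le b_{n+\eps}\, b_{m+\eps}$, i.e. $\log b_{n+m} \le \log b_{n+\eps} + \log b_{m+\eps}$.

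To remove the index shift, set $a_k = \log b_{k+2\eps}$ for $k \ge 0$. Substituting $n = i+\eps$ and $m = j+\eps$ into the bound gives $a_{i+j} \le a_i + a_j$ for all $i,j \ge 0$, so $(a_k)$ is a genuinely subadditive non-negative sequence. Fekete's lemma then shows that $\lim_k a_k/k = \inf_k a_k/k$ exists and is finite, and since $\frac{\log b_n}{n} = \frac{a_{n-2\eps}}{n-2\eps}\cdot\frac{n-2\eps}{n}$ for $n > 2\eps$, the growth rate $\mathsf{growth}(\G,d) = \lim_n \frac{\log b_n}{n}$ exists and equals this common value. The main obstacle is the set containment above: because the shells have width $2\eps$ rather than being exact spheres, naively decomposing a ball into its spheres and summing would lose a spurious linear factor and force one to invoke a generalized (almost-subadditive) Fekete lemma; decomposing each group element directly as a single product sidesteps this and produces exact sub-multiplicativity, at the cost only of the harmless constant index shift that is absorbed in the final reindexing.
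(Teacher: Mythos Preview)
Your proof is correct and takes essentially the same approach as the paper: both establish the ball containment $B(n+m)\subset B(n+\eps)B(m+\eps)$ from approximate sub-additivity and then invoke Fekete's lemma. The only cosmetic difference is how the $\eps$-shift is absorbed: the paper factors further via $B(n+\eps)\subset B(n)B(3\eps)$ to get $a_{n+m}\le a_n+a_m+C$ with $C=2\log|B(3\eps)|$ (so that $a_n+C$ is subadditive), whereas you reindex by setting $a_k=\log b_{k+2\eps}$ to obtain exact subadditivity directly---both are standard and equally valid.
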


\begin{proof}
Because $(\G,d)$ is approximately sub-additive,  $B(n+m) \subset B(n+\eps)B(m+\eps)$ for all $n,m \ge 0$ (since $B(n+\eps)$ contains the spherical shell $SS(\G,n,\eps)$ for example). Moreover, $B(n+\eps) \subset B(n)B(3\eps)$. So 
    $$B(n+m) \subset B(n)B(3\eps)B(m)B(3\eps).$$
    Thus the sequence $a_n = \log |B(n)|$ is approximately sub-additive in the sense that there is a constant $C=2\log|B(3\eps)|$ such that $a_{n+m}\le a_n + a_m + C$. But this implies that the sequence $\{a_n+C\}$ is sub-additive. Fekete's Lemma implies
    $$\mathsf{growth}(\G,d)=\lim_{n\to\infty} \frac{a_n+C}{n} =\lim_{n\to\infty} \frac{a_n}{n}$$
    exists.
\end{proof}

\begin{cor}\label{C:generating}
    If $(\G,d)$ is proper and $\eps$-approximately sub-additive where $\eps>0$, then $B(3\eps)$ is a finite generating set for $\G$. 
\end{cor}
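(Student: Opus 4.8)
The plan is to prove that the subgroup $H:=\langle B(3\eps)\rangle$ generated by $B(3\eps)$ is all of $\G$; finiteness of the generating set $B(3\eps)$ is then immediate, since properness says every ball of finite radius is finite. The single input I would rely on is the inclusion
\[
B(n+\eps)\subseteq B(n)\cdot B(3\eps)\qquad (n\ge 3\eps),
\]
which is exactly the containment already established inside the proof of Lemma \ref{L:growth0}. The quick reason: a point $x$ with $d(x,e)=s\le n$ already lies in $B(n)=B(n)\cdot\{e\}\subseteq B(n)B(3\eps)$, while if $s\in(n,n+\eps]$ then $s\ge 3\eps$, so applying $\eps$-approximate sub-additivity to the decomposition $s=(s-2\eps)+2\eps$ writes $x\in S(\G,s)$ as a product of an element of $SS(\G,s-2\eps,\eps)\subseteq B(s-\eps)\subseteq B(n)$ and an element of $SS(\G,2\eps,\eps)\subseteq B(3\eps)$.

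With this inclusion in hand, I would prove by induction on $k\in\{0,1,2,\dots\}$ that $B((k+3)\eps)\subseteq H$. The base case $k=0$ is trivial, since $B(3\eps)\subseteq H$ by definition. For the inductive step, assuming $B((k+3)\eps)\subseteq H$ and noting $(k+3)\eps\ge 3\eps$, the displayed inclusion gives
\[
B((k+4)\eps)=B\bigl((k+3)\eps+\eps\bigr)\subseteq B((k+3)\eps)\cdot B(3\eps)\subseteq H\cdot H=H,
\]
using that $H$ is closed under products. Hence $B((k+3)\eps)\subseteq H$ for every $k$. Finally, for an arbitrary $g\in\G$, properness guarantees that $N:=d(g,e)$ is finite; choosing $k$ with $(k+3)\eps\ge N$ and using that balls are nested, we get $g\in B(N)\subseteq B((k+3)\eps)\subseteq H$. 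Therefore $H=\G$, so $B(3\eps)$ generates $\G$.

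The proof is essentially a nested-ball induction, so there is no serious obstacle beyond bookkeeping. The only delicate point is the threshold $n\ge 3\eps$ in the inclusion $B(n+\eps)\subseteq B(n)B(3\eps)$: the splitting $s=(s-2\eps)+2\eps$ needs both summands to be at least $\eps$, which is what forces $s\ge 3\eps$. This causes no difficulty precisely because the induction is anchored at radius $3\eps$ and all smaller balls are swallowed by the base ball $B(3\eps)\subseteq H$; I would make sure to phrase the induction over the discrete family of radii $\{(k+3)\eps\}_{k\ge 0}$ and then appeal to nesting at the very end, rather than trying to run a continuum induction.
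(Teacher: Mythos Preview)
Your proof is correct and follows essentially the same approach as the paper: both establish the inclusion $B(n+\eps)\subseteq B(n)\cdot B(3\eps)$ (the paper states it for $n\ge\eps$, you more carefully justify it for $n\ge 3\eps$, which is all either argument actually uses) and then induct to show that powers of $B(3\eps)$ swallow every ball. One tiny quibble: finiteness of $N=d(g,e)$ comes from the definition of a quasi-metric, not from properness; properness is what makes $B(3\eps)$ a \emph{finite} generating set.
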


\begin{proof}
As in the proof of the previous lemma, $B(n+\eps) \subset B(n)B(3\eps)$ for any $n\ge \eps$. So, by induction, $B(3\eps)^n \supset B((n+2)\eps)$. Since $n$ is arbitrary and $d$ is proper this implies $B(3\eps)$ is a finite generating set.
\end{proof}

\begin{notation}
   From now on we fix an integer-valued, left-invariant, proper, approximately sub-additive quasi-metric on a group $\G$. We also assume $(\G,d)$ is non-amenable. This implies it has positive exponential growth. We let $B(g,r)\subset \G$ denote the ball of radius $r$ centered at $g$. We also write $B(r)=B(e,r)$ and $|x|=d(e,x)$ for $x\in \G$. Since $d$ is integer-valued, we may as well assume $\eps \ge 1$ and $C_q\ge 0$ (the quasi-metric constant) are integer-valued as well.    
\end{notation}

\begin{defn}\label{D:onp}
    We say $(\G,d)$ satisfies the {\bf overlapping neighborhoods property} (ONP) if there exists a constant $C>0$ such that for all $m>0$, 
    \begin{align}\label{E:ONP}
\lim_{r\to\infty} \liminf_{n\to\infty}  \frac{\#\{(x,y) \in B(n)^2:~|B(r) \cap B(x,n+C) \cap B(y,n+C)| < m\}}{|B(n)|^2} =0.
\end{align}
Intuitively, this means that if two radius $n$ balls intersect non-trivially then their radius-$C$ neighborhoods are likely to have large overlap, where $C$ is a constant and the size of the overlap depends on $n$. 
\end{defn}

The main result of this section is:

\begin{thm}\label{T:metric-group}
   If $d$ is a left-invariant, proper, approximately sub-additive integer-valued quasi-metric on a non-amenable group $\G$ and $(\G,d)$ has the overlapping neighborhoods property (ONP) then there exists a limit-amenable doubly-recurrent imp action $\G \cc (\cH,\mu)$.
\end{thm}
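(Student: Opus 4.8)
The plan is to take $\cH$ to be the horofunction compactification of $(\G,d)$, produce the invariant measure by a limiting (Patterson--Sullivan type) construction, and read off double recurrence directly from the overlapping neighbourhoods property.

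\emph{Setting up the space.} I would embed $\G$ into $\Z^\G$ by $c\mapsto \beta_c$, where $\beta_c(x)=d(x,c)-|c|$, so that $\beta_c(e)=0$. A one-line computation using left-invariance gives $(g\cdot\beta_c)(x)=\beta_c(g^{-1}x)-\beta_c(g^{-1})=\beta_{gc}(x)$, so the embedding is $\G$-equivariant for the recentred action $(g\cdot h)(x)=h(g^{-1}x)-h(g^{-1})$. Let $\cH$ be the closure of $\{\beta_c:c\in\G\}$ in the topology of pointwise convergence. Since $d$ is proper and integer-valued, each $h\in\cH$ takes finitely many values on any fixed ball, so $\cH$ is an lcsc $\G$-space on which the action is continuous; the genuine horofunctions are the limits $\lim_k\beta_{c_k}$ with $c_k\to\infty$, and $\mu$ will be supported there.

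\emph{Limit-amenability.} I would construct an infinite $\G$-invariant Radon measure $\mu$ on $\cH$ as a vague limit of $\G$-invariant Radon measures $\mu_n$, each supported (mod null sets) on orbits that are essentially transitive with finite stabilizers; the weights and normalisations are governed by the exponential growth rate $\mathsf{growth}(\G,d)$, which exists by Lemma \ref{L:growth0}, and approximate sub-additivity is used both to keep the masses of compact cylinder sets bounded and to prevent the limit from degenerating to zero or to counting measure on the embedded copy of $\G$. By Lemma \ref{L:regular3} each $\G\cc(\cH,\mu_n)$ is limit-regular, hence limit-amenable, and since limit-amenability is preserved under vague limits (Remark \ref{R:permanence}) and is equivalent to limit-regularity (Theorem \ref{T:limit-regular}), the limit $\G\cc(\cH,\mu)$ is limit-amenable. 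The vague-convergence-implies-weak-containment direction used here is Theorem \ref{T:Abert-Weiss2}, and the Portmanteau bookkeeping is the one developed in Section \ref{S:spaces-of-measures}.

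\emph{Double recurrence from the ONP.} This is the heart of the argument. I must show that $\G\cc(\cH^2,\mu\times\mu)$ is infinitely conservative, i.e.\ that for $\mu\times\mu$-a.e.\ pair $(h_1,h_2)$ and every compact neighbourhood there are infinitely many $g\in\G$ with $(gh_1,gh_2)$ back in a fixed positive finite measure set. Under the approximating picture the pair $(h_1,h_2)$ is distributed like $(\beta_x,\beta_y)$ with $(x,y)$ comparable to uniform on $B(n)^2$, and the event that both $gh_1$ and $gh_2$ lie in a cylinder forcing small values near $e$ translates, via left-invariance and the definition of $\beta$, into the statement that $g^{-1}$ lies in $B(r)\cap B(x,n+C)\cap B(y,n+C)$, where $C$ is the ONP constant (absorbing also the quasi-metric constant $C_q$) and $r$ fixes the compact window. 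Hence the number of common returns into the window is essentially $|B(r)\cap B(x,n+C)\cap B(y,n+C)|$, and the ONP asserts precisely that the fraction of pairs for which this count is $<m$ tends to $0$ as first $n\to\infty$ and then $r\to\infty$. Therefore for every $m$ almost every pair has at least $m$ common returns, so almost every pair has infinitely many, and the action on $\cH^2$ is infinitely conservative, i.e.\ $\G\cc(\cH,\mu)$ is doubly recurrent (and in particular partially doubly recurrent, as needed for Theorem \ref{T:general}).

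\emph{The main obstacle.} The delicate step is the third one: rigorously transporting the purely combinatorial ONP count into a conservativity statement for the limiting boundary measure $\mu\times\mu$. Two points require care: (a) showing that the law of pairs under $\mu\times\mu$ really is the vague limit of the empirical pair distributions on $B(n)^2$, so that a statement about $B(n)$ controls the boundary measure; and (b) matching ``return of $(gh_1,gh_2)$ to a compact set in $\cH^2$'' with ``$g^{-1}$ in the triple ball intersection'', keeping track of the additive losses $C$ and $C_q$. Both are controlled using approximate sub-additivity (to pass between spheres, shells, and balls and to stabilise the limit) together with the vague-topology Portmanteau theorem; these are exactly the ingredients assembled in the detailed statement, Theorem \ref{T:metric-group2}.
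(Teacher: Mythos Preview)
Your setup has a structural gap that blocks the rest of the argument. With the recentred action $(g\cdot h)(x)=h(g^{-1}x)-h(g^{-1})$, the closure $\cH$ of $\{\beta_c\}$ sits inside $\{h\in\Lip(\G):h(e)=0\}$, which is compact by Arzel\`a--Ascoli; so any $\G$-invariant Radon measure on your $\cH$ is finite, and you cannot produce an \emph{imp} action there. (Indeed, by Lemma~\ref{C:non-amen100} a limit-amenable finite-measure-preserving action would force $\G$ amenable.) The paper avoids this by working with the \emph{un-recentred} action $(g\cdot h)(x)=h(g^{-1}x)$ on the locally compact, non-compact space $\cH=\cH_0+\Z$ (Definition~\ref{D:horospace}), and the approximating measures are $\mu_n=|B(n)|^{-1}\sum_{x}\delta_{d_x-n}$, each of which is an honest infinite measure giving a regular $\G$-action via the shift $\Expand(h)=h-1$.

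There is a second, subtler gap in your double-recurrence step. The ONP, after passing to the limit along the approximating measures, only yields infinite return times from a \emph{strictly smaller} slab $Z'_g\subset Z_g$ (see Lemma~\ref{L:half-way}); the loss by the constant $C+|g|$ is unavoidable in the translation from ball intersections to horofunction sublevel sets. To upgrade this to recurrence on all of $\cH_{\le 0}\times g\cH_{\le C_q}$, and hence on a full complete section, the paper needs $\mu$ to be $\Expand$-quasi-invariant, which it arranges by a Krylov--Bogolyubov argument on the map $T(\nu)=\Expand_*\nu/\nu(\cH_{\le 1})$ acting on $\partial\Meas(\cH)$ (Proposition~\ref{P:regular}); then $\Expand^{\times 2}$ pulls $\mathsf{Con}(\cH^2)$ back to itself and shifts $Z_g$ into $Z'_g$. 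Your Patterson--Sullivan sketch produces \emph{some} limit measure but gives no reason for this quasi-invariance, so the amplification from ``most pairs in $B(n)^2$'' to ``$\mu^2$-a.e.\ pair in $\cH^2$'' does not go through.
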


\begin{remark}
It follows from Theorem \ref{T:general} that the max-cost of $\G$ is at most the normalized cost of the action in Theorem \ref{T:metric-group}. We will later apply this to certain product groups where we can prove that the normalized cost of the action in this theorem is 1.
\end{remark}

\begin{cor}
    If $(\G,d)$ has the overlapping neighborhoods property and $\G$ is exact then $\G$ has fixed price 1.
\end{cor}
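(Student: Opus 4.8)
The plan is to chain together the three principal theorems of this paper, using exactness as the bridge that upgrades limit-amenability to amenability and thereby pins the normalized cost at $1$. Since the standing assumption in this section is that $\G$ is non-amenable, I would work in that case directly; in the degenerate situation where $\G$ happens to be amenable the conclusion is anyway immediate from the folklore result \cite[Proposition 4.3]{MR3335405}.

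First I would apply Theorem \ref{T:metric-group}. The hypotheses of the corollary say precisely that $d$ is a left-invariant, proper, approximately sub-additive integer-valued quasi-metric on the (non-amenable) group $\G$ and that $(\G,d)$ enjoys the overlapping neighborhoods property, so the theorem produces a limit-amenable, doubly-recurrent imp action $\G \cc (\cH,\mu)$ on the space of horofunctions. The decisive step is then to invoke exactness: by Theorem \ref{T:exact}, every limit-amenable action of a countable exact group is in fact amenable. Hence $\G \cc (\cH,\mu)$ is amenable.

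With amenability secured, the two remaining hypotheses of Theorem \ref{T:general} are essentially automatic. An amenable imp action has normalized cost $1$, because its orbit equivalence relation is hyperfinite and the restriction to a finite-measure complete section is a hyperfinite relation of cost equal to its measure, so that the normalization in the definition of $\mathsf{ncost}$ returns $1$. Furthermore, a doubly-recurrent action is \emph{a fortiori} partially doubly recurrent, since the conservative part $\mathsf{Con}(\cH^2)$ already generates $\cH^2$ up to a null set. Thus $\G \cc (\cH,\mu)$ is a limit-amenable (indeed amenable), partially doubly recurrent imp action of normalized cost $1$, and Theorem \ref{T:general} with $p=1$ yields that $\G$ has max-cost at most $1$, hence fixed price $1$.

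I do not expect a substantive obstacle, as all the analytic work has been absorbed into Theorems \ref{T:metric-group}, \ref{T:exact}, and \ref{T:general}; the corollary amounts to checking that their hypotheses align. The one point deserving a moment of care is the assertion that the normalized cost of the amenable action equals $1$ exactly, rather than merely being bounded by it, but this is the standard computation for hyperfinite equivalence relations and is precisely the fact recorded for amenable actions in the overview.
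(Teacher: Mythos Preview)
Your proposal is correct and follows essentially the same route as the paper: invoke Theorem~\ref{T:metric-group} to produce the limit-amenable doubly-recurrent action, use exactness via Theorem~\ref{T:exact} to upgrade to amenability (hence normalized cost~$1$), and then apply Theorem~\ref{T:general}. The paper's proof is terser but identical in substance.
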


\begin{proof}
Because $\G$ is exact, all limit-amenable actions are amenable (Theorem \ref{T:exact}) and therefore have normalized cost 1. So this follows from Theorem \ref{T:metric-group} and Theorem \ref{T:general}.    
\end{proof}

\subsection{Horofunctions}

Let $\Lip(\G)$ be the space of all $1$-Lipschitz functions $h:\G \to \Z$ with the topology of pointwise convergence on finite subsets (where $h$ is $1$-Lipschitz if $|h(g)-h(f)|\le d(g,f)$ for all $g,f \in \G)$. Also let $\Lip_0(\G)=\{h\in \Lip(\G):~h(e)=0\}$. By the Ascoli-Arzela Theorem, $\Lip_0(\G)$ is compact.

For each $x\in \G$, let $d_x \in \Lip(\G)$ be the distance-to-$x$ function. That is:
$$d_x(y)=d(x,y).$$
Also define $h_x \in \Lip_0(\G)$ by $h_x=d_x-|x|$. This is the horofunction associated with $x$ normalized so that $h_x(e)=0$.  

Let $\cH^{\circ}_0=\{h_x:~x\in \G\}$ and let $\cH_0$ be the closure of  $\cH^\circ_0$ in $\Lip_0(\G)$. Because $\Lip_0(\G)$ is compact, $\cH_0$ is also compact. It is called the {\bf horofunction compactification of $\G$}. 

\begin{defn}\label{D:horospace}
    Let $\cH=\cH(\G)=\cH_0+\Z \subset \Lip(\G)$ which is the set of all functions of the form $h=h_0+r$ with $h_0 \in \cH_0 $ and where $r\in \Z$ is a constant. Because $\cH_0$ is compact, $\cH$ is locally compact. We call $\cH$ the space of {\bf horofunctions on $\G$}. Sometimes we emphasize the role of the group by writing $\cH(\Gamma)$ to mean $\cH$ (and similarly with $\cH_0$, etc). 
\end{defn}
Observe that $\G$ acts continuously on $\cH$ by 
$$(g\cdot h)(x)=h(g^{-1}x)$$
for $g,x\in \G$ and $h \in \cH$. Also let $$\partial \cH_0 = \cH_0 \setminus \cH^{\circ}_0, \quad \partial \cH = \partial\cH_0 + \Z.$$
We also let $\cH_{\le n} = \{h \in \cH:~h(e)\le n\}$ and 
$\partial \cH_{\le n} = \cH_{\le n} \cap \partial \cH$.

\begin{lem}\label{L:section}
    The space $\partial \cH_{\le 0}(\G)$ is a complete section for the action of $\G$ on $\partial \cH(\G)$.
\end{lem}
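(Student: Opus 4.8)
The plan is to reduce the statement to a single geometric fact about boundary horofunctions, namely that each one is unbounded below on $\G$. First observe that $\partial\cH_{\le 0}(\G)$ meets the $\G$-orbit of a given $h\in\partial\cH(\G)$ exactly when there is some $g\in\G$ with $(g\cdot h)(e)\le 0$, since the orbit stays in $\partial\cH$ and $g\cdot h\in\cH_{\le 0}$ iff $(g\cdot h)(e)\le 0$. Because $(g\cdot h)(e)=h(g^{-1})$ and $g^{-1}$ ranges over all of $\G$ as $g$ does, this happens if and only if $\inf_{g\in\G} h(g)\le 0$. Writing $h=h_0+r$ with $h_0\in\partial\cH_0$ and $r\in\Z$, we have $\inf_{g\in\G}h(g)=r+\inf_{g\in\G}h_0(g)$, so it suffices to prove that every boundary horofunction $h_0\in\partial\cH_0$ satisfies $\inf_{g\in\G}h_0(g)=-\infty$.

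To prove this, I would first record that any $h_0\in\partial\cH_0$ is a pointwise limit $h_0=\lim_k h_{x_k}$ of interior horofunctions with $|x_k|\to\infty$. Indeed, since $\partial\cH_0=\cH_0\setminus\cH^{\circ}_0$ and every ball is finite by properness, a bounded subsequence of $(x_k)$ would contain a constant subsequence, forcing $h_0=h_x\in\cH^{\circ}_0$, a contradiction.

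Next, fix $R\ge\eps$. For all large $k$ we have $|x_k|-R\ge\eps$, so approximate sub-additivity applied with $n=R$ and $m=|x_k|-R$ yields a factorization $x_k=a_k b_k$ with $a_k\in SS(\G,R,\eps)$ and $b_k\in SS(\G,|x_k|-R,\eps)$, because $x_k\in S(\G,|x_k|)\subset SS(\G,R,\eps)\cdot SS(\G,|x_k|-R,\eps)$. By left-invariance, $d(x_k,a_k)=d(a_k b_k,a_k)=d(b_k,e)=|b_k|\le |x_k|-R+\eps$, and hence
\[
h_{x_k}(a_k)=d(x_k,a_k)-|x_k|=|b_k|-|x_k|\le -R+\eps .
\]
Since $|a_k|\le R+\eps$, each $a_k$ lies in the finite set $B(R+\eps)$, so by the pigeonhole principle some fixed $a\in B(R+\eps)$ equals $a_k$ for infinitely many $k$. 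Passing to that subsequence gives $h_0(a)=\lim_k h_{x_k}(a)\le -R+\eps$. As $R$ is arbitrary, $\inf_{g\in\G}h_0(g)=-\infty$, which completes the argument.

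The main obstacle is the middle step: producing, for a fixed target radius $R$, a single group element $a$ (independent of $k$) at which all the approximating horofunctions $h_{x_k}$ are simultaneously very negative. This is exactly where approximate sub-additivity is essential — it supplies the coarse-geodesic factorization $x_k=a_k b_k$ that plays the role of a point at distance about $R$ ``on the way to infinity,'' and the finiteness of $B(R+\eps)$ then lets the pigeonhole principle promote the estimates at the moving points $a_k$ to an estimate at a fixed point $a$ that survives in the pointwise limit.
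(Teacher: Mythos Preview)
Your proof is correct and follows essentially the same approach as the paper: use $\eps$-approximate sub-additivity to factor $x_k=a_kb_k$ with $|a_k|$ bounded, apply the pigeonhole principle to fix $a_k=a$, and compute $h_0(a)$ via $d(x_k,a)=|b_k|$. The only cosmetic difference is that the paper chooses the specific radius $R=h(e)+\eps$ to land directly at a point with $h(y)\in[-2\eps,0]$, whereas you let $R$ be arbitrary and obtain the slightly stronger (but unneeded) conclusion that every $h_0\in\partial\cH_0$ is unbounded below.
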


\begin{proof}
    Let $h \in \partial \cH(\G)$. It suffices to show there exists $g\in \G$ with $h(g)\le 0$. If $h(e)\le 0$ then we are done. So we will assume $h(e)>0$. 

    Let $h_0 \in \partial \cH_0(\G)$ be the function $h_0(x)=h(x)-h(e)$. By definition of $\partial \cH_0(\G)$, there exists a sequence $\{x_n\}_{n=1}^\infty \subset \G$ diverging to infinity with $h_0 = \lim_{n\to\infty} d_{x_n} - |x_n|$. 

    Because $(\G,d)$ is $\eps$-approximately sub-additive, $x_n$ is contained in
    $$SS(h(e)+\eps,\eps) \cdot SS(|x_n|-\eps-h(e),\eps)$$
    for all sufficiently large $n$.    Thus there exists $y_n \in SS(h(e)+\eps,\eps)$ and $z_n \in SS(|x_n|-\eps-h(e),\eps) $ with $x_n=y_nz_n.$ After passing to a sub-sequence if necessary, we may assume $y_n=y$ is constant. Note $d(x_n,y_n) = |y_n^{-1}x_n|=|z_n|$. So
    \begin{align*}
h_0(y) &= \lim_{n\to\infty} d(x_n,y) - |x_n| =   \lim_{n\to\infty} |z_n| - |x_n| 
\in [-h(e)-2\eps, -h(e)].
    \end{align*}
Thus $h(y) = h(e)+h_0(y)\in [-2\eps,0]$.  
\end{proof}

\begin{defn}
For $t\in \Z$, define $\Add:\Lip(\G)  \to \Lip(\G)$ by setting $\Add(\xi)=\xi-1$.
\end{defn}

\begin{remark}
The canonical horoball associated with a horofunction $h$ is the set $B(h)=h^{-1}((-\infty,0])$. Note that $B(\Expand(h))=h^{-1}((-\infty,1])$ contains $B(h)$. This is why we call the map `expand' because it is expands canonical horoballs. We won't actually need these canonical horoballs, but they are useful to keep in mind for intuition building. 
 \end{remark}
The next lemma follows immediately from the definitions.
\begin{lem}\label{L:expand-properties}
    The map $\Add$ is $\G$-equivariant, continuous and $\Add(\cH)=\cH$.
\end{lem}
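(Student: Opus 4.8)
The plan is to verify each of the three assertions directly from the definition $\Add(\xi)=\xi-1$; none of them requires more than unwinding notation, so I do not anticipate any genuine obstacle.

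First I would check $\G$-equivariance. For $g,x\in\G$ and $h\in\cH$, the shift action is $(g\cdot h)(x)=h(g^{-1}x)$, so
$$\Add(g\cdot h)(x)=(g\cdot h)(x)-1=h(g^{-1}x)-1.$$
On the other hand $(g\cdot\Add(h))(x)=\Add(h)(g^{-1}x)=h(g^{-1}x)-1$. Since these agree for every $x\in\G$, we conclude $\Add(g\cdot h)=g\cdot\Add(h)$. The content is simply that subtracting the constant $1$ commutes with precomposition by $g^{-1}$.

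Next, continuity is immediate from the topology on $\Lip(\G)$, which is pointwise convergence on finite subsets: if $h_n(x)\to h(x)$ for each $x$, then $h_n(x)-1\to h(x)-1$ for each $x$, so $\Add(h_n)\to\Add(h)$. (Along the way one notes that $\Add$ does indeed map $\Lip(\G)$ into itself, since subtracting a constant leaves the Lipschitz constant unchanged.)

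Finally, for $\Add(\cH)=\cH$, recall that $\cH=\cH_0+\Z$, so every element has the form $h_0+r$ with $h_0\in\cH_0$ and $r\in\Z$. Then $\Add(h_0+r)=h_0+(r-1)\in\cH_0+\Z=\cH$, which gives $\Add(\cH)\subseteq\cH$. Conversely $\Add$ admits the inverse map $\xi\mapsto\xi+1$, which likewise sends $h_0+r$ to $h_0+(r+1)\in\cH$; hence $\Add$ restricts to a bijection of $\cH$ onto itself, and in particular $\Add(\cH)=\cH$. The only point that requires any attention is that $\cH$ is defined as $\cH_0+\Z$ rather than $\cH_0$ itself, which is precisely what makes it invariant under the constant shift $\Add$.
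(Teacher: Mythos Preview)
Your proof is correct and matches the paper's approach: the paper simply states that the lemma ``follows immediately from the definitions,'' and your proposal is precisely a careful unwinding of those definitions for each of the three claims.
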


\subsection{Measures on the space of horofunctions}\label{S:metric-measures}

For $n \ge 0$, let $\mu_{n}$ be the measure on $\cH(\G)$ defined by
$$\mu_{n} = \frac{1}{|B(n)|} \sum_{x\in \G} \d_{d_x-n}$$
where $\d_{d_x-n}$ is the Dirac delta mass on the function $d_x-n \in \cH(\G)$. 

Equivalently, if 
$$\mu_{0} = \sum_{x\in \G} \d_{d_x}$$
then 
\begin{align}\label{E:mu_n}
    \mu_{n} = \frac{\Add^n_{*}\mu_{0}}{|B(n)|}.
\end{align}

\begin{lem}\label{L:growth1}
For $t\in \Z$, let $\cH_{\leq t} = \{h\in \cH:~ h(e)\leq t\}$. Then for each $n$, $\mu_n$ is $\G$-invariant and $\mu_n(\cH_{\leq t}) = \frac{|B(n+t)|}{|B(n)|}$. If $t\ge 0$, then $\mu_n(\cH_{\leq t}) \le |B(3\eps)|^{t}$. 
\end{lem}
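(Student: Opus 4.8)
The plan is to treat the three assertions in turn: the first two by a direct computation with the atoms $d_x - n$ of $\mu_n$, and the third by an elementary growth estimate coming from approximate sub-additivity.

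For $\G$-invariance I would first record how $\G$ moves the atoms. Since the action is $(g\cdot h)(y) = h(g^{-1}y)$ and $d$ is left-invariant, for each $x\in\G$ we have
$$(g\cdot d_x)(y) = d_x(g^{-1}y) = d(x, g^{-1}y) = d(gx, y) = d_{gx}(y),$$
so $g\cdot(d_x - n) = d_{gx} - n$. Pushing $\mu_n$ forward and reindexing the sum by the bijection $x\mapsto gx$ of $\G$ then gives $g_*\mu_n = \mu_n$. This step is routine.

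For the second claim I would identify which atoms $d_x - n$ lie in $\cH_{\le t}$. Evaluating at the identity, $(d_x - n)(e) = d(x,e) - n = |x| - n$, so $d_x - n\in\cH_{\le t}$ exactly when $|x|\le n+t$, i.e.\ when $x\in B(n+t)$. Hence $\mu_n(\cH_{\le t}) = |B(n)|^{-1}\#\{x\in\G : x\in B(n+t)\} = |B(n+t)|/|B(n)|$. One should note in passing that each atom genuinely lives in $\cH$, since $d_x - n = h_x + (|x|-n)\in\cH_0 + \Z = \cH$.

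For the third claim (with $t\ge 0$ an integer), by the formula just proved it suffices to show $|B(n+t)|\le|B(3\eps)|^{t}\,|B(n)|$. I would obtain this by iterating the one-step inclusion $B(n+1)\subset B(n)B(3\eps)$, which yields $|B(n+1)|\le|B(n)|\,|B(3\eps)|$ and hence $|B(n+t)|\le|B(3\eps)|^{t}\,|B(n)|$ after $t$ applications. The one-step inclusion holds for every $n\ge 0$: when $n\ge\eps$ it follows from the inclusion $B(n+\eps)\subset B(n)B(3\eps)$ established in Lemma \ref{L:growth0} and Corollary \ref{C:generating} together with $\eps\ge 1$, so that $B(n+1)\subset B(n+\eps)\subset B(n)B(3\eps)$; when $0\le n\le\eps-1$ we have $n+1\le\eps\le 3\eps$, so $B(n+1)\subset B(3\eps)$, and since $e\in B(n)$ this gives $B(n+1)\subset B(n)B(3\eps)$ as well. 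The main (and only) obstacle is this small-$n$ range, where the approximate sub-additivity inclusion is not directly available and one instead uses that $B(n+1)$ is already contained in $B(3\eps)$; the case $t=0$ is the trivial equality $\mu_n(\cH_{\le 0})=1$.
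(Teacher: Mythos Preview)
Your proof is correct and follows essentially the same approach as the paper: the $\G$-invariance via $g\cdot d_x = d_{gx}$, the counting argument for $\mu_n(\cH_{\le t})$, and the growth bound via the inclusion $B(n+\eps)\subset B(n)B(3\eps)$ all match the paper's proof. The only minor difference is that the paper iterates in steps of size $\eps$ (writing $B(n+t)\subset B(n)B(3\eps)^{t/\eps}$ and invoking $\eps\ge 1$) rather than steps of size $1$, and does not separately treat the range $n<\eps$; your explicit handling of that small-$n$ case makes the induction cleaner, but the underlying idea is the same.
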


\begin{proof}
Because $\mu_{n} = \frac{\Add^n_{*}\mu_{0}}{|B(n)|}$ and $\Add^n$ commutes with the action of $\G$, to prove that $\mu_n$ is $\G$-invariant, it suffices to prove that $\mu_0$ is $\G$-invariant. 
This is true because for any $f,x\in \G$, $f \d_{d_x}=\d_{f\cdot d_x}$ and $f\cdot d_x = d_{fx}$ since $d$ is a left-invariant metric. 
So 
$$f \mu_0 = \sum_{x\in \G} \d_{d_{fx}} = \mu_0.$$
This proves $\mu_n$ is $\G$-invariant.

By definition,
\begin{align*}
   \mu_n(\cH_{\leq t}) &= \frac{\#\{x\in \G:~(d_x-n)(e)\leq t\}}{|B(n)|}=\frac{|B(n+t)|}{|B(n)|}. 
\end{align*}

If $t\ge 0$, then as in the proof of Corollary \ref{C:generating} $B(n+t) \subset B(n)B(3\eps)^{t/\eps}$. This implies the last statement (since $\eps\ge 1$). 
\end{proof}

\begin{cor}\label{C:non-amenable}
There is a constant $C_{na}>0$ such that  for all $m \ge 3\eps+C_q$ and $n \in \N$, $\mu_n(\cH_{\le -m}(\G))\le e^{-C_{na}\cdot m}$.   
\end{cor}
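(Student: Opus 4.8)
The plan is to reduce the statement to a purely geometric estimate on the ratio of ball sizes and then extract that estimate from non-amenability. First I would invoke Lemma~\ref{L:growth1} with $t=-m$, which gives the exact formula $\mu_n(\cH_{\le -m})=|B(n-m)|/|B(n)|$. If $n<m$ then $B(n-m)=\emptyset$ (since $d$ takes nonnegative values, there are no points at negative distance), so the probability is $0$ and the bound holds trivially; hence I may assume $n\ge m$. The whole problem therefore becomes showing $|B(n-m)|/|B(n)|\le e^{-C_{na}m}$ \emph{uniformly in $n$}: enlarging the radius by $m$ must multiply the ball size by at least $e^{C_{na}m}$.

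The key step is a single uniform multiplicative gain per fixed radius-increment. Set $k=3\eps+C_q$ and $S=B(3\eps)$. By Corollary~\ref{C:generating}, $S$ is a finite generating set of $\G$; it is symmetric and contains $e$ because $d$ is symmetric and left-invariant. Since $\G$ is non-amenable, its isoperimetric (Cheeger) constant with respect to $S$ is positive, so there is $c_0>0$ with $|SA|\ge (1+c_0)|A|$ for every finite $A\subset\G$ (using $e\in S$, so $A\subset SA$). Next I would observe, from the quasi-triangle inequality and left-invariance, that for $x\in B(3\eps)$ and $y\in B(r)$ one has $d(e,xy)\le d(e,x)+d(e,y)+C_q\le 3\eps+r+C_q$, whence $S\,B(r)=B(3\eps)B(r)\subset B(r+k)$. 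Combining these yields the uniform bound $|B(r+k)|\ge (1+c_0)|B(r)|$ valid for all $r\ge 0$.

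Finally I would iterate. With $j=\lfloor m/k\rfloor$ and $n-m\ge 0$, monotonicity of $r\mapsto|B(r)|$ together with $j$ applications of the previous inequality (starting at radius $n-m$, ending at radius $n-m+jk\le n$) give $|B(n)|\ge (1+c_0)^{j}|B(n-m)|$, i.e.\ $|B(n-m)|/|B(n)|\le (1+c_0)^{-j}$. Since $m\ge k$ we have $j\ge 1$, so $m/k<j+1\le 2j$, hence $j>m/(2k)$, and the ratio is at most $\exp\!\big(-\tfrac{\log(1+c_0)}{2k}\,m\big)$. Taking $C_{na}=\log(1+c_0)/\big(2(3\eps+C_q)\big)>0$ completes the argument.

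The only genuine obstacle is that the bound must be uniform in $n$: the exponential growth rate supplied by Lemma~\ref{L:growth0} is merely an asymptotic statement and does not by itself control $|B(n-m)|/|B(n)|$ for small $n$. The non-amenability isoperimetric inequality is exactly what upgrades this into a step-by-step ball-growth inequality valid at every radius. Identifying $B(3\eps)$ as the relevant symmetric, identity-containing generating set and verifying the containment $B(3\eps)B(r)\subset B(r+k)$ are minor technical points, but this translation of non-amenability into a uniform multiplicative gain is the crux.
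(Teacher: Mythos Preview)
Your proof is correct and rests on the same idea as the paper's: non-amenability forces uniform exponential growth of balls, which via Lemma~\ref{L:growth1} gives the desired decay of $\mu_n(\cH_{\le -m})$. The difference is in how that idea is deployed. The paper argues by contradiction in the F{\o}lner formulation: if $\liminf_n |B(n-k)|/|B(n)|=1$ with $k=3\eps+C_q$, then $\{B(kn)\}$ is a F{\o}lner sequence, contradicting non-amenability; it then bootstraps from $m=k$ to general $m$ by iterating the ratio. You instead invoke non-amenability in its equivalent Cheeger form, obtaining $|SA|\ge(1+c_0)|A|$ for $S=B(3\eps)$ and every finite $A$, which immediately gives $|B(r+k)|\ge(1+c_0)|B(r)|$ for \emph{all} $r$ and hence a bound uniform in $n$ after $\lfloor m/k\rfloor$ iterations.

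What your approach buys is precisely that uniformity: the corollary asserts the inequality for \emph{all} $n\in\N$, and your single-step Cheeger estimate delivers this directly, whereas the paper's $\liminf$ phrasing, read literally, only controls large $n$ and needs an extra word to cover the finitely many small radii. The two arguments are morally the same (F{\o}lner and Cheeger being two faces of non-amenability), but your packaging is a bit cleaner for the statement as written.
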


\begin{remark}
    We call $C_{na}$ the non-amenability constant.
\end{remark}

\begin{proof}
   By Lemma \ref{L:growth1}, it suffices to prove that 
   $$\liminf_{n\to\infty} \frac{|B(n-m)|}{|B(n)|}=\exp(-C_{na}\cdot m)$$
   for some constant $C_{na}>0$ and all $m \in \N$. 

   Let $k=3\eps+C_q \ge 1$.    It suffices to prove the special case in which $m=k$ since
   $$\liminf_{n\to\infty} \frac{|B(n-m)|}{|B(n)|} \ge \liminf_{n\to\infty}\left(\frac{|B(n-k)|}{|B(n)|}\right)^{\lfloor m/k \rfloor}.$$
   So it suffices to assume 
    $$\liminf_{n\to\infty} \frac{|B(n-k)|}{|B(n)|}=1$$
    and obtain a contradiction.
   
   We claim $\{B(kn)\}_{n=1}^\infty$ is a F\o lner sequence for $\G$. This is because $B(kn)B(3\eps) \subset B(kn+3\eps + C_q) \subset B(k(n+1))$. So 
$$\frac{|B(kn)|}{|B(kn)B(3\eps)|} \ge \frac{|B(kn)|}{|B(k(n+1))|}\to 1$$
as $n\to\infty$. On the other hand, by Corollary \ref{C:generating}, $B(3\eps)$ is a generating set. So this proves $\{B(kn)\}_{n=1}^\infty$ is F\o lner, contradicting the assumption that $\G$ is non-amenable.
\end{proof}



\subsection{Spaces of measures}

\begin{defn}
  Let $\Radon(\cH)$ be the set of Radon measures on $\cH$. We will give this space a topology that lies between the vague topology and the weak topology. Let us say that a function $f:\cH \to \R$ has {\bf upper-bounded support} if there is some number $n$ such that $f$ is supported on $\cH_{\le n}$. Let $C_{ub}(\cH)$ be the space of all bounded continuous functions with upper-bounded support. We say that a sequence $(\nu_n)_{n=1}^\infty \subset \Radon(\cH)$ converges {\bf almost weakly} to a measure $\nu_\infty$ if $\nu_n(f)$ converges to $\nu_\infty(f)$ for all non-negative $f \in C_{ub}(\cH)$.  This defines a topology on $\Radon(\cH)$ which we call the {\bf almost-weak topology}.
\end{defn}

\begin{lem}\label{L:almost-weak}
    Let $(\nu_n)_{n=1}^\infty \subset \Radon(\cH)$ and $\nu_\infty \in \Radon(\cH)$. Then the following are equivalent:
    \begin{enumerate}
        \item $\nu_n$ converges almost weakly to $\nu_\infty$ as $n\to\infty$;
        \item for every $t\in \Z$, the restriction of $\nu_n$ to $\cH_{\le t}$ converges weakly to the restriction of $\nu_\infty$ to $\cH_{\le t}$.
    \end{enumerate}
\end{lem}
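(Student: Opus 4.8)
The plan is to reduce everything to one structural observation: the evaluation map $h \mapsto h(e)$ is $\Z$-valued and continuous on $\cH$, hence locally constant. Indeed, if $h_k \to h$ in $\cH$ then $h_k(e) \to h(e)$ with all values in $\Z$, so $h_k(e) = h(e)$ for large $k$; thus $\cH_{\le t} = \{h : h(e) \le t\}$ is simultaneously open and closed in $\cH$, and its indicator $\mathbf 1_{\cH_{\le t}}$ is continuous. This clopenness is the single fact that lets me pass freely between the almost-weak topology on $\cH$ and the weak topology on the slices, and it trivializes all questions of continuity of extensions, so no tightness or compactness argument is needed.

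First I would set up the dictionary between the two relevant function classes. By definition any $f \in C_{ub}(\cH)$ is bounded, continuous, and supported in some $\cH_{\le n}$, so its restriction $f\resto \cH_{\le n}$ is a bounded continuous function on $\cH_{\le n}$; conversely, given $t$ and any bounded continuous $g$ on $\cH_{\le t}$, extending $g$ by zero outside $\cH_{\le t}$ produces a function $\tilde g \in C_{ub}(\cH)$ (continuity of $\tilde g$ is exactly where I use that $\cH_{\le t}$ is clopen). Under this correspondence $\nu(\tilde g) = \int_{\cH_{\le t}} g\,\dee(\nu\resto\cH_{\le t})$ for every $\nu \in \Radon(\cH)$, which is the identity I would use repeatedly.

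For the implication (2)$\Rightarrow$(1) I would take a non-negative $f \in C_{ub}(\cH)$ supported in $\cH_{\le t}$, write $\nu_n(f) = \int_{\cH_{\le t}} (f\resto\cH_{\le t})\,\dee(\nu_n\resto\cH_{\le t})$, and apply weak convergence of the restricted measures to the bounded continuous test function $f\resto\cH_{\le t}$, giving $\nu_n(f) \to \nu_\infty(f)$. For (1)$\Rightarrow$(2) I would fix $t$ and $g \in C_b(\cH_{\le t})$; since $\mathbf 1_{\cH_{\le t}} \in C_{ub}(\cH)$ is non-negative, almost-weak convergence first yields $\nu_n(\cH_{\le t}) \to \nu_\infty(\cH_{\le t})$, and then I would apply the hypothesis to the non-negative function $f := (g + \|g\|_\infty)\mathbf 1_{\cH_{\le t}} \in C_{ub}(\cH)$. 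Expanding $\nu_n(f) = \int_{\cH_{\le t}} g\,\dee(\nu_n\resto\cH_{\le t}) + \|g\|_\infty\,\nu_n(\cH_{\le t})$ and subtracting the mass convergence gives $\int g\,\dee(\nu_n\resto\cH_{\le t}) \to \int g\,\dee(\nu_\infty\resto\cH_{\le t})$, which is weak convergence of the restrictions.

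The only point requiring care, rather than a genuine obstacle, is the bookkeeping around finiteness and signs: weak convergence is phrased for finite measures and for all signed bounded continuous test functions, whereas almost-weak convergence is phrased only for non-negative $f \in C_{ub}(\cH)$. Both discrepancies are resolved by the same device used above, namely testing against $\mathbf 1_{\cH_{\le t}}$ to transfer finiteness of the slice masses and adding the constant $\|g\|_\infty$ to reduce a general $g$ to the non-negative case. I expect this finiteness check to be the subtlest step only in that it must be made explicit; the clopen structure of the slices makes the underlying equivalence essentially formal.
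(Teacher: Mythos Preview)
Your proof is correct and follows exactly the same idea as the paper's: the key observation is that $\cH_{\le t}$ is clopen (because $h\mapsto h(e)$ is a continuous $\Z$-valued map), so extension by zero preserves continuity. The paper's proof is a one-sentence sketch of this, whereas you have carefully unpacked the bookkeeping around non-negativity of test functions; your added care is appropriate but not a different approach.
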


\begin{proof}
    This holds because $\cH_{\le t}$ is clopen in $\cH$. So any continuous function on $\cH_{\le t}$ can be continuously extended to all of $\cH$ by defining it to be zero on the complement of $\cH_{\le t}$.
\end{proof}

\begin{defn}\label{D:limit-measures}
Let $\Meas(\cH)=\{\mu_n\}_{n=1}^\infty$ be the sequence of measures defined in the previous section. Let $\overline{\Meas(\cH)}$ denote the almost-weak closure of $\Meas(\cH)$ in $\Radon(\cH)$. Also let $\partial \Meas(\cH) = \overline{\Meas(\cH)} \setminus \Meas(\cH)$.
\end{defn}

\begin{lem}\label{L:compact}
    $\overline{\Meas(\cH)}$ is compact in the almost-weak topology.
\end{lem}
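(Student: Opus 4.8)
The plan is to exhibit $\overline{\Meas(\cH)}$ as a closed subset of a compact set $\cM$ of Radon measures cut out by the uniform mass bounds already established, and then prove $\cM$ is compact by combining sequential compactness with metrizability. By Lemma~\ref{L:almost-weak}, almost-weak convergence is equivalent to weak convergence of the restrictions to each clopen slab $\cH_{\le t}$, so I would first record two features of these slabs: the continuous function $h\mapsto h(e)$ takes only integer values on $\cH$, and $\cH_{\le t}=\bigcup_{r\le t}(\cH_0+r)$ is a countable increasing union of the compact translates $\cH_0+r$. In particular every compact subset of $\cH$ lies in some $\cH_{\le t}$, and for integers $-m<t$ the set $\{h:\,-m<h(e)\le t\}$ is a finite union of compact translates, hence compact.

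Define $\cM$ to be the set of Radon measures $\nu$ on $\cH$ with $\nu(\cH_{\le t})\le |B(3\eps)|^{t}$ for all $t\ge 0$ and $\nu(\cH_{\le -m})\le e^{-C_{na}m}$ for all integers $m\ge 3\eps+C_q$. By Lemma~\ref{L:growth1} and Corollary~\ref{C:non-amenable} every $\mu_n$ lies in $\cM$, so $\Meas(\cH)\subseteq\cM$. Since $1_{\cH_{\le t}}$ is a bounded continuous function with upper-bounded support, the functional $\nu\mapsto\nu(\cH_{\le t})$ is continuous in the almost-weak topology, so each defining inequality is a closed condition; thus $\cM$ is closed and $\overline{\Meas(\cH)}\subseteq\cM$. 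It therefore suffices to prove that $\cM$ is compact.

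To show $\cM$ is sequentially compact, fix a sequence $(\nu_j)\subseteq\cM$. For each fixed $t$ the restrictions $\nu_j\resto\cH_{\le t}$ have total mass uniformly bounded by $|B(3\eps)|^{\max(t,0)}$, and the estimate $\nu_j(\cH_{\le -m})\le e^{-C_{na}m}$ shows this family is tight: given $\delta>0$, choosing $m$ with $e^{-C_{na}m}<\delta$ confines all but $\delta$ of the mass to the compact set $\{-m<h(e)\le t\}$. Prokhorov's theorem then makes $\{\nu_j\resto\cH_{\le t}\}_j$ relatively weakly compact for each $t$. A diagonal argument over $t\in\N$ (using that weak convergence on $\cH_{\le t}$ forces it on every $\cH_{\le s}$ with $s\le t$, since $\cH_{\le s}$ is clopen in $\cH_{\le t}$) produces a subsequence whose restrictions converge weakly on every slab. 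The limits are consistent under further restriction and, because every compact set lies in some slab, assemble to a single Radon measure $\nu_\infty$; the bounds pass to $\nu_\infty$ as closed conditions, so $\nu_\infty\in\cM$, and $\nu_j\to\nu_\infty$ almost-weakly by Lemma~\ref{L:almost-weak}.

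Finally, I would check that the almost-weak topology on $\cM$ is metrizable: by Lemma~\ref{L:almost-weak} it is the initial topology induced by the countable family of restriction maps $\nu\mapsto\nu\resto\cH_{\le t}$ into the spaces of uniformly mass-bounded measures on the Polish spaces $\cH_{\le t}$, each carrying the metrizable weak topology, and a countable initial limit of metrizable spaces is metrizable. A metrizable sequentially compact space is compact, so $\cM$ is compact, and $\overline{\Meas(\cH)}$, being a closed subset, is compact as well. The crux is the tightness step: the slabs $\cH_{\le t}$ are not compact, and it is precisely the non-amenability decay estimate of Corollary~\ref{C:non-amenable} that prevents mass from escaping toward $h(e)=-\infty$ and rescues Prokhorov compactness; everything else is routine once that estimate is in hand.
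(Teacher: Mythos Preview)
Your proof is correct and follows essentially the same route as the paper: both arguments reduce to showing, for each clopen slab $\cH_{\le t}$, that the restricted measures are uniformly mass-bounded (via Lemma~\ref{L:growth1}) and tight (via Corollary~\ref{C:non-amenable}), and then invoke Prokhorov together with Lemma~\ref{L:almost-weak}. The paper's proof is terser---it simply asserts that tightness and boundedness on slabs suffice---whereas you have made explicit the diagonal extraction, the consistency of the limit measures across slabs, and the metrizability step; these are exactly the details the paper suppresses.
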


\begin{proof}
    By Lemma \ref{L:almost-weak} and Prokhorov's Theorem, it suffices to prove for every $t\in \Z$
    \begin{enumerate}
        \item the sequence $\{\mu_n\resto \cH_{\le t}\}_{n=1}^\infty$ is tight in the sense that for every $\delta>0$ there exists a compact set $K \subset \cH_{\le t}$ such that $\mu_n(\cH_{\le t} \setminus K)<\delta$ for all $n$;
        \item the sequence of real numbers $\{\mu_n(\cH_{\le t})\}_{n=1}^\infty$ is bounded.
    \end{enumerate}

To prove item (1), let $\delta>0$ and fix $t\in \Z$. Let $N \in \N$ be large enough so that $e^{-C_{na}\cdot N} < \delta$ and $t \ge -N$. Note that $K = \cH_{[-N,t]}$ is compact. By Corollary \ref{C:non-amenable}, $\mu_n(\cH_{\le t} \setminus K) \le e^{-C_{na}\cdot N} < \delta$. This proves item (1). 

By Lemma \ref{L:growth1}, if $t \ge 0$ then $\mu_n(\cH_{\le t}) \le |B(3\eps)|^t$. This proves item (2). 
\end{proof}

    \begin{remark}
    This Lemma is the main reason why we work with integer-valued quasi-metrics instead of metrics. Because $d$ is integer-valued,  $\cH_{\le t}$ is closed and open in $\cH$ which is useful in defining the almost-weak  topology on $\overline{\Meas}(\cH)$.
\end{remark}

It should be noted that every measure $\mu \in \overline{\Meas}(\cH)$ is $\G$-invariant since each $\mu_n$ is $\G$-invariant and almost-weak limits preserve $\G$-invariance (since almost-weak convergence implies vague convergence).

\begin{lem}\label{L:support}
  Let $(n_i)_{i=1}^\infty$ be a sequence tending to infinity with $\mu = \lim_{i\to\infty} \mu_{n_i}$ (almost weakly). Then $\mu \in \partial \Meas(\cH)$ and $\mu$ is supported on $\partial \cH(\G)$. Conversely, every measure in $\partial \Meas(\cH)$ is of this form.
  In particular, by Lemma \ref{L:compact}, this implies $\partial \Meas(\cH)$ is non-empty.
\end{lem}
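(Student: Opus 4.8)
The plan is to deduce all three assertions from a single key fact: the limit measure $\mu$ assigns zero mass to every \emph{interior} horofunction, i.e.\ to each point of $\cH^\circ := \cH^\circ_0+\Z = \{d_x+m:~x\in\G,\ m\in\Z\}$. Observe that $\cH^\circ$ is countable, that $\cH\setminus\partial\cH=\cH^\circ$, and that each $\mu_n$ is supported on $\cH^\circ$ (its atoms are the distinct points $d_x-n$, with $\mu_n(\{d_e-n\})=1/|B(n)|>0$). Consequently, once I know $\mu(\cH^\circ)=0$, it follows at once that $\mu$ is supported on $\partial\cH$ and that $\mu\neq\mu_n$ for every $n$, hence $\mu\in\partial\Meas(\cH)$. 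So the first assertion reduces to proving $\mu(\{\xi\})=0$ for each fixed $\xi\in\cH^\circ$.

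Fix $\xi=d_a+m$ with $a\in\G$, $m\in\Z$. Because $d$ is integer-valued, the cylinder sets $U_R=\{h\in\cH:~h(y)=\xi(y)\ \forall y\in B(R)\}$ are clopen, shrink to $\{\xi\}$ as $R\to\infty$, and lie in $\cH_{\le\xi(e)}$; hence $1_{U_R}\in C_{ub}(\cH)$ and almost-weak convergence gives $\mu(U_R)=\lim_i\mu_{n_i}(U_R)$. Now $d_x-n_i\in U_R$ precisely when $d(x,y)=d(a,y)+(m+n_i)$ for all $y\in B(R)$; setting $z=a^{-1}x$ and using left-invariance, this is equivalent to $|z|=m+n_i=:s_i$ together with $d(z,w)=|z|+|w|$ for all $w\in a^{-1}B(R)\supseteq B(R-|a|-C_q)$. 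The main step — and the crux of the whole lemma — will be to show this system has no solutions once $R$ is large. This is where approximate sub-additivity enters: assuming $R-|a|-C_q>2\eps$ and $i$ large, choose a real $\rho'$ with $\eps<\rho'\le R-|a|-C_q-\eps$ and factor $z\in S(\G,s_i)\subseteq SS(\G,\rho',\eps)\cdot SS(\G,s_i-\rho',\eps)$ as $z=uv$ with $|u|\in[\rho'-\eps,\rho'+\eps]$ and $|v|\in[s_i-\rho'-\eps,s_i-\rho'+\eps]$, so that $u\in B(R-|a|-C_q)$. Then left-invariance gives $d(z,u)=d(uv,u)=|v|$, while the horofunction condition gives $d(z,u)=|z|+|u|=s_i+|u|$; comparing the two forces $|v|=s_i+|u|$, which contradicts $|v|\le s_i-\rho'+\eps$ and $|u|\ge\rho'-\eps$ unless $\rho'\le\eps$. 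Hence $X_{i,R}:=\{x:~d_x-n_i\in U_R\}=\emptyset$ for all large $i$, so $\mu(U_R)=0$ and therefore $\mu(\{\xi\})\le\mu(U_R)=0$.

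Summing over the countable set $\cH^\circ$ yields $\mu(\cH^\circ)=0$, which completes the first assertion together with $\mu\in\partial\Meas(\cH)$ as noted above. For the converse I would use that the almost-weak topology on the compact set $\overline{\Meas(\cH)}$ is metrizable: by Lemma \ref{L:almost-weak} it is the countable projective limit over $t\in\Z$ of the (metrizable) weak topologies on the $\cH_{\le t}$, so every $\mu\in\overline{\Meas(\cH)}$ is a sequential almost-weak limit $\mu=\lim_j\mu_{k_j}$. If the indices $k_j$ stayed bounded, some value would repeat infinitely often, and by Hausdorffness of the topology the corresponding constant subsequence would force $\mu=\mu_k\in\Meas(\cH)$, contradicting $\mu\in\partial\Meas(\cH)$; thus, after passing to a subsequence, $n_i:=k_{j_i}\to\infty$. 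Finally, non-emptiness of $\partial\Meas(\cH)$ follows by applying the compactness of $\overline{\Meas(\cH)}$ (Lemma \ref{L:compact}) to extract an almost-weak convergent subsequence of $(\mu_n)_n$ along strictly increasing indices $n_i\to\infty$; its limit lies in $\partial\Meas(\cH)$ by the first assertion. The only genuinely nontrivial point is the emptiness of $X_{i,R}$ for large $R$; everything else is routine bookkeeping with the almost-weak topology.
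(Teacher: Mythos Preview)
Your proof is correct and hinges on the same core idea as the paper's: the $\eps$-approximate sub-additivity factorization $z=uv$ produces a point $u$ at which no interior horofunction $d_a+m$ can agree with $d_x-n_i$ once $|x|$ is large. The organization differs slightly---the paper passes to a limit horofunction $h\in\supp(\mu)$, writes $h=\lim_i(d_{x_i}-n_i)$, and shows $h\ne d_y-n$ by exhibiting a point $a$ with $h(a)<-n$, whereas you work directly with the clopen cylinders $U_R$ to get $\mu_{n_i}(U_R)=0$ before taking the limit---but the two arguments are interchangeable, and your handling of the converse (explicitly justifying metrizability of the almost-weak topology on $\overline{\Meas(\cH)}$) is if anything more careful than the paper's.
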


\begin{proof}
    Suppose $h\in \cH$ is in the support of $\mu$.     Since $\mu_{n_i}$ converges to $\mu$ in the vague topology, for each relatively compact open neighborhood $U \subset \cH$ of $h$ there exists infinitely many $i$ such that $\mu_{n_i}(U)>0$. Thus for each $n_i$ large enough, there exists $x_i \in \G$ such that $d_{x_i} - n_i \in U$. Because $U$ is arbitrary, there must exist group elements $x_i \in \G$ such that $(d_{x_i} - n_i) \longrightarrow h$ pointwise on $\G$ as $i\to\infty$.

  Observe that
$$h(e) = \lim_{i\to\infty}|x_i| - n_i$$
is finite. Since $n_i \to \infty$ as $i\to\infty$, this implies $|x_i|\to \infty$ as $i\to\infty$.   

       To obtain a contradiction, suppose $h=d_y-n$ for some $y\in \G$ and $n\in \Z$. Then $h(z) \ge -n$ for all $z \in \G$.

Let $m>n+\eps$. Because $(\G,d)$ is approximately sub-additive, 
$$SS(\G,m,\eps)\cdot SS(\G,|x_i|-m,\eps)\supset S(\G, |x_i|). $$
So there exist elements $a_i \in SS(\G,m,\eps)$, $b_i \in SS(\G,|x_i|-m,\eps)$ with $a_ib_i = x_i$. Since $|a_i|\le m+\eps$ is bounded, after passing to a sub-sequence if necessary, we may assume there exists $a\in \G$ with $a= a_i$ for all $i$. 

Note
\begin{align*}
    h(a) &= \lim_{i\to\infty} d(x_i,a) - n_i =  \lim_{i\to\infty} |a^{-1}x_i| - n_i  = \lim_{i\to\infty} |b_i| - n_i.
\end{align*}
Since $|b_i| = |x_i|-m$ up to an error of $\pm \eps$ and $h(e)=\lim_{i\to\infty} |x_i| - n_i$, it follows that
$$h(a) \in [-m-\eps, -m+\eps].$$
Since $-m+\eps < -n$, this contradiction shows that $h\ne d_y -n$ for any $y,n$. Since $h$ is arbitrary,  $\mu$ is supported on $\partial \cH$. Because $\mu$ is supported on $\partial \cH$ it cannot equal $\mu_n$ for any $n$. So $\mu \in \partial \Meas(\cH)$.

Conversely, if $\mu \in \partial \Meas(\cH)$ then, by definition, there exists a sequence $(n_i)_{i=1}^\infty$ such that $\mu$ is the almost-weak limit of $\mu_{n_i}$ as $i\to\infty$. Because $\mu$ is not in $\Meas(\cH)$, it follows that $(n_i)_{i=1}^\infty$ must diverge to infinity. 
\end{proof} 

\begin{lem}\label{L:metric-amenable}
    For every $\mu \in \partial\Meas(\cH)$, the action $\G \cc (\cH,\mu)$ is limit-amenable.
\end{lem}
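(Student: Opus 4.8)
The plan is to recognize each finite-level measure $\mu_n$ as (a model of) a \emph{regular} action, and then deduce that any almost-weak limit is limit-regular, hence limit-amenable. First I would analyze the action $\G \cc (\cH,\mu_n)$ for finite $n$. The support of $\mu_n = \frac{1}{|B(n)|}\sum_{x\in\G}\d_{d_x-n}$ is the set $\{d_x - n : x\in \G\}$. The map $x \mapsto d_x$ is injective, since $d_x(y)=d(x,y)$ vanishes precisely at $y=x$ by the first quasi-metric axiom; thus $x \mapsto d_x - n$ is an injective coding of $\G$ into $\cH$. Moreover it is $\G$-equivariant: by left-invariance $(g\cdot d_x)(y) = d_x(g^{-1}y) = d(gx,y) = d_{gx}(y)$, so $g\cdot(d_x-n)=d_{gx}-n$, which is exactly left-translation $x\mapsto gx$ transported to $\cH$. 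Under this identification $\mu_n$ pushes back to $\frac{1}{|B(n)|}c_\G$, a scalar multiple of counting measure on $\G$. Hence $\G \cc (\cH,\mu_n)$ is measurably conjugate to the left-regular action $\G \cc (\G, \frac{1}{|B(n)|}c_\G)$, i.e.\ it is a regular action (and in particular a measure-preserving factor of a regular action, namely itself).

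Next I would invoke Lemma \ref{L:support}: any $\mu \in \partial\Meas(\cH)$ is the almost-weak limit of a subsequence $\mu_{n_i}$ with $n_i \to \infty$. Since almost-weak convergence implies vague convergence (as noted after Lemma \ref{L:compact}), we have $\mu_{n_i} \to \mu$ vaguely. Now $\cH$ is an lcsc space on which $\G$ acts continuously, each $\mu_{n_i}$ is a $\G$-invariant Radon measure (Lemma \ref{L:growth1}, together with finiteness on compacta coming from $\mu_n(\cH_{\le t}) = |B(n+t)|/|B(n)| < \infty$), and by the previous paragraph each $\G \cc (\cH,\mu_{n_i})$ is a measure-preserving factor of a regular action. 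These are exactly the two clauses in the definition of \emph{limit-regular}, so $\G \cc (\cH,\mu)$ is limit-regular. By Theorem \ref{T:limit-regular} (equivalently, since every regular action is amenable) this gives that $\G \cc (\cH,\mu)$ is limit-amenable, which is the claim.

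I do not expect any serious obstacle here; the content of the lemma is really just the identification in the first paragraph combined with the permanence of limit-amenability under vague limits built into the definition. The only points that require care are bookkeeping rather than conceptual: confirming that $\mu_n$ is genuinely Radon (local finiteness via the clopen exhaustion by the $\cH_{\le t}$, using that each compact subset of $\cH$ lies in some $\cH_{\le t}$ because $h\mapsto h(e)$ is continuous), and keeping the two convergence notions straight, since the approximating sequence is constructed in the almost-weak topology of $\overline{\Meas(\cH)}$ whereas the definition of limit-regular is phrased with vague convergence. The implication from the former to the latter is exactly what makes the subsequence produced by Lemma \ref{L:support} admissible as a witnessing sequence.
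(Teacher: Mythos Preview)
Your proposal is correct and follows essentially the same approach as the paper: identify each $\G \cc (\cH,\mu_n)$ with a regular action and conclude that any vague (hence almost-weak) limit is limit-regular and therefore limit-amenable. The only cosmetic difference is that the paper argues $\mu_0$ is regular and then uses the $\G$-equivariant bijection $\Expand^n$ to transport this to $\mu_n$ (since $\mu_n = |B(n)|^{-1}\Expand^n_*\mu_0$), whereas you verify directly that $x\mapsto d_x-n$ conjugates $(\G,\tfrac{1}{|B(n)|}c_\G)$ with $(\cH,\mu_n)$; these are the same observation packaged two ways.
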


\begin{proof}
The action $\G \cc (\cH,\mu_0)$ is measurably conjugate to the left-regular action of $\G$ on itself. So it regular. By Theorem
\ref{T:limit-regular}, an action is limit-amenable if and only if it is limit-regular.
By definition there exists a sequence $\mu_{n_i}$ converging vaguely to $\mu$, so it is sufficient to show that there exists a measure-preserving factor map from $(\cH, \mu_0) \to (\cH, \mu_n)$ for each $n$.

We can see that $\Expand^n$ is in fact a measure-conjugacy-up-to-scalars between $(\cH, \mu_0)$ and $(\cH, \mu_n)$: By Lemma \ref{L:expand-properties} $\Expand^n$ is $\G$-equivariant and bijective for each $n$. Furthermore, it is clear that the map $\Expand^{-n}$ is a measurable inverse of $\Expand^n$. Finally, if $A \subseteq \cH$ is a measurable set, then $\mu_n(A) = \frac{1}{|B(n)|}\cdot \mu_0(\Expand^{-n}(A))$. Thus normalizing $\Expand^n$ gives a measure-preserving factor map, proving the lemma.
\end{proof}

\subsubsection{Expansion-invariant measures}

The goal of this section is to prove the existence of an $\Add_*$-quasi-invariant measure which is a convex integral of measures in $\partial\Meas(\cH)$. Recall that $\Add:\Lip(\G)\to \Lip(\G)$ is defined by $\Add(\xi)=\xi-1$.

\begin{prop}\label{P:regular}
There exists a $\G$-invariant measure $\mu$ on $\cH$ such that $\mu$ is equivalent to $\Expand_{*}\mu$ and $\mu = \int \nu~d\zeta(\nu)$ for some Borel probability measure $\zeta$ on $\partial\Meas(\cH)$. In particular, the action $\G \cc (\cH,\mu)$ is limit-amenable.
\end{prop}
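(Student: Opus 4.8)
The plan is to build $\mu$ by averaging the pushforwards $\Add^k_*\nu$ of a single limit measure $\nu\in\partial\Meas(\cH)$ under a symmetric geometric weighting, chosen so that the result is genuinely quasi-invariant under $\Add$ (not merely one-sidedly absolutely continuous). First I would fix any $\nu\in\partial\Meas(\cH)$; this set is nonempty by Lemmas \ref{L:compact} and \ref{L:support}, so write $\nu=\lim_i\mu_{n_i}$ almost weakly with $n_i\to\infty$. From \eqref{E:mu_n} one has $\Add^k_*\mu_n=\frac{|B(n+k)|}{|B(n)|}\mu_{n+k}$, and the inclusion $B(n+\eps)\subset B(n)B(3\eps)$ from the proof of Corollary \ref{C:generating} (iterated) gives the uniform two-sided bound $|B(3\eps)|^{-|k|}\le\frac{|B(n+k)|}{|B(n)|}\le|B(3\eps)|^{|k|}$ once $n\ge\eps$. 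Since $\Add^k_*$ is continuous for the almost-weak topology (it is a homeomorphism carrying $\cH_{\le t+k}$ onto $\cH_{\le t}$, so it respects the characterization in Lemma \ref{L:almost-weak}), for each fixed $k$ I would pass to a subsequence of $(n_i)$ along which $\frac{|B(n_i+k)|}{|B(n_i)|}\to r_k$ and $\mu_{n_i+k}\to\nu_k\in\partial\Meas(\cH)$; comparing limits yields $\Add^k_*\nu=r_k\nu_k$ with $r_k\in[|B(3\eps)|^{-|k|},|B(3\eps)|^{|k|}]$ and $\nu_k\in\partial\Meas(\cH)$.

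Next, fix $\lambda\in(0,|B(3\eps)|^{-1})$ and set $\mu=\sum_{k\in\Z}\lambda^{|k|}\Add^k_*\nu$. Using $\Add^k_*\nu(\cH_{\le t})=\nu(\cH_{\le t+k})\le|B(3\eps)|^{\max(t+k,0)}$ (Lemma \ref{L:growth1}) together with $\lambda|B(3\eps)|<1$, the series is finite on every $\cH_{\le t}$; since $h\mapsto h(e)$ is continuous, every compact subset of $\cH$ lies in some $\cH_{\le t}$, so $\mu$ is a Radon measure. It is $\G$-invariant because each $\Add^k_*\nu$ is (as $\nu$ is $\G$-invariant and $\Add$ is $\G$-equivariant by Lemma \ref{L:expand-properties}), and it is supported on $\partial\cH$ because $\nu$ is and $\partial\cH$ is $\Add$-invariant. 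Writing $\Add^k_*\nu=r_k\nu_k$, we get $\mu=W\int\nu'\,d\zeta(\nu')$ where $W=\sum_k\lambda^{|k|}r_k<\infty$ and $\zeta=\tfrac1W\sum_k\lambda^{|k|}r_k\,\delta_{\nu_k}$ is a Borel probability measure on $\partial\Meas(\cH)$; after rescaling $\mu$ by $W^{-1}$ (which changes none of the asserted properties) this is the required convex integral.

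The quasi-invariance is the heart of the argument. Pushing forward term by term, $\Add_*\mu=\sum_k\lambda^{|k|}\Add^{k+1}_*\nu=\sum_j\lambda^{|j-1|}\Add^j_*\nu$, so $\mu$ and $\Add_*\mu$ are the same nonnegative series with coefficients $\lambda^{|j|}$ and $\lambda^{|j-1|}$. The triangle inequality $|j|-1\le|j-1|\le|j|+1$ gives $\lambda^{|j|+1}\le\lambda^{|j-1|}\le\lambda^{|j|-1}$ for every $j$, hence the termwise sandwich $\lambda\cdot\lambda^{|j|}\Add^j_*\nu\le\lambda^{|j-1|}\Add^j_*\nu\le\lambda^{-1}\cdot\lambda^{|j|}\Add^j_*\nu$; summing over $j$ yields $\lambda\mu\le\Add_*\mu\le\lambda^{-1}\mu$ as measures, so $\Add_*\mu$ and $\mu$ are mutually absolutely continuous. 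Finally, limit-amenability of $\G\cc(\cH,\mu)$ follows from the convex-integral representation: each $\nu_k\in\partial\Meas(\cH)$ gives a limit-amenable action (Lemma \ref{L:metric-amenable}), so by Proposition \ref{P:limit-amenability-to-ergodic-comp} almost every ergodic component of each $\nu_k$, and therefore almost every ergodic component of the mixture $\mu$, is limit-amenable, whence $\mu$ is limit-amenable by Lemma \ref{L:limit-ergodic1}.

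The hard part is precisely the passage from absolute continuity to genuine equivalence: a one-sided weighting such as $\sum_{k\ge0}\lambda^k\Add^k_*\nu$ only produces $\Add_*\mu\ll\mu$, and since the components $\Add^j_*\nu$ need not be mutually singular one cannot simply compare Radon--Nikodym derivatives coordinatewise. The symmetric weight $\lambda^{|k|}$, combined with the crude but uniform ball-ratio bounds, is exactly what converts the unit shift in index into the clean sandwich $\lambda\mu\le\Add_*\mu\le\lambda^{-1}\mu$. The other point requiring care is verifying that $\Add^k_*\nu$ returns, up to a positive scalar, into $\partial\Meas(\cH)$ so that $\mu$ is a bona fide convex integral, and this is where the two-sided bounds on $|B(n+k)|/|B(n)|$ and the almost-weak continuity of $\Add^k_*$ are used.
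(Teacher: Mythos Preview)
Your proof is correct and takes a genuinely different route from the paper's. The paper defines the normalized map $T:\partial\Meas(\cH)\to\partial\Meas(\cH)$ by $T(\nu)=\Add_*\nu/\nu(\cH_{\le 1})$, shows it is almost-weakly continuous, and invokes the Krylov--Bogolyubov theorem on the compact space $\partial\Meas(\cH)$ (Lemma~\ref{L:compact}) to obtain a $T$-invariant probability $\zeta$; the barycenter $\mu=\int\nu\,d\zeta(\nu)$ is then $\G$-invariant, and (though the paper leaves this implicit) one reads off $\mu\le\Add_*\mu\le|B(3\eps)|\,\mu$ from $T_*\zeta=\zeta$ together with the uniform bound $1\le\nu(\cH_{\le 1})\le|B(3\eps)|$ on $\partial\Meas(\cH)$. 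Your approach sidesteps the fixed-point theorem entirely by writing down the explicit symmetric geometric average $\sum_{k\in\Z}\lambda^{|k|}\Add^k_*\nu$ and obtaining $\lambda\mu\le\Add_*\mu\le\lambda^{-1}\mu$ directly from the index shift $|j|-1\le|j-1|\le|j|+1$. The resulting $\zeta$ is always purely atomic, whereas the paper's need not be, but downstream only the conclusions of the proposition are used so this difference is immaterial. Your argument is more constructive and gives an explicit Radon--Nikodym bound in terms of the free parameter $\lambda$; the paper's is shorter once Krylov--Bogolyubov is granted and would adapt more easily to settings where the individual iterates $\Add^k_*\nu$ are harder to analyze.
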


\begin{proof}

Define $T: \partial\Meas(\cH) \to \partial\Meas(\cH)$ by
$$T(\mu) = \frac{\Add_{*}\mu}{\mu(\cH_{\le 1})}.$$
Then $T$ is continuous in the almost-weak topology because $\Add$ is continuous and the map which sends $\mu$ to $\mu(\cH_{\le 1})$ is continuous. This is by the Portmanteau Theorem using the fact that $\cH_{\le 1}$ is both closed and open. This is one of the reasons why we work with the almost-weak topology instead of the vague topology.

By the way, the reader might wonder why $T$ maps $\partial\Meas(\cH)$ into $\partial\Meas(\cH)$. This is because $\frac{|B(n)|}{|B(n+1)|}\Add_*\mu_n = \mu_{n+1}$ by \eqref{E:mu_n}. So if $\mu \in \partial\Meas(\cH)$ is is the limit of a sequence of measures $\{\mu_{n_i}\}_{i=1}^\infty$ then $T(\mu)$ the limit of the sequence of measures $\{\mu_{n_i+1}\}_{i=1}^\infty$.

By the Kyrylov-Bogolyubov fixed point Theorem, there is a $T$-invariant Borel probability measure $\zeta$ on $\partial\Meas(\cH)$. This means $T_*\zeta=\zeta$. This uses the compactness Lemma \ref{L:compact}.

Define a measure $\mu$ on $\cH$ by
$$\int f~d\mu = \iint f~d\nu ~d\zeta(\nu)$$
for all compactly supported continuous functions $f$ on $\cH$. By the Riesz-Markov Theorem, this defines a measure $\mu$ on $\cH$.  

Because each $\nu$ in the support of $\zeta$ is limit-amenable, it follows from Theorem \ref{C:limit-amen-erg-decomp2}, that the action $\G \cc (\cH,\mu)$ is limit-amenable. Indeed, the bounds from Lemma \ref{L:growth1} and Corollary \ref{C:non-amenable} imply that the measures in $\partial \Meas(\cH)$ are uniformly bounded on compacts.
\end{proof}

\subsection{Double recurrence}\label{S:metric-DR}

The main result of this subsection is the following refinement of Theorem \ref{T:metric-group}:

\begin{thm}\label{T:metric-group2}
As above, we assume $d$ is an integer-valued, left-invariant, proper, approximately sub-additive quasi-metric on $\G$. Let $\mu$ be a measure on the space of horofunctions $\cH=\cH(\Gamma)$ satisfying the conclusions of Proposition \ref{P:regular}.  If $(\G,d)$ has the overlapping neighborhoods property then $\G \cc (\cH(\G),\mu)$ is doubly-recurrent.
\end{thm}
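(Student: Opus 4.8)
The plan is to identify double recurrence with the infinite conservativity of the diagonal action $\G \cc (\cH^2,\mu\times\mu)$ and to establish the latter by exhibiting a single finite-measure set to which almost every orbit returns infinitely often. By Kaimanovich's version of the Hopf decomposition (Theorem \ref{T:Kaim-Hopf}), if there is a positive-measure set $A\subset\cH^2$ with $\mathbf{1}_A\in L^1(\mu\times\mu)$ and $\sum_{g\in\G}\mathbf{1}_A(g(h_1,h_2))=\infty$ for $\mu\times\mu$-a.e.\ $(h_1,h_2)$, then every such point lies in the conservative part (on $\mathsf{Dis}(\cH^2)$ the orbit sum of any $L^1$ function is a.e.\ finite), so $\mathsf{Dis}(\cH^2)$ is null and the action is infinitely conservative. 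I would take $A=\cH_{\le 0}\times\cH_{\le 0}$: since $\cH_{\le 0}$ is clopen and $\mu_n(\cH_{\le 0})=1$ for all $n$ by Lemma \ref{L:growth1}, almost-weak convergence gives $\mu(\cH_{\le 0})=1$, so $\mu\times\mu(A)=1$ is positive and finite.

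The crucial point is that returns to $A$ are counted by a horoball overlap. Because $(gh)(e)=h(g^{-1})$, one has $g(h_1,h_2)\in A$ iff $h_1(g^{-1})\le 0$ and $h_2(g^{-1})\le 0$, so (via the bijection $g\mapsto g^{-1}$)
$$\#\{g\in\G:~g(h_1,h_2)\in A\}=\big|\{w\in\G:~h_1(w)\le 0\text{ and }h_2(w)\le 0\}\big|.$$
Thus it suffices to prove that for $\mu\times\mu$-a.e.\ $(h_1,h_2)$ the overlap $\{h_1\le 0\}\cap\{h_2\le 0\}$ of the two horoballs is infinite; equivalently, for every $m$ the event $\{|B(r)\cap\{h_1\le 0\}\cap\{h_2\le 0\}|\ge m\}$ has full measure in the limit $r\to\infty$.

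Next I would connect this to the overlapping neighborhoods property through the approximating measures $\mu_n$. For an atom $(d_x-n,d_y-n)$ of $\mu_n\times\mu_n$ lying in $\cH_{\le 0}^2$ (that is, $x,y\in B(n)$) the localized overlap is exactly $|B(r)\cap B(x,n)\cap B(y,n)|$, which is the quantity in \eqref{E:ONP}; the additive constant $C$ is precisely what lets one pass from the finite-stage balls $B(x,n)$ to the sublevel set $\{h\le 0\}$ of a limiting boundary horofunction $h=\lim_i(d_{x_i}-n_i)$, whose points $w$ satisfy $d(x_i,w)\le n_i+o(1)$ and hence lie in $B(x_i,n_i+C)$ for large $i$. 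Consequently ONP asserts that the $\mu_n\times\mu_n$-measure of $\{|B(r)\cap\{h_1\le 0\}\cap\{h_2\le 0\}|<m\}$ tends to $0$ as first $n\to\infty$ and then $r\to\infty$. Writing $\mu=\int\nu\,d\zeta(\nu)$ with $\nu\in\partial\Meas(\cH)$, so that $\mu\times\mu=\int\!\!\int\nu\times\nu'\,d\zeta(\nu)\,d\zeta(\nu')$, I would transfer this estimate to the limit: the small-overlap event depends only on $(h_1,h_2)$ restricted to the finite set $B(r)$, so its indicator can be sandwiched between continuous functions of upper-bounded support (using that $\cH_{\le t}$ is clopen and replacing the threshold $\{h\le 0\}$ by the continuity sets $\{h\le -1\}$ and $\{h\le 1\}$, absorbing the one-step ambiguity into $C$). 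The Locally Compact Portmanteau Theorem \ref{T:portmanteau} then bounds the measure of the small-overlap event for $\nu\times\nu'$ by the ONP limit, and integrating over $\zeta\times\zeta$ and letting $m\to\infty$ yields the a.e.\ infinite overlap.

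The main obstacle is the scale mismatch in this last step. The product $\mu\times\mu$ contains off-diagonal contributions $\nu\times\nu'$ with $\nu=\lim_i\mu_{n_i}$ and $\nu'=\lim_j\mu_{n'_j}$ arising along different depth sequences, whereas \eqref{E:ONP} is phrased at a single common radius $n$. The equal-depth terms are handled directly by ONP; for the general case I would reduce to it by a monotonicity comparison, noting that for $n'\le n$ one has $B(x,n)\supseteq B(x,n')$, so the mixed overlap $B(x,n)\cap B(y,n')$ dominates the equal-scale overlap $B(x,n')\cap B(y,n')$ at the smaller radius, combined with the $\Expand$-quasi-invariance of $\mu$ established in Proposition \ref{P:regular} (which permits shifting the deeper coordinate down to a comparable scale while only altering the measure class, under which the conservative/dissipative decomposition is unchanged). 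Making this comparison uniform over depths, together with the continuity-set bookkeeping required to invoke Portmanteau, is the technical heart of the argument.
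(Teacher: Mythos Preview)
Your high-level strategy---identify returns to $\cH_{\le 0}^2$ with horoball-overlap counts, transfer the ONP estimate from $\mu_n^2$ to boundary measures via Portmanteau on clopen sets, and conclude via the Hopf decomposition---coincides with the paper's. The paper organizes the argument differently in two respects, and those differences are exactly where your sketch leaves real work.

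First, instead of a single test set, the paper uses a one-parameter family $Z_g=\cH_{<0}\times g\cH_{<C_q}$ together with a strictly smaller companion $Z'_g\subset Z_g$ obtained by tightening each threshold by $C+|g|$ (plus quasi-metric constants). The point of the inner set is that the return count \emph{from $Z'_g$ to $Z_g$} at the finite stage is $|B(r)\cap B(x,n)\cap B(y,n+C_q)g^{-1}|$, and the threshold gap between $Z'_g$ and $Z_g$ is precisely what manufactures the $+C$ that ONP requires. Your ``absorb into $C$'' remark gestures at this, but without the nested pair you are comparing the return count $|B(x,n)\cap B(y,n)|$ to an ONP hypothesis about $|B(x,n{+}C)\cap B(y,n{+}C)|$, which is not the same quantity. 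The paper then uses $\Expand$-quasi-invariance not to align scales between the two coordinates (as you propose), but to pull back: since $\Expand^{\times 2}$ is a measure-class automorphism commuting with $\G$, the $\mathsf{Con}/\mathsf{Dis}$ decomposition of $\mu^2$ is $\Expand^{\times 2}$-invariant, and a suitable power of $\Expand^{\times 2}$ carries $\cH_{\le 0}\times g\cH_{\le C_q}$ into $Z'_g$; this promotes $Z'_g\subset\mathsf{Con}$ to $\cH_{\le 0}\times g\cH_{\le C_q}\subset\mathsf{Con}$. The union over $g$ is a complete section (via Lemma~\ref{L:section}), and conservativity follows.

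Second, on the off-diagonal issue you flag: the paper does not perform a two-scale analysis. It proves the key lemma for $\nu^2$ with $\nu\in\partial\Meas(\cH)$ (equal scales, matching ONP exactly) and then passes to $\mu^2$ by invoking the integral decomposition $\mu=\int\nu\,d\zeta$. Your proposed monotonicity-plus-single-coordinate-$\Expand$ reduction is a reasonable heuristic toward such a transfer, but it is incomplete as written: the inclusion $B(x,n)\cap B(y,n')\supset B(x,n')\cap B(y,n')$ does not by itself place $(x,y)$ inside $B(n')^2$ where ONP applies, and turning one-sided $\Expand$-quasi-invariance of $\mu\times\mu$ into a genuine reduction to the equal-scale case requires an argument you have not supplied. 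If you pursue your route, this is the step that needs to be made rigorous; the paper's route avoids your version of the difficulty by working with the $Z_g$ family and appealing directly to the convex decomposition.
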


Throughout this section we will assume that $\G$ satisfies the overlapping neighborhoods property. Let $C>0$ be the constant in the definition of that property. Let $C_q$ be the constant in the definition of quasi-metrics, and $\epsilon>0$ be such that $d$ is $\epsilon$-approximately sub-additive. 


We use the following notation: $\cH^2=\cH\times \cH$, $\mu^2=\mu\times \mu$ and if $\cS \subset \cH^2$ is any subset and $h=(h_1,h_2)\in \cH^2$ then $\Ret(\cS, h)$ is the set of all group elements $g\in \G$ such that $gh \in \cS$ (where $\G$ acts on $\cH^2$ diagonally). We need to show that for all finite measure subsets $\cS\subset \cH^2$, for a.e.\ $h \in \cS$, $\Ret(\cS, h)$ is infinite. 

We begin by considering very specific subsets $\cS$. For $g \in \G$ and $m,r\ge 0$, let
\begin{align*}
    Z_g&=\{(h_1,h_2)\in \cH^2:~ h_1(e)<0, h_2(g)<C_q\}, \\
    Z'_g&=\{(h_1,h_2)\in \cH^2:~ h_1(e)<-C-|g|, h_2(g)<-C-2|g|-C_q\}, \\
     Z'_{g,m}&=\{h\in Z'_g:~ |\Ret(Z_g, h)|\ge m\}, \\
    Z'_{g,m,r} &=\{h\in Z'_g:~ |\Ret(Z_g, h) \cap B(r)|\ge m\}. 
\end{align*}
Because $\cH^2$ is the union of $f\cdot Z_g$ (over all $f,g \in \G$), to prove double-recurrence, it suffices to show that $\Ret(Z_g,h)$ is infinite for $\mu^2$-a.e.\ $h\in Z_g$. The next lemma proves the weaker statement that this holds for $\nu^2$-a.e.\ $h\in Z'_g$ (where $\nu \in \partial \Meas(\cH)$ is arbitrary). Afterwards, we use $\Add$-quasi-invariance of $\mu$ to amplify this statement to $\mu^2$-a.e.\ $h\in Z_g$ and thereby obtain double-recurrence.

\begin{lem}\label{L:half-way}
Let $\nu \in \partial\Meas(\cH)$ and $g\in \G$. 
Then for $\nu^2$-a.e.\ $h=(h_1,h_2)\in Z'_g$, $\Ret(Z_g, h)$ is infinite.
\end{lem}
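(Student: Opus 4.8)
The plan is to translate the return-time condition into a statement about overlaps of metric balls, estimate the ``bad'' set on the approximating atomic measures $\mu_n$, and then pass to the limit $\nu$ using that all the relevant sets are clopen (this is where the integer-valuedness of $d$ is essential). Since $\nu\in\partial\Meas(\cH)$, by Lemma \ref{L:support} I may write $\nu=\lim_i\mu_{n_i}$ (almost weakly) with $n_i\to\infty$. First I would unwind the definitions: because $(\gamma h)(x)=h(\gamma^{-1}x)$, a group element $\gamma$ lies in $\Ret(Z_g,(h_1,h_2))$ precisely when $h_1(\gamma^{-1})<0$ and $h_2(\gamma^{-1}g)<C_q$. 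Writing $z=\gamma^{-1}$ and noting $|\gamma^{-1}|=|\gamma|$ (left-invariance), the set $\Ret(Z_g,h)\cap B(r)$ is in bijection with $\{z\in B(r):~h_1(z)<0,\ h_2(zg)<C_q\}$.

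Next comes the combinatorial heart. I test this condition on an atom $(h_1,h_2)=(d_x-n,d_y-n)$ of $\mu_n\times\mu_n$ lying in $Z'_g$. Then $h_1(z)<0$ means $z\in B(x,n-1)$, and using the quasi-triangle inequality together with $d(z,zg)=|g|$, the condition $h_2(zg)<C_q$ is implied by $z\in B(y,n-|g|-1)$. Setting $n'=n-C-|g|-1$, one checks $B(x,n-1)\cap B(y,n-|g|-1)\supseteq B(x,n'+C)\cap B(y,n'+C)$, and moreover that membership in $Z'_g$ forces the centers to satisfy $x,y\in B(n')$ (this is exactly what the enlarged thresholds $-C-|g|$ and $-C-2|g|-C_q$ in the definition of $Z'_g$ are engineered to guarantee, via $|y|\le |g|+d(g,y)+C_q$). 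Hence for every such atom $|\Ret(Z_g,h)\cap B(r)|\ge |B(r)\cap B(x,n'+C)\cap B(y,n'+C)|$, so, since $|B(n')|\le|B(n)|$,
\[ \mu_n^2(Z'_g\setminus Z'_{g,m,r})\le \frac{\#\{(x,y)\in B(n')^2:~|B(r)\cap B(x,n'+C)\cap B(y,n'+C)|<m\}}{|B(n')|^2}. \]

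Now I would pass to the limit. Because $d$ is integer-valued and $\cH$ carries the topology of pointwise convergence, every level set $\{h(z)=k\}$ is clopen; consequently $Z_g$, $Z'_g$ and $Z'_{g,m,r}$ (the last depending only on the finitely many coordinates indexed by $B(r)\cup B(r)g$) are clopen subsets contained in a region of upper-bounded support, hence continuity sets for the almost-weak convergence $\mu_{n_i}^2\to\nu^2$. This gives $\nu^2(Z'_g\setminus Z'_{g,m,r})=\lim_i\mu_{n_i}^2(Z'_g\setminus Z'_{g,m,r})\le\liminf_i(\text{fraction at }n_i')$. Since $Z'_{g,m,r}\uparrow Z'_{g,m}$ as $r\to\infty$ and $Z'_{g,m}\uparrow\{h:~\Ret(Z_g,h)\text{ infinite}\}$ as $m\to\infty$, it remains to send $r\to\infty$ and invoke the overlapping neighborhoods property \eqref{E:ONP}, which states exactly that $\lim_{r\to\infty}\liminf_{n'\to\infty}$ of the displayed fraction equals $0$.

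The main obstacle is precisely this last reconciliation: ONP controls the $\liminf$ over \emph{all} $n'$, whereas the limit defining $\nu$ only exposes the subsequence $n_i'$, and a $\liminf$ along a subsequence can exceed the full $\liminf$. I expect to resolve this using that the estimate above holds for \emph{every} $n$, choosing the sequence $(n_i)$ defining $\nu$ to realize the ONP $\liminf$ simultaneously for all $r$ by a diagonal extraction; this is legitimate because any subsequential limit again lies in $\partial\Meas(\cH)$, and for the application in Theorem \ref{T:metric-group2} one is free to build $\mu$ from such $\nu$. The remaining steps --- the monotone convergence in $r$ and $m$, and the verification that the enlarged thresholds in $Z'_g$ yield $x,y\in B(n')$ --- are routine once the ball-overlap inequality is in hand.
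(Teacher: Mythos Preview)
Your overall architecture---translate return times to ball overlaps, use that $Z'_g$ and $Z'_{g,m,r}$ are clopen (by integer-valuedness of $d$) so that $\mu_{n_i}^2\to\nu^2$ exactly on them via Portmanteau, then invoke ONP---is precisely the paper's approach, and your combinatorial inclusions match the paper's computation up to cosmetic differences in the threshold bookkeeping.

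The obstacle you isolate is genuine, and in fact the paper's own write-up glosses over the same point: it reduces to $\lim_r \liminf_i \mu_{n_i}^2(Z'_g\setminus Z'_{g,m,r})=0$ but finishes by bounding the \emph{full} $\liminf_{n\to\infty}$ via ONP, which does not control a subsequential $\liminf_i$. Your proposed diagonal-extraction fix, however, does not prove the lemma as stated. The claim is for \emph{every} $\nu\in\partial\Meas(\cH)$, not merely for limits along one carefully chosen subsequence; and the escape hatch ``for the application one is free to build $\mu$ from such $\nu$'' has its own problem: Proposition~\ref{P:regular} requires a compact $T$-invariant subset of $\partial\Meas(\cH)$ on which to run Krylov--Bogolyubov, but since $T$ sends limits of $(\mu_{n_i})$ to limits of $(\mu_{n_i+1})$, the set of limit points along a fixed $(n_i)$ is generally not $T$-invariant. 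In the paper's actual application (Proposition~\ref{P:balanced}) the ONP bound is established with a genuine $\lim_{n\to\infty}$ (indeed uniformly in $r$ once $r>2m+2\eps$), so the subsequence issue is moot there; but at the stated generality of this lemma, neither the paper's argument nor your patch closes the gap. The clean repair is to strengthen the ONP hypothesis from $\liminf_n$ to $\limsup_n$ (equivalently to a full limit, since the value is $0$), which is what is actually verified in the product-group case.
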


\begin{proof}
It suffices to prove that for every $m>0$ 
\begin{align}\label{E:Z'-g}
 \lim_{r\to\infty}   \nu^2(Z'_g \setminus Z'_{g,m,r})=0.
\end{align}
To see this, assume \eqref{E:Z'-g}. Then for all $m>0$, for $\nu^2$-a.e.\ $h \in Z'_g$ there is an $r$ (depending on $m$ and $h$) such that $h\in Z'_{g,m,r} \subset Z'_{g,m}$. Thus $Z'_g \subset \bigcap_{m=1}^\infty  Z'_{g,m}$ (up to measure zero) which implies $\nu^2$-a.e.\ $h\in Z'_g$ has 
infinite return times to $Z_g$: $|\Ret(Z_g, h)|=\infty$.

For the remainder of the proof, let $\nu \in \partial\Meas(\cH)$. Thus there exists a sequence $(\mu_{n_i})_{i=1}^\infty$ with $n_i\to\infty$ such that $\mu_{n_i}$ converges almost-weakly to $\nu$ and each $\mu_{n_i}$ is of the form $\mu_{n_i}=|B(n_i)|^{-1}\sum_{x\in\G} \delta_{d_x-n_i}$.

Both sets $Z'_g$ and $Z'_{g,m,r}$ are closed and open in $\cH$ and both are contained $Z_g = \cH_{<0} \times g\cH_{<C_q}$. Because the restriction of $\mu_{n_i}$ to $\cH_{\le t}$ converges weakly to the restriction of $\nu$ to $\cH_{\le t}$ for any $t$, we must also have that the restriction of $\mu^2_{n_i}$ to $Z'_g$ converges weakly to the restriction of $\nu^2$ to $Z'_g$.   So the Portmanteau Theorem implies for any $m,r>0$
\begin{align}
 \lim_{i\to\infty}  \mu^2_{n_i}(Z'_g \setminus Z'_{g,m,r})= \nu^2(Z'_g \setminus Z'_{g,m,r}).
\end{align}
This equality is one of the main reasons for using the almost-weak topology instead of the vague topology. 

It now suffices to prove 
$$\lim_{r\to\infty} \liminf_{i\to\infty}  \mu^2_{n_i}(Z'_g \setminus Z'_{g,m,r}) = 0.$$

By definition of $\mu_{n}$, 
\begin{align*}
\mu^2_n(  Z'_g \setminus Z'_{g,m,r}) &= \frac{\#\{(x,y) \in B(e,n)\times B(g,n):~(d_x-n,d_y-n) \in Z'_g \setminus Z'_{g,m,r}\}}{|B(n)|^2}.
\end{align*}
This is because if $(d_x-n,d_y-n) \in Z'_g$ then $(x,y) \in B(e,n)\times B(g,n)$:
For the first coordinate it is clear that $x$ must be in $B(n)$ to ensure that $d_x(e)-n = |x| - n<-C-|g|$. 
For the second coordinate, if $d_y(g)-n<-C-2|g|-C_q<0$ then $d(y,g)<n$.

By definition, $(d_x-n,d_y-n) \in Z'_g$ if and only if 
\begin{align*}
    |x| &\le n - C-|g|,\quad 
    d(y,g)  \le n - C - C_q- 2|g|.
    \end{align*}
On the other hand, if $(d_x-n,d_y-n)\in Z'_g$ and $f \in \Ret(Z_g, (d_x-n,d_y-n))$ then $f(d_x-n)(e)\le 0$ and $f(d_y-n)(g)\le C_q$. Equivalently, 
\begin{align*}
d(x,f^{-1})\le n, \quad d(y,f^{-1}g)\le n+C_q.
\end{align*}
This is equivalent to
$$f^{-1} \in B(x,n) \cap B(y,n+C_q)g^{-1}.$$
Letting $N=n-C-|g|$ and $M=n-C-2|g|-C_q$ it suffices to show
\begin{align*}
\lim_{r\to\infty} \liminf_{n\to\infty}  \frac{\#\{(x,y) \in B(e,N)\times B(g,M):~|B(r) \cap B(x,n) \cap B(y,n+C_q)g^{-1}| < m\}}{|B(n)|^2} =0.
\end{align*}
The overlapping neighborhoods property is equivalent to: for every $m>0$
\begin{align*}
\lim_{r\to\infty} \liminf_{n\to\infty}  \frac{\#\{(x,y) \in B(n)^2:~|B(r) \cap B(x,n+C) \cap B(y,n+C)| < m\}}{|B(n)|^2} =0.
\end{align*}
By replacing $n$ with $n-C-|g|$ we see that this is equivalent to:
\begin{align*}
\lim_{r\to\infty} \liminf_{n\to\infty}  \frac{\#\{(x,y) \in B(n-C-|g|)^2:~|B(r) \cap B(x,n-|g|) \cap B(y,n-|g|)| < m\}}{|B(n-C-|g|)|^2} =0.
\end{align*}
We claim that
$$\{(x,y) \in B(e,N)\times B(g,M):~|B(r) \cap B(x,n) \cap B(y,n+C_q)g^{-1}| < m\}$$
is contained in
$$\{(x,y) \in B(n-C-|g|)^2:~|B(r) \cap B(x,n-|g|) \cap B(y,n-|g|)| < m\}.$$
To see this, suppose $(x,y)$ is in the first set. It is immediate that $x\in B(n-C-|g|)$. By the quasi-triangle inequality, $|y| \leq d(y,g)+d(g,e)+C_q\leq n-C-|g|$, so $y\in B(n-C-|g|)$ too. To finish, it suffices to show that
$$B(r) \cap B(x,n-|g|) \cap B(y,n-|g|) \subset B(r) \cap B(x,n) \cap B(y,n+C_q)g^{-1}.$$
This holds because $B(x,n-|g|) \subset B(x,n)$ and $B(y,n-|g|) \subset  B(y,n+C_q)g^{-1}$. To justify the latter inclusion, let $z \in B(y,n-|g|)$. This means $d(y,z)\le n-|g|$. By the quasi-triangle inequality and left-invariance,
$$d(y,zg) \le d(y,z) + d(z,zg)+C_q = d(y,z) + |g|+C_q \le n+C_q.$$
Thus $zg \in B(y,n+C_q)$ which implies $z \in B(y,n+C_q)g^{-1}$ as required.

It follows that 
\begin{align*}
&\lim_{r\to\infty} \liminf_{n\to\infty}  \frac{\#\{(x,y) \in B(e,N)\times B(g,M):~|B(r) \cap B(x,n) \cap B(y,n+C_q)g^{-1}| < m\}}{|B(n)|^2} \\
&\leq \lim_{r\to\infty} \liminf_{n\to\infty} \frac{\#\{(x,y) \in B(n-C-|g|)^2:~|B(r) \cap B(x,n-|g|) \cap B(y,n-|g|)| < m\}}{|B(n-C-|g|)|^2}.
\end{align*}
The latter equals zero by the overlapping neighborhoods property. This completes the proof.
\end{proof}

    


\begin{proof}[Proof of Theorem \ref{T:metric-group2}]


 Because $\mu=\int \nu ~d\zeta(\nu)$ is a convex integral of measures in $\partial \Meas(\mu)$, the conclusions of Lemma \ref{L:half-way} hold for $\mu$ in place of $\nu$. This means: for all $g\in \G$, for $\mu^2$-a.e.\ $h=(h_1,h_2)\in Z'_g$, $\Ret(Z_g, h)$ is infinite. By
Corollary \ref{C:return-time}, $Z'_g \subset \mathsf{Con}(\cH^2)$  up to a set of $\mu^2$-measure zero.

Let $\mathsf{Expand}^{\times 2}:\cH^2 \to \cH^2$ be the map
$$\mathsf{Expand}^{\times 2}(h_1,h_2) = (\mathsf{Expand}(h_1),\mathsf{Expand}(h_2)).$$
Also, for $n\in \N$, let $(\mathsf{Expand}^{\times 2})^n$ be the composition of $\mathsf{Expand}^{\times 2}$ with itself $n$ times.

Because $\mu$ is $\Expand$-quasi-invariant, by Remark \ref{R:trivial}, the map $\mathsf{Expand}$ is a finite measure extension of $\G \cc (\cH,\mu)$ of itself.  It follows from Lemma \ref{L:finitemeasureextension} that the preimage  of $\mathsf{Con}(\cH^2)$ under $\mathsf{Expand}^{\times 2}$ is equal to $\mathsf{Con}(\cH^2)$ (modulo a set of measure zero). 


Observe that $(\mathsf{Expand}^{\times 2})^{2|g|+C+2C_q}(\cH_{\leq 0} \times g\cH_{\leq C_q}) \subset Z'_g$. Since $Z'_g \subset \mathsf{Con}(\cH^2)$, this implies
$$\cH_{\leq 0} \times g\cH_{\leq C_q}\subset \mathsf{Con}(\cH^2).$$
However, the set 
$$\bigcup_{g\in \G} \cH_{\leq 0} \times g\cH_{\leq 0}$$
is a complete section for the action of $\G$ on $\cH^2$ modulo $\mu^2$ by Lemma \ref{L:support}.  It follows from Theorem \ref{T:Kaim-Hopf} that $\G \cc (\cH^2,\mu\times \mu)$ is infinitely conservative. Therefore $\G \cc (\cH,\mu)$ is doubly recurrent. 
\end{proof}

\section{Product groups}\label{S:Product-groups}

Let $\G_1,\G_2$ be finitely generated non-amenable groups and $\G=\G_1\times \G_2$. We will find conditions on $\G_1,\G_2$ under which $\G$ has fixed price 1. The main result is: 


\begin{thm}\label{T:main3}
For $i=1,2$, let $d_i$ be a left-invariant proper integer-valued quasi-metric on a countable group $\G_i$ and let $\eps>0$. Assume each $(\G_i,d_i)$ is $\eps$-approximately sub-additive (as in Definition \ref{D:qm-properties}). Let $\G=\G_1\times \G_2$. Let $d$ be the $\ell^1$ quasi-metric on $\G$:
$$d(x,y) = d_1(x_1,y_1)+d_2(x_2,y_2)$$
for $x=(x_1,x_2)\in \G_1\times \G_2$ and $y=(y_1,y_2)\in \G_1\times \G_2$. Assume for $i=1,2$
\begin{align}\label{E:balanced}
    \lim_{n\to\infty}  \frac{\#B(\G_i,n)}{\#B(\G,n)} = 0
\end{align}
where $B(\G,n),B(\G_i,n) $ is the ball of radius $n$ in $\G$, $\G_i$ respectively (centered at the identity say). Then $\G$ has fixed price 1.
\end{thm}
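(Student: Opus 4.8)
The plan is to derive Theorem~\ref{T:main3} from the horofunction construction of Theorem~\ref{T:metric-group2} together with the general criterion of Theorem~\ref{T:general}. First dispose of the degenerate case: the balanced condition \eqref{E:balanced} forces both factors to be infinite (if, say, $\G_1$ were finite then $\#B(\G_2,n)/\#B(\G,n)$ would stay bounded away from $0$), and if \emph{both} $\G_i$ are amenable then $\G$ is amenable and has fixed price $1$ by the folklore result \cite[Proposition~4.3]{MR3335405}. So we may assume $\G$ is non-amenable. The routine step is then to verify the standing hypotheses of Theorem~\ref{T:metric-group2} for the $\ell^1$ quasi-metric $d$: it is integer-valued and left-invariant because each $d_i$ is, and proper because $B(\G,n)\subseteq B(\G_1,n)\times B(\G_2,n)$ is finite. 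Approximate sub-additivity follows from the factorwise version: given $x=(x_1,x_2)$ with $d(e,x)=n+m$, one splits the target scale $n$ between the two coordinates and factors each $x_i$ using the $\eps$-approximate sub-additivity of $(\G_i,d_i)$, producing $x=uv$ with $d(e,u)$ within $O(\eps+C_q)$ of $n$; thus $(\G,d)$ is $\eps'$-approximately sub-additive for some $\eps'=O(\eps+C_q)$, and Lemma~\ref{L:growth0} guarantees the growth rate exists.

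The heart of the argument is the overlapping neighborhoods property \eqref{E:ONP}, and this is where \eqref{E:balanced} is used. Write $|g|_i:=d_i(e,g)$. The $\ell^1$ structure makes the relevant quantity additive across factors: for $z=(z_1,z_2)$ one has $d(x,z)=d_1(x_1,z_1)+d_2(x_2,z_2)$. Given $x,y\in B(\G,n)$, I would produce many $z\in B(\G,r)$ lying in $B(x,n+C)\cap B(y,n+C)$ as follows. For each scale $s=1,\dots,\lfloor r/2\rfloor$ choose, using approximate sub-additivity in the first factor, an element $z_1$ ``toward $x_1$'' with $|z_1|_1\approx s$ and $d_1(x_1,z_1)\le |x_1|_1-s+\eps$, and likewise an element $z_2$ ``toward $y_2$'' with $|z_2|_2\approx s$ and $d_2(y_2,z_2)\le |y_2|_2-s+\eps$; set $z=(z_1,z_2)$. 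Using only the trivial bounds $d_2(x_2,z_2)\le |x_2|_2+s+C_q$ and $d_1(y_1,z_1)\le |y_1|_1+s+C_q$ — so that ``progress'' toward $x$ in the first coordinate cancels ``regress'' in the second, and symmetrically for $y$ — one gets $d(x,z)\le |x|+O(\eps+C_q)\le n+C$ and $d(y,z)\le |y|+O(\eps+C_q)\le n+C$, with $C=O(\eps+C_q)$ a constant, while $|z|\approx 2s\le r$. These $z$ are distinct across $s$, so there are at least on the order of $r/\eps$ of them, exceeding any fixed $m$ once $r$ is large. The construction requires only that $|x_1|_1$ and $|y_2|_2$ be at least about $r/2$. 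Finally, \eqref{E:balanced} controls the exceptional pairs: for fixed $r$, the fraction of $x\in B(\G,n)$ with $|x_1|_1<r/2$ is at most $\#B(\G_1,r/2)\cdot\#B(\G_2,n)/\#B(\G,n)\to0$ as $n\to\infty$, and symmetrically for $|y_2|_2<r/2$. Hence for each fixed $r\ge 2m$ the bad fraction in \eqref{E:ONP} tends to $0$, proving the ONP.

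Theorem~\ref{T:metric-group2} now yields a limit-amenable, doubly-recurrent imp action $\G\cc(\cH,\mu)$, and Theorem~\ref{T:general} reduces everything to showing that its normalized cost equals $1$: then the max-cost of $\G$ is at most $1$, and being at least $1$, it equals $1$, i.e.\ $\G$ has fixed price $1$. I expect this normalized-cost computation to be the second genuinely substantive ingredient, since the action is only limit-amenable, not amenable, when the $\G_i$ are non-exact, so one cannot simply invoke hyperfiniteness of a cross-section. Using Lemma~\ref{L:section} to fix the finite-measure complete section $\Lambda=\partial\cH_{\le0}$ and Corollary~\ref{C:cost-section} for section-independence, I would bound $\cost(\cR|\Lambda)$ above by $\mu(\Lambda)$ by building graphings whose average degree tends to $2$, connecting the points of $\Lambda$ through the infinitely many returns guaranteed by double recurrence (the same conservativity of $\G\cc(\cH^2,\mu^2)$ that drove the construction above), with the balanced growth again ensuring these returns are plentiful. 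This gives $\mathsf{ncost}(\G,\cH,\mu)=1$ and completes the proof.

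The main obstacle is the ONP verification: the balanced hypothesis \eqref{E:balanced} is precisely what forces typical $x,y$ to have \emph{both} coordinates large, which is what lets the cancellation construction produce unboundedly many common near-neighbors; without it one factor dominates and the overlap stays bounded, exactly as the remark following Theorem~\ref{T:main3-intro} anticipates. Establishing $\mathsf{ncost}=1$ is the secondary, but still delicate, point.
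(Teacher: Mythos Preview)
Your verification of the overlapping neighborhoods property is essentially the paper's Proposition~\ref{P:balanced}: the same ``move toward $x_1$ in the first coordinate, toward $y_2$ in the second'' construction, the same cancellation $d(x,z)\le |x|+O(\eps+C_q)$, and the same use of \eqref{E:balanced} to control the exceptional set where $|x_1|$ or $|y_2|$ is small. That part is fine, as is your reduction to Theorem~\ref{T:metric-group2} and Theorem~\ref{T:general}.

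The genuine gap is your plan for $\mathsf{ncost}=1$. You propose to bound $\cost(\cR|\Lambda)$ by building graphings of average degree $\to 2$ using the returns furnished by double recurrence. But double recurrence is a statement about the diagonal action on $\cH^2$; it does not by itself produce efficient graphings of $\cR|\Lambda$, and there is no general principle that a doubly-recurrent limit-amenable action has normalized cost $1$. Indeed, the paper says explicitly (opening of \S\ref{S:Product-groups}, discussion before Proposition~\ref{P:product-cost}) that the authors \emph{do not know} how to show $\G\cc(\cH,\mu)$ has normalized cost $1$ unless $\G$ is exact. Your sketch does not supply the missing mechanism.

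What the paper actually does is pass to a finite-measure extension on which the \emph{product structure} of $\G=\G_1\times\G_2$ becomes visible at the level of the equivalence relation. Concretely, one enlarges $\cH(\G)$ to a space $\tcH\subset\Cocycle(\G)\times\cH(\G)$ of pairs $(c,h)$ where $c$ is a $\Z^2$-valued Lipschitz cocycle recording the two coordinate horofunctions separately (Lemma~\ref{L:existence-1}); limit-amenability and double recurrence lift to $\tcH$ by Lemma~\ref{L:finitemeasureextension}. The point is that the section $\tcH_0$ is canonically homeomorphic to $\cH_0(\G_1)\times\cH_0(\G_2)$, and the kernel $\cK$ of the natural $\Z^2$-valued cocycle on $\cR_0$ is exactly the direct product $\cS_1\times\cS_2$ of the two factor equivalence relations. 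Gaboriau's theorem that a product of infinite equivalence relations has trivial cost (\cite[Theorem~24.9]{MR2095154}) then gives $\cost(\cK)=\tmu(\tcH_0)$, and a short additional graphing argument (Proposition~\ref{P:product-cost}) upgrades this to $\cost(\cR_0)=\tmu(\tcH_0)$, i.e.\ $\mathsf{ncost}=1$. The product structure of $\G$, not double recurrence, is what drives the cost computation.
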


\subsection{Double recurrence}

\begin{prop}\label{P:balanced}
Assume the hypotheses of Theorem \ref{T:main3}. Then $(\G,d)$ has the overlapping neighborhoods property of Definition \ref{D:onp}. 
\end{prop}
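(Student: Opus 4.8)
The plan is to verify \eqref{E:ONP} directly, with the single constant $C = 2\eps + C_{q,1} + C_{q,2}$ (here $C_{q,i}$ denotes the quasi-metric constant of $d_i$), by exhibiting, for a \emph{typical} pair $(x,y)$ with $x=(x_1,x_2),\,y=(y_1,y_2)\in B(\G,n)$, at least $m$ points of $B(r)$ lying within distance $n+C$ of both $x$ and $y$. The geometric difficulty to keep in mind is that, for a group of exponential growth, almost all of the mass of $B(\G,n)$ sits within bounded distance of the boundary sphere, so the naive witnesses coming only from the triangle inequality, which must satisfy $d(e,z)\le (n-d(e,x))+\mathrm{const}$, are boundedly many. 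The whole point is to use the product structure to move \emph{toward $x$ in one factor and toward $y$ in the other}, which costs nothing in the $\ell^1$ metric.

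Concretely, I would first fix $m$ and, for each integer $s\ge\eps$, use $\eps$-approximate sub-additivity of $(\G_1,d_1)$ to factor $x_1 = u_1 v_1$ with $u_1\in SS(\G_1,s,\eps)$ and $v_1\in SS(\G_1,\,|x_1|_1-s,\,\eps)$ (valid whenever $s\le |x_1|_1-\eps$, writing $|x_1|_1 = d_1(e_1,x_1)$), and symmetrically factor $y_2 = u_2 v_2$ with $u_2\in SS(\G_2,s,\eps)$ (valid whenever $s\le |y_2|_2-\eps$). The witness is $z=(u_1,u_2)$. By left-invariance $d_1(u_1,x_1)=|v_1|_1$ and $d_2(u_2,y_2)=|v_2|_2$, while the quasi-triangle inequality controls the ``wrong'' coordinates; adding over the two factors gives
\[
d(z,x)\le |x_1|_1+|x_2|_2 + 2\eps + C_{q,2}\le n+C,\qquad d(z,y)\le |y_1|_1+|y_2|_2+2\eps+C_{q,1}\le n+C,
\]
together with $d(e,z)=|u_1|_1+|u_2|_2\le 2s+2\eps\le r$ as soon as $s\le r/2-\eps$. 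Since values of $s$ differing by more than $2\eps$ force $|u_1|_1$ to differ, the witnesses obtained as $s$ ranges over an arithmetic progression of step $2\eps+1$ inside $[\eps,\ \min(r/2-\eps,\ |x_1|_1-\eps,\ |y_2|_2-\eps)]$ are pairwise distinct. Hence, setting $L=L(m)=2\eps+m(2\eps+1)$, once $r\ge R(m):=4\eps+2m(2\eps+1)$ and $|x_1|_1\ge L$ and $|y_2|_2\ge L$, this progression has at least $m$ terms and produces $\ge m$ distinct witnesses.

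With the construction in hand, the proof reduces to a counting estimate. For $r\ge R(m)$ the bad set in \eqref{E:ONP} is contained in $\{x:\ |x_1|_1<L\}\cup\{y:\ |y_2|_2<L\}$, so the bad fraction is at most
\[
\frac{\#\{x\in B(\G,n):\,|x_1|_1<L\}}{\#B(\G,n)} + \frac{\#\{y\in B(\G,n):\,|y_2|_2<L\}}{\#B(\G,n)}.
\]
Here the balanced hypothesis enters: slicing the ball over the first coordinate gives $\#\{x\in B(\G,n):|x_1|_1<L\}\le \#B(\G_1,L)\cdot\#B(\G_2,n)$, so the first term is $\le \#B(\G_1,L)\cdot \#B(\G_2,n)/\#B(\G,n)\to 0$ by \eqref{E:balanced} with $i=2$ (the factor $\#B(\G_1,L)$ being a constant once $m$ is fixed); symmetrically the second term $\to 0$ by \eqref{E:balanced} with $i=1$. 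Thus $\liminf_{n\to\infty}$ of the bad fraction is $0$ for every $r\ge R(m)$, whence $\lim_{r\to\infty}\liminf_{n\to\infty}=0$ for each $m$ with the one constant $C$ — which is exactly the ONP.

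The main obstacle is the witness construction of the second paragraph: one must recognize that the overlap of two radius-$n$ balls near the origin is forced to be large precisely because one can independently spend length in the factor pointing toward $x$ and in the factor pointing toward $y$, and that approximate sub-additivity supplies such ``geodesic initial segments'' of every length up to the relevant bound. The role of \eqref{E:balanced} is then exactly to guarantee that, for typical $x$ and $y$, the coordinates we need to move toward ($x_1$ and $y_2$) are long — of size $\gg L$ — rather than negligibly short; the bounds $\#B(\G_i,L)=o(\#B(\G,n))$ are what convert ``one factor is short'' into a null event. Both parts of \eqref{E:balanced} are genuinely used, one per coordinate, matching the remark that the method breaks down when one factor grows much faster than the other.
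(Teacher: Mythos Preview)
Your proposal is correct and follows essentially the same approach as the paper: construct witnesses $z=(u_1,u_2)$ by taking an approximate initial segment of $x_1$ in $\G_1$ and of $y_2$ in $\G_2$ via $\eps$-approximate sub-additivity, verify the same three distance estimates, and then use \eqref{E:balanced} to show that the exceptional set where $|x_1|_1$ or $|y_2|_2$ is short has negligible density. You are in fact slightly more careful than the paper on one point: you step $s$ by $2\eps+1$ to guarantee the witnesses are pairwise distinct, whereas the paper simply asserts the lower bound $\min(r/2,|x_1|,|y_2|)-\eps$ without commenting on possible collisions among the $\rho(t)$.
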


\begin{proof}
We will write $B(n)$, $B(\G_i,n)$ to mean the ball of radius $n$ centered at the identity in $\G$, $\G_i$ respectively. 

Let $(x,y) \in B(n)^2$. For example, $x=(x_1,x_2)\in B(n) \subset \G=\G_1\times \G_2$. We will show that if $|x_1|$ and $|y_2|$ are sufficiently large then the balls $B(x,n+C)$ and $B(y,n+C)$ have a large overlap when $C=2\eps+C_q$, where $C_q$ is the constant in the quasi-triangle inequality. This will use $\eps$-approximate sub-additivity.

Because $d_1$ is $\eps$-approximately sub-additive, for each integer $t$ with $0\le t\le |x_1|=d_1(x_1,e)$, there exists an element $\xi_1(t) \in SS(\G_1,t,\eps)$ such that $\xi_1(t)^{-1}x_1 \in SS(\G_1,|x_1|-t,\eps)$. This means
\begin{align}
    t-\eps& \le |\xi_1(t)|\le t+\eps \label{E:xi1} \\
    |x_1|-t-\eps& \le |\xi_1(t)^{-1}x_1|=d_1(\xi_1(t),x_1)  \le |x_1|-t+\eps.\label{E:xi2} 
\end{align}
We might think of $\xi_1$ as providing something like a path from the identity to $x_1$ even though there is no requirement that $\xi_1(t)$ is close to $\xi_1(t+1)$. 

Similarly, for each integer $t$ with $0\le t \le |y_2|$ there is a group element $\zeta_2(t)\in \G_2$ with
\begin{align}
    t-\eps& \le |\zeta_2(t)|\le t+\eps \label{E:zeta1}\\
    |y_2|-t-\eps& \le |\zeta_2(t)^{-1}y_2|=d_2(\zeta_1(t),y_2)  \le |y_2|-t+\eps.\label{E:zeta2}
\end{align}


Let $T=\min(|x_1|,|y_2|)$. Define a map $\rho:\{0,\ldots, T\} \to \G$ by
$$\rho(t) = (\xi_1(t),\zeta_2(t)).$$
We will show that if $t$ is sufficiently small then $\rho(t)$ lies in the overlap of $B(x,n+C)$ with $B(y,n+C)$. To prove this, we bound $d(x,\rho(t))$ as follows:
\begin{align*}
  d(x,\rho(t))  &= d_1(x_1, \xi_1(t)) + d_2(x_2, \zeta_2(t))  \\
  &\le |x_1|-t+\eps + d_2(x_2, e) + d_2(e, \zeta_2(t)) + C_q \\
  &\le |x_1|-t+\eps + |x_2| + t + \eps + C_q = |x|  + 2\eps + C_q.
\end{align*}
The first line is by definition of $d$, the second comes from \eqref{E:xi2} and the quasi-triangle inequality, the last comes from \eqref{E:zeta1}.

  Similarly, $d(y,\rho(t)) \le  |y| + 2\eps+C_q$ and $d(e,\rho(t)) \le 2t+2\eps$.
  
Fix $r>0$. It follows that
$$\rho(t) \in B(r)\cap B(x,n+2\eps+C_q)\cap B(y,n+2\eps+C_q) $$
for all $t \in \{0,\ldots, \min(T,r/2-\eps)\}$. Since $T=\min(|x_1|,|y_2|)$,  this implies
\begin{align*}
    |B(r)\cap B(x,n+2\eps+C_q)\cap B(y,n+2\eps+C_q) | &\ge \min(r/2, |x_1|, |y_2|) - \eps.
\end{align*}
Fix $m\in \N$. Suppose $C=2\eps+C_q$ and $r>2m+2\eps$. If $|B(r) \cap B(x,n+C) \cap B(y,n+C)| < m$ then the previous inequality implies 
$$m>\min(r/2, |x_1|, |y_2|) - \eps\ge \min(m, |x_1|-\eps, |y_2|-\eps).$$
So we must have either $|x_1|<m+\eps$ or $|y_2|<m+\eps$. Thus
   \begin{align*}
& \frac{\#\{(x,y) \in B(n)^2:~|B(r) \cap B(x,n+C) \cap B(y,n+C)| < m\}}{|B(n)|^2}\\
&\le  \frac{|B(\G_1,m+\eps)|\cdot |B(\G_2,n)|\cdot |B(n)| + |B(n)| \cdot |B(\G_1,n)| \cdot|B(\G_2,m+\eps)| }{|B(n)|^2}.
\end{align*}

Because $B(\G,n)$ contains the products $B(\G_1,n)\times B(\G_2,0)$ and $B(\G_1,0)\times B(\G_2,n)$, it follows that $|B(n)|\ge |B(\G_i,n)|$ for $i=1,2$. Equation \ref{E:balanced} now implies
\begin{align*}
    \lim_{r\to\infty} \liminf_{n\to\infty}  \frac{\#\{(x,y) \in B(n)^2:~|B(r) \cap B(x,n+C) \cap B(y,n+C)| < m\}}{|B(n)|^2} =0.
\end{align*}
This is the overlapping neighborhoods property.
\end{proof}

\subsection{Cost}

For this subsection, we assume the hypotheses of Theorem \ref{T:main3}. In order to prove that $\G$ has fixed price 1, we will invoke Theorem \ref{T:general}. For that purpose, we need to construct an imp action of $\G$ which is limit-amenable, doubly-recurrent and has normalized cost 1. By Proposition \ref{P:regular}, there exists a $\G$-invariant measure $\mu$ on $\cH$ satisfying certain conditions including that the action $\G \cc (\cH,\mu)$ is limit-amenable. By Proposition \ref{P:balanced} and Theorem \ref{T:metric-group2}, the action $\G \cc (\cH,\mu)$ is also doubly-recurrent. So this looks like a promising candidate.

Unfortunately, we do not know how to prove that this action has normalized cost 1 (unless $\G$ is exact, in which case the action is amenable by Theorem \ref{T:exact} and therefore has normalized cost 1). We will instead show a certain finite-measure-preserving extension of it has normalized cost 1 and is still limit-amenable. This is sufficient because double recurrence lifts to finite-measure-preserving extensions by Lemma \ref{L:finitemeasureextension}. 

The extension will be an action of the form $\G \cc (\Cocycle (\G)\times \cH(\G), \tmu)$ where $\Cocycle(\G)$ is a space of cocycles as defined next. A map $c:\G\times \G \to \Z^2$ is a {\bf cocycle} if
$$c(g,h)+c(h,k) = c(g,k)$$
for all $g,h,k\in \G$. It is {\bf 1-Lipschitz} if 
$$\|c(g,h)\| \le d(g,h)$$
for all $g,h \in \G$ where the norm on $\Z^2$ is defined by
$$\|(n,m)\|=|n|+|m|.$$
Let $\Cocycle= \Cocycle(\G)$ be the space of all $1$-Lipschitz cocycles $c:\G\times \G \to \Z^2$ with the topology of pointwise convergence on finite subsets. We will write $\Cocycle(\G)$ and $\Cocycle$ interchangeably when $\G$ is clear from context.

\begin{lem}
    $\Cocycle(\G)$ is compact.
\end{lem}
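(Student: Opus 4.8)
The plan is to exhibit $\Cocycle(\G)$ as a closed subset of a compact product space and conclude by Tychonoff's theorem, in direct analogy with the Ascoli-Arzela argument used above to show $\Lip_0(\G)$ is compact.

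First I would observe that the $1$-Lipschitz condition pins each coordinate down to a finite set. Indeed, for every $(g,h) \in \G \times \G$ the value $c(g,h)$ must lie in $D(g,h) := \{v \in \Z^2 : \|v\| \le d(g,h)\}$, and since $d(g,h)$ is a finite integer and the norm-balls of $(\Z^2,\|\cdot\|)$ are finite, each $D(g,h)$ is a finite set. Thus $\Cocycle(\G)$ sits inside the product $P := \prod_{(g,h) \in \G \times \G} D(g,h)$, and the topology of pointwise convergence on $\Cocycle(\G)$ is exactly the subspace topology inherited from the product topology on $P$. By Tychonoff's theorem $P$ is compact, being a product of finite (hence compact discrete) spaces.

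Next I would check that $\Cocycle(\G)$ is closed in $P$. Membership in $\Cocycle(\G)$ is cut out by the cocycle identities $c(g,h) + c(h,k) = c(g,k)$ ranging over all triples $(g,h,k) \in \G^3$ (the Lipschitz bound being already built into $P$). For each fixed triple the map $c \mapsto c(g,h) + c(h,k) - c(g,k)$ is continuous from $P$ to the discrete group $\Z^2$, since it is a finite $\Z$-linear combination of coordinate projections; its fiber over $0 \in \Z^2$ is therefore clopen, in particular closed. Intersecting these closed sets over all triples shows $\Cocycle(\G)$ is closed in $P$, hence compact.

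There is no genuine obstacle here: the only points needing care are the routine verifications that pointwise convergence agrees with the product topology and that finiteness of $d(g,h)$ makes each $D(g,h)$ finite. Both are immediate from the standing assumptions that $d$ is integer-valued and proper. (One could alternatively phrase the argument by noting that a cocycle $c$ is the coboundary $c(g,h) = f(h) - f(g)$ of the $1$-Lipschitz function $f(g) = c(e,g)$ with $f(e)=0$, reducing to compactness of the space of such $f$; but the direct closed-subset-of-a-product approach above is cleaner and avoids quotienting by constants.)
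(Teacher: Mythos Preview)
Your proof is correct and takes essentially the same approach as the paper: both exhibit $\Cocycle(\G)$ as a closed subset of a compact product space (the paper writes it as a product of intervals $[-d(g,h),d(g,h)]$, you as a product of finite norm-balls in $\Z^2$) and invoke Tychonoff. Your version is more explicit about why the cocycle conditions cut out a closed set, which the paper leaves implicit.
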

\begin{proof}
This follows from the Ascoli-Arzela Theorem. Alternatively, we see that $\Cocycle(\G)$ is naturally identified with a closed subset of a product of intervals of the form $[-d(g,h),d(g,h)]$ over all $(g,h)\in \G^2$. By Tychonoff's Theorem, the latter space is compact.
\end{proof}
Let $\G$ act on $\Cocycle(\G)$ by
$$(gc)(x,y) = c(g^{-1}x, g^{-1}y).$$
We let $\G$ act on $\Cocycle(\G)\times \cH(\G)$ by $g\cdot (c,h) = (g c, g h)$.

Actually, we will only need a subset of $\Cocycle(\G)\times \cH(\G)$. To define this subset, let $\ssum: \Z^2 \to \Z$ be given by $\ssum(n,m)=n+m$.

\begin{defn}\label{D:extendedhorofunctions}
    Let $\tcH_n$ be the set of all pairs $(c,h) \in \Cocycle(\G)\times \cH(\G)$ satisfying 
$$\ssum(c(g,e))+n=h(g)$$
for all $g\in \G$. Also let $\tcH=\cup_{n \in \Z} \tcH_n$. 
\end{defn}

Observe that $\tcH$ is $\G$-invariant and closed in $\Cocycle(\G)\times \cH(\G)$. In fact, if $f\in\G$ then $f\tcH_n =\tcH_{n+c(e,f^{-1})}$. In Lemma \ref{L:existence-1} below we construct a $\G$-invariant measure on $\tcH$ with nice properties. Before doing so, we present a general measure theory result which will be needed.

\begin{prop}\label{P:general10}
Let $X$ be a locally compact Polish space and $K$ be a compact metric space. Let $\Radon(X)$ and $\Radon(K\times X)$ be the space of Radon measures on $X$ and on $K\times X$ in the vague topology respectively. If $\pi:K\times X \to X$ is the projection map then the push-forward $\pi_*:\Radon( K\times X)\to \Radon(X)$ is proper. That is, if $\cM \subset \Radon(\G,X)$ is compact then $\pi_*^{-1}(\cM)$ is also compact.
\end{prop}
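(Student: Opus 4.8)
The plan is to deduce properness from three ingredients: that $\pi_*$ is a well-defined continuous map for the vague topologies, that the preimage of a compact set is closed, and that it is relatively compact. Throughout I would use that both $X$ and $K\times X$ are locally compact and Polish (hence second countable), so that the vague topology on their Radon measures is Polish and Hausdorff and the usual tools apply.

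First I would check that $\pi_*$ genuinely maps $\Radon(K\times X)$ into $\Radon(X)$ and is vaguely continuous. The key point---and the only place compactness of $K$ is used---is that for $f\in C_c(X)$ the pullback $f\circ\pi$ lies in $C_c(K\times X)$, since $\supp(f\circ\pi)\subseteq K\times\supp(f)$ is compact. This shows $\pi_*\nu(f)=\nu(f\circ\pi)$ is finite for every $f\in C_c(X)$, so $\pi_*\nu$ is locally finite and thus Radon, and it exhibits $\nu\mapsto\pi_*\nu$ as built from the vaguely continuous evaluations $\nu\mapsto\nu(f\circ\pi)$, giving vague continuity of $\pi_*$. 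Since the vague topology is Hausdorff, the compact set $\cM$ is closed, and continuity of $\pi_*$ then makes $\pi_*^{-1}(\cM)$ closed.

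The substantive step is relative compactness of $\pi_*^{-1}(\cM)$. Here I would invoke the standard characterization (as reviewed in the appendix on spaces of measures): a family of Radon measures on a locally compact second countable space is vaguely relatively compact if and only if it is uniformly bounded on compact sets, i.e.\ $\sup_{\nu}\nu(g)<\infty$ for every nonnegative $g\in C_c$. Given a compact $L\subseteq K\times X$, its projection $K'=\pi(L)$ is compact in $X$ and $L\subseteq K\times K'$. Using local compactness (Urysohn), choose $f\in C_c(X)$ with $f\ge\mathbf{1}_{K'}$ and $f\ge 0$; then $f\circ\pi\ge\mathbf{1}_{K\times K'}\ge\mathbf{1}_L$, so for every $\nu\in\pi_*^{-1}(\cM)$,
$$\nu(L)\le\nu(f\circ\pi)=\pi_*\nu(f)\le\sup_{\lambda\in\cM}\lambda(f)<\infty,$$
the final supremum being finite because $\cM$, being vaguely compact, is vaguely bounded. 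Hence $\pi_*^{-1}(\cM)$ is uniformly bounded on compacts and therefore relatively compact; combined with closedness this yields compactness.

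The main care point is ensuring the pushforward lands in $\Radon(X)$ at all, i.e.\ local finiteness of $\pi_*\nu$, which genuinely relies on $K$ being compact---without it the statement would be false, as mass could escape in the $K$-direction and $\pi_*\nu$ need not be locally finite. I expect this to be the only real subtlety; the remaining work (citing the correct relative-compactness criterion for the vague topology and dominating indicators of compact sets by nonnegative $C_c$ functions) is routine given local compactness and second countability.
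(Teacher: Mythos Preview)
Your argument is correct, and it differs from the paper's in a pleasant way. The paper proceeds by a direct sequential extraction: take $(\tilde\mu_n)\subset\pi_*^{-1}(\cM)$, pass to a subsequence so that $\mu_n:=\pi_*\tilde\mu_n\to\mu$ in $\cM$, and then, for each fixed $f\in C_c(K\times X)$ supported in $K\times Y$ with $Y\subset X$ compact, use $\tilde\mu_n(K\times Y)=\mu_n(Y)$ together with $\limsup_n\mu_n(Y)\le\mu(Y)<\infty$ (Portmanteau) to see that $(\tilde\mu_n(f))_n$ is bounded; separability of $C_c(K\times X)$ plus diagonalization then produces a vague limit. Your route is more structural: continuity of $\pi_*$ (via $f\circ\pi\in C_c(K\times X)$) gives closedness of $\pi_*^{-1}(\cM)$, and the inequality $\nu(L)\le(\pi_*\nu)(f)\le\sup_{\lambda\in\cM}\lambda(f)<\infty$ verifies the uniform-bound-on-compacts criterion for relative vague compactness. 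Your approach isolates exactly where compactness of $K$ enters and avoids the ad hoc diagonalization; the paper's approach avoids quoting the abstract criterion but essentially reproves a special case of it.

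One small remark: the Prokhorov-type theorem actually stated in the paper's appendix is formulated for Polish (not necessarily locally compact) spaces and carries a second, tightness-type condition on bounded sets. To cite it cleanly you should note that on an lcsc space one may choose a proper metric, so bounded sets are relatively compact and that second condition is vacuous; alternatively, cite the standard lcsc version directly (uniform boundedness on compacts $\Leftrightarrow$ relative vague compactness).
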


\begin{proof}
Let $\cM \subset \Radon(X)$ be compact. Let $(\tilde{\mu}_n)_{n=1}^\infty \subset \pi_*^{-1}(\cM)$. It suffices to prove this sequence has a subsequential limit. 

Let $\mu_n=\pi_*(\tilde{\mu}_n)$ be the push-forward measure. Since $\cM$ is compact, after passing to a subsequence if necessary, we may assume $\mu_n$ converges to a measure $\mu \in \cM$  as $n\to\infty$ in the vague topology. 

Because the space $C_c(K\times X)$ of compactly supported continuous functions is separable in the uniform topology, it suffices, by a diagonalization argument, to prove that for every $f\in C_c(K \times X)$ there exists a subsequence $(n_i)_{i=1}^\infty$ such that the limit of $\tilde{\mu}_{n_i}(f)$ exists. 

Because $f$ has compact support there is a constant $M$ such that $|f(k,x)|\le M$ for all $(k,x)\in K\times X$. Additionally, there exists a compact subspace $Y \subset X$ such that $f(k,x)=0$ for all $(k,x) \notin K \times Y$. 

Because $\mu$ is Radon, $\mu(Y)<\infty$. By the unbounded Portmanteau Theorem \ref{T:portmanteau}, we have $\limsup_{n\to\infty} \mu_n(Y) \le \mu(Y)<\infty$. Since  $\tilde{\mu}_n(K\times Y) = \mu_n(Y)$, this implies that
$$\sup_{n}\tilde{\mu}_n(K\times Y)  =L$$
is finite. Since the support of $f$ is contained in $K\times Y$, it follows that $\int f~d\tilde{\mu}_n \in [-ML,ML]$ for all $n$.  Since this interval is compact, there exists a subsequence $(n_i)_{i=1}^\infty$ such that the limit of $\tilde{\mu}_{n_i}(f)$ exists as required.
\end{proof}

Let $\pi:\Cocycle(\G)\times \cH(\G) \to \cH(\G)$, be the projection $\pi(c,h)=h$. Recall that $\Expand:\cH \to \cH$ is the map $\Expand(h)=h-1$. Define $\widetilde{\Expand}:\tcH \to \tcH$ by $\widetilde{\Expand}(c,h)=(c,h-1)$. So $\pi \circ \widetilde{\Expand}=\Expand \circ \pi$. Moreover, the maps $\pi, \widetilde{\Expand}$ and $\Expand$ are all $\G$-equivariant.

Recall from \S \ref{S:metric-measures} that $\mu_{0} = \sum_{x\in \G} \d_{d_x}$ is a measure on $\cH(\G)$ and $\mu_{n} = \frac{\Add^n_{*}\mu_{0}}{|B(n)|}$. By Lemma \ref{L:support}, $\partial\Meas(\cH)$ is the set of all measures $\mu$ on $\cH$ such that there exists a sequence $(n_i)_{i=1}^\infty$ tending to infinity with $\mu = \lim_{i\to\infty} \mu_{n_i}$ in the almost-weak topology.

\begin{lem}\label{L:existence-1}
For every measure $\mu \in \partial\Meas(\cH)$ there exists a $\G$-invariant measure $\tmu$ on $\tcH$ which projects to $\mu$ under the map $\pi:\Cocycle \times \cH \to \cH$, where $\pi(c,h)=h$. Moreover, $\tmu$ can be chosen so that the action $\G \cc (\tcH,\tmu)$ is limit-amenable.
\end{lem}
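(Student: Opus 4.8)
The plan is to lift the sequence $(\mu_{n_i})$ defining $\mu$ to a sequence of $\G$-invariant measures on $\tcH$ built from distance cocycles, and then extract a vague limit using the properness of $\pi_*$ from Proposition \ref{P:general10}.

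First I would introduce the lift. For $x=(x_1,x_2)\in\G$ define the distance cocycle $c_x\colon\G\times\G\to\Z^2$ by
$$c_x(g,h)=\big(d_1(x_1,g_1)-d_1(x_1,h_1),\ d_2(x_2,g_2)-d_2(x_2,h_2)\big).$$
The cocycle identity is immediate from cancellation, the $1$-Lipschitz bound $\|c_x(g,h)\|\le d(g,h)$ holds coordinatewise by the same reasoning that places each $d_x$ in $\Lip(\G)$, and $\ssum(c_x(g,e))=d(x,g)-|x|$. Consequently, setting $\Psi_n(x)=(c_x,\,d_x-n)$, one checks $\ssum(c_x(g,e))+(|x|-n)=(d_x-n)(g)$, so $\Psi_n(x)\in\tcH_{|x|-n}\subset\tcH$ and $\pi\circ\Psi_n(x)=d_x-n$. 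Left-invariance of $d$ yields $\Psi_n(gx)=g\Psi_n(x)$, so $\Psi_n$ is $\G$-equivariant; and since $d_g=d_e$ forces $g=e$, the stabilizer of $\Psi_n(e)$ is trivial and $\Psi_n$ is injective.

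Next I would set $\tmu_n=\tfrac{1}{|B(n)|}\sum_{x\in\G}\d_{\Psi_n(x)}$, which is Radon (for compact $K\subset\tcH$ we have $\tmu_n(K)\le\mu_n(\pi(K))<\infty$) and satisfies $\pi_*\tmu_n=\mu_n$. Because $\{\Psi_n(x):x\in\G\}$ is precisely the $\G$-orbit of $\Psi_n(e)$ and $\Psi_n$ is an equivariant bijection onto it with trivial stabilizers, the action $\G\cc(\tcH,\tmu_n)$ is measurably conjugate to the left-regular action of $\G$ equipped with the Haar measure $|B(n)|^{-1}c_\G$. Hence each $\tmu_n$ is regular, and in particular limit-amenable.

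Now fix $\mu\in\partial\Meas(\cH)$ and $n_i\to\infty$ with $\mu_{n_i}\to\mu$ almost weakly, hence vaguely. Since $\Cocycle(\G)$ is compact and $\cH$ is locally compact Polish, $\tcH$ is a closed subset of an lcsc space, hence lcsc, and $\pi$ is proper. The set $\cM=\{\mu_{n_i}\}_i\cup\{\mu\}$ is vaguely compact, so Proposition \ref{P:general10} makes $\pi_*^{-1}(\cM)$ vaguely compact; as each $\tmu_{n_i}$ lies in it, a subsequence converges vaguely to some $\tmu$. This $\tmu$ is supported on the closed set $\tcH$, is $\G$-invariant as a vague limit of $\G$-invariant measures, and satisfies $\pi_*\tmu=\lim_j\mu_{n_{i_j}}=\mu$ because $\pi$ proper makes $\pi_*$ vaguely continuous. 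Finally, since the $\tmu_{n_i}$ are limit-amenable and converge vaguely to $\tmu$ on the lcsc space $\tcH$, the permanence property of Remark \ref{R:permanence} gives that $\G\cc(\tcH,\tmu)$ is limit-amenable, completing the proof. The crux is the extraction step: a priori there is no reason the lifts $\tmu_{n_i}$ converge, and it is exactly the properness of $\pi_*$ that supplies compactness in the fiber over $\cM$ and guarantees a limit that still projects onto $\mu$.
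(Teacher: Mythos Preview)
Your proof is correct and follows essentially the same approach as the paper: define the distance cocycles $c_x$, form the lifted measures $\tmu_n=|B(n)|^{-1}\sum_x\delta_{(c_x,d_x-n)}$ (which the paper obtains equivalently as $|B(n)|^{-1}\widetilde{\Expand}^n_*\tmu_0$), observe these are regular, and then use Proposition~\ref{P:general10} to extract a vague subsequential limit that inherits $\G$-invariance, projects to $\mu$, and is limit-amenable. Your write-up is in fact slightly more careful than the paper's in justifying $\pi_*\tmu=\mu$ via the properness of $\pi$ (the paper just cites continuity of $\pi$), and you use the smaller compact set $\cM=\{\mu_{n_i}\}\cup\{\mu\}$ rather than all of $\overline{\Meas(\cH)}$, but these are cosmetic differences.
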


\begin{proof}
    Recall that for $x\in \G$, $d_x:\G \to \Z$ is the function $d_x(y)=d(x,y)$. For a given $x=(x_1,x_2) \in \G$, let $c_x:\G\to \R^2$ be the cocycle
$$c_x( (y_1,y_2), (z_1,z_2)) = (d_1(x_1,y_1)-d_1(x_1,z_1), d_2(x_2,y_2)-d_2(x_2,z_2)).$$
Define a measure $\tilde{\mu}_0$ on $\tcH(\G)$ by 
$$\tilde{\mu}_0 = \sum_{x\in \G} \d_{c_x,d_x}.$$
It is immediate $\pi_*\tmu_0=\mu_0$. Moreover, the action $\G \cc (\tcH,\tmu_0)$ is regular by construction. 

For $n\in \N$, define a measure $\tilde{\mu}_n$ on $\tcH(\G)$ by 
$$\tilde{\mu}_n = \frac{\widetilde{\Expand}^n_*\tilde{\mu}_0}{|B(n)|}.$$ 
Because $\pi$ and $\widetilde{\Expand}$ are $\G$-equivariant, it follows that $\tmu_n$ is $\G$-invariant, $\pi_*\tmu_n=\mu_n$ and the action $\G \cc (\tcH,\tmu_n)$ is regular (in the sense that it is measurably conjugate to the left-regular action of $\G$ on itself).

Now suppose $\mu \in \partial\Meas(\cH)$. By Lemma \ref{L:support}, $\mu$ is the almost-weak limit of measures $\mu_{n_i}$ for some divergent sequence $(n_i)_{i=1}^\infty$. 

By Lemma \ref{L:compact}, $\overline{\Meas(\cH)}$ is compact. By Proposition \ref{P:general10}, $\pi^{-1}_*(\overline{\Meas(\cH)})$ is compact in the vague topology. So, after passing to a sub-sequence if necessary, we may assume that $\tmu_{n_i}$ converges to a measure $\tmu$ in the vague topology. Because the $\G$-action is continuous and each $\tmu_{n_i}$ is $\G$-invariant, $\tmu$ is $\G$-invariant. Because $\pi$ is continuous, it follows that $\pi_*\tmu=\mu$. Because each action $\G \cc (\tcH, \tmu_{n_i})$ is regular, the action $\G \cc (\tcH, \tmu)$ is limit-amenable. 
\end{proof}

\begin{prop}\label{P:product-cost}
Let $\tmu$ be a $\G$-invariant measure on $\tcH$ satisfying:
\begin{enumerate}
    \item $\tmu(\tcH_{\le 0})=1$, 
    \item for $\tmu$-a.e.\ $(c,h)$ there exists $g \in \G$ with $h(g)\le 0$.
\end{enumerate}
Then the normalized cost of the action $\G \cc (\tcH(\G),\tmu)$ is 1.
\end{prop}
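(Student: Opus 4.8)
The plan is to compute the normalized cost using the section $\Lambda=\tcH_{\le 0}$. By hypothesis (1) we have $\tmu(\Lambda)=1$, so $(\Lambda,\tmu|\Lambda)$ is a probability space, the induced relation $\cR|\Lambda$ is finite-measure-preserving, and directly from the definition of normalized cost,
$$\mathsf{ncost}(\G,\tcH,\tmu)=\cost_{\tmu|\Lambda}(\cR|\Lambda),$$
where $\cR$ is the orbit equivalence relation of $\G\cc(\tcH,\tmu)$. One first checks that $\Lambda$ really is a complete section: by hypothesis (2), for a.e.\ $(c,h)$ there is $g\in\G$ with $h(g)\le 0$, and then $g^{-1}\cdot(c,h)\in\Lambda$ since $(g^{-1}\cdot h)(e)=h(g)\le 0$. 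It therefore suffices to prove $\cost_{\tmu|\Lambda}(\cR|\Lambda)=1$, and I would establish the two inequalities separately.

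For the lower bound I would show $\cR|\Lambda$ is aperiodic and invoke the standard fact that an aperiodic finite-measure-preserving equivalence relation has cost at least $1$. Aperiodicity means a.e.\ class of $\cR|\Lambda$ is infinite, i.e.\ a.e.\ $\G$-orbit meets $\Lambda$ infinitely often. Since $g\cdot(c,h)\in\Lambda$ exactly when $h(g^{-1})\le 0$, this amounts to $\{g:h(g)\le 0\}$ being infinite, which holds because a boundary horofunction $h\in\partial\cH$ is unbounded below (being a pointwise limit $d_{x_i}-|x_i|$ with $|x_i|\to\infty$, as in the proof of Lemma \ref{L:support}). Alternatively it follows from conservativity of the action together with $\tmu(\Lambda)<\infty<\tmu(\tcH)$.

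The upper bound is the heart of the matter, and it is precisely where the cocycle coordinate of $\tcH$ (the reason the extension is introduced) gets used. For $w=(c,h)$ set $\sigma(w,g\cdot w)=c(e,g^{-1})\in\Z^2$; the cocycle identity for $c$ shows $\sigma:\cR\to\Z^2$ is a genuine cocycle, well defined on $\cR|\Lambda$ once the action is essentially free (which one arranges, as in the proof of Theorem \ref{T:general}, by first taking a product with a Bernoulli shift, preserving all hypotheses). Because $\Z^2$ is amenable, its orbit graphs admit graphings of cost arbitrarily close to $1$: a F\o lner exhaustion of $\Z^2$ by boxes, a Hamiltonian path (``snake'') through each box, and one edge joining consecutive boxes together give an essentially $2$-regular graphing. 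The plan is to pull this structure back along $\sigma$ — ordering the section points of each $\cR$-class by their $\sigma$-value inside a growing sequence of $\Z^2$-F\o lner boxes — and to compute the resulting graph-cost by a mass-transport argument (in the spirit of Lemma \ref{weight-cost} and the proof of Theorem \ref{T:ppp-cost}), showing the expected degree tends to $2$, i.e.\ the cost tends to $1$.

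The main obstacle is that $\sigma$ has infinite fibers: the kernel subrelation $\cR_0=\{(w,w')\in\cR|\Lambda:\sigma(w,w')=0\}$ is a ``product of horospheres'' and is not visibly amenable, so one cannot simply snake through the finitely many section points of a given $\Z^2$-value. The way I would handle this is to replace $\Lambda$ by a cocycle-adapted complete section on which $\sigma$ becomes finite-to-one — selecting, within each class and each $\sigma$-fiber, the representative(s) of largest level $h(e)$ — thereby reducing $\cR|\Lambda$, up to a negligible cost correction, to a sub-$\Z^2$ relation to which the F\o lner argument applies. Verifying that this adapted section has finite positive measure and still spans each class is exactly the point at which the balancedness hypothesis \eqref{E:balanced} (which forces both factor directions to contribute genuinely) and the properness of $d$ must enter, and I expect this step to require the most care.
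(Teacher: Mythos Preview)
Your setup is fine: $\Lambda$ is a complete section, and the lower bound via aperiodicity is correct. The upper bound, however, goes astray exactly where you flag the ``main obstacle.'' You correctly identify the kernel subrelation $\cK=\ker\sigma$ as the difficulty, but your proposed fix---passing to a section on which $\sigma$ becomes finite-to-one---will not work. There is no reason for such a section to exist: $\cK$ is a genuine infinite-measure-preserving equivalence relation on each $\sigma$-level, and ``selecting representatives of largest level $h(e)$'' does not make sense (there is no largest) nor does it make the fibers finite. You also expect the balancedness hypothesis \eqref{E:balanced} to enter here; in fact the proof of this proposition does not use \eqref{E:balanced} at all.

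What you are missing is that the product structure $\G=\G_1\times\G_2$ is exactly what computes the cost of $\cK$. The paper works on the slice $\tcH_0$ (not $\tcH_{\le 0}$), where there is a homeomorphism $\Phi:\cH_0(\G_1)\times\cH_0(\G_2)\to\tcH_0$ coming from the cocycle coordinate. Under $\Phi$, the kernel $\cK$ becomes a \emph{direct product} $\cS_1\times\cS_2$ of the two induced horospherical relations, and Gaboriau's product theorem \cite[Theorem 24.9]{MR2095154} gives $\cost(\cK)=\tmu(\tcH_0)$ immediately. One also notes that on $\tcH_0$ the cocycle $\sigma$ actually lands in the anti-diagonal $\{(-n,n)\}\cong\Z$, not all of $\Z^2$; so once $\cK$ is handled, the quotient is a $\Z$-cocycle and one finishes by placing, for each $n$, a single partial isomorphism from a small section $\cB_n$ realizing the value $(-n,n)$, contributing arbitrarily little extra cost. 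The essential insight is Gaboriau's product theorem applied to $\cK$, not any F{\o}lner or section-shrinking trick.
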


\begin{proof}
Because of the Ergodic Decomposition Theorem \cite{MR1784210}, without loss of generality, we may assume $\tmu$ is ergodic with respect to the $\G$-action. Since $\tcH=\cup_k \tcH_k$, there exists an integer $k$ such that $\tmu(\tcH_k)>0$. After replacing $\tmu$ with $\frac{\widetilde{\Expand}^{-k}_*\tmu}{\tmu(\tcH_{\le -k})}$ if necessary we may assume $\tmu(\tcH_0)>0$. Since the action is ergodic, this means that $\tcH_0$ is a complete section for the action.

Let $\cR^\G$ be the orbit-equivalence relation 
$$\cR^\G=\{(\xi, g\xi):~\xi \in \tcH\}.$$
Let $\cR_0$ be the restriction of $\cR^\G$ to $\tcH_0$:
$$\cR_0=\cR^\G\cap (\tcH_0\times \tcH_0).$$
Because $\tcH_0$ is a complete section for the $\G$-action, it suffices to compute the cost of $\cR_0$ with respect to $\tmu_0$ (where $\tmu_0$ is the restriction of $\tmu$ to $\tcH_0$).

We will show that there is a normal sub-equivalence relation $\cK \subset \cR_0$ which splits as a direct product. Then we can apply one of Gaboriau's theorems to prove $\cK$ has cost $\tmu(\tcH_0)$. This is the main step towards proving $\cR_0$ also has cost $\tmu(\tcH_0)$.

There is a canonical cocycle $\theta:\cR_0 \to \Z^2$ given by 
$$\theta( (c,h), (gc,gh) ) =c(e,g^{-1})=-c(g,e).$$
Let $\cK \le \cR_0$ be the kernel of this cocycle:
$$\cK=\{(\xi, \xi'):~\theta(\xi,\xi')=(0,0)\}.$$
The first step is to show that $\cK$ has cost $\tmu(\tcH_0)$ with respect to $\tmu_0$. 

To analyze $\theta$ and $\cK$, we need to introduce more notation. For $i=1,2$, let $\cH(\G_i)$ be the space of horofunctions for the metric group $(\G_i,d_i)$. As usual, let $\cH_0(\G_i)$ be the subspace of horofunctions $h \in \cH(\G_i)$ with $h(e)=0$. Define $\Phi:\cH_0(\G_1)\times \cH_0(\G_2) \to \tcH_0$ by $\Phi(h_1,h_2)=(c,h)$ where 
\begin{align*}
    h(g_1,g_2) &= h_1(g_1) + h_2(g_2) \\
    c( (f_1,f_2), (g_1,g_2)) &= (h_1(g_1)-h_1(f_1), h_2(g_2) - h_2(f_2)).
\end{align*}
Also define $\Psi:\tcH_0 \to \cH_0(\G_1)\times \cH_0(\G_2)$ by 
$$\Psi(c,h)=(h_1,h_2)$$
where $h_1,h_2$ are defined by $c(e,g)=(h_1(g_1),h_2(g_2))$. Using the definition of $\tcH$ (Definition \ref{D:extendedhorofunctions}), it can be seen that $\Phi$ and $\Psi$ are inverses of each other. In particular, they are both homeomorphisms. Moreover, they are $\G$-equivariant in the following sense: if $g=(g_1,g_2) \in \G$, $(h_1,h_2)\in \cH_0(\G_1)\times \cH_0(\G_2)$ and $g_ih_i \in \cH_0(\G_i)$ for $i=1,2$, then $\Phi((g_1h_1,g_2h_2))=g\Phi(h_1,h_2) \in \tcH_0$. A similar statement holds for $\Psi$.

For $i=1,2$, let $\cS^{\G_i}$ be the orbit-equivalence relation of $\G_i$ on $\cH(\G_i)$:
$$\cS^{\G_i}=\{(\xi, g\xi):~\xi \in \cH(\G_i), g\in \G_i\}.$$
Let $\cS_i$ be the restriction of $\cS^{\G_i}$ to $\cH_0(\G_i)$:
$$\cS_i=\cS^{\G_i}\cap (\cH_0(\G_i)\times \cH_0(\G_i)).$$
Then $\Phi$ induces a bijection from $\cS_1\times \cS_2$ to $\cK$. This is because of the $\G$-equivariance mentioned earlier. It now follows from Gaboriau's Theorem \cite[Theorem 24.9]{MR2095154}, that $\cK$ has cost $\tmu_0(\tcH_0)$ with respect to $\tmu_0$. 

Next we will show that $\cR_0$ has cost $\tmu_0(\tcH_0)$ by constructing a graphing which witnesses this cost (up to an error, which will be controlled). First, we need to better understand how the $\cR_0$-classes are partitioned into $\cK$-classes.

The range of the cocycle $\theta$ can be simplified in the following sense. Suppose $\Phi(h_1,h_2)=(c,h)$. Then
$$\theta((c,h),g\cdot (c,h))=c(e,g^{-1})=(h_1(g_1^{-1}), h_2(g_2^{-1})).$$
However, since we are implicitly assuming $g\cdot (c,h) \in \tcH_0$, it follows that 
$$h(g^{-1})=h_1(g_1^{-1})+h_2(g_2^{-1})=0.$$
Therefore, the image of $\theta$ is contained in the anti-diagonal subgroup $\Delta=\{(-n,n):~n \in \Z\}$.

Let $\delta>0$ and let $\{\cB_n\}_{n\in \Z}$ be a sequence of pairwise disjoint Borel subsets of $\tcH_0$ such that 
\begin{enumerate}
    \item for a.e.\ $\xi \in \tcH_0$ and every $n\in \Z$ there exists $\xi'\in \cB_n$ with $(\xi,\xi')\in \cK$ and 
    \item if $\cB=\cup_{n\in \Z} \cB_n$ then $\tmu(\cB)<\delta$.
\end{enumerate}
The first property is equivalent to saying that each $\cB_n$ is a complete section for $\cK$. 

For each $n$, let $\cB'_n$ be the set of all $\xi\in \cB_n$ such that there exists $\xi'$ with $\theta(\xi,\xi')=(-n,n)$ (in particular $(\xi,\xi')\in \cR_0$). This is a Borel set because $\theta$ is continuous. Let $\phi_n:\cB'_n \to \tcH_0$ be a Borel map such that $\theta(\xi,\phi_n(\xi))=(-n,n)$ (in particular, the graph of $\phi_n$ is contained in $\cR_0$).

Let $\cG_\cK$ be a graphing of $\cK$ with cost $<\tmu(\tcH_0)+\delta$. Let $\cG$ be the union of $\cG_\cK$ with $\{(\xi, \phi_n(\xi)):~\xi \in \cB'_n, n\in\Z\}$. 

We claim that $\cG$ is a graphing of $\cR_0$. To see this, let $(\xi,\xi')\in \cR_0$ and suppose $\theta(\xi,\xi')=(-n,n)$. Because $\cB$ is a complete section (mod $\mu$) for a.e.\ such $\xi$, there exists $\zeta \in \cB_n$ with $(\xi,\zeta)\in \cK$. Because $\theta(\xi,\xi')=(-n,n)$ and $(\xi,\zeta)\in \cK$, it follows that $\theta(\zeta,\xi')=(-n,n)$. So $\zeta \in \cB'_n$. So $(\zeta,\phi_n(\zeta))\in \cG$. Also note $(\phi_n(\zeta), \xi')\in \cK$ by the cocycle condition.

We now see that there is a path in $\cG$ from $\xi$ to $\xi'$: namely the path obtained by concatenating a path in $\cG_{\cK}$ from $\xi$ to $\zeta$ with the edge $(\zeta,\phi_n(\zeta))$ together with a path from $\phi_n(\zeta)$ to $\xi'$ in $\cG_\cK$. Because $(\xi,\xi')\in \cR_0$ is arbitrary, this shows $\cG$ is a graphing.

Since the cost of $\cG$ is at most the cost of $\cG_\cK$ plus $\tmu(\cB)$, $\cost_{\tmu_0}(\cG) \le \tmu(\tcH_0)+2\delta$. Since $\delta$ is arbitrary, it follows that the cost of $\cR_0$ is $\tmu(\tcH_0)$ with respect to $\tmu_0$. By definition, this implies that the normalized cost of $\G \cc (\tcH,\tmu)$ is 1.
\end{proof}

\subsection{Proof of Theorem \ref{T:main3}}

\begin{proof}[Proof of Theorem \ref{T:main3}]

By Proposition \ref{P:balanced}, $(\G,d)$ has the overlapping neighborhoods property. Let $\mu$ be a $\G$-invariant measure satisfying the conclusions of Proposition \ref{P:regular}. By Theorem \ref{T:metric-group2},  $\G \cc (\cH,\mu)$ is doubly-recurrent. By Lemma \ref{L:existence-1}, there exists a $\G$-invariant measure $\tmu$ on $\tcH$ which projects to $\mu$. Moreover, the action $\G \cc (\tcH,\tmu)$ is limit-amenable. By Lemma \ref{L:finitemeasureextension}, the action $\G \cc (\tcH,\tmu)$ is doubly recurrent. By Proposition \ref{P:product-cost}, the normalized cost of $\G \cc (\tcH,\tmu)$ is 1. By Theorem \ref{T:general}, $\G$ has fixed price 1.
\end{proof}

\subsection{Comparable growth}

For $i=1,2$, we assume $\G_i$ is a countable group and $d_i$ is a left-invariant, proper, integer-valued quasi-metric on $d_i$ which is $\eps$-approximately subadditive. The latter means that
$$SS(\G_i,n,\eps)SS(\G_i,m,\eps) \supset S(\G_i,n+m)$$
where $S(\G_i,n), SS(\G_i,n,\eps)$ are the radius $n$ sphere and the union of the spheres with radius $r \in [n-\eps,n+\eps]$ respectively. Because $d_i$ is integer-valued we will assume $\eps$ is also integer-valued.

\begin{defn}
We  say $(\G_1,d_1)$ and $(\G_2,d_2)$ have {\bf roughly comparable growth rates} if there are functions $f_i:\N \to [0,\infty)$ for $i=1,2$ such that for all $n\in \N$ and $i=1,2$
\begin{align*}
    f_i(n) \#SS(\G_i,n,\eps)&\le \#SS(\G_{2-i},n,\eps) \\
    \sum_{n=1}^\infty f_i(2n\eps+m) &=\infty
\end{align*}
for every $m\in \{0,1,\ldots\}$. 

\end{defn}



\begin{prop}\label{P:roughly-comparable}
Suppose that $(\G_1,d_1)$ and $(\G_2,d_2)$ have roughly comparable growth rates.
Then equation \eqref{E:balanced} is satisfied. 
\end{prop}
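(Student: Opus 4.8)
The plan is to prove the stronger assertion $\#B(\G_i,n)/\#B(\G,n)\to 0$ by showing $\#B(\G,n)/\#B(\G_1,n)\to\infty$ (and symmetrically for $\G_2$); since $\G_1$ and $\G_2$ play symmetric roles I would treat only $i=1$. First I would record an elementary lower bound on the product ball obtained by slicing along shells of the second factor. For $j\ge 1$ put $t_j=2j\eps$. If $x_2\in SS(\G_2,t_j,\eps)$ and $x_1\in B(\G_1,n-t_j-\eps)$ then $|x_1|+|x_2|\le n$, so $B(\G_1,n-t_j-\eps)\times SS(\G_2,t_j,\eps)\subset B(\G,n)$. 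The shells $SS(\G_2,2j\eps,\eps)$ have width $2\eps$ and centers spaced $2\eps$ apart, so each element of $\G_2$ lies in at most two of them; hence these product sets cover $B(\G,n)$ with multiplicity at most $2$ and
\begin{align*}
\#B(\G,n)\ \ge\ \tfrac12\sum_{j\,:\,t_j+2\eps\le n/2}\#B(\G_1,n-t_j-\eps)\,\#SS(\G_2,t_j,\eps).
\end{align*}

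The heart of the argument is a ratio estimate producing a \emph{shell} of $\G_1$ in the denominator: there is a constant $c>0$ (depending only on $\eps$ and on the cardinalities of a few small balls of $\G_1$) with
\begin{align*}
\frac{\#B(\G_1,n-t_j-\eps)}{\#B(\G_1,n)}\ \ge\ \frac{c}{\#SS(\G_1,t_j,\eps)}\qquad(t_j+2\eps\le n/2).
\end{align*}
This is where $\eps$-approximate sub-additivity is used. From $SS(\G_1,a,\eps)\cdot SS(\G_1,b,\eps)\supset S(\G_1,a+b)$ one gets the sphere bound $\#S(\G_1,s)\le \#SS(\G_1,t,\eps)\,\#SS(\G_1,s-t,\eps)$ for $s\ge t+\eps$. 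Summing over $s$ and absorbing the bounded tail $\sum_{s<t+\eps}\#S(\G_1,s)$ (which for $t\le n/2$ is dominated by the surviving term) yields an inequality of the form $\#B(\G_1,n)\le (2\eps+2)\,\#SS(\G_1,t,\eps)\,\#B(\G_1,n-t+\eps)$; applying it with $t=t_j+2\eps$ gives the display once one also knows $\#SS(\G_1,t_j+2\eps,\eps)\le C'\,\#SS(\G_1,t_j,\eps)$ for a constant $C'$, which follows from the same sphere decomposition (a shell at radius $t+2\eps$ is covered by products of the shell at radius $t$ with the fixed finite shell at radius $2\eps$).

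Combining the two displays and invoking the hypothesis $\#SS(\G_2,t_j,\eps)\ge f_1(t_j)\,\#SS(\G_1,t_j,\eps)$ gives, for a constant $c'>0$,
\begin{align*}
\frac{\#B(\G,n)}{\#B(\G_1,n)}\ \ge\ c'\!\!\sum_{j\,:\,t_j+2\eps\le n/2}\frac{\#SS(\G_2,t_j,\eps)}{\#SS(\G_1,t_j,\eps)}\ \ge\ c'\!\!\sum_{j\,:\,t_j+2\eps\le n/2} f_1(2j\eps).
\end{align*}
As $n\to\infty$ the range of summation exhausts all $j\ge1$, and $\sum_{j\ge1}f_1(2j\eps)=\infty$ is exactly the roughly-comparable-growth hypothesis (the case $m=0$). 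Hence the ratio diverges, so $\#B(\G_1,n)/\#B(\G,n)\to0$; the identical argument with $\G_1,\G_2$ and $f_1,f_2$ interchanged gives $\#B(\G_2,n)/\#B(\G,n)\to0$, which is \eqref{E:balanced}.

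The main obstacle is the ratio estimate, and specifically forcing a shell $\#SS(\G_1,t_j,\eps)$ rather than a ball $\#B(\G_1,t_j)$ into its denominator. The crude sub-additive bound $\#B(\G_1,n)\le C\,\#B(\G_1,n-t)\,\#B(\G_1,t)$ only produces a ball there, and a ball can be much larger than the corresponding shell (polynomially larger already for $\G_1=\Z$), so the resulting series would no longer be directly comparable to the hypothesis, which is framed entirely in terms of shell ratios. Extracting the shell is precisely what compels the sphere-by-sphere use of approximate sub-additivity and the near-equality of neighbouring shells. The various $O(\eps)$ shifts introduced along the way are what the offset parameter $m$ in the definition of roughly comparable growth is designed to absorb: a slightly different bookkeeping would land the series on $\sum_j f_1(2j\eps+m_0)$ for some fixed $m_0\in\{0,1,\dots\}$, and the hypothesis guarantees divergence for every such $m_0$.
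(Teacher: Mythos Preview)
Your proof is correct and reaches the same divergent series $\sum_j f_1(2j\eps)$, but via a different decomposition than the paper's. The paper first reduces the ball-ratio statement to a sphere-vs-shell statement: writing
\[
\frac{\#B(\G_1,n)}{\#B(\G,n)}=\sum_{r=0}^n \frac{\#S(\G_1,r)}{\#SS(\G,r,2\eps)}\cdot\frac{\#SS(\G,r,2\eps)}{\#B(\G,n)},
\]
with the second factors summing to a bounded constant and individually tending to zero, it suffices to show $\#S(\G_1,r)/\#SS(\G,r,2\eps)\to 0$. It then lower-bounds $\#SS(\G,r,2\eps)$ by the disjoint union of products $SS(\G_1,r-2k\eps,\eps)\times SS(\G_2,2k\eps,\eps)$, applies the hypothesis to replace $\#SS(\G_2,2k\eps,\eps)$ by $f_1(2k\eps)\#SS(\G_1,2k\eps,\eps)$, and then invokes sub-additivity once per summand in the clean form $\#SS(\G_1,r-2k\eps,\eps)\cdot\#SS(\G_1,2k\eps,\eps)\ge\#S(\G_1,r)$, giving $\#SS(\G,r,2\eps)\ge\#S(\G_1,r)\sum_k f_1(2k\eps)$ directly.

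Your route stays at the ball level throughout, which forces the extra ratio estimate $\#B(\G_1,n-t)/\#B(\G_1,n)\ge c/\#SS(\G_1,t,\eps)$ together with the shell-shift bound $\#SS(\G_1,t+2\eps,\eps)\le C'\#SS(\G_1,t,\eps)$. Both of these follow from the same sub-additivity, so nothing is lost, but the paper's sphere-level reduction sidesteps this detour and gets the shell in the denominator for free. One cosmetic point: your product sets do not ``cover'' $B(\G,n)$; rather they are contained in it with pairwise multiplicity at most $2$, which is exactly what your counting uses.
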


\begin{proof}
Observe that
\begin{align*}
    \frac{\#B(\G_i,n)}{\#B(\G,n)} = \sum_{r=0}^n \frac{\#S(\G_i,r)}{\#SS(\G,r,2\eps)}\cdot  \frac{\#SS(\G,r,2\eps)}{\#B(\G,n)}.
\end{align*}
Since 
\begin{align*}
  \sum_{r=0}^n \frac{\#SS(\G,r,2\eps)}{\#B(\G,n)}\le 4\eps,
\end{align*}
and of course $\frac{\#SS(\G,r,2\eps)}{\#B(\G,n)} \to 0$ as $n\to\infty$ with $r$ held fixed, it suffices to prove:
\begin{align}\label{E:rough}
   \lim_{r\to\infty} \frac{\#S(\G_i,r)}{\#SS(\G,r,2\eps)} = 0.
\end{align}
For simplicity, let us assume $r$ is divisible by $2\eps$. We estimate $\#\SS(\G,r,2\eps)$ as follows. Observe that $\SS(\G,r,2\eps)$ contains the direct product $SS(\G_1,r-2k\eps,\eps)\times SS(\G_2,2k\eps,\eps)$ for all $0\le k \le r/2\eps$. Moreover these products are pairwise disjoint. Therefore 
\begin{align*}
   \# \SS(\G,r,2\eps) &\ge \sum_{k=0}^{r/2\eps} \#SS(\G_1,r-2k\eps,\eps)\#SS(\G_2,2k\eps,\eps)
\end{align*}
Because $(\G_1,d_1)$ and $(\G_2,d_2)$ are roughly comparable, $\#SS(\G_2,2k\eps,\eps) \ge f_1(2\k\eps)\#SS(\G_1,2k\eps,\eps)$. Therefore,
\begin{align*}
   \# \SS(\G,r,2\eps) &\ge \sum_{k=0}^{r/2\eps} \#SS(\G_1,r-2k\eps,\eps)\#SS(\G_1,2k\eps,\eps)f_1(2k\eps).
\end{align*}
By $\eps$-subadditivity, the product $SS(\G_1,r-2k\eps,\eps)\cdot SS(\G_1,2k\eps,\eps)$ contains the sphere $S(\G_1,r)$. Therefore, 
\begin{align*}
   \# \SS(\G,r,2\eps) &\ge \sum_{k=0}^{r/2\eps} S(\G_1,r)f_1(2k\eps).
\end{align*}
Thus
$$\frac{\#S(\G_1,r)}{\#SS(\G,r,2\eps)} \le \frac{1}{\sum_{k=0}^{r/2\eps} f_1(2k\eps).} $$
Because $\sum_{k=0}^\infty f_1(2k\eps)=\infty$, this implies the limit \eqref{E:rough} with one modification: we assumed $i=1$ and $r$ is divisible by $2\eps$. The general case, when $r$ is congruent to $m$ mod $2\eps$ for some fixed number $m$ and $i\in \{1,2\}$ is similar and left to the reader.
\end{proof}

\begin{cor}
    For any countable group $\G$, $\G\times \G$ has fixed price 1.
\end{cor}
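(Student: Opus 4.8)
The plan is to derive the corollary from Theorem \ref{T:main3}. We may assume $\G$ is infinite, as fixed price $1$ only concerns infinite groups. If $\G$ is amenable then $\G\times\G$ is infinite and amenable, hence has fixed price $1$ by the folklore result \cite[Proposition 4.3]{MR3335405}; so from now on assume $\G$ is non-amenable.

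The core case is when $\G$ is also finitely generated. Here I would fix a finite symmetric generating set and let $d$ be the associated word metric, which is proper, left-invariant, integer-valued, and $\eps$-approximately sub-additive with $\eps=1$ (since $S(\G,n+m)\subset S(\G,n)\cdot S(\G,m)\subset SS(\G,n,1)\cdot SS(\G,m,1)$). I then apply Theorem \ref{T:main3} with $\G_1=\G_2=\G$ and $d_1=d_2=d$. Since the two factors are identical, $\#SS(\G_1,n,\eps)=\#SS(\G_2,n,\eps)$ for every $n$, so the pair $(\G_1,d_1),(\G_2,d_2)$ has roughly comparable growth rates with $f_1=f_2\equiv 1$ (and $\sum_n 1=\infty$). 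Proposition \ref{P:roughly-comparable} then yields the balance condition \eqref{E:balanced}, and Theorem \ref{T:main3} gives that $\G\times\G=\G_1\times\G_2$ has fixed price $1$.

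It remains to treat $\G$ non-amenable and not finitely generated, and this is the main obstacle: by Corollary \ref{C:generating} any group carrying a proper $\eps$-approximately sub-additive quasi-metric with $\eps>0$ is automatically finitely generated, so Theorem \ref{T:main3} cannot be applied to such a $\G$ directly. Instead I would exhaust it: choosing a finitely generated non-amenable $H_0\le\G$ (which exists because non-amenability is witnessed by a finitely generated subgroup) and writing $\G=\bigcup_n H_n$ as an increasing union of finitely generated subgroups with $H_0\subseteq H_n$, each $H_n$ is finitely generated and non-amenable, so $H_n\times H_n$ has fixed price $1$ by the core case. Now $\G\times\G=\bigcup_n(H_n\times H_n)$. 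For an arbitrary free ergodic p.m.p.\ action of $\G\times\G$, restriction to $H_n\times H_n$ produces an increasing sequence of aperiodic subrelations (aperiodic since $H_n\times H_n$ is infinite) whose union is the full orbit relation, each of cost $1$ because $H_n\times H_n$ has fixed price $1$. By Gaboriau's continuity of cost along increasing unions of aperiodic p.m.p.\ relations \cite{gaboriau-cost}, the full orbit relation has cost $1$; since the action was arbitrary, $\G\times\G$ has fixed price $1$.
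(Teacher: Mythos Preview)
Your proof is correct and follows essentially the same route as the paper: dispose of the amenable case, handle the finitely generated non-amenable case by taking $d_1=d_2$ to be a word metric and invoking Proposition~\ref{P:roughly-comparable} (with $f_1=f_2\equiv 1$) and Theorem~\ref{T:main3}, then reduce the infinitely generated case to the finitely generated one via an increasing exhaustion by finitely generated non-amenable subgroups together with Gaboriau's result on increasing unions. The only differences are cosmetic: you spell out the $f_i\equiv 1$ witnesses and the continuity-of-cost argument explicitly, whereas the paper simply cites \cite[Proposition 32.1(ii)]{MR2095154} for the last step.
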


\begin{proof}
If $\G$ is amenable then $\G \times \G$ is amenable and it follows from the Ornstein-Weiss Theorem \cite{OW80} that all amenable groups have fixed price 1. This is also in \cite[Corollary 31.2]{MR2095154}. So assume $\G$ is non-amenable.    If $\G$ is finitely generated, then we choose $d_1=d_2$ to be a word metric on $\G$. By Proposition \ref{P:roughly-comparable} equation \eqref{E:balanced} is satisfied.  By Theorem \ref{T:main3}, $\G\times \G$ has fixed price 1.

If $\G$ is non-amenable but not finitely generated, then there exist non-amenable finitely generated subgroups $\G_1 \le \G_2 \le \cdots $ such that $\G = \cup_{i=1}^\infty \G_i$. By the previous paragraph, each $\G_i\times \G_i$ has fixed price 1. Because $\G \times \G = \cup_{i=1}^\infty \G_i \times \G_i$, it follows that $\G$ also has fixed price 1. This fact follows from a theorem of Gaboriau that appears in \cite[Proposition 32.1 (ii)]{MR2095154}.
\end{proof}

\begin{cor}\label{C:poly-exp}
Suppose for $i=1,2$, $(\G_i,d_i)$ are countable groups equipped with left-invariant proper metrics satisfying the $\eps$-approximate sub-additivity condition and  the growth condition
$$C^{-1} n^{\d_i} e^{\a_i n} \le |B(\G_i,d_i,n)| \le C n^{\d_i} e^{\a_i n}$$
for some constants $\d_i, C, \a_i>0$. Such groups are said to have {\bf exact polynomial-exponential growth}. Suppose as well that $|\d_1-\d_2|\le 1$. Then $\G_1\times \G_2$ has fixed price 1.
\end{cor}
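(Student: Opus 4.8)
The plan is to verify that $(\G_1,d_1)$ and $(\G_2,d_2)$ have roughly comparable growth rates and then quote Proposition \ref{P:roughly-comparable} (to obtain \eqref{E:balanced}) followed by Theorem \ref{T:main3}. Write $\a:=\a_1=\a_2$ for the common exponential growth rate; the two rates must coincide, since otherwise the faster-growing factor dominates the product and \eqref{E:balanced} fails (this is exactly the phenomenon described in the remark following Theorem \ref{T:main3-intro}), so equality of the rates is implicit in the hypotheses. The first preparatory step is to enlarge $\eps$. Approximate sub-additivity is monotone in $\eps$: for integers $\eps'\ge\eps$ one has $SS(\G_i,n,\eps')\supset SS(\G_i,n,\eps)$, and the defining containment is only demanded on the smaller index range $n,m\ge\eps'$, so each $(\G_i,d_i)$ remains $\eps'$-approximately sub-additive. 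Hence I may assume $\eps$ is as large as I wish, in particular large enough that $e^{\a(2\eps+1)}>2C^2$.

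The technical heart is a two-sided estimate on shell sizes. Since $d_i$ is integer-valued, $SS(\G_i,n,\eps)=B(\G_i,n+\eps)\setminus B(\G_i,n-\eps-1)$. The upper ball bound gives, for $n\ge\eps$,
\[
\#SS(\G_i,n,\eps)\le \#B(\G_i,n+\eps)\le C\,2^{\d_i}e^{\a\eps}\,n^{\d_i}e^{\a n},
\]
while subtracting the two ball bounds gives, for $n>\eps+1$,
\[
\#SS(\G_i,n,\eps)\ge n^{\d_i}e^{\a n}\bigl(C^{-1}e^{\a\eps}-Ce^{-\a(\eps+1)}\bigr)\ge \tfrac12 C^{-1}e^{\a\eps}\,n^{\d_i}e^{\a n},
\]
the last inequality using the choice of $\eps$. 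Thus $\#SS(\G_i,n,\eps)\asymp n^{\d_i}e^{\a n}$ for all large $n$, and therefore the ratio
\[
f_i(n):=\frac{\#SS(\G_{2-i},n,\eps)}{\#SS(\G_i,n,\eps)}
\]
(set to $0$ whenever the denominator vanishes) satisfies $f_i(n)\ge c\,n^{\d_{2-i}-\d_i}$ for all large $n$ and some $c>0$. By construction $f_i(n)\,\#SS(\G_i,n,\eps)\le \#SS(\G_{2-i},n,\eps)$ for every $n$, which is the first requirement in the definition of roughly comparable growth.

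It remains to check the divergence $\sum_{n=1}^\infty f_i(2n\eps+m)=\infty$ for every $m\ge0$ and $i=1,2$. Writing $p=\d_{2-i}-\d_i$, the lower bound on $f_i$ reduces this to $\sum_n (2n\eps+m)^{p}=\infty$, which holds precisely when $p\ge-1$. For $i=1$ this reads $\d_2-\d_1\ge-1$ and for $i=2$ it reads $\d_1-\d_2\ge-1$; both are guaranteed by the hypothesis $|\d_1-\d_2|\le1$. The finitely many small $n$ where the shell bounds are not yet valid affect only finitely many terms and are harmless. This confirms that $(\G_1,d_1)$ and $(\G_2,d_2)$ have roughly comparable growth rates, so \eqref{E:balanced} holds by Proposition \ref{P:roughly-comparable}, and Theorem \ref{T:main3} then yields that $\G_1\times\G_2$ has fixed price $1$. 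I expect the shell lower bound to be the main obstacle: the raw hypothesis only controls $\#B(\G_i,n)$, and extracting a matching lower bound for a single shell requires widening the shell (enlarging $\eps$) so that the difference $\#B(\G_i,n+\eps)-\#B(\G_i,n-\eps-1)$ keeps a definite fraction of $\#B(\G_i,n)$ — which is exactly where the monotonicity of approximate sub-additivity is needed.
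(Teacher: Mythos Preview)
Your technical estimates on shell sizes are correct and, in fact, supply the details the paper's proof skips. However, there is a genuine gap: you assert that $\a_1=\a_2$ is ``implicit in the hypotheses,'' and this is false. The corollary allows arbitrary $\a_1,\a_2>0$; nothing in the statement forces them to coincide. Your justification is circular: you argue that if $\a_1\ne\a_2$ then \eqref{E:balanced} fails for the given metrics, but \eqref{E:balanced} is not a hypothesis of the corollary --- it is something to be \emph{arranged}, possibly by changing the metrics. So your proof covers only the special case $\a_1=\a_2$.

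The paper's fix is to rescale: replace $d_i$ by the integer-valued quasi-metric $d'_i(x,y)=\lceil\a_i\,d_i(x,y)\rceil$. Under $d'_i$ the ball of radius $n$ is essentially $B(\G_i,d_i,n/\a_i)$, so $|B(\G_i,d'_i,n)|\asymp n^{\d_i}e^{n}$ and both groups now have the \emph{same} exponential rate (namely $1$). At that point your shell argument applies verbatim to $(\G_i,d'_i)$ and yields roughly comparable growth, whence Proposition~\ref{P:roughly-comparable} and Theorem~\ref{T:main3} finish the job. (One must also check that $d'_i$ remains approximately sub-additive; this is routine since rescaling and rounding perturb distances by bounded additive errors, which can be absorbed into a larger $\eps$.) So the missing idea is precisely this rescaling step; once you insert it, your proof is complete and in fact more detailed than the paper's.
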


\begin{proof}
Define a quasi-metric $d'_i$ on $\G_i$ by $d'_i(x,y)=\lceil\alpha_i\cdot d_i(x,y)\rceil$ where $\lceil x\rceil$ is the smallest integer greater than or equal to $x$. By Proposition \ref{P:roughly-comparable}, the rescaled metric groups $(\G_i,d'_i)$ satisfy  equation \eqref{E:balanced}.  By Theorem \ref{T:main3}, $\G_1\times \G_2$ has fixed price 1.
\end{proof}

\begin{remark}
    According to \cite{fujiwara2025hardylittlewoodmaximaloperatorspaces} and references therein, the following groups have exact polynomial-exponential growth 
        \begin{enumerate}
        \item $(\G,d)$ where $\G$ is a lattice ins a connected semi-simple Lie group $G$ with finite center and $d$ is induced from a $G$-invariant Riemannian metric arising from the Killing form;
        \item hyperbolic groups with respect to word metrics \cite{MR1214072};
        \item right-angled Artin groups with word metrics induced by standard generating sets;
        \item geometrically finite discrete subgroups of $\operatorname{Isom}(\H^n)$ with respect to word metrics induced by suitable finite generating sets;
        \item Coxeter groups of exponential growth with respect to standard generating sets;
        \item braid groups of exponential growth with respect to standard generating sets;
        \item Artin groups of extra-large type with respect to standard generating sets.
    \end{enumerate}
\end{remark}




\appendix

\section{Recurrence}\label{S:recurrence}

In this subsection, we recall the Hopf decomposition of \cite{MR2731695}. Throughout, we let $G \cc (X,\mu)$ be an action by measure-class preserving transformations.

By the Ergodic Decomposition Theorem \cite{MR1784210}, there exist a standard measure space, denoted $(Z,\zeta)$ and measurable maps $\pi:X \to Z$, $\nu:Z \to \Prob(X)$ such that 
\begin{enumerate}
\item $\pi$ is $\G$-invariant mod $\mu$;
\item $\nu_z(\pi^{-1}(z))=1$ for a.e.\ $z\in Z$;
    \item $\mu = \int \nu_z~d\zeta(z)$;
    \item if $\phi:X \to Y$ is any measurable map to a standard Borel space $Y$ which is $\G$-invariant mod $\mu$, then there exists a measurable map $\tphi:Z \to Y$ such that $\phi(x) = \tphi(\pi(x))$ for a.e.\ $x$.
\end{enumerate}
We say that tuple $(Z,\zeta, \pi, \nu)$ comprises the \textbf{ergodic decomposition of the action $\G \cc (X,\mu)$}.

\begin{defn}\label{D:Hopf}
   The \textbf{continual part} of the action $\G \cc (X,\mu)$ is 
   $$\mathsf{Cont}(X)=\{x\in X:~ \nu_{\pi(x)} \textrm{ is non-atomic}\}.$$
The \textbf{discontinual part} of the action, denoted $\mathsf{Discont}(X) \subset X$ is the complement and consists of all atomic orbits ergodic components. Also let 
$$\mathsf{Discont}_{\textrm{cofinite}}(X)=\{ x\in \mathsf{Discont}(X):~ \Stab_\G(x) \textrm{ is finite }\}$$
   where   $$\Stab_\G(x)=\{g\in \G:~ gx=x\}$$
is the stabilizer of $x$. Thus we can write $X$ as a disjoint union
$$X = \mathsf{Cont}(X) \sqcup (\mathsf{Discont}(X)\setminus  \mathsf{Discont}_{\textrm{cofinite}}(X))\sqcup \mathsf{Discont}_{\textrm{cofinite}}(X).$$
Each of these parts is $\G$-invariant.
\end{defn}

\begin{defn}
Given $Y \subset X$ and $x\in X$, let 
 $$\Ret(Y,x)=\{g\in \G:~ gx \in Y\}$$
 be the return-time set. The set $Y$ is said to be \textbf{recurrent} if for a.e.\ $y\in Y$ there exists a non-identity element $g\in \G$ with $gy \in Y$ (i.e. the return-time set $\Ret(Y,y)\ne \{e\}$). The set $Y$ is said to be  \textbf{infinitely recurrent} if for a.e.\ $y \in Y$, the return-time set $\Ret(Y,y)$ is infinite. The action $\G \cc (X,\mu)$ 
 is \textbf{infinitely conservative} if every measurable subset of $X$ with positive measure is infinitely recurrent.  
 We will say the action is {\bf doubly-recurrent (DR)} if the diagonal action $\G \cc (X\times X,\mu\times\mu)$ is infinitely conservative.
\end{defn}

\begin{defn}
    Let $\mathsf{Con}(X)=X  \setminus \mathsf{Discont}_{\textrm{cofinite}}(X)$ and $\mathsf{Dis}(X) = \mathsf{Discont}_{\textrm{cofinite}}(X)$.
\end{defn}

\begin{thm}\label{T:Kaim-Hopf}
The restriction of the action to the set $  \mathsf{Con}(X)$ is infinitely conservative (with respect to the measure $\mu$). On the other hand, if $\G \cc (X,\mu)$ is an imp and $E \subset \mathsf{Dis}(X)$ has finite positive measure, then for a.e.\ $x\in X$, $\Ret(E,x)$ is finite. 
\end{thm}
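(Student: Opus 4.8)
The plan is to reduce both assertions to the ergodic decomposition $(Z,\zeta,\pi,\nu)$ recalled above, since each conclusion is a property of individual orbits and hence can be checked on a.e.\ ergodic component $\nu_z$. The partition of $X$ into $\mathsf{Cont}(X)$, $\mathsf{Discont}(X)\setminus\mathsf{Discont}_{\mathrm{cofinite}}(X)$ and $\mathsf{Discont}_{\mathrm{cofinite}}(X)$ is precisely a sorting of components according to whether $\nu_z$ is non-atomic, atomic with infinite-stabilizer atoms, or atomic with finite-stabilizer atoms, and I would treat each type separately. A recurring elementary observation is that for $y\in Y$ one always has $\Stab_\G(y)\subseteq\Ret(Y,y)$, so whenever the stabilizer is infinite, infinite recurrence of $Y$ at $y$ is automatic.

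For the first assertion, let $Y\subseteq\mathsf{Con}(X)$ have positive measure. Points of $Y$ with infinite stabilizer are infinitely recurrent by the observation above; this already covers the atomic components in $\mathsf{Con}(X)$, since by the definition of $\mathsf{Con}(X)$ their atoms have infinite stabilizer. It remains to treat points with finite stabilizer, which necessarily lie in the non-atomic (continual) components. Here I would first prove conservativity: if $W\subseteq\mathsf{Cont}(X)$ were wandering of positive measure, split $W=W_1\sqcup W_2$ into two positive-measure pieces (possible since $\nu_z$ is non-atomic) and observe that $\G W_1$ and $\G W_2$ are disjoint $\G$-invariant positive-measure sets, contradicting ergodicity of the component. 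Conservativity is then upgraded to infinite conservativity by a transversal argument: if some positive-measure set $Y_F=\{y\in Y:\Ret(Y,y)\subseteq F\}$ (with $F\subseteq\G$ finite) existed, then the orbit equivalence relation restricted to $Y_F$ would have finite classes, hence a Borel transversal meeting each orbit essentially once, producing a wandering set of positive measure and contradicting conservativity. This continual case is exactly the content of Kaimanovich's generalization of Hopf's theorem, so I would cite \cite{MR2731695} for the analytic core and supply only the atomic bookkeeping.

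For the second assertion, assume $\G\cc(X,\mu)$ is imp and $E\subseteq\mathsf{Dis}(X)=\mathsf{Discont}_{\mathrm{cofinite}}(X)$ has finite positive measure. Since $\mathsf{Dis}(X)$ is $\G$-invariant and $E\subseteq\mathsf{Dis}(X)$, for $x\notin\mathsf{Dis}(X)$ the orbit $\G x$ misses $E$ entirely, so $\Ret(E,x)=\emptyset$. For $x\in\mathsf{Dis}(X)$, its ergodic component is atomic and, by ergodicity, supported on the single orbit $\G x\cong\G/\Stab_\G(x)$ with $\Stab_\G(x)$ finite; because $\mu$ is invariant, the restriction of $\mu$ to this orbit is a scalar multiple of counting measure, so $\mu$-a.e.\ such $x$ is a genuine atom of some positive mass $m_0$. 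Then $|E\cap\G x|\le\mu(E)/m_0<\infty$, and since the map $g\mapsto gx$ sends $\Ret(E,x)$ onto the finite set $E\cap\G x$ with fibers equal to cosets of the finite group $\Stab_\G(x)$, the set $\Ret(E,x)$ is a finite union of finite cosets, hence finite.

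The main obstacle is the continual case of the first assertion: proving that a non-atomic ergodic measure-class-preserving action is conservative and then infinitely conservative in the presence of possibly nontrivial (finite) stabilizers. The conservativity step is clean, but upgrading to infinite conservativity requires care with the fixed-point sets $\mathrm{Fix}(g)$ when passing to a transversal, and this is exactly where I would invoke \cite{MR2731695} rather than reprove Hopf--Kaimanovich from scratch. By contrast, all the atomic parts reduce to the single elementary identity $\Stab_\G(y)\subseteq\Ret(Y,y)$ together with the finiteness of the intersection of an orbit with a finite-measure set.
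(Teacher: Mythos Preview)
Your treatment of the first assertion matches the paper's: both cite \cite{MR2731695} for the continual (non-atomic) part, and your handling of the atomic components in $\mathsf{Con}(X)$ via $\Stab_\G(y)\subseteq\Ret(Y,y)$ is correct and implicit in Kaimanovich's argument.

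For the second assertion your per-orbit argument has a genuine gap. You claim that ``because $\mu$ is invariant, the restriction of $\mu$ to this orbit is a scalar multiple of counting measure, so $\mu$-a.e.\ such $x$ is a genuine atom of some positive mass $m_0$,'' and then bound $|E\cap\G x|\le\mu(E)/m_0$. This fails already for $\G=\Z$ acting on $X=\Z\times[0,1]$ by translation in the first coordinate with $\mu=c_\Z\times\mathsf{Leb}$: here $\mathsf{Dis}(X)=X$, yet $\mu(\{x\})=0$ for every $x$, so no positive $m_0$ exists. The point $x$ is an atom of the \emph{ergodic component} measure, not of $\mu$, and you cannot compare $|E\cap\G x|$ directly with $\mu(E)$. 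To repair this you must either disintegrate $\mu$ into $\G$-invariant (not probability) conditionals $\mu_z$ and use that $\mu(E)<\infty$ forces $\mu_z(E)<\infty$ for a.e.\ $z$, or follow the paper: pick a Borel orbit transversal $W\subset\mathsf{Dis}(X)$ via \cite[Lemma~4]{MR2731695} and compute $\mu(E)=\int_W|\G w\cap E|\,d\mu(w)$ (up to a finite stabilizer factor), which directly gives $|\G w\cap E|<\infty$ for $\mu$-a.e.\ $w\in W$ and hence for a.e.\ $x$.
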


\begin{proof}
    The first statement is proven in \cite[Propositions 7 and 8]{MR2731695}. To prove the second statement, let $E \subset \mathsf{Discont}_{\textrm{cofinite}}(X)$ have finite positive measure. By \cite[Lemma 4]{MR2731695}, there exists a measurable map $$\phi:\mathsf{Discont}_{\textrm{cofinite}}(X) \to \mathsf{Discont}_{\textrm{cofinite}}(X)$$ such that for every $x$, $\phi(x)$ is in the $\G$-orbit of $x$. Moreover, $\phi(x)=\phi(gx)$ for every $x$ and every $g\in \G$. Let $W$ be the image of $\phi$. Then $\mathsf{Discont}_{\textrm{cofinite}}(X)$ is the disjoint union of $gW$. 

    Because $\mu$ is $\G$-invariant,
    $$\mu(E) = \int_W |\phi^{-1}(x)\cap E|~d\mu(x).$$
Since $\mu(E)$ is finite and $\phi^{-1}(x)=\G x$, this implies $\mu$-a.e.\ $x$ is such that $|\G x \cap E|<\infty$. But $|\G x\cap E| = |\Ret(E,x)|$. So $|\Ret(E,x)|<\infty$.
\end{proof}

\begin{cor}\label{C:return-time}
     Let $\G \cc (X,\mu)$ be an infinite-measure-preserving action. Let $Y \subset X$ have finite positive measure. For $y\in Y$, let 
 $$\Ret(Y,y)=\{g\in \G:~ gy \in Y\}$$
 be the return-time set and 
 $$Y_\infty = \{y \in Y:~ \Ret(Y,y) \textrm{ is unbounded} \}.$$
 Then $Y_\infty \subset \mathsf{Con}(X)$ (ignoring a set of measure zero).
\end{cor}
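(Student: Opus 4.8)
The plan is to reduce the statement directly to the second half of Theorem \ref{T:Kaim-Hopf}. Set $E = Y \cap \mathsf{Dis}(X)$, where $\mathsf{Dis}(X) = \mathsf{Discont}_{\mathrm{cofinite}}(X)$. Since $\mathsf{Con}(X) = X \setminus \mathsf{Dis}(X)$ by definition, the assertion ``$Y_\infty \subset \mathsf{Con}(X)$ mod null'' is equivalent to $\mu(Y_\infty \cap \mathsf{Dis}(X)) = 0$; and because $Y_\infty \subset Y$, we have $Y_\infty \cap \mathsf{Dis}(X) = Y_\infty \cap E$. So it suffices to prove $\mu(Y_\infty \cap E) = 0$.

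First I would dispose of the trivial case $\mu(E) = 0$: then $E$ is null, hence $Y_\infty \cap E$ is null and there is nothing to prove. So assume $\mu(E) > 0$. Since $Y$ has finite measure and $E \subset Y$, the set $E$ has finite positive measure, and by construction $E \subset \mathsf{Dis}(X)$. Thus $E$ satisfies the hypotheses of the second statement of Theorem \ref{T:Kaim-Hopf}, which gives that $\Ret(E, x)$ is finite for $\mu$-a.e.\ $x\in X$; in particular, this holds for a.e.\ $y \in E$.

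The key step is the exact equality $\Ret(Y, y) = \Ret(E, y)$ for every $y \in \mathsf{Dis}(X)$. Indeed, $E \subset Y$ gives $\Ret(E, y) \subset \Ret(Y, y)$ trivially; conversely, if $gy \in Y$ then, because $\mathsf{Dis}(X)$ is $\G$-invariant (Definition \ref{D:Hopf}) and $y \in \mathsf{Dis}(X)$, we also have $gy \in \mathsf{Dis}(X)$, so $gy \in Y \cap \mathsf{Dis}(X) = E$, i.e.\ $g \in \Ret(E, y)$. Consequently, for a.e.\ $y \in E$ the set $\Ret(Y, y) = \Ret(E, y)$ is finite, hence bounded, so $y \notin Y_\infty$. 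This yields $\mu(Y_\infty \cap E) = 0$ and completes the argument.

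I do not expect a genuine obstacle here: essentially all the content is packaged into Theorem \ref{T:Kaim-Hopf}. The only points requiring care are (i) invoking the $\G$-invariance of $\mathsf{Dis}(X)$ explicitly, since that is exactly what forces a return visit to $Y$ from a dissipative point to land back inside $E$, and (ii) reconciling the defining condition of $Y_\infty$ (``$\Ret(Y,y)$ unbounded'') with the conclusion of Theorem \ref{T:Kaim-Hopf} (``$\Ret(E,y)$ finite''); these are compatible because any finite subset of $\G$ is bounded, so finiteness of $\Ret(Y,y)$ already excludes membership in $Y_\infty$, independently of the chosen proper metric.
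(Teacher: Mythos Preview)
Your proposal is correct and is precisely the intended deduction: the paper states this as an immediate corollary of Theorem~\ref{T:Kaim-Hopf} with no separate proof, and your argument---restricting to $E=Y\cap\mathsf{Dis}(X)$, using $\G$-invariance of $\mathsf{Dis}(X)$ to get $\Ret(Y,y)=\Ret(E,y)$ for $y\in\mathsf{Dis}(X)$, and then invoking the second half of Theorem~\ref{T:Kaim-Hopf}---is exactly how one unpacks that corollary.
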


\begin{defn}
    Let $\G \cc (X,\mu)$ be an ergodic imp  action. By Theorem \ref{T:Kaim-Hopf}, $X^2$ is the disjoint union of $\G$-invariant measurable sets $\mathsf{Con}(X^2)$ and $\mathsf{Dis}(X^2)$ and the restriction of $\G$ to $\mathsf{Con}(X^2)$ is infinitely conservative. We will say the action $\G \cc (X,\mu)$ is {\bf partially doubly recurrent} (PDR) if for a.e.\ $x,y \in X$ there exist $x=x_1,x_2,\ldots, x_n=y$ with $(x_i,x_{i+1})\in \mathsf{Con}(X^2)$ for all $i$. In other words, the equivalence relation generated by $\mathsf{Cont}(X^2)$ is all of $X$ (up to a set of measure zero). 
\end{defn}

\begin{lem}\label{L:conserv-erg-dec}
Let $\G \cc (X,\mu)$ be measure-preserving. Then this action is infinitely conservative (doubly-recurrent, partially doubly recurrent) if and only if a.e.\ ergodic component is infinitely conservative (doubly-recurrent, partially doubly recurrent).
\end{lem}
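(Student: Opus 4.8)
The plan is to reduce all three equivalences to a single localization principle: a canonically-defined $\G$-invariant set is $\mu$-null if and only if it is null on $\zeta$-almost every ergodic component. First I would fix the ergodic decomposition $(Z,\zeta,\pi,\nu)$ of $\G \cc (X,\mu)$ from the appendix, so that $\mu = \int_Z \nu_z \, d\zeta(z)$ with each $\nu_z$ ergodic and $\pi$-invariant mod $\mu$. The guiding observation is that conservativity, recurrence, and the Hopf decomposition are all measure-class notions, and that the sets $\mathsf{Con}(X)$ and $\mathsf{Dis}(X)$ of Definition \ref{D:Hopf} are built directly out of the ergodic decomposition (through whether $\nu_{\pi(x)}$ is non-atomic, together with the pointwise condition that $\Stab_\G(x)$ be finite). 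Hence $\mathsf{Dis}(X)$ is $\G$-invariant and localizes: for $\zeta$-a.e.\ $z$, the slice $\mathsf{Dis}(X) \cap \pi^{-1}(z)$ coincides with the dissipative part of the component action $\G \cc (X,\nu_z)$.

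With this in hand the infinitely conservative case is immediate. By Theorem \ref{T:Kaim-Hopf}, $\G \cc (X,\mu)$ is infinitely conservative exactly when $\mu(\mathsf{Dis}(X)) = 0$. Since $\mu(\mathsf{Dis}(X)) = \int_Z \nu_z(\mathsf{Dis}(X)) \, d\zeta(z)$ and every integrand is non-negative, this vanishes if and only if $\nu_z(\mathsf{Dis}(X)) = 0$ for $\zeta$-a.e.\ $z$; by the localization of $\mathsf{Dis}$, this says precisely that $\zeta$-a.e.\ ergodic component is infinitely conservative.

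For double recurrence I would run the same argument one level up, on the diagonal product action $\G \cc (X^2, \mu^2)$. By definition $\G \cc (X,\mu)$ is doubly recurrent iff $\G \cc (X^2,\mu^2)$ is infinitely conservative, and the infinitely conservative case (applied to $X^2$) reduces this to infinite conservativity of almost every ergodic component of $\mu^2$. The partially-doubly-recurrent case is handled through the canonical $\G$-invariant equivalence relation $\cR_{\mathsf{Con}}$ on $X$ generated by $\mathsf{Con}(X^2)$: PDR asserts that $\cR_{\mathsf{Con}}$ joins $\mu^2$-a.e.\ pair, and I would verify that both $\mathsf{Con}(X^2)$ and the derived connectivity statement localize over the ergodic decomposition, so that the global assertion holds iff it holds on almost every component.

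The hard part will be reconciling the ergodic decomposition of the product action with that of the base. Writing $\mu^2 = \iint \nu_z \times \nu_{z'} \, d\zeta(z)\, d\zeta(z')$, the Fubini step only governs the off-diagonal blocks $z \neq z'$, whereas the assertion that a component $\nu_z$ is itself doubly recurrent (or PDR) concerns the diagonal product $\nu_z \times \nu_z$; and the diagonal $\{z = z'\}$ is $\zeta\times\zeta$-null once $\zeta$ is non-atomic. The delicate step I would pursue is to avoid passing through $\zeta \times \zeta$ altogether and instead work with the canonical sets $\mathsf{Con}(X^2)$ and $\mathsf{Dis}(X^2)$ directly: I would show that their intersections with $(\supp \nu_z) \times (\supp \nu_{z'})$ agree with the corresponding Hopf sets of $\G \cc (X^2, \nu_z \times \nu_{z'})$, so that the product's own ergodic decomposition, rather than $\zeta \times \zeta$, carries the conservativity data, after which the three equivalences follow from non-negativity of the disintegrated integrals exactly as in the infinitely conservative case. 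Since the lemma is invoked in the paper only after a reduction to ergodic actions (where $\mu$ has a single component and double recurrence is just conservativity of $\mu^2$ by definition), I would also record the ergodic special case separately, as it is what the main arguments actually use.
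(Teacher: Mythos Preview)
The paper's own proof is a single line: ``This is a direct consequence of Theorem~\ref{T:Kaim-Hopf}.'' Your treatment of the infinitely-conservative clause is exactly what that citation unpacks to---$\mathsf{Dis}(X)$ is built from the ergodic decomposition, so $\mu(\mathsf{Dis}(X))=\int \nu_z(\mathsf{Dis}(X))\,d\zeta(z)$ vanishes iff it vanishes fiberwise---and this matches the paper.

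For the doubly-recurrent and PDR clauses you have gone well beyond the paper and correctly isolated a genuine subtlety that the one-line proof does not address: the disintegration $\mu^2=\iint \nu_z\times\nu_{z'}\,d\zeta^2$ assigns no mass to the diagonal $\{z=z'\}$ once $\zeta$ is non-atomic, so conservativity of $\mu^2$ does not directly witness conservativity of $\nu_z\times\nu_z$. Your proposed fix---localizing $\mathsf{Con}(X^2)$ and $\mathsf{Dis}(X^2)$ to the slices $(\supp\nu_z)\times(\supp\nu_{z'})$---is the right instinct, but note that even if it succeeds it yields ``$\mu$ is DR iff $\nu_z\times\nu_{z'}$ is infinitely conservative for $\zeta^2$-a.e.\ $(z,z')$,'' which is still not the assertion ``$\nu_z$ is DR for $\zeta$-a.e.\ $z$''; the diagonal products $\nu_z\times\nu_z$ remain invisible to $\zeta^2$. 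So your route does not close the gap you identified.

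Two mitigating remarks. First, PDR is only \emph{defined} in the paper for ergodic actions, so the PDR clause of the lemma is most naturally read as definitional (extending the notion to non-ergodic actions componentwise), in which case it is tautological. Second, as you note in your final paragraph, the paper invokes the lemma solely to pass from a general action to a single ergodic component (the forward direction, once), after which all subsequent arguments live in the ergodic world; for that limited purpose the infinitely-conservative mechanism and the definitional reading of PDR suffice.
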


\begin{proof}
This is a direct consequence of Theorem \ref{T:Kaim-Hopf}.
\end{proof}

\begin{lem}\label{L:finitemeasureextension}
Suppose $\G \cc (X_1,\mu_1)$ is a finite measure extension of $\G \cc (X_2,\mu_2)$ (and both are imp actions). 
\begin{enumerate}
\item $\G \cc (X_1,\mu_1)$ is infinitely conservative if and only if $\G \cc (X_2,\mu_2)$ is infinitely conservative.
\item $\G \cc (X_1,\mu_1)$ is doubly recurrent if and only if $\G \cc (X_2,\mu_2)$ is doubly recurrent.
\item $\G \cc (X_1,\mu_1)$ is partially doubly recurrent if and only if $\G \cc (X_2,\mu_2)$ is partially doubly recurrent.
\end{enumerate}
\end{lem}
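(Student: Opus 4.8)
The plan is to reduce all three equivalences to a single structural identity relating the Hopf decompositions of $X_1$ and $X_2$, namely that $\mathsf{Con}(X_1)=\phi^{-1}(\mathsf{Con}(X_2))$ (equivalently $\mathsf{Dis}(X_1)=\phi^{-1}(\mathsf{Dis}(X_2))$) modulo null sets. The engine behind this identity is the exact equality of return-time sets
\[
\Ret(\phi^{-1}(A),x)=\Ret(A,\phi(x)),
\]
valid for every $A\subseteq X_2$ and a.e.\ $x$, which follows instantly from $\G$-equivariance: $gx\in\phi^{-1}(A)\iff\phi(gx)=g\phi(x)\in A$. (Since $\G$ is countable, $g\phi(x)=\phi(gx)$ holds for a.e.\ $x$ simultaneously in all $g$.) The finite-measure-extension hypothesis enters only to guarantee that preimages of finite-measure sets are finite-measure, so that Theorem~\ref{T:Kaim-Hopf} may be applied to preimage sets.

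For the key identity, fix an increasing sequence $A_1\subseteq A_2\subseteq\cdots$ of finite-measure subsets of $X_2$ with union $X_2$; then each $\phi^{-1}(A_n)$ has finite measure. I would prove the two inclusions separately. If $\phi(x)\in\mathsf{Dis}(X_2)$, then by Theorem~\ref{T:Kaim-Hopf} the sets $\Ret(A_n,\phi(x))$ are finite for a.e.\ such $x$; by the exact equality $\Ret(\phi^{-1}(A_n),x)$ is finite, so $x$ cannot lie in $\mathsf{Con}(X_1)$ (a point of $\mathsf{Con}(X_1)$ is infinitely recurrent to any positive-measure set it belongs to), giving $\phi^{-1}(\mathsf{Dis}(X_2))\subseteq\mathsf{Dis}(X_1)$. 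Conversely, if $\phi(x)\in\mathsf{Con}(X_2)$, then $\phi(x)$ is infinitely recurrent to $\mathsf{Con}(X_2)\cap A_n$ for suitable $n$; pulling back through the exact equality (with $A=\mathsf{Con}(X_2)\cap A_n$, whose preimage has finite measure) shows $\Ret(\phi^{-1}(A),x)$ is infinite, while $\G$-invariance of $\mathsf{Dis}(X_1)$ together with Theorem~\ref{T:Kaim-Hopf} would force finite returns were $x\in\mathsf{Dis}(X_1)$; hence $x\in\mathsf{Con}(X_1)$. This establishes the identity mod null, and since $\phi_*\mu_1$ is equivalent to $\mu_2$, a $\G$-invariant set is $\mu_2$-null iff its preimage is $\mu_1$-null.

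Item (1) is then immediate: by Theorem~\ref{T:Kaim-Hopf} an imp action is infinitely conservative iff its dissipative part is null, and the identity transports this between $X_1$ and $X_2$. For item (2), recall $\G\cc(X_i,\mu_i)$ is doubly recurrent iff the diagonal action $\G\cc(X_i^2,\mu_i^2)$ is infinitely conservative. The map $\phi\times\phi$ is $\G$-equivariant with $(\phi\times\phi)_*\mu_1^2$ equivalent to $\mu_2^2$, and although it need not be a finite-measure extension in the literal sense, the rectangles $A_n\times A_n$ exhaust $X_2^2$ by finite-measure sets whose preimages $\phi^{-1}(A_n)^2$ are finite-measure. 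This is all the argument above uses, so it applies verbatim to $\phi\times\phi$, yielding $\mathsf{Con}(X_1^2)=(\phi\times\phi)^{-1}(\mathsf{Con}(X_2^2))$ and hence the equivalence of double recurrence.

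Finally, item (3) uses the identity $\mathsf{Con}(X_1^2)=(\phi\times\phi)^{-1}(\mathsf{Con}(X_2^2))$, but now at the level of the equivalence relations these sets generate. In one direction, pushing a $\mathsf{Con}(X_1^2)$-chain forward through $\phi\times\phi$ produces a $\mathsf{Con}(X_2^2)$-chain, and since $\phi_*\mu_1\sim\mu_2$ this shows $X_1$ PDR $\Rightarrow X_2$ PDR. The reverse direction requires lifting a $\mathsf{Con}(X_2^2)$-chain $\phi(x)=z_0,\dots,z_k=\phi(y)$ to a $\mathsf{Con}(X_1^2)$-chain from $x$ to $y$: any choice of fibre lifts $w_i\in\phi^{-1}(z_i)$ (with $w_0=x$, $w_k=y$) makes each consecutive pair project into $\mathsf{Con}(X_2^2)$, hence into $\mathsf{Con}(X_1^2)$ — provided the chosen pairs avoid the $\mu_1^2$-null set on which the identity fails. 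I expect this null-set avoidance to be the main obstacle, since the endpoints $x,y$ live in measure-zero fibres and one cannot simply quote Fubini. The plan is to select the intermediate lifts by disintegrating $\mu_1$ over $\mu_2$ and choosing $w_i$ according to the conditional measures $\lambda_{z_i}$, so that for $\mu_2^2$-a.e.\ chain the relevant product conditional measures charge the bad set with probability zero; a careful measurable-selection argument then produces lifts valid for $\mu_1^2$-a.e.\ $(x,y)$, completing the proof.
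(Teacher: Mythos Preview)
For items (1) and (2) your plan is correct and is essentially the paper's argument; you are in fact more careful than the paper in noting that $\phi\times\phi$ need not literally be a finite-measure extension (the paper simply asserts it is), and you correctly identify that the existence of a rectangular exhaustion with finite-measure preimages is all the argument actually uses.

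For item (3) your overall strategy is again the paper's, but your proposed execution via disintegration and measurable selection is substantially heavier than what the paper does, and---as you yourself anticipate---the step where you must arrange for the downstairs chain to have all consecutive pairs generic for the conditional measures is not obviously easier than the null-set problem you started with; it is essentially a reformulation of it. The paper's route is shorter: after discarding a $\G$-invariant null set to make $\phi$ literally surjective, it fixes a finite-measure set $S\subset X_2^2$, lets $S_\infty\subset S$ be the points with infinite return to $S$, and observes that $\tilde S_\infty:=(\phi\times\phi)^{-1}(S_\infty)$ is, by your return-time identity, exactly the set of points of the finite-measure set $(\phi\times\phi)^{-1}(S)$ with infinite return to it---hence $\tilde S_\infty\subset\mathsf{Con}(X_1^2)$ by Corollary~\ref{C:return-time}. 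The purely set-theoretic identity $(\phi\times\phi)^{-1}(S_\infty^{\,n})=\tilde S_\infty^{\,n}$, now literal by surjectivity, lifts $S_\infty$-chains wholesale with arbitrary fibre lifts for the intermediate points; no probabilistic choice is needed. Letting $S$ exhaust $X_2^2$ by rectangles makes $S_\infty$ exhaust $\mathsf{Con}(X_2^2)$, and PDR transfers. This sidesteps disintegration entirely---though it should be said that the containment $\tilde S_\infty\subset\mathsf{Con}(X_1^2)$ from Corollary~\ref{C:return-time} is itself only asserted mod null, so the paper's argument is not entirely free of the subtlety you flag either.
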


\begin{proof}
(1): If $\G \cc (X_1,\mu_1)$ is conservative, then by lifting finite measure sets from $X_2$ up to $X_1$, we see that $\G \cc (X_2,\mu_2)$ is also conservative. 

Suppose $\G \cc (X_2,\mu_2)$ is conservative. We will show $\G\cc (X_1,\mu_1)$ is also conservative. Let $Z \subset X_1$ be a set with positive finite measure. We need to show $\Ret(Z,z)$ is unbounded for a.e.\ $z\in Z$. 

Because $\phi:X_1 \to X_2$ is a finite-measure-extension, there exists a partition $Z=\sqcup_i Z_i$ of $Z$ into sets and there exist subsets $Y_i \subset X_2$ such that $\mu_2(Y_i)<\infty$ and $Z_i \subset Y_i$ for all $i$. So without loss of generality, we may assume there exists a finite measure set $Y \subset X_2$ such that $Z \subset \phi^{-1}(Y)$.

Because $\G \cc (X_2,\mu_2)$ is conservative, the return time set $\Ret(Y,y)$ is unbounded for a.e.\ $y\in Y$. Therefore, the return time set $\Ret(\phi^{-1}(Y),y)$ is unbounded for a.e.\ $y \in \phi^{-1}(Y)$. By Corollary \ref{C:return-time}, the return time set $\Ret(Z,z)$ is unbounded for a.e.\ $z\in Z$. This finishes the proof of item (1).

(2): The factor $(X_1\times X_1, \mu_1\times \mu_1) \to (X_2\times X_2,\mu_2\times \mu_2)$ is a finite measure extension. So item (1) implies item (2). 

(3): As in item (1), it is straightforward to check that if $\G \cc (X,\mu)$ is PDR then the factor $\G \cc (Y,\nu)$ is also PDR. So assume that the actor $\G \cc (Y,\nu)$ is PDR. 

In general, if $T \subset Y \times Y$, then we let $T^n$ be the set of all $(x,y)\in Y\times Y$ such that there exist $x=x_1,x_2,\ldots, x_n=y$ such that $(x_i,x_{i+1})\in T$ for all $i$.

Let $S \subset Y\times Y$ be a set with positive measure. Let $S_\infty$ be the set of all $(x,y)\in S$ such that the return-time set $\Ret(S,(x,y))$ is infinite. Because the action is PDR, $\cup_n S_\infty^n$ has full measure in $S$.

Let $\tilde{S}_\infty = \phi^{-1}(S_\infty)$. As in item (1), $\tilde{S}_\infty \subset \mathsf{Con}(X^2)$. Moreover, by induction on $n$, note that $\phi^{-1}(S^n_\infty) = \tilde{S}^n_\infty$. Therefore, $\cup_n \tilde{S}^n_\infty$ has full measure in $\phi^{-1}(S)$. Because $S$ is arbitrary, this implies the action $\G \cc (X,\mu)$ is partially doubly recurrent. 

\end{proof}

\section{Measured equivalence relations}\label{S:mer}

Let $(X,\mu)$ be a standard Borel  space and $\cR \subset X \times X$ a Borel equivalence relation. For $x\in X$, let $ [x]_\cR=\{y\in X:~(x,y) \in \cR\}$ be its equivalence class.  We say $\cR$ is 
\begin{itemize}
    \item {\bf discrete} or {\bf countable} if every equivalence class is at most countable;
    \item {\bf aperiodic} if every equivalence class is infinite;
    \item {\bf finite} if every equivalence class is finite;
    \item {\bf hyperfinite} if there exist finite Borel equivalence relations $\cR_1 \subset \cR_2\subset \cdots$ with $\cR=\cup_i \cR_i$.
\end{itemize}


The next theorem provides several equivalent formulations for when $\cR$ preserves the measure $\mu$.
\begin{thm}\label{T:mp}
    Let $\cR$ be a discrete equivalence relation on a standard measure space $(X,\mu)$. The following are equivalent:
    \begin{enumerate}
        \item (Full group) Let $[\cR]$ be the group of all Borel automorphisms $\phi:X \to X$ such that $(x,\phi(x))\in \cR $ for every $x$. This is the {\bf full group} of $\cR$. $\cR$ preserves $\mu$ in the sense that $\phi_*\mu=\mu$ for every $\phi \in [\cR]$.
        \item (Mass transport principle) For every non-negative Borel map $F:\cR \to \R$, 
$$\int \sum_{y\in \cR} F(x,y)~d\mu(x) = \int \sum_{y\in \cR} F(y,x)~d\mu(x).$$
\item (Group action) There exists a countable group $\G$ with a measure-preserving action $\G \cc (X,\mu)$ such that if $\cR_\G=\{(x,gx):~x\in X, g\in \G\}$ is the orbit-equivalence relation then $\cR_\G=\cR$ mod $\mu$.
\item (Unimodularity) Define Borel measures $\mu_L, \mu_R$ on $\cR$ by 
\begin{align*}
    \mu_L(E) &=  \int \#\{y:~(x,y) \in E\} ~d\mu(x)\\
   \mu_R(E) &=  \int \#\{y:~(y,x) \in E\} ~d\mu(x).
\end{align*}
Then $\mu_L=\mu_R$.
    \end{enumerate}
\end{thm}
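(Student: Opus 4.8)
The plan is to prove the cycle $(1)\Rightarrow(3)\Rightarrow(2)\Leftrightarrow(4)\Rightarrow(1)$, after first observing that $(2)$ and $(4)$ are two encodings of the same fact. Indeed, checking on indicators $F=\mathbf{1}_E$ and extending by monotone convergence, one has, for every non-negative Borel $F:\cR\to\R$,
\begin{align*}
\int_\cR F\,d\mu_L &= \int \sum_{y}F(x,y)\,d\mu(x), &
\int_\cR F\,d\mu_R &= \int \sum_{y}F(y,x)\,d\mu(x).
\end{align*}
So the mass transport identity in $(2)$ holds for all such $F$ precisely when $\int_\cR F\,d\mu_L=\int_\cR F\,d\mu_R$ for all $F$, which is the equality $\mu_L=\mu_R$ of $(4)$. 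This step is purely definitional.

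Next I would prove $(2)\Rightarrow(1)$. Fix $\phi\in[\cR]$ and a Borel set $B\subseteq X$. Since $(x,\phi(x))\in\cR$ for every $x$, the function $F(x,y)=\mathbf{1}_B(x)\,\mathbf{1}[y=\phi(x)]$ is a well-defined non-negative Borel function on $\cR$. For each $x$ there is exactly one admissible $y$, namely $\phi(x)$, so $\sum_y F(x,y)=\mathbf{1}_B(x)$ and the left-hand side of $(2)$ equals $\mu(B)$. On the right, for fixed $x$ the only contributing $y$ is $y=\phi^{-1}(x)$, and it contributes iff $\phi^{-1}(x)\in B$; hence $\sum_y F(y,x)=\mathbf{1}[x\in\phi(B)]$ and the right-hand side equals $\mu(\phi(B))$. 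Thus $\mu(\phi(B))=\mu(B)$ for every Borel $B$, which is exactly $\phi_*\mu=\mu$, giving $(1)$.

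For $(1)\Rightarrow(3)$ I would invoke the Feldman--Moore theorem: any countable (discrete) Borel equivalence relation $\cR$ is the orbit equivalence relation $\cR_\G$ of a suitable countable group $\G$ acting on $X$ by Borel automorphisms, a statement requiring no measure. Each $g\in\G$ sends $x\mapsto gx$ with $(x,gx)\in\cR$ for all $x$, so $g\in[\cR]$; by $(1)$ each such $g$ preserves $\mu$. Therefore the action $\G\cc(X,\mu)$ is measure-preserving and satisfies $\cR_\G=\cR$, which is $(3)$.

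Finally I would close the cycle with $(3)\Rightarrow(2)$. Given an mp action with $\cR=\cR_\G$, enumerate $\G=\{\gamma_n\}_{n}$ and disjointify the graphs: put $B_n=\mathrm{graph}(\gamma_n)\setminus\bigcup_{m<n}\mathrm{graph}(\gamma_m)$ and $C_n=\{x\in X:(x,\gamma_n x)\in B_n\}$, a Borel subset of $X$, so that $\cR=\bigsqcup_n\{(x,\gamma_n x):x\in C_n\}$ up to null sets. Then
\begin{align*}
\int \sum_y F(x,y)\,d\mu(x) &= \sum_n \int_{C_n} F(x,\gamma_n x)\,d\mu(x), \\
\int \sum_y F(y,x)\,d\mu(x) &= \sum_n \int \mathbf{1}[\gamma_n^{-1}x\in C_n]\,F(\gamma_n^{-1}x,x)\,d\mu(x).
\end{align*}
The change of variables $x\mapsto\gamma_n x$, valid because each $\gamma_n$ preserves $\mu$, converts the $n$-th term of the second sum into $\int_{C_n}F(x,\gamma_n x)\,d\mu(x)$, matching the first sum term-by-term; summing over $n$ yields $(2)$. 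The substantive input of the whole argument is the Feldman--Moore realization in the third step, which may be cited; the only genuine care required elsewhere is the measurable disjointification and the measure-preserving change of variables in this last step, where the hypothesis that each $\gamma_n$ preserves $\mu$ is used precisely once. Everything else is bookkeeping.
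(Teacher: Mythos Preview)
Your argument is correct and follows exactly the approach the paper indicates: the paper's own ``proof'' is simply a remark that item $(3)$ uses the Feldman--Moore theorem and that the remaining implications are exercises, and your cycle $(1)\Rightarrow(3)\Rightarrow(2)\Leftrightarrow(4)\Rightarrow(1)$ carries out precisely those exercises with the Feldman--Moore citation in the expected place.
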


\begin{remark}
     Item (3) uses the Feldman-Moore Theorem \cite{MR0578656}. The rest are exercises.
\end{remark}

We say that $(X,\mu,\cR)$ is pmp, imp, or mp if the conditions above are satisfied and $\mu$ is a probability, infinite or arbitrary measure respectively.

In the case where $\cR$ is aperiodic, we get the following result, due to Slaman-Steel.

\begin{thm}[Marker Lemma]\label{marker}
    Let $\cR$ be an aperiodic discrete Borel equivalence relation on $X$. Then there exists a \textbf{vanishing sequence of markers} for $\cR$, i.e. there is a sequence $\{S_n\} \subset X$ of Borel sets such that 
    \begin{enumerate} \itemsep0em
        \item $S_0 \supseteq S_1 \supseteq S_2 \supseteq \cdots$,
        \item $\cap_n S_n = \emptyset$, and
        \item each $S_n$ meets every equivalence class of $\cR$. That is, each $S_n$ is a \textbf{complete section} for $\cR$. 
    \end{enumerate}
\end{thm}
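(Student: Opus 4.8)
The plan is to reduce the problem to a one-dimensional picture and build the markers as shrinking ``annuli'' around a canonical accumulation point of each class. First I would fix a Borel injection $\iota : X \to [0,1]$, which exists since $X$ is standard Borel. The guiding idea is this: if I can attach to each $\cR$-class a single real number that is an accumulation point of the image of that class under $\iota$, then the set of points whose $\iota$-value is close to, but distinct from, that accumulation point will meet every class (because an accumulation point is approached arbitrarily closely by other points of the image) yet will be escaped by every individual point (because each point sits at a fixed positive distance from the accumulation point, unless it lands exactly on it, which I exclude). This converts the notion of ``vanishing markers'' into the elementary fact that the nested annuli $\{\,0 < |\,\cdot - \ell\,| < 2^{-n}\,\}$ shrink to the empty set.

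Concretely, the steps in order would be: (1) use the Feldman--Moore Theorem \cite{MR0578656} (or Luzin--Novikov uniformization) to obtain Borel maps $f_0, f_1, \dots : X \to X$ with $[x]_\cR = \{ f_n(x) : n \in \N\}$ for every $x$, so that the image set $\iota([x]_\cR) = \{ \iota(f_n(x)) : n \in \N\}$ is a Borel function of $x$; (2) define $\ell(x)$ to be the least accumulation point of $\iota([x]_\cR)$ in $[0,1]$, which exists because $\iota$ is injective and $\cR$ is aperiodic, so $\iota([x]_\cR)$ is an infinite subset of the compact interval and (Bolzano--Weierstrass) its accumulation set is closed and non-empty; (3) set $S_n = \{ x \in X : 0 < |\iota(x) - \ell(x)| < 2^{-n}\}$; (4) verify the three conclusions. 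Nesting $S_{n+1}\subseteq S_n$ is immediate from $2^{-(n+1)} < 2^{-n}$. The intersection $\bigcap_n S_n$ is empty because a point in it would satisfy $|\iota(x)-\ell(x)|=0$ and $|\iota(x)-\ell(x)|>0$ simultaneously. Each $S_n$ is a complete section because $\ell(x)$ is an accumulation point of $\iota([x]_\cR)$, so infinitely many points of the image lie within $2^{-n}$ of it; at most one equals $\ell(x)$, so some $f_m(x)$ gives $0 < |\iota(f_m(x)) - \ell(x)| < 2^{-n}$, i.e.\ $f_m(x) \in S_n \cap [x]_\cR$.

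The main obstacle, and the only genuinely delicate point, is producing the anchor $\ell$ as a Borel, $\cR$-invariant function \emph{without} selecting a Borel transversal for $\cR$ (which need not exist when $\cR$ is non-smooth). This is exactly where using an accumulation point, rather than, say, a least or greatest class element, pays off: the least accumulation point of the \emph{set} $\iota([x]_\cR)$ does not depend on the enumeration of the class, so $\ell$ is automatically constant on classes, whereas the naive choices (minimum, maximum) either fail to exist or fail to yield empty intersection for classes of order type $\Z$. What then remains is the routine check that the operation ``sequence $\mapsto$ least accumulation point of its range'' is Borel; this follows by writing the accumulation set as $\bigcap_k \overline{\{\iota(f_n(x)) : n \ge k\}}$ and noting that taking the infimum of this closed set is a standard Borel operation. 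I would also note that aperiodicity is invoked in exactly one place, namely to force the image set to be infinite so that an accumulation point exists, and that one could alternatively just cite Slaman--Steel, the self-contained argument above being recorded only for completeness.
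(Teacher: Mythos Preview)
The paper does not actually prove the Marker Lemma; it is stated in the appendix as a background result attributed to Slaman--Steel, with no argument given. So there is no ``paper's own proof'' to compare against.

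Your argument is correct and is essentially the standard proof. One small imprecision worth tightening: the formula $\bigcap_k \overline{\{\iota(f_n(x)) : n \ge k\}}$ computes the cluster set of the \emph{sequence} $(\iota(f_n(x)))_n$, which coincides with the accumulation set of the \emph{range} $\iota([x]_\cR)$ only when the enumeration $n\mapsto f_n(x)$ is injective. A Feldman--Moore enumeration by a group action need not be injective (points can have nontrivial stabilizer), so a value repeated infinitely often would appear in the cluster set without being an accumulation point of the range. You already gesture at the fix by mentioning Lusin--Novikov: for an aperiodic countable Borel equivalence relation one can choose the $f_n$ so that $n\mapsto f_n(x)$ is a bijection onto $[x]_\cR$, and then your formula is exactly right. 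With that adjustment, Borelness of $\ell$ and the three marker properties follow as you outline.
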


In the case where $\mu$ is a standard probability measure we are able to use the Marker Lemma to find complete sections with arbitrarily small measure.
Note that for infinite measures, a vanishing sequence of markers does not necessarily correspond to a finite measure complete section. For example, if we have an aperiodic equivalence relation on $\R$ equipped with Lebesgue measure, the sequence $S_n = [n, \infty)$ is a vanishing sequence of markers but each $S_n$ has infinite measure. 

However, we can always find a finite measure section for an aperiodic equivalence relation $\cR$ as long as $(X, \mu)$ is $\sigma$-finite and a.e.\ ergodic component is non-atomic.

\begin{thm}\label{T:section}
    Let $(X,\mu)$ be a $\sigma$-finite infinite non-atomic measure space. Suppose $\cR \subset X \times X$ is a countable aperiodic $\mu$-quasi-invariant equivalence relation. Suppose a.e.\ ergodic component of $(\cR,\mu)$ is purely non-atomic. Then for every $\eps>0$ there exists a Borel section $S$ for the equivalence relation $\cR$ with $\mu(S) < \eps$.%
\end{thm}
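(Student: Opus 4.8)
The plan is to build the small section directly from the ergodic (measure-class) decomposition of $(X,\mu,\cR)$, exploiting non-atomicity of the components to spread a small ``measure budget'' across the fibers. By the Feldman--Moore theorem (cf.\ Theorem \ref{T:mp}) we may write $\cR=\cR_\G$ for a countable group $\G$ of Borel automorphisms of $X$, and $\mu$-quasi-invariance means each element of $\G$ is measure-class preserving, so the ergodic decomposition of Appendix \ref{S:recurrence} applies. This furnishes a standard $\sigma$-finite space $(Z,\zeta)$, an $\cR$-invariant Borel map $\pi:X\to Z$, and probability measures $\nu_z$ with $\nu_z(\pi^{-1}(z))=1$, each $\nu_z$ ergodic, and $\mu=\int \nu_z\,d\zeta(z)$. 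The hypothesis says $\nu_z$ is non-atomic for $\zeta$-a.e.\ $z$, and since $\nu_z(X)=1$ we have $\zeta(Z)=\mu(X)=\infty$, so $\zeta$ is an infinite but $\sigma$-finite measure.

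First I would fix the budget function. Since $\zeta$ is $\sigma$-finite there is a strictly positive Borel $\rho:Z\to(0,\infty)$ with $\int\rho\,d\zeta<\infty$ (a density of a finite measure equivalent to $\zeta$). Choosing $c>0$ with $c\int\rho\,d\zeta<\eps$ and setting $g=\min(1,c\rho)$ gives a Borel function with $0<g\le 1$ everywhere and $\int g\,d\zeta\le c\int\rho\,d\zeta<\eps$.

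The key step is to realize $g$ as the family of conditional measures of a single Borel set. Because each $\nu_z$ is a non-atomic standard probability measure, there is a Borel map $t:X\to[0,1]$ with $t_*\nu_z=\mathrm{Leb}_{[0,1]}$ for $\zeta$-a.e.\ $z$; this is a fiberwise measure isomorphism to $([0,1],\mathrm{Leb})$, whose measurability in $z$ is a standard measurable-selection consequence of the Borel disintegration together with the isomorphism theorem for non-atomic standard probability spaces. Put $S=\{x\in X:\ t(x)\le g(\pi(x))\}$. Then $S$ is Borel and $\nu_z(S)=\mathrm{Leb}([0,g(z)])=g(z)$ for a.e.\ $z$, whence
\[
\mu(S)=\int \nu_z(S)\,d\zeta(z)=\int g\,d\zeta<\eps .
\]
To see $S$ is a complete section, note that for a.e.\ $z$ we have $\nu_z(S)=g(z)>0$; since $\pi$ is $\cR$-invariant, the $\cR$-saturation of $S\cap\pi^{-1}(z)$ is an $\cR$-invariant subset of $\pi^{-1}(z)$ of positive $\nu_z$-measure, hence $\nu_z$-conull by ergodicity of $\nu_z$. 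Thus for $\mu$-a.e.\ $x$ the class $[x]_\cR$ meets $S$, i.e.\ $S$ is a complete section mod $\mu$. To upgrade to a genuine complete section at no cost in measure, set $N=X\setminus\bigcup_{\gamma\in\G}\gamma^{-1}S$, an $\cR$-invariant Borel set which is $\mu$-null by the previous sentence; then $S\cup N$ meets every class and still has measure $<\eps$.

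The main obstacle is precisely the key step: producing $t$ (equivalently, a Borel set with prescribed, variable conditional measures) measurably in the parameter $z$. This is exactly where the ``purely non-atomic'' hypothesis is indispensable, since an atomic component (a single orbit carrying counting measure) admits no positive section of small mass, matching the $\R$-with-Lebesgue/$S_n=[n,\infty)$ obstruction recorded before the theorem. Everything else is routine: $\sigma$-finiteness of $\zeta$ supplies the summable budget $g$, and ergodicity of the components converts ``positive conditional measure'' into ``meets a.e.\ class.''
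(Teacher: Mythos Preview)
Your proposal is correct and follows essentially the same approach as the paper: both pass to the ergodic decomposition, choose a strictly positive $L^1$ budget function on the space of components, and use non-atomicity of the fibers to carve out a Borel set whose conditional measure in each fiber equals the budget. The only cosmetic difference is that the paper, after identifying $X$ with $[0,\infty)$, uses the fiberwise CDF $x\mapsto\mu_{\pi(x)}([0,x])$ directly in place of your abstract fiberwise isomorphism $t$, which sidesteps the measurable-selection appeal you flag as the ``main obstacle.''
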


\begin{proof}
    
    Let $\pi: (X, \mu) \to (Y,\nu)$ be the factor map of $(X,\mu)$ to the space $(Y, \nu)$ of ergodic components for the action of $\G$. Then for $\nu$-a.e.\ $y\in Y$, the action of $\G$ on  $(X_y, \cB_y,  \mu_y)$ is ergodic and $\mu_y$-invariant, where $X_y = \pi^{-1}(y)$, $\cB_y = \cB \cap X_y$, and $\mu_y$ the associated measure to $y$ for the disintegration of $\mu$ over $\nu$. That is, for each $y \in Y$, there exists a measure $\mu_y$ such that $\mu = \int_Y \mu_y d\nu(y)$ and the action of $\G$ on $(\pi^{-1}(y), \mu_y)$ is ergodic. Additionally, by assumption, $\mu_y$ is non-atomic for a.e.\ $y$.

    
    Let $f: Y \to [0, \infty)$ be a Borel function with $f(y) > 0$ for $\nu$-a.e.~$y$ and $\int f d\nu < \eps$, i.e.\ an everywhere positive $\rL^1$ function on $Y$. Such a function exists because $(Y,\nu)$ is $\sigma$-finite.

Define
$$S=\{x\in [0,\infty):~ \mu_{\pi(x)}([0,x])\le f(\pi(x))\}.$$
Because $f\circ \pi$ is Borel and the map $x\mapsto\mu_{\pi(x)}([0,x])$ is Borel, it follows that $S$ is Borel. Moreover, for $y\in Y$, $\mu_y(S)=f(y)$. This is because $S \cap \pi^{-1}(y)$ is an interval of the form $[0,t]$ where $t$ is the largest number with $\mu_{y}([0,t])\le f(y)$. So
$$\mu(S) = \int \mu_{y}(S)~d\nu(y) = \int f(y)~\dee\nu(y)<\eps.$$
Lastly, we observe that $S$ meets every equivalence class because it meets a.e.\ ergodic component $\pi^{-1}(y)$ in a positive measure set since $f$ is positive a.e.
\end{proof}




\begin{remark}
    Here we use the quasi-invariant version of the ergodic decomposition theorem, which can be found in \cite{MR1784210}. We can apply this result since any infinite measure is equivalent to some probability measure. That is, if the action of $\G \cc X$ is $\mu$-quasi-invariant, it will be $\nu$-quasi-invariant for any probability measure $\nu$ equivalent to $\mu$.
\end{remark}

\begin{remark}
    The requirement that a.e.\ ergodic component of $(\cR, \mu)$ is purely non-atomic is necessary.
    Let $\G$ be a countable group and consider the action $\G \cc \G \times \R$ by $g(h,x)=(gh,x)$. This action preserves the measure $c_\G \times \mathsf{Leb}$ and the orbit-equivalence relation is aperiodic.
    However, the ergodic components are the fibers $\G\times\{x\}$ for each $x\in \R$ equipped with counting measure $c_\G$.
    We can see that every complete section has infinite measure. This is because every complete section contains a subset of the form $\{(\pi(x),x):~x\in \R\}$ where $\pi:\R \to \G$ is a measurable and such a subset has infinite measure.
\end{remark}

\section{Spaces of measures}\label{S:spaces-of-measures}

The purpose of this section is to review convergence of measures; especially vague convergence in the context of locally compact spaces, and prove a version of the Portmanteau Theorem we will frequently use.

Let $(X,d)$ be a Polish space and $\Radon(X)$ be the set of Radon measures on $X$. Let
\begin{align*}
C(X)&=\{f: X \to \C:~ f \textrm{ is continuous}\},\\
\|f\| &= \sup_{x\in X} |f(x)|,\\
C_B(X)&=\{f\in C(X):~\|f\|<\infty \},\\
    C_c(X) &= \{f\in C(X):X \to \C:~f \textrm{ has compact support}\},\\
    C_0(X) &= \{f:X \to \C:~f \textrm{ vanishes at infinity}\}.
\end{align*}
The last condition means: for all $\eps>0$ there exists compact $K \subset X$ such that $|f(x)|<\eps$ for all $x\in X\setminus K$.

We also let $C^+(X), C_B^+(X),C_c^+(X),C_0^+(X)$ denote the non-negative functions in $C(X), \\ C_B(X),C_c(X),C_0(X)$ respectively. Observe $C_c(X) \subset C_0(X) \subset C_B(X) \subset C(X)$. Additionally, $C_B(X)$ is a Banach algebra with the norm $f\mapsto \|f\|$. The subspace $C_0(X)$ is closed in $C_B(X)$ and is therefore a Banach space itself. Note that the space $C_c(X)$ is not closed unless $X$ is compact. If $X$ is lcsc then $C_c(X)$ is norm dense in $C_0(X)$ by Urysohn's Lemma.

Let $(\mu_n)_{n=1}^\infty, \mu_\infty$ be Radon measures on $X$. We say $(\mu_n)_n$ converges to $\mu_\infty$
\begin{itemize}
    \item {\bf vaguely} if $\lim_{n\to\infty} \int f ~d\mu_n = \int f ~d\mu_\infty$ for all $f\in C_c^+(X)$;
      \item {\bf weak*} if  $\lim_{n\to\infty} \int f ~d\mu_n = \int f ~d\mu_\infty$ for all $f\in C_0^+(X)$;
    \item  {\bf weakly} if $\lim_{n\to\infty} \int f ~d\mu_n = \int f ~d\mu_\infty$ for all $f\in C_B^+(X)$.
\end{itemize}

\begin{remark}
We use non-negative test functions in the above definitions to ensure that the integral $\int f~d\mu_\infty\in [0,\infty]$ is well-defined in case $\mu_\infty$ is an infinite measure and $f$ has non-compact support.
\end{remark}

A subset $A \in \Si$ is a $\mu$-continuity set if $\mu(\partial A) = 0$, where $\partial A = \overline{A}\cap \overline{X \setminus A}$ is the topological boundary of $A$. In other words, $\partial A$ is the set of limits of sequences of points in $A$ which are also limits of sequences of points outside of $A$.

The Portmanteau theorem is well-known for probability measures, but our work also focuses on infinite measures. The following locally compact version is a specialization of a more general result \cite{MR2271177}.

\begin{thm}[Locally Compact Portmanteau theorem]\label{T:portmanteau}
    Let $Y$ be a locally compact second countable space with Borel $\s$-algebra $\Si$.  Let $(\mu_n)^\infty_{n=1}$, $\mu_\infty$ be measures on $Y$. Then the following are equivalent
        \begin{enumerate}\itemsep0em
\item $\mu_n$ converges vaguely to $\mu_\infty$ as $n\to\infty$;
\item $\lim_{n\to\infty} \mu_n(A)=\mu_\infty(A)$ for every relatively compact $\mu_\infty$-continuity set $A\subset Y$;
        \item 
        \begin{enumerate}
            \item 
        $\limsup_n \mu_n(F) \leq \mu_\infty(F)$ for all relatively compact closed sets $F \subset Y$, and
        \item $\liminf_n \mu_n(O) \geq \mu_\infty(O)$ for all relatively compact open $O \subset Y$. 
        \end{enumerate}
    \end{enumerate}
    \end{thm}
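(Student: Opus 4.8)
The plan is to establish the three equivalences through the cyclic chain $(1) \Rightarrow (3) \Rightarrow (2) \Rightarrow (1)$, localizing every argument to relatively compact sets so that the possible infiniteness of the measures causes no difficulty. Throughout I would rely on the two regularity facts already used before Lemma \ref{L:continuityset}: that $\mu_\infty$ is inner and outer regular (being Radon), and that for any compact set and any $\eps>0$ one can sandwich it between relatively compact $\mu_\infty$-continuity sets differing by less than $\eps$ in $\mu_\infty$-measure, the point being that among the uncountably many candidate neighborhoods only countably many can have boundaries of positive mass.

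For $(1) \Rightarrow (3)$, first I would treat a relatively compact closed (hence compact) set $F$. By Urysohn's Lemma on the locally compact space $Y$, for each relatively compact open $U \supset F$ there is $f \in C_c^+(Y)$ with $\mathbf{1}_F \le f \le \mathbf{1}_U$, giving $\limsup_n \mu_n(F) \le \lim_n \int f\,d\mu_n = \int f\,d\mu_\infty \le \mu_\infty(U)$; shrinking $U$ to $F$ by outer regularity yields $\limsup_n \mu_n(F) \le \mu_\infty(F)$. For a relatively compact open $O$, inner regularity lets me pick a compact $K \subset O$ with $\mu_\infty(K)$ close to $\mu_\infty(O)$ and then $f \in C_c^+(Y)$ with $\mathbf{1}_K \le f \le \mathbf{1}_O$, so $\liminf_n \mu_n(O) \ge \lim_n \int f\,d\mu_n = \int f\,d\mu_\infty \ge \mu_\infty(K)$, and taking the supremum over such $K$ finishes it.

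The implication $(3) \Rightarrow (2)$ is a sandwich argument: for a relatively compact $\mu_\infty$-continuity set $A$ with interior $A^\circ$ and closure $\overline A$, condition (3) gives
\[
\mu_\infty(A^\circ) \le \liminf_n \mu_n(A^\circ) \le \liminf_n \mu_n(A) \le \limsup_n \mu_n(A) \le \limsup_n \mu_n(\overline A) \le \mu_\infty(\overline A),
\]
and since $\mu_\infty(\partial A)=0$ the two outer terms both equal $\mu_\infty(A)$ (finite, because $\overline A$ is compact and $\mu_\infty$ is Radon), forcing $\lim_n \mu_n(A) = \mu_\infty(A)$.

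Finally $(2) \Rightarrow (1)$ is where the infinite-measure subtlety concentrates, and I expect the domination step to be the main obstacle. Given $f \in C_c^+(Y)$ with support $K$, I would first choose a relatively compact open $\mu_\infty$-continuity set $U \supset K$; by (2), $\mu_n(U) \to \mu_\infty(U) < \infty$, so $M := \sup_n \mu_n(U) < \infty$. Using the layer-cake formula $\int f\,d\mu = \int_0^{\|f\|} \mu(\{f>t\})\,dt$, I note that $\partial\{f>t\} \subset \{f=t\}$ and that the level sets $\{f=t\}$ are pairwise disjoint subsets of $K$, so $\mu_\infty(\{f=t\})=0$ for all but countably many $t$; for those $t$ the set $\{f>t\}$ is a relatively compact $\mu_\infty$-continuity set, whence $\mu_n(\{f>t\}) \to \mu_\infty(\{f>t\})$ by (2). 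Since $\{f>t\}\subset U$ gives the uniform bound $\mu_n(\{f>t\}) \le M$ on $(0,\|f\|)$, the Dominated Convergence Theorem yields $\int f\,d\mu_n \to \int f\,d\mu_\infty$, i.e.\ vague convergence. The only delicate point is securing that uniform bound $M$, which is precisely what condition (2) supplies once the support is enclosed in a continuity set.
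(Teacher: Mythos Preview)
Your proof is correct; the cyclic chain $(1)\Rightarrow(3)\Rightarrow(2)\Rightarrow(1)$ goes through, and the delicate step in $(2)\Rightarrow(1)$---enclosing the support of $f$ in a relatively compact open continuity set $U$ so that $\sup_n\mu_n(U)<\infty$ furnishes the domination needed for the layer-cake/DCT argument---is handled properly.

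However, your approach is genuinely different from the paper's. The paper does not argue directly at all: it simply observes that the statement is a specialization of the unbounded Portmanteau theorem of \cite{MR2271177} applied to the one-point compactification $X=Y\cup\{x_0\}$, matching each item here with the corresponding item there. Your argument is self-contained and elementary, requiring only Urysohn's Lemma, regularity of Radon measures, and dominated convergence; what you lose is brevity. The paper's route is one line but imports an external reference; yours would make the appendix independent of that citation, at the cost of a paragraph.
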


\begin{proof}
This is a direct consequence of the unbounded Portmanteau Theorem proven in \cite[Theorem 2.1]{MR2271177} applied to the 1-point compactification of $Y$. That is, we let $X=Y\cup \{x_0\}$ be the 1-point compactification of $Y$. We let $d$ be an arbitrary metric on $X$ inducing its topology. Such a metric exists because $Y$ is second countable.

Item (1) above is equivalent to item (iv) of \cite[Theorem 2.1]{MR2271177}, item (2) above is equivalent to item (ii) of \cite[Theorem 2.1]{MR2271177}, item (3a) is equivalent to item (vi-a) of \cite[Theorem 2.1]{MR2271177}, item (3b) is equivalent to item (vi-b) of \cite[Theorem 2.1]{MR2271177}. By taking , we see that items (3) and (4) are equivalent to each other.
\end{proof}

There is also a Prokhorov-type theorem for vague compactness from \cite{MR3642325}. Let $X$ be a separable and complete metric space, and let $\cS$ denote the class of measurable subsets, $\hat\cS$ the space of bounded subsets.
Let $\cM_X$ denote the space of locally finite measures on $X$ and $\hat{\cM_X}$ the space of bounded measures on $X$. Let $\cK$ denote the space of compact subsets of $X$. 
\begin{thm}
    The vague topology on $\cM_X$ is Polish with Borel $\sigma$-field $\cB_{\cM_X}$. Furthermore, a set $A \subset \cM_X$ is vaguely relatively compact iff \begin{enumerate}
        \item $\sup_{\mu\in A} \mu B < \infty, \quad B \in \hat\cS$,
        \item $\inf_{K\in\cK} \sup_{\mu \in A} \mu(B/K) = 0, \quad B \in \hat\cS$.
    \end{enumerate}
\end{thm}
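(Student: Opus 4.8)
The plan is to prove the three assertions together, since completeness of the vague metric will rest on the compactness criterion. Throughout I would fix an increasing sequence of bounded open sets $B_1\subset B_2\subset\cdots$ with $\bigcup_n B_n=X$ (such a localizing exhaustion exists because $\hat\cS$ is generated by a countable family in the separable space $X$), and reduce every statement to the behaviour of the restrictions $\mu\res B_n$, which are bounded measures in $\hat{\cM_X}$. The basic reduction is that $\mu_k\to\mu$ vaguely if and only if, for every $n$ with $\partial B_n$ null for $\mu$, the restricted measures $\mu_k\res B_n$ converge weakly to $\mu\res B_n$; this lets me import the classical theory of weak convergence of finite measures on a Polish space and transfer it to $\cM_X$.

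For Polishness I would first build a metric from a countable convergence-determining family $(f_j)_{j\ge1}$ of continuous functions of bounded support, obtained by tensoring uniformly dense countable subfamilies of each $C_0(B_n)$ with cutoffs, and set $\rho(\mu,\nu)=\sum_{j\ge1}2^{-j}\,\frac{|\mu f_j-\nu f_j|}{1+|\mu f_j-\nu f_j|}$. I would check that $\rho$ metrizes the vague topology and that the purely atomic measures $\sum_i q_i\d_{x_i}$ with rational weights $q_i$ and $x_i$ ranging over a countable dense subset of $X$ are $\rho$-dense, which gives separability. Completeness would then follow by showing that any $\rho$-Cauchy sequence automatically satisfies conditions (1) and (2) of the theorem --- uniform boundedness and uniform tightness on each $B_n$ --- and is therefore vaguely relatively compact, hence convergent.

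For the Borel $\sigma$-field I would argue by mutual inclusion. Each evaluation $\mu\mapsto\mu(B)$ with $B\in\hat\cS$ is $\rho$-Borel, because $\mathbf 1_B$ can be squeezed between functions of bounded support on a dissecting system while the evaluations $\mu\mapsto\mu f_j$ are vaguely continuous; hence $\cB_{\cM_X}$ is contained in the vague Borel field. Conversely each map $\mu\mapsto\mu f_j$ is $\cB_{\cM_X}$-measurable (approximate $f_j$ by simple functions built from sets in $\hat\cS$), and since $\rho$ is assembled from exactly these maps, the vague Borel field is contained in $\cB_{\cM_X}$; so the two coincide.

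The compactness criterion is the crux. Necessity I would prove by contradiction: if $\sup_{\mu\in A}\mu(B)=\infty$ or tightness fails for some bounded $B$, pick $\mu_k\in A$ realizing the failure, pass to a vague limit $\mu\in\cM_X$ by relative compactness, and use bounded-support $\mu$-continuity sets approximating $B$ to force $\mu(B)=\infty$, contradicting local finiteness. Sufficiency is the genuine Prokhorov-type step and is where I expect the main obstacle. Given (1) and (2) and a sequence $(\mu_k)\subset A$, for each fixed $n$ the restrictions $\mu_k\res B_n$ form a uniformly bounded, uniformly tight family of finite measures, so the classical Prokhorov theorem yields weakly convergent subsequences; a diagonal argument produces one subsequence along which $\mu_k\res B_n$ converges weakly to a finite measure $\nu_n$ for every $n$. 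The delicate points are to arrange the $B_n$ as continuity sets so that the family $(\nu_n)$ is consistent, $\nu_{n+1}\res B_n=\nu_n$, whence they patch into a single locally finite $\mu\in\cM_X$ (local finiteness being immediate from (1)), and then to upgrade weak convergence of restrictions to full vague convergence $\mu_k\to\mu$ via the reduction above. Controlling the mass on the boundaries $\partial B_n$ so that nothing is lost or gained in the patching and in the final upgrade is the principal difficulty; the remainder is bookkeeping around standard weak-convergence facts.
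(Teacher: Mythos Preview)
The paper does not prove this theorem; it is stated without proof as a quotation from \cite{MR3642325} (Kallenberg's \emph{Random Measures, Theory and Applications}). Your outline follows the standard route taken there: localize via a bounded exhaustion, invoke classical Prokhorov on the finite restrictions, diagonalize, and patch. The difficulties you flag --- selecting the exhausting sets as continuity sets so that the pieces $\nu_n$ are consistent, and upgrading restricted weak convergence to vague convergence --- are the genuine ones, and your treatment of them is sound.

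One small gap: your necessity argument covers the failure of (1) but not of (2). If (1) holds while uniform tightness fails on some bounded $B$, the masses $\mu_k(B)$ remain bounded, so you cannot force $\mu(B)=\infty$ as you claim. The correct route is to show that vague relative compactness of $A$ forces weak relative compactness of the restricted family $\{\mu\res\bar B:\mu\in A\}$: given a sequence in $A$, pass to a vaguely convergent subsequence with limit $\mu$, enlarge $\bar B$ to a bounded $\mu$-continuity set $B'$, deduce weak convergence of the restrictions to $B'$, hence uniform tightness of that subsequence on $\bar B\subset B'$. Since every sequence thus has a subsequence with weakly convergent restrictions, the hard direction of Prokhorov on a Polish space then yields uniform tightness of the full family $\{\mu\res\bar B:\mu\in A\}$, which is condition (2).
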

\noindent In particular, this recovers Prokhorov's theorem in the case that (1) and (2) hold with $B = S$.

\bibliography{biblio}
\bibliographystyle{alpha}

\end{document}